\renewcommand{\MR}[1]{} \renewcommand{\PrintDOI}[1]{}
\newcommand{\arxiv}[1]{\href{http://arxiv.org/abs/#1}{\sf arXiv:\nolinkurl{#1}}}
\numberwithin{equation}{section}
\newtheorem{thm}{Theorem}[section]
\newtheorem{prop}[thm]{Proposition}
\newtheorem{lem}[thm]{Lemma}
\newtheorem{cor}[thm]{Corollary}
\newtheorem{conj}[thm]{Conjecture}
\theoremstyle{definition}
\newtheorem{eg}[thm]{Example}
\newtheorem{dfn}[thm]{Definition}
\theoremstyle{remark}
\newcommand{\beq}{\begin{equation}}
\newcommand{\eeq}{\end{equation}}
\newcommand{\be}{\begin{equation*}}
\newcommand{\ee}{\end{equation*}}
\newcommand{\bs}{{\bm\fks}}
\newcommand{\C}{\mathbb{C}}
\newcommand{\Z}{\mathbb{Z}}
\newcommand{\mc}{\mathcal}
\newcommand{\sfb}{\mathsf{b}}
\newcommand{\sfd}{\mathsf{D}}
\newcommand{\sfe}{\mathsf{E}}
\newcommand{\sff}{\mathsf{F}}
\newcommand{\sfy}{\mathsf{y}}
\newcommand{\gl}{\mathfrak{gl}}
\newcommand{\h}{\mathfrak{h}}
\newcommand{\fkS}{\mathfrak{S}}
\newcommand{\fks}{\mathfrak{s}}
\newcommand{\End}{\mathrm{End}}
\newcommand{\scrB}{\mathscr{B}}
\newcommand{\scrW}{\mathscr{W}}
\newcommand{\bC}{\mathbb{C}}
\newcommand{\bR}{\mathbb{R}}
\newcommand{\bZ}{\mathbb{Z}}
\newcommand{\tl}{\tilde}
\newcommand{\gge}{\geqslant}
\newcommand{\lle}{\leqslant}
\newcommand{\la}{\lambda}
\newcommand{\La}{\Lambda}
\newcommand{\bla}{\bm\lambda}
\newcommand{\glMN}{\mathfrak{gl}_{m|n}^{\bm s}}
\newcommand{\UglMN}{\mathrm{U}(\mathfrak{gl}_{m|n}^\s)}
\newcommand{\YglMN}{\mathscr{Y}_{\bm s}}
\newcommand{\YMN}{{\mathscr{Y}_{\bm s}}}
\newcommand{\YMNtl}{{\mathscr{Y}_{\tl{\bm s}}}}
\newcommand{\YMNp}{{\mathscr{Y}_{\bm s}^+}}
\newcommand{\BMN}{{\mathscr{B}_{\bm s,\bm \ve}}}
\newcommand{\BMNtl}{{\mathscr{B}_{\tl{\bm s},\tl{\bm \ve}}}}
\newcommand{\wt}{\widehat}
\newcommand{\ka}{\varkappa}
\newcommand{\ve}{\varepsilon}
\newcommand{\qedd}{\tag*{$\square$}}
\newcommand{\s}{{\bm s}}
\newcommand{\ovs}{{\overline{\bm s}}}
\newcommand{\ove}{{\overline{\bm \varepsilon}}}
\newcommand{\uns}{{\underline{\bm s}}}
\newcommand{\une}{{\underline{\bm \varepsilon}}}
\begin{document}
\pagestyle{myheadings}
\setcounter{page}{1}

\title[Twisted super Yangians of type AIII]{Twisted super Yangians of type AIII and their representations}

\author{Kang Lu}
\address{Department of Mathematics, University of Virginia, Charlottesville, VA 22903, USA}
\email{kang.lu@virginia.edu}

\begin{abstract}
We study the super analogue of the Molev-Ragoucy reflection algebras, which we call twisted super Yangians of type AIII, and classify their finite-dimensional irreducible representations. These superalgebras are coideal subalgebras of the super Yangian $\mathscr{Y}(\gl_{m|n})$ and are associated with symmetric pairs of type AIII in Cartan's classification.
We establish the Schur-Weyl type duality between degenerate affine Hecke algebras of type BC and twisted super Yangians.

		\medskip

		\noindent
		{\bf Keywords:} Schur-Weyl duality, twisted super Yangian, quantum symmetric pair

\end{abstract}

\maketitle
\setcounter{tocdepth}{1}
\tableofcontents

\thispagestyle{empty}


\section{Introduction}
Reflection algebras, introduced by Sklyanin in his seminal paper \cite{Sklyanin1988Boundary}, are pivotal in constructing the commutative Bethe subalgebra and ensuring integrability of quantum integrable systems with boundary conditions. These algebras, inspired by Cherednik's scattering theory \cite{Cherednik1984Factorizing} for factorized particles on the half-line, form the foundation for various studies.

In \cite{Molev2002reflection}, Molev and Ragoucy studied a family of reflection algebras $\mathscr B_{\bm\ve}$, whose relations are described in terms of reflection equation and a certain unitary condition, and classified their finite-dimensional irreducible representations. These reflection algebras can also be called twisted Yangians of type AIII as they are coideal subalgebras of the Yangian $\mathscr Y(\gl_n)$ and deformations of the fixed point subalgebra of $\mathrm U(\gl_{n}[x])$ associated to symmetric pair of type AIII, see \S\ref{sec:properties}. The twisted Yangians depend on a sequence $\bm\ve=(\ve_1,\ve_2,\cdots,\ve_n)$, where $\ve_i=\pm 1$, and for different $\bm\ve$ the $\mathscr B_{\bm\ve}$ might not be isomorphic.

These twisted Yangians were further investigated by Chen, Guay and Ma in \cite{Chen2014twisted}. They related the twisted Yangians (in R-matrix presentation) with another family of twisted Yangians introduced by MacKay \cite{MacKay2002rational} (in Drinfeld's original presentation). A Drinfeld functor from the category of modules over degenerate affine Hecke algebras of type BC (dAHA) to the category of modules over twisted Yangians were constructed. It turns out the Drinfeld functor is an equivalence of categories under certain conditions, similar to the usual Schur-Weyl duality. Moreover, the Drinfeld functor sends a finite-dimensional irreducible module over dAHA to either zero space or a finite-dimensional irreducible module over twisted Yangians.

In the present article, we shall study the supersymmetric generalization of $\mathscr B_{\bm\ve}$, that are twisted super Yangians of type AIII. The twisted super Yangians $\mathscr B_{\s,\bm\ve}$ are coideal subalgebras of the super Yangian $\mathscr Y(\gl_{m|n}^\s)$ that depends on sequences of parity sequence $\s=(s_1,s_2,\cdots,s_{m+n})$ and $\bm\ve=(\ve_1,\ve_2,\cdots,\ve_{m+n})$, where $s_i,\ve_i=\pm 1$ for $1\lle i\lle m+n$ and $1$ appears in $\bm s$ exactly $m$ times. The new sequence $\s$ corresponds to the Dynkin diagram we choose for the associated general linear Lie superalgebra $\gl_{m|n}$. When $\s$ satisfies $s_i=1$ for $1\lle i\lle m$ and $s_i=-1$ otherwise, we call $\s$ the standard parity sequence. The standard parity sequence corresponds to the standard Borel subalgebra of $\gl_{m|n}$.

The twisted super Yangians appear previously (under the name \emph{reflection superalgebras}) in the study of analytical and nested algebraic Bethe ansatz \cite{Ragoucy2007analytical,Belliard2009nested} for quantum integrable models (open spin chains) with symmetry described by twisted super Yangians. For the case of the standard parity sequence $\s$ and a specific $\bm\ve$, they computed the highest weight of twisted super Yangian for a highest weight vector of super Yangians. These superalgebras were also recently studied in \cite{kettle2023orthosymplectic}, where some partial results of this paper were obtained, and in \cite{bagnoli2023double}, where a double version of twisted super Yangian is introduced and studied. Note that in \cite{kettle2023orthosymplectic,bagnoli2023double}, the author deals with twisted super Yangians associated with the standard parity sequence $\s$ and a specific $\bm\ve$ while ours are arbitrary\footnote{For the classical limit, the treatment is the same for twisted super Yangians associated to different $\s$ and $\bm\ve$, but the representation theory does rely on $\s$ and $\bm\ve$.}.

Our primary objective is to initiate the study of highest weight representation theory for twisted super Yangians, classify their finite-dimensional irreducible representations, and establish a Schur-Weyl type duality between the twisted super Yangians and degenerate affine Hecke algebras of type B/C, analogous to \cite{Molev2002reflection,Chen2014twisted}. We deal with twisted super Yangians associated to arbitrary parity sequence $\s$ and $\bm\ve$. Our main methods are similar strategy to \cite{Molev1998finite,Molev2002reflection,Chen2014twisted}, cf. also \cite{Guay2017representationsI}. Since $\s$ and $\bm\ve$ are both arbitrary, the calculations become more complicated than that in \cite{Molev2002reflection,Chen2014twisted}. We need to put extra effort to correctly insert the necessary sign factors $\s$ and $\bm\ve$.

Finite-dimensional irreducible representations of super Yangians were classified by Zhang \cite{Zhang1995reps,Zhang1996super} for the standard parity sequence. A complete and concrete description of criteria for an irreducible $\mathscr Y(\gl_{m|n}^\s)$-module (for arbitrary $\s$) being finite-dimensional is not available, though such a criteria can be obtained recursively using the odd reflections \cite{Molev2022odd,Lu2022note}. Consequently, we only have classification of finite-dimensional irreducible $\mathscr B_{\s,\bm\ve}$-modules for the cases (1) arbitrary $\bm\ve$ when $n=0,1$ and (2) the standard parity sequence $\s$ when the occurrence of $i$ such that $\ve_i\ne \ve_{i+1}$  is at most 1.

There are also twisted Yangians of types AI and AII introduced by Olshanski \cite{Olshanski1992twisted} and of types BCD introduced by Guay and Regelskis \cite{Guay2016twisted} via R-matrix presentation, whose representation theory are studied in \cite{Molev1998finite,Guay2017representationsI}. Another family of twisted Yangians associated to general symmetric pairs were introduced by MacKay \cite{MacKay2002rational} in terms of Drinfeld's J-symbols. More recently, together with Wang and Zhang, we introduced another family of twisted Yangians for symmetric pairs of split and quasi-split types in Drinfeld's new presentation, \cite{Lu2023drinfeld,Lu2024drinfeld,Lu2024affine}. The Drinfeld's new presentation of twisted Yangians allows one to define the shifted twisted Yangians and their truncations. Such shifted twisted Yangians are closely related to Slodowy slices and finite W-algebras of classical types, and fixed point loci of affine Grassmannian slices, see \cite{Brown2009twisted,Tappeiner2024shifted,Lu2025shifted-W,Lu2025shifted-G} and cf. \cite{BK06,KWWY14,BFN19}. The isomorphism between the twisted Yangians beyond type A in R-matrix and Drinfeld presentations remains unproven, see \cite{Lu2023drinfeld,Lu2024isomorphism,Lu2024drinfeld}, offering an interesting avenue for future research. In the sequel work \cite{Lu2024twisted}, we extend the joint work \cite{Lu2024drinfeld} with Zhang, introducing a family of twisted super Yangians in Drinfeld presentations that are isomorphic to (quasi-split type) twisted super Yangians here for arbitrary ``symmetric" parity sequences. Finally, we remark that a family of twisted super Yangians combining types AI and AII were introduced in \cite{Briot2003twisted} where some partial results about their representation theory were obtained. These twisted super Yangians and their quantum Berezinians were recently investigated in \cite{Lin2024from,Lin2025Ber}. 

This article is organized in the following fashion. Section \ref{sec:super yangian} revisits basic properties of the super Yangian $\mathscr Y(\gl_{m|n}^\s)$. Section \ref{sec:twisted-super-yangians} delves into twisted super Yangians and their properties. Section \ref{sec:reps} explores highest weight representation theory and tensor product structures for twisted super Yangians. Section \ref{sec:rank1} classifies finite-dimensional irreducible representations for rank 1, while Section \ref{sec:classification} extends this classification to higher ranks in key cases. Finally, Section \ref{sec:schur-weyl} establishes a Schur-Weyl type duality between degenerate affine Hecke algebras of type BC and twisted super Yangians.

\medskip

{\bf Acknowledgments.} 
The author is partially supported by NSF grants DMS-2001351 and DMS--2401351, both awarded to Weiqiang Wang. 

\section{Super Yangian}\label{sec:super yangian}
Throughout the paper, we work over $\bC$.
\subsection{General linear Lie superalgebras}\label{sec glmn}
In this subsection, we recall the basics of the general linear Lie superalgebra $\glMN$, see e.g. \cite{Cheng2009dualities} for more detail.

A \textit{vector superspace} $W = W_{\bar 0}\oplus W_{\bar 1}$ is a $\bZ_2$-graded vector space. We call elements of $W_{\bar 0}$ \textit{even} and elements of
$W_{\bar 1}$ \textit{odd}. We write $\vert w\vert \in\{\bar 0,\bar 1\}$ for the parity of a homogeneous element $w\in W$. Set $(-1)^{\bar 0}=1$ and $(-1)^{\bar 1}=-1$.

Fix $m,n\in \bZ_{\gge 0}$ and set $\ka=m+n$. Denote by $S_{m\vert n}$ the set of all sequences $\bm s=(s_{1},s_2,\dots,s_{\ka})$ where $s_i\in\{\pm1\}$ and $1$ occurs exactly $m$ times. Elements of $S_{m\vert n}$ are called \textit{parity sequences}. The parity sequence of the form $\bm{s_0}=(1,\dots,1,-1,\dots,-1)$ is the \textit{standard parity sequence}.

Fix a parity sequence $\s\in S_{m\vert n}$ and define $\vert i\vert \in \bZ_2$ for $1\lle i \lle \ka$ by $s_i=(-1)^{\vert i\vert }$.

The Lie superalgebra $\glMN$ is generated by elements $e_{ij}^\s$, $1\lle i,j\lle \ka$, with the supercommutator relations
\[
[e^\s_{ij},e^\s_{kl}]=\delta_{jk}e^\s_{il}-(-1)^{(\vert i\vert +\vert j\vert )(\vert k\vert +\vert l\vert )}\delta_{il}e^\s_{kj},
\]
where the parity of $e_{ij}^\s$ is $\vert i\vert +\vert j\vert $. In the following, we shall drop the superscript $\s$ when there is no confusion.

Denote by $\UglMN$ the universal enveloping superalgebra of $\glMN$. The superalgebra $\UglMN$ is a Hopf superalgebra with the coproduct given by $\Delta(x)=1\otimes x+x\otimes 1$ for all $x\in \glMN$.

The \textit{Cartan subalgebra $\h$} of $\glMN$ is spanned by $e_{ii}$, $1\lle i \lle \ka$. Let $\epsilon_i$, $1\lle i \lle \ka$, be a basis of $\h^*$ (the dual space of $\h$) such that $\epsilon_i(e_{jj})=\delta_{ij}$. There is a bilinear form $(\ ,\ )$ on $\h^*$ given by $(\epsilon_i,\epsilon_j)=s_i\delta_{ij}$. The \textit{root system $\bf{\Phi}$} is a subset of $\h^*$ given by
\[
{\bf \Phi}:=\{\epsilon_i-\epsilon_j~\vert ~1\lle i,j\lle \ka \text{ and }i\ne j\}.
\]
We call a root $\epsilon_i-\epsilon_j$ \textit{even} (resp. \textit{odd}) if $\vert i\vert =\vert j\vert $ (resp. $\vert i\vert \ne \vert j\vert $).

Set $\alpha_i:=\epsilon_i-\epsilon_{i+1}$ for $1\lle i \lle \ka$. Denote by
$$
{\bf P}:=\bigoplus_{1\lle i \lle \ka}\bZ \epsilon_i,\quad  {\bf Q}:=\bigoplus_{1\lle i < \ka}\bZ \alpha_i,\quad {\bf Q}_{\gge 0}:=\bigoplus_{1\lle i <\ka}\bZ_{\gge 0} \alpha_i$$ the \textit{weight lattice}, the \textit{root lattice}, and the \textit{cone of positive roots}, respectively. Define a partial ordering $\gge$ on $\h^*$: $\mu\gge \nu$ if $\mu-\nu\in {\bf Q}_{\gge 0}$. 

For a weight $\mu\in\h^*$, it is convenient to write $\mu$ as the $\ka$-tuple $(\mu_i)_{1\lle i\lle \ka}$ such that $\mu_i=\mu(e_{ii})$.

A module $M$ over a superalgebra $\mathcal A$ is a vector superspace $M$ with a homomorphism of superalgebras $\mathcal A\to \End(M)$. A $\glMN$-module is a module over $\mathrm{U}(\glMN)$. However, we shall not distinguish modules which only differ by a parity.

For a $\glMN$-module $M$, define the \textit{weight subspace of weight} $\mu$ by
\begin{equation}\label{eq:uweight-space}
(M)_{\mu}:=\{v\in M\ \vert \ e_{ii}v=\mu(e_{ii})v,\ 1\lle i \lle \ka\}.
\end{equation}
For a $\glMN$-module $M$ such that $(M)_{\mu}=0$ unless $\mu\in\bf Q$, we say that $M$ is \textit{$\bf Q$-graded}.

For a $\glMN$-module $M$, we call a vector $v\in M$ \textit{singular} if $e_{ij}v=0$ for $1\lle i<j\lle \ka$. We call a nonzero vector $v\in M$ a \textit{singular vector of weight} $\mu$ if $v$ satisfies
\[
e_{ii}v=\mu(e_{ii})v,\quad e_{jk}v=0,
\]
for $1\lle i\lle \ka$ and $1\lle j< k\lle \ka$. A nonzero vector $v\in (M)_{\mu}$ is {\it a highest (resp. lowest) weight vector} of $M$ if $(M)_{\nu}= 0$ unless $\mu-\nu\in\bf Q_{\gge 0}$ (resp. $\nu-\mu\in\bf Q_{\gge 0}$). Clearly, a highest weight vector is singular while a lowest weight vector $v$ satisfies $e_{ji}v=0$ for $1\lle i< j\lle \ka$.

Denote by $L(\mu)$ the irreducible $\glMN$-module generated by a
singular vector of weight $\mu$.

Let $V:=\bC^{m\vert n}$ be the vector superspace with a basis $v_i$, $1\lle i \lle \ka$, such that $\vert v_i\vert =\vert i\vert $. Let $E_{ij}\in\End(V)$ be the linear operators such that $E_{ij}v_k=\delta_{jk}v_i$. The map $\rho_V:\glMN\to \End(V),\ e_{ij}\mapsto E_{ij}$ defines a $\glMN$-module structure on $V$. As a $\glMN$-module, $V$ is isomorphic to $L({\epsilon_1})$. The vector $v_i$ has weight $\epsilon_i$. The highest weight vector is $v_1$ and the lowest weight vector is $v_{\ka}$. We call it the \textit{vector representation} of $\glMN$.

\subsection{Super Yangians}\label{sec rtt}
Fix a parity sequence $\bm s\in S_{m|n}$ and recall the definition of super Yangian $\YglMN:=\mathscr Y(\mathfrak{gl}_{m|n}^\s)$ from \cite{Nazarov1991berezinian}.

\begin{dfn}The super Yangian $\YglMN$ is the $\Z_2$-graded unital associative algebra over $\C$ with generators $\{t_{ij}^{(r)}\ |\ 1\lle i,j \lle\ka, \, r\gge 1\}$ and the defining relations are given by
\beq\label{eq:comm-series}
[t_{ij}(u),t_{kl}(v)]=\frac{(-1)^{|i||j|+|i||k|+|j||k|}}{u-v}(t_{kj}(u)t_{il}(v)-t_{kj}(v)t_{il}(u)).
\eeq
where
\[
t_{ij}(u)=\delta_{ij}+\sum_{r\gge 1}  t_{ij}^{(r)}u^{-r},
\]
and the generators $t_{ij}^{(r)}$ have parities $|i|+|j|$.
\end{dfn}
The super Yangian $\YglMN$ has the RTT presentation as follows. Define the rational R-matrix $R(u)\in \End(V \otimes  V)$ by $R(u)=1-\mathcal P/u$, where $\mc P\in \End(V \otimes V)$ is the super flip operator defined by
\[
\mathcal P=\sum_{i,j=1}^\ka s_jE_{ij}\otimes E_{ji}.
\]
The rational R-matrix satisfies the quantum Yang-Baxter equation
\beq\label{eq yang-baxter}
R_{12}(u-v)R_{13}(u)R_{23}(v)=R_{23}(v)R_{13}(u)R_{12}(u-v).
\eeq
Define the operator $T(u)\in \YglMN[[u^{-1}]]\otimes \End(V) $,
$$
T(u)=\sum_{i,j=1}^\ka (-1)^{|i||j|+|j|} t_{ij}(u)\otimes E_{ij}.
$$
Then defining relations \eqref{eq:comm-series} can be written as
\beq\label{eq:RTT}
R(u-v)T_1(u)T_2(v)=T_2(v)T_1(u)R(u-v).
\eeq

The super Yangian $\YglMN$ is a Hopf superalgebra with the coproduct
\beq\label{eq Hopf}
\Delta: t_{ij}(u)\mapsto \sum_{k=1}^\ka t_{ik}(u)\otimes t_{kj}(u),
\eeq
and the antipode $S:T(u)\to T^{-1}(u)$.

Define the series
$$
t_{ij}'(u)=\delta_{ij}+\sum_{r\gge 1} {t}_{ij}^{\prime( r)}u^{-r}
$$
by
\beq\label{eq:inverseT}
T^{-1}(u)=\sum_{i,j=1}^\ka (-1)^{|i||j|+|j|}  t_{ij}'(u)\otimes E_{ij}.
\eeq
Then
\beq\label{eq:T'-expression}
t_{ij}'(u)=\delta_{ij}+\sum_{k=1}^\infty (-1)^k\sum_{a_1,\cdots,a_{k-1}=1}^\ka t_{ia_1}^\circ(u)t_{a_1a_2}^\circ(u)\cdots t_{a_{k-1}j}^\circ(u),
\eeq
where $t_{ij}^\circ(u)=t_{ij}(u)-\delta_{ij}$. In particular, by taking the coefficient of $u^{-r}$, for $r\gge 1$, one obtains
\beq\label{eq:T-expression-comp}
t_{ij}^{\prime(r)}=\sum_{k=1}^r (-1)^k\sum_{a_1,\cdots,a_{k-1}=1}^\ka\sum_{r_1+\cdots+r_k=r}t_{ia_1}^{(r_1)}t_{a_1a_2}^{(r_2)}\cdots t_{a_{k-1}j}^{(r_k)},
\eeq
where $r_i$ for $1\lle i\lle k$ are positive integers.

By \eqref{eq:RTT}, one has
\beq\label{eq:T'RT}
\begin{split}
T_1^{-1}(-u)R(u+v)T_2(v)=T_2(v)R(u+v)T_1^{-1}(-u),\\
T_1(u)R(u+v)T_2^{-1}(-v)=T_2^{-1}(-v)R(u+v)T_1(u),
\end{split}
\eeq
and
\beq\label{eq:tt'}
(u-v)[t_{ij}(u),t_{kl}'(v)]=(-1)^{|i||j|+|i||k|+|j||k|}\Big(\delta_{kj}\sum_{s=1}^\ka t_{is}(u)t_{sl}'(v)-\delta_{il}\sum_{s=1}^{\ka}t_{ks}'(v)t_{sj}(u)\Big).
\eeq

For $z\in\C$ there exists an isomorphism of Hopf superalgebras,
\begin{align}
\tau_z:\YglMN\to\YglMN, \qquad t_{ij}(u)\mapsto t_{ij}(u-z).\label{eq tau z}
\end{align}

The universal enveloping superalgebra $\mathrm U(\glMN)$ is a Hopf subalgebra of $\YglMN$ via the embedding $e_{ij}\mapsto s_it_{ij}^{(1)}$. The left inverse of this embedding is the \emph{evaluation homomorphism} $\pi_{m|n}^\s: \YglMN\to \UglMN$ given by
\beq\label{eq:evaluation-map}
\pi_{m|n}^\s: t_{ij}(u)\mapsto \delta_{ij}+s_ie_{ij}u^{-1}.
\eeq

The evaluation homomorphism is a superalgebra homomorphism but not a Hopf superalgebra homomorphism.
For any $\glMN$-module $M$, it is naturally a $\YglMN$-module obtained by pulling back $M$ through the evaluation homomorphism $\pi_{m|n}$. We denote the corresponding $\YglMN$-module by the same letter $M$ and call it an \emph{evaluation module}.

The following standard PBW-type theorem for super Yangian $\YMN$ is known.
\begin{thm}[{\cite{Gow2007gauss,Peng2016parabolic}}]\label{thm:PBW}
Given any total ordering on the elements $t_{ij}^{(p)}$ for $1\lle i,j\lle \ka$ and $p\in\mathbb Z_{>0}$, the ordered monomials in these elements, containing no second or higher
order powers of the odd generators, form a basis of the super Yangian $\YMN$.
\end{thm}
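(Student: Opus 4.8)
The plan is to follow the classical two-step route to a Poincar\'e--Birkhoff--Witt theorem: derive the spanning statement from a filtration whose associated graded is a current Lie superalgebra, and then prove linear independence by evaluating on a sufficiently large family of representations.

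\emph{Step 1: a filtration and the spanning statement.} For $k\gge 0$ let $F_k\YMN$ be the span of the monomials in the generators $t_{ij}^{(r)}$ in which $t_{ij}^{(r)}$ is assigned weight $r-1$ and the total weight is $\lle k$; this is an exhaustive algebra filtration, so $\mathrm{gr}\,\YMN$ is generated by the symbols $\bar t_{ij}^{(r)}\in F_{r-1}/F_{r-2}$. Extracting the coefficient of $u^{-r}v^{-s}$ from \eqref{eq:comm-series} one finds that $[t_{ij}^{(r)},t_{kl}^{(s)}]$ is a combination of $\delta_{kj}t_{il}^{(r+s-1)}$ and $\delta_{il}t_{kj}^{(r+s-1)}$ (weight $(r-1)+(s-1)$) together with products $t_{\bullet\bullet}^{(a)}t_{\bullet\bullet}^{(b)}$ with $a,b\gge 1$, $a+b=r+s-1$ (strictly smaller weight). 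Hence in $\mathrm{gr}\,\YMN$ the symbols obey precisely the bracket relations of the current Lie superalgebra $\glMN\otimes\C[x]$, and tracking the sign $(-1)^{|i||j|+|i||k|+|j||k|}$ against the normalization $e_{ij}\mapsto s_it_{ij}^{(1)}$ shows that $e_{ij}x^{r-1}\mapsto s_i\bar t_{ij}^{(r)}$ defines a surjective superalgebra homomorphism $\varphi\colon \mathrm U\big(\glMN\otimes\C[x]\big)\twoheadrightarrow \mathrm{gr}\,\YMN$ respecting the weight gradings. The classical PBW theorem for $\glMN\otimes\C[x]$ — whose odd root vectors square to zero — says the ordered monomials in the $e_{ij}x^{r-1}$ with no repeated odd generator form a basis; their $\varphi$-images span $\mathrm{gr}\,\YMN$, and lifting (induction on the weight), the corresponding monomials in the $t_{ij}^{(r)}$ span $\YMN$. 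Thus the theorem reduces to the injectivity of $\varphi$: the spanning monomials of each weight are independent in $\mathrm{gr}\,\YMN$ exactly when $\varphi$ is injective, and then, being carried by $\varphi$ from a basis, they lift to a basis of $\YMN$.

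\emph{Step 2: linear independence via evaluation modules.} To prove $\varphi$ injective I would bound $\dim(\mathrm{gr}\,\YMN)$ from below using the Hopf structure and the evaluation homomorphism. Fix $L\gge 1$ and pairwise distinct $z_1,\dots,z_L\in\C$, and set
\[
\rho_L:=\Big(\bigotimes_{a=1}^{L}\big(\pi^\s_{m|n}\circ\tau_{z_a}\big)\Big)\circ\Delta^{(L-1)}\colon\ \YMN\longrightarrow \UglMN^{\otimes L},
\]
with $\Delta^{(L-1)}$ the $(L-1)$-fold iterated coproduct \eqref{eq Hopf}. By \eqref{eq:evaluation-map} and \eqref{eq tau z}, $\rho_L(t_{ij}(u))$ is the $(i,j)$-entry of the ordered product $\prod_{a=1}^{L}\big(1+\sum_{p,q}s_p\,e_{pq}^{(a)}\otimes E_{pq}\,(u-z_a)^{-1}\big)$, the factors involving disjoint tensor slots and hence commuting up to Koszul signs, where $e_{pq}^{(a)}$ sits in the $a$-th copy of $\UglMN$. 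Expanding in $u^{-1}$ and using that the $z_a$ may be taken algebraically independent, one shows that for $L$ large (depending on the chosen weight bound) the $\rho_L$-images of the spanning monomials of bounded weight stay linearly independent in $\UglMN^{\otimes L}$; this reduces to the classical PBW theorem for $\UglMN$ applied slot by slot, once one accounts for the Koszul signs from reordering tensor factors and from the substitution $e_{ij}=s_it_{ij}^{(1)}$. Letting the weight bound tend to infinity yields the injectivity of $\varphi$.

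\emph{Main obstacle, and an alternative.} The genuinely delicate point is the separation-of-monomials argument in Step 2: the super bookkeeping (Koszul signs under permutation of tensor slots, the factors $s_i$) has to be done carefully, and the argument uses $\mathrm{char}\,\C=0$ and PBW for $\glMN$ in an essential way. An alternative — the route taken in \cite{Gow2007gauss,Peng2016parabolic} — sidesteps the generic-evaluation argument by passing to the Gauss decomposition $T(u)=F(u)D(u)E(u)$, proving a PBW theorem for the resulting generators $d_i^{(r)},e_i^{(r)},f_i^{(r)}$, and inducing on $\ka=m+n$ through the parabolic (block) presentations of $\YMN$; this replaces the representation-theoretic input by a longer structural induction, but the combinatorial core — that the ordered monomials cannot collapse — is unchanged.
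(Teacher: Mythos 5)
The paper does not prove Theorem \ref{thm:PBW}; it quotes it from \cite{Gow2007gauss,Peng2016parabolic}, and the argument there is essentially the one you outline: the loop filtration $\deg t_{ij}^{(r)}=r-1$, the surjection $\mathrm U(\glMN\otimes\C[x])\twoheadrightarrow \mathrm{gr}\,\YMN$, and injectivity via tensor products of evaluation modules at generic points, with the Gauss-decomposition/parabolic presentation giving the alternative route you also mention. Your sketch is therefore consistent with the standard proof; the only piece not actually carried out is the separation-of-monomials (Vandermonde-type) computation in your Step 2, which you correctly flag as the technical core.
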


Besides the antipode $S$, we also have the following anti-automorphisms of $\YMN$ defined by
\begin{align*}
&\mc t:\YMN\to\YMN,\quad t_{ij}(u)\mapsto (-1)^{|i||j|+|j|}t_{ji}(u),\\
&\mc n:\YMN\to\YMN,\quad t_{ij}(u)\mapsto t_{ij}(-u).
\end{align*}
Then the anti-automorphisms $S$, $\mc t$, and $\mc n$ of $\YMN$ pairwise commute, see e.g. \cite[Proposition 1.5]{Nazarov2020yangian}. Let $\Omega$ be the anti-automorphism of $\YMN$ given by
\beq\label{Omega}
\Omega=S\circ \mc t\circ \mc n,\qquad \Omega(t_{ij}(u))=(-1)^{|i||j|+|j|}t_{ji}'(-u).
\eeq
\subsection{Highest weight representations}
We first recall the results about the highest weight representations for $\YMN$ from \cite{Zhang1996super}.
\begin{dfn}
A representation $L$ of $\YMN$ is called highest $\ell_\s$-weight if there exists a nonzero vector $\xi\in L$ such that $L$ is generated by $\xi$ and $\xi$ satisfies
\beq\label{eq:highest-Y}
\begin{split}
   & t_{ij}(u)\xi=0,  \qquad \quad\quad \quad 1\lle i<j\lle \ka,\\
   &    t_{ii}(u)\xi=\la_i(u)\xi,\qquad\quad  1\lle i\lle \ka,
\end{split}
\eeq
where $\la_i(u)\in 1+u^{-1}\C[[u^{-1}]]$. The vector $\xi$ is called a {\it highest $\ell_\s$-weight vector} of $L$ and the tuple $\bla(u)=(\la_i(u))_{1\lle i\lle \ka}$ is the {\it highest $\ell_\s$-weight} of $L$.
\end{dfn}

Let $\bla(u)=(\la_i(u))_{1\lle i\lle \ka}$ be a $\ka$-tuple as above. Then there exists a unique, up to isomorphism, irreducible highest weight representation $L(\bla(u))$ with the highest weight $\bla(u)$. Any finite-dimensional irreducible representation of $\YMN$ is isomorphic to $L(\bla(u))$ for some $\bla(u)$. The criterion for $L(\bla(u))$ being finite-dimensional was classified in \cite{Zhang1996super} when $\s$ is the standard parity sequence.

\begin{thm}[\cite{Zhang1996super}]\label{thm:zhang}
If $\s$ is the standard parity sequence, then the irreducible $\YMN$-module $L(\bla(u))$ is finite-dimensional if and only if there exist monic polynomials $P_i(u)$, $1\lle i\lle \ka$, such that
\[
\frac{\la_i(u)}{\la_{i+1}(u)}=\frac{P_i(u+s_i)}{P_i(u)},\quad \frac{\la_m(u)}{\la_{m+1}(u)}=\frac{P_m(u)}{P_{\ka}(u)}, \quad 1\lle i\lle \ka \text{ and } i\ne m,
\]
and $\deg P_m=\deg P_{\ka}$.
\end{thm}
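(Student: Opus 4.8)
The plan is to follow the Tarasov--Drinfeld strategy familiar from $\mathscr Y(\gl_N)$, adapted to the super case, reducing both implications to the rank-one super Yangians attached to the nodes of the Dynkin diagram of $\gl_{m|n}$; the three basic tools are the Hopf structure \eqref{eq Hopf}, the evaluation homomorphism \eqref{eq:evaluation-map}, and the shift automorphism \eqref{eq tau z}.

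For necessity, assume $L(\bla(u))$ is finite-dimensional. For each $1\lle i\lle\ka-1$ the four entries $t_{ii}(u)$, $t_{i,i+1}(u)$, $t_{i+1,i}(u)$, $t_{i+1,i+1}(u)$ generate a subalgebra of $\YMN$; comparing the signs in \eqref{eq:comm-series} with those in the defining relations of $\mathscr Y(\gl_2)$ and $\mathscr Y(\gl_{1|1})$ --- the relevant parities being both $\bar 0$ for $i<m$, both $\bar 1$ for $i>m$, and one of each for $i=m$ --- shows that this subalgebra is the image of a corner embedding of $\mathscr Y(\gl_2)$ when $i\ne m$ and of $\mathscr Y(\gl_{1|1})$ when $i=m$ (injectivity of such corner embeddings being standard, e.g.\ via Theorem~\ref{thm:PBW}); for $i>m$ the embedding incorporates the reparametrisation $u\mapsto-u$, which is exactly where the sign $s_i=-1$ enters. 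The cyclic span of the highest $\ell_{\bm s}$-weight vector $\xi$ under such a subalgebra is a highest weight module with highest weight $(\la_i(u),\la_{i+1}(u))$ (up to that reparametrisation), and it is finite-dimensional since it sits inside $L(\bla(u))$, hence so is its irreducible quotient. Tarasov's classification for $\mathscr Y(\gl_2)$ applied at the nodes $i\ne m$ then produces monic $P_i$ with $\la_i(u)/\la_{i+1}(u)=P_i(u+s_i)/P_i(u)$, and the rank-one classification for $\mathscr Y(\gl_{1|1})$ applied at $i=m$ produces monic $P_m,P_\ka$ with $\la_m(u)/\la_{m+1}(u)=P_m(u)/P_\ka(u)$ and $\deg P_m=\deg P_\ka$.

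For sufficiency, suppose $\bla(u)$ satisfies the stated conditions. One checks that $t_{ij}(u)\mapsto f(u)\,t_{ij}(u)$ is an automorphism of $\YMN$ for any $f\in1+u^{-1}\C[[u^{-1}]]$, and it preserves finite-dimensionality, so after twisting we may assume $\la_\ka(u)=1$; then each $\la_i(u)$ is the product over the nodes $j\gge i$ of the prescribed rational functions, hence a ratio of polynomials. Factor each $P_j$ ($j\ne m$) into monic linear factors and pair off the linear factors of $P_m$ with those of $P_\ka$ using $\deg P_m=\deg P_\ka$; each factor (resp.\ each pair) gives an ``elementary'' tuple which one identifies as the highest $\ell_{\bm s}$-weight of a shifted evaluation module $\tau_zL(\mu)$ for an explicit $\gl_{m|n}$-dominant weight $\mu$ (meaning $\mu_a-\mu_{a+1}\in\bZ_{\gge0}$ for $a\ne m$): a linear factor $u-z$ of $P_i$ with $i<m$ comes from $\mu=(1^i,0^{\ka-i})$, one with $m<i<\ka$ from $\mu=((-1)^m,1^{i-m},0^{\ka-i})$, and a linear pair $(u-a,u-b)$ at the node $m$ from $\tau_bL\big((b-a)^m,0^n\big)$; dominance holds at the even nodes by inspection and is vacuous at the odd node $m$. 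Each such $\tau_zL(\mu)$ is finite-dimensional. Finally, if $L(\bla')$ and $L(\bla'')$ are finite-dimensional, then by \eqref{eq Hopf} the vector $\xi'\otimes\xi''$ is a highest $\ell_{\bm s}$-weight vector of $L(\bla')\otimes L(\bla'')$ with highest weight the componentwise product $\bla'\bla''$, so the submodule it generates is finite-dimensional and its irreducible quotient is $L(\bla'\bla'')$; writing $\bla(u)$ as the product of the elementary tuples then exhibits $L(\bla(u))$ as a subquotient of a tensor product of finite-dimensional evaluation modules, hence finite-dimensional.

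I expect the main obstacle to be the sign bookkeeping, which is concentrated at the odd node $m$: on the necessity side one must confirm that the four entries around node $m$ really close under \eqref{eq:comm-series} into a copy of $\mathscr Y(\gl_{1|1})$ bearing the stated highest weight, and that the $u\mapsto-u$ twist at the nodes $i>m$ yields precisely the shift $s_i$ rather than $-s_i$; on the sufficiency side one must check that the weights $\mu$ written down to realise the elementary tuples are genuinely $\gl_{m|n}$-dominant, and that it is the equality $\deg P_m=\deg P_\ka$ --- as opposed to a single polynomial at the even nodes --- that makes the factorisation at the odd node possible, which also explains why the condition at node $m$ in the theorem has a different shape from the ones at the other nodes.
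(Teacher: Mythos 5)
The paper offers no proof of Theorem \ref{thm:zhang} to compare against: it is imported from \cite{Zhang1996super} (with the rank-one ingredient at the odd node going back to \cite{Zhang1995reps}), so the only question is whether your blind argument is sound, and it is --- indeed it is essentially the standard Tarasov--Drinfeld reduction that underlies the cited classification. Your necessity step only requires a surjection of $\mathscr Y(\gl_2)$ (resp.\ $\mathscr Y(\gl_{1|1})$) onto the corner subalgebra spanned by the coefficients of $t_{ab}(u)$, $a,b\in\{i,i+1\}$, not injectivity, since \eqref{eq:comm-series} closes on these entries; and the $u\mapsto-u$ reparametrisation at nodes $i>m$ does what you claim: Tarasov's relation $\la_i(-u)/\la_{i+1}(-u)=Q(u+1)/Q(u)$ becomes $\la_i(u)/\la_{i+1}(u)=P_i(u-1)/P_i(u)$ with $P_i(u)$ the monic polynomial proportional to $Q(-u)$, i.e.\ the shift is $s_i=-1$ as required. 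On the sufficiency side your elementary weights check out: $(1^i,0^{\ka-i})$, $((-1)^m,1^{i-m},0^{\ka-i})$ and $((b-a)^m,0^n)$ are dominant integral for the even part (Kac's criterion imposes nothing at the odd node, and $b-a$ may be any complex number), their shifted evaluation $\ell_\s$-weights have the ratios $(u-z+s_i)/(u-z)$ at the chosen node and $1$ elsewhere with last component $1$, so after the scalar twist normalising $\la_\ka(u)=1$ their componentwise product recovers $\bla(u)$, and the coproduct-triangularity argument for tensor products of singular vectors is correct as you state it. The one thing to be explicit about is that the two rank-one classifications ($\mathscr Y(\gl_2)$ and $\mathscr Y(\gl_{1|1})$) enter as external inputs --- which is legitimate, and the $\gl_{1|1}$ case is exactly where the condition $\deg P_m=\deg P_\ka$ with its different shape originates, as you observe.
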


A criterion for $L(\bla(u))$ to be finite-dimensional with an arbitrary parity sequence $\s$ can be recursively deduced from Theorem \ref{thm:zhang} via the odd reflections of super Yangian, see \cite{Molev2022odd,Lu2022note}. However, a compact description of such a criterion for an arbitrary parity sequence $\s$ is not available.

Regard $\UglMN$ as a subalgebra of $\YglMN$, then we have $t_{ii}^{(1)}=s_ie_{ii}$. In particular, one assigns a $\glMN$-weight to an $\ell_\s$-weight via the map
\beq\label{varpi1}
\varpi: \big(1+u^{-1}\C[[u^{-1}]]\big)^{\ka}\to \h^*,\ \bm\la(u)\mapsto \varpi(\bm\la(u))\quad \text{ such that } \quad \varpi(\bm\la(u))(e_{ii})=s_i\la_{i,1},
\eeq
where $\la_{i,1}$ is the coefficients of $u^{-1}$ in $\la_i(u)$.

Given a $\YglMN$-module $L$, consider it as a $\glMN$-module and its $\glMN$-weight subspaces $(M)_{\mu}$, see \eqref{eq:uweight-space}.
\begin{lem}\label{lem:wt-change}
We have
\[
t_{ij}^{(r)}(M)_{\mu}\subset (M)_{\mu+\epsilon_i-\epsilon_j},\qquad t_{ij}^{\prime (r)}(M)_{\mu}\subset (M)_{\mu+\epsilon_i-\epsilon_j},
\]
for $1\lle i, j\lle \ka$, and $r\in\bZ_{>0}$.
\end{lem}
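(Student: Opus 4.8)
The plan is to prove the statement purely from the defining commutation relations \eqref{eq:comm-series} together with the identity $t_{ii}^{(1)}=s_ie_{ii}$ coming from the embedding $\mathrm U(\glMN)\hookrightarrow\YglMN$. Fix $v\in(M)_\mu$. The goal is to show $e_{kk}\cdot(t_{ij}^{(r)}v)=(\mu+\epsilon_i-\epsilon_j)(e_{kk})\,(t_{ij}^{(r)}v)$ for every $k$, and similarly with $t_{ij}^{\prime(r)}$ in place of $t_{ij}^{(r)}$.

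First I would extract from \eqref{eq:comm-series} the commutator of $t_{kk}^{(1)}$ with $t_{ij}(u)$. Taking $k=l$ there and comparing the coefficient of $v^{0}$ (i.e. reading off the degree-one part in $1/v$ on the left after clearing the pole, or equivalently specializing appropriately), one gets a relation of the shape $[t_{kk}^{(1)},t_{ij}(u)]=(-1)^{\star}\big(\delta_{ki}t_{kj}(u)-\delta_{kj}t_{ik}(u)\big)$ up to a sign $(-1)^{\star}$ that I expect to simplify because $\star$ involves $|k|+|k|$ which is even. Concretely, $[e_{kk},t_{ij}(u)]=(\delta_{ki}-\delta_{kj})t_{ij}(u)$ after multiplying through by $s_k$ and using $t_{kk}^{(1)}=s_ke_{kk}$; this is exactly the statement that $t_{ij}(u)$ is a weight-$(\epsilon_i-\epsilon_j)$ operator for the adjoint $\h$-action. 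Taking the coefficient of $u^{-r}$ gives $[e_{kk},t_{ij}^{(r)}]=(\delta_{ki}-\delta_{kj})t_{ij}^{(r)}$. Then for $v\in(M)_\mu$,
\[
e_{kk}\,t_{ij}^{(r)}v=t_{ij}^{(r)}e_{kk}v+(\delta_{ki}-\delta_{kj})t_{ij}^{(r)}v=\big(\mu(e_{kk})+\delta_{ki}-\delta_{kj}\big)t_{ij}^{(r)}v,
\]
which is precisely $(\mu+\epsilon_i-\epsilon_j)(e_{kk})$, proving the first inclusion.

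For the second inclusion I would avoid recomputing everything from \eqref{eq:T-expression-comp} and instead argue that $t_{ij}'(u)$ also has adjoint $\h$-weight $\epsilon_i-\epsilon_j$. The cleanest route is relation \eqref{eq:tt'} specialized to the Cartan generators: setting $k=l$ in \eqref{eq:tt'}, dividing by $s_k$, using $t_{kk}^{(1)}=s_ke_{kk}$ and reading the top order, one obtains $[e_{kk},t_{ij}'(v)]=(\delta_{ki}-\delta_{kj})t_{ij}'(v)$ — note the two sum terms in \eqref{eq:tt'} collapse to $\delta_{ki}t_{ij}'(v)$ and $\delta_{kj}t_{ij}'(v)$ once $u$ is sent to extract $e_{kk}$ and the $\delta$'s force $i$ or $j$ to equal $k$. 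Alternatively one can induct on the length $k$ in \eqref{eq:T'-expression}: each factor $t_{a_{l-1}a_l}^\circ(u)$ carries weight $\epsilon_{a_{l-1}}-\epsilon_{a_l}$, the telescoping sum of weights along a monomial $t_{ia_1}^\circ\cdots t_{a_{k-1}j}^\circ$ is $\epsilon_i-\epsilon_j$, so every term of $t_{ij}'(u)$ is a weight-$(\epsilon_i-\epsilon_j)$ operator and hence so is $t_{ij}^{\prime(r)}$. Either way the same computation as above finishes the proof.

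I do not anticipate a genuine obstacle here; the only thing requiring care is the bookkeeping of the sign $(-1)^{|i||j|+|i||k|+|j||k|}$ in \eqref{eq:comm-series} and \eqref{eq:tt'} when $k=l$: one must check that in every surviving term the Kronecker deltas force an index coincidence that makes the sign exponent even (e.g. $\delta_{ki}$ turns $|i||k|$ into $|k|^2$, and $|i||j|+|j||k|=|k|(|i|+|j|)$ becomes $|k|(|k|+|j|)$... these should all reduce to $+1$), so that no stray signs survive. This is the step I would write out most carefully, and it is also the place where the result is genuinely about the \emph{super} Yangian rather than a formal transcription of the $\gl_N$ case — though in the end the weights behave exactly as in the non-super setting because the Cartan is purely even.
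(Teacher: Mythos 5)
Your argument is correct and is essentially the paper's own proof: the paper likewise extracts $[t_{kk}^{(1)},t_{ij}(u)]$ and $[t_{kk}^{(1)},t'_{ij}(u)]$ from \eqref{eq:comm-series} and \eqref{eq:tt'} (its equations \eqref{eq:1st-node-tt}--\eqref{eq:1st-node-tt'}) and then uses the identification $t_{kk}^{(1)}=s_k e_{kk}$. One small correction to your sign heuristic: with the Cartan index the exponent reduces to $|k||k|=|k|$, so the bracket carries the sign $(-1)^{|k|}=s_k$ rather than $+1$, and it is precisely this $s_k$ that cancels against the $s_k$ in $e_{kk}=s_k t_{kk}^{(1)}$ to yield your stated formula $[e_{kk},t_{ij}(u)]=(\delta_{ki}-\delta_{kj})t_{ij}(u)$, and similarly for $t'_{ij}(u)$.
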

\begin{proof}
By \eqref{eq:comm-series} and \eqref{eq:tt'}, we have
\beq\label{eq:1st-node-tt}
[t_{ij}^{(1)},t_{kl}(u)]=(-1)^{|i||j|+|i||k|+|j||k|}\big(\delta_{kj}t_{il}(u)-\delta_{il}t_{kj}(u)\big),
\eeq
\beq\label{eq:1st-node-tt'}
[t_{ij}^{(1)},t_{kl}'(u)]=(-1)^{|i||j|+|i||k|+|j||k|}\big(\delta_{kj}t_{il}'(u)-\delta_{il}t_{kj}'(u)\big).
\eeq
Note that $t_{ij}^{(1)}$ is identified with $s_ie_{ij}$, then the lemma follows from the above equations by a direct computation.
\end{proof}

Thus, we have the following corollary of Theorem \ref{thm:PBW} and Lemma \ref{lem:wt-change}.
\begin{cor}\label{cor:hwrt}
If $L$ is a $\YMN$-module of highest $\ell_\s$-weight $\bla(u)$, then $L$ has a $\glMN$-weight subspace decomposition. Moreover, its highest $\ell_\s$-weight vector has weight $\varpi(\bla(u))$ and the other weight vectors have weights that are strictly smaller (with respect to $\gge$ defined in \S \ref{sec glmn}) than $\varpi(\bla(u))$.\qed
\end{cor}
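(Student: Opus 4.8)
The plan is to deduce Corollary \ref{cor:hwrt} directly from the PBW theorem (Theorem \ref{thm:PBW}) and the weight-shift lemma (Lemma \ref{lem:wt-change}). First I would observe that since $L$ is generated by its highest $\ell_\s$-weight vector $\xi$, the PBW theorem lets us write $L$ as the span of vectors obtained by applying to $\xi$ ordered monomials in the generators $t_{ij}^{(r)}$. Fixing once and for all a total ordering in which all ``lowering'' generators $t_{ij}^{(r)}$ with $i>j$ come first, then the Cartan generators $t_{ii}^{(r)}$, then the ``raising'' generators $t_{ij}^{(r)}$ with $i<j$, any such monomial applied to $\xi$ can be simplified: the raising part annihilates $\xi$ by the highest-weight condition \eqref{eq:highest-Y}, and the Cartan part acts by a scalar on $\xi$ (again by \eqref{eq:highest-Y}). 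Hence $L$ is spanned by vectors of the form $t_{i_1j_1}^{(r_1)}\cdots t_{i_pj_p}^{(r_p)}\xi$ with each $i_a\gge j_a$.

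Next I would invoke Lemma \ref{lem:wt-change}: since $\xi\in (L)_{\varpi(\bla(u))}$ (which itself follows because $t_{ii}^{(1)}=s_ie_{ii}$ and $t_{ii}(u)\xi=\la_i(u)\xi$ forces $e_{ii}\xi=s_i\la_{i,1}\xi$, i.e. the $\glMN$-weight of $\xi$ is exactly $\varpi(\bla(u))$ by the definition \eqref{varpi1} of $\varpi$), each application of a generator $t_{i_aj_a}^{(r_a)}$ shifts the weight by $\epsilon_{i_a}-\epsilon_{j_a}$. Therefore the spanning vector above lies in the weight subspace $(L)_{\varpi(\bla(u))+\sum_a(\epsilon_{i_a}-\epsilon_{j_a})}$. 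This immediately shows that $L$ is a sum of $\glMN$-weight subspaces, and since distinct weight subspaces intersect trivially, this sum is a direct sum decomposition. Moreover, each occurring weight is $\varpi(\bla(u))$ minus a sum of terms $\epsilon_{j_a}-\epsilon_{i_a}$ with $i_a\gge j_a$; each such term is a nonnegative integer combination of the simple roots $\alpha_k=\epsilon_k-\epsilon_{k+1}$, hence lies in $\mathbf{Q}_{\gge 0}$. Thus every weight $\nu$ of $L$ satisfies $\varpi(\bla(u))-\nu\in\mathbf{Q}_{\gge 0}$, i.e. $\nu\lle\varpi(\bla(u))$ with respect to the ordering $\gge$ from \S\ref{sec glmn}; and $\nu=\varpi(\bla(u))$ can occur only when no lowering generators are applied, in which case the vector is a scalar multiple of $\xi$, so all other weight vectors have weight strictly smaller than $\varpi(\bla(u))$.

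There is no serious obstacle here — the statement is a formal consequence of the two cited results — but the one point requiring a little care is the reduction of an arbitrary PBW monomial to one involving only lowering generators. This needs the ordering convention to be chosen so that raising and Cartan generators sit to the right of lowering generators, together with the observation that $t_{ii}(u)\xi=\la_i(u)\xi$ gives $t_{ii}^{(r)}\xi=\la_{i,r}\xi$ for all $r$, so that the whole Cartan block acts diagonally on $\xi$; the PBW theorem's caveat about odd generators appearing only to the first power does not interfere with this argument. One should also note the mild subtlety that $\YMN$-modules are only considered up to parity shift, but this does not affect the $\glMN$-weight decomposition. With these remarks the corollary follows. \qed
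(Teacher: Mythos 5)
Your proposal is correct and is exactly the argument the paper intends: the corollary is stated with no written proof as an immediate consequence of Theorem \ref{thm:PBW} and Lemma \ref{lem:wt-change}, and your reduction of PBW monomials to lowering monomials applied to $\xi$, followed by the weight-shift lemma, is the standard way to make that implicit argument explicit. No gaps.
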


Let $\YMNp$ be the left ideal of $\YMN$ generated by all the coefficients of $t_{ij}(u)$ with $1\lle i<j\lle \ka$. We write $X\doteq X'$ if $X-X'\in \YMNp$. Clearly, if $\xi$ is a highest $\ell_\s$-weight vector of $\YMN$ and $X\doteq X'$, then $X\xi=X'\xi$.

\begin{prop}[\cite{Ragoucy2007analytical,Belliard2009nested}]\label{prop:t'-l-weight}
If $\xi$ is a highest $\ell_\s$-weight vector of highest $\ell_\s$-weight $\bla(u)$ in a representation $L$ of $\YMN$, then
\beq\label{eq:t'-annih}
\begin{split}
   & t'_{ij}(u)\xi=0,  \qquad \quad\quad \quad 1\lle i<j\lle \ka,\\
   & t'_{ii}(u)\xi=\la_i'(u)\xi,\qquad\quad  1\lle i\lle \ka,
\end{split}
\eeq
for certain $\la'_i(u)\in 1+u^{-1}\C[[u^{-1}]]$. (The formal series $\la'_i(u)$ will be determined later.)
\end{prop}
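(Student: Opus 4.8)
The plan is to show that the left ideal $\YMNp$ is invariant under left multiplication by coefficients of $t'_{ij}(u)$ for $i>j$ and by $t'_{ii}(u)-t'_{jj}(u)$-type corrections, and then to read off the required annihilation and eigenvalue properties from the commutation relations. Concretely, I would first establish the \emph{triangularity} statement: for $i<j$, each coefficient of $t'_{ij}(u)$ lies in $\YMNp$. This should follow from the explicit expansion \eqref{eq:T'-expression} together with Lemma~\ref{lem:wt-change}: every summand $t^\circ_{ia_1}(u)t^\circ_{a_1a_2}(u)\cdots t^\circ_{a_{k-1}j}(u)$ shifts weights by $\epsilon_i-\epsilon_j$, so for the product to have a chance of being outside $\YMNp$ the chain of indices would need to ``descend'' overall from $i$ to $j$; but since $i<j$ the very last factor that first strictly increases the running index must be some $t^\circ_{ab}(u)$ with $a<b$, which already lies in $\YMNp$, and $\YMNp$ is a left ideal so the whole monomial does. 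Making this precise is the main obstacle: one has to argue carefully that in any monomial $t_{ia_1}^{\circ}\cdots t_{a_{k-1}j}^{\circ}$ with $i<j$ \emph{some} factor $t^\circ_{ab}$ has $a<b$ — equivalently, that one cannot write a strictly-increasing ``endpoint pair'' $(i,j)$ as a concatenation of non-increasing steps — and that the super-sign bookkeeping in \eqref{eq:T'-expression} does not obstruct membership in the (sign-independent) left ideal $\YMNp$. This gives the first line of \eqref{eq:t'-annih}, namely $t'_{ij}(u)\xi \doteq 0$ hence $t'_{ij}(u)\xi=0$ for $i<j$.

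Next, for the diagonal entries, I would use relation \eqref{eq:tt'} (or equivalently \eqref{eq:T'RT}) with $k=l=i$ and general $i<j$ (or the appropriate index choice) to express $[t_{ij}(u), t'_{ii}(v)]$ as a combination of terms each of which, after applying the already-proven triangularity of $t'$ and the highest-weight conditions \eqref{eq:highest-Y}, lands in $\YMNp$. More directly: apply \eqref{eq:tt'} to deduce that $t'_{ii}(v)$ commutes with the Cartan generators $t_{jj}(u)$ modulo $\YMNp$, and that $t_{jk}(u)t'_{ii}(v)\doteq t'_{ii}(v)t_{jk}(u) + (\text{lower})$ for $j<k$, so that $t'_{ii}(v)\xi$ is again a highest $\ell_\s$-weight vector of the \emph{same} $\glMN$-weight $\varpi(\bla(u))$ as $\xi$ (using Lemma~\ref{lem:wt-change}, $t_{ii}^{\prime(r)}$ preserves weight spaces). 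Since by Corollary~\ref{cor:hwrt} the $\varpi(\bla(u))$-weight space of the cyclic module generated by $\xi$ is one-dimensional (spanned by $\xi$, as all other PBW monomials strictly lower the weight), it follows that $t'_{ii}(v)\xi = \la'_i(v)\xi$ for scalars $\la'_i(v)$, and the normalization $t'_{ii}(v) = \delta_{ii} + O(v^{-1}) = 1 + O(v^{-1})$ forces $\la'_i(v)\in 1 + v^{-1}\C[[v^{-1}]]$.

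An alternative, perhaps cleaner route for the whole proposition is purely algebraic: work inside $\YMN$ and show $T^{-1}(u)$ is ``upper-triangular modulo $\YMNp$'' by a Gauss-decomposition argument. One writes $T(u)=F(u)D(u)E(u)$ with $E(u)$ unipotent upper-triangular (whose off-diagonal entries generate $\YMNp$ up to lower-order corrections), $F(u)$ unipotent lower-triangular, $D(u)$ diagonal; then $T^{-1}(u)=E(u)^{-1}D(u)^{-1}F(u)^{-1}$, and one checks that $E(u)^{-1}$ is still upper-triangular with entries in $\YMNp$, that $D(u)^{-1}$ acts diagonally on $\xi$ with eigenvalues the inverses of those of $D(u)$ — here $D(u)$ acting on $\xi$ is governed by the quantum-minor/Gelfand–Tsetlin description, giving an explicit formula for $\la'_i(u)$ — and that $F(u)^{-1}$ fixes $\xi$ modulo terms that strictly lower the weight and hence contribute nothing after pairing with the top weight space. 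I expect the triangularity bookkeeping (the combinatorial claim that a monomial with endpoints $i<j$ must contain an ``ascending'' factor, with all super-signs under control) to be the genuinely delicate point; the eigenvalue computation itself is then a formal consequence, and the precise formulas for $\la'_i(u)$ are deferred, consistent with the parenthetical remark in the statement.
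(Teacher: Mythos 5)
Your handling of the diagonal entries is fine and is essentially the paper's own argument: by Lemma \ref{lem:wt-change} the coefficients of $t'_{ii}(u)$ preserve the weight space $(L)_{\varpi(\bla(u))}$, which is one-dimensional by Corollary \ref{cor:hwrt}, so $\xi$ is an eigenvector, and the constant term of $t'_{ii}(u)$ forces $\la'_i(u)\in 1+u^{-1}\C[[u^{-1}]]$.

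The off-diagonal part, however, contains a genuine gap. You claim that each coefficient of $t'_{ij}(u)$, $i<j$, lies in $\YMNp$ because every monomial $t^{\circ}_{ia_1}(u)\cdots t^{\circ}_{a_{k-1}j}(u)$ in \eqref{eq:T'-expression} contains a factor $t^{\circ}_{ab}(u)$ with $a<b$, ``which already lies in $\YMNp$, and $\YMNp$ is a left ideal so the whole monomial does.'' This inference is invalid: a left ideal is closed under multiplication on the \emph{left} only, and the ascending factor need not be the rightmost one (already $t^{\circ}_{ij}(u)\,t^{\circ}_{jj}(u)$, or $t^{\circ}_{ik}(u)\,t^{\circ}_{kj}(u)$ with $k>j$, has its only ascending factor on the left). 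So membership in $\YMNp$ does not follow from the mere existence of an ascending factor; one would have to commute it to the right and control the correction terms coming from \eqref{eq:comm-series}. The delicate point is exactly this left/right issue, not the (trivial) observation that a chain from $i$ to $j>i$ must ascend somewhere, nor the super-signs. The conclusion you want is nevertheless true, and the cleanest repair is to discard the ideal-membership claim altogether and apply the same weight argument you use for the diagonal entries: by Lemma \ref{lem:wt-change}, $t_{ij}^{\prime(r)}\xi$ lies in the weight space of weight $\varpi(\bla(u))+\epsilon_i-\epsilon_j$, which is neither equal to nor strictly smaller than $\varpi(\bla(u))$ and hence vanishes by Corollary \ref{cor:hwrt}; this yields $t'_{ij}(u)\xi=0$ directly. (The paper instead proves the vanishing by induction on the order of the coefficient, using the specializations \eqref{prop:kill-1}--\eqref{prop:kill-2} of \eqref{eq:tt'} and the fact that $\xi$ is an eigenvector of $t_{jj}^{(1)}$ and $t_{jj}^{(2)}$; note that its argument never asserts $t_{ij}^{\prime(r)}\in\YMNp$.) Your alternative Gauss-decomposition sketch suffers from the same triangularity subtlety and is not what the paper does here; the Gauss decomposition only enters later, in Proposition \ref{prop:highest-weight-inver}, where the explicit formula for $\la'_i(u)$ is derived.
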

\begin{proof}
Let $1\lle i<j\lle \ka$. By \eqref{eq:tt'}, for any $1\lle k\lle \ka$, we have
\[
(-1)^{|i||j|+|i||k|+|j||k|}[t_{kj}(u),t_{ik}'(v)]\doteq -\frac{1}{u-v}\sum_{s=j}^{\ka}t_{is}'(v)t_{sj}(u).
\]
Expanding $(u-v)^{-1}$ as $\sum_{r=0}^\infty v^ru^{-r-1}$ and take the coefficients of $u^{-1}v^{-p}$ and $u^{-2}v^{-p}$, we have
\beq\label{prop:kill-1}
(-1)^{|i||j|+|i||k|+|j||k|}[t_{kj}^{(1)},t_{ik}^{\prime (p)}]\doteq -t_{ij}^{\prime (p)},
\eeq
\beq\label{prop:kill-2}
(-1)^{|i||j|+|i||k|+|j||k|}[t_{kj}^{(2)},t_{ik}^{\prime (p)}]\doteq -t_{ij}^{\prime (p+1)}-t_{ij}^{\prime (p)}t_{jj}^{(1)}-\sum_{s=j+1}^\ka t_{is}^{\prime (p)}t_{sj}^{(1)}.
\eeq
We prove $t_{ij}^{\prime(p)}\xi=0$ for all $1\lle i<j\lle \ka$ by induction on $p$.

The base case is clear because it is immediate from \eqref{eq:T-expression-comp} that $t_{ij}^{\prime(1)}=-t_{ij}^{(1)}$. Suppose now that $t_{ij}^{\prime(p)}\xi=0$. It follows from \eqref{prop:kill-1} and the induction hypothesis that
\beq\label{prop:kill-3}
t_{is}^{\prime (p)}t_{sj}^{(1)}\xi=0,\qquad j<s\lle \ka.
\eeq
Note that $\xi$ is an eigenvector of $t_{jj}^{(1)}$ and $t_{jj}^{(2)}$, we have
\beq\label{prop:kill-4}
[t_{jj}^{(2)},t_{ij}^{\prime(p)}]\xi=0,\qquad  t_{ij}^{\prime (p)}t_{jj}^{(1)}\xi=0.
\eeq
Setting $k=j$ in \eqref{prop:kill-2} and applying \eqref{prop:kill-2} to $\xi$, we immediately obtain $t_{ij}^{\prime(p+1)}\xi=0$ from \eqref{prop:kill-3} and \eqref{prop:kill-4}. Thus by induction, we have $t_{ij}^{(r)}\xi=0$ for all $1\lle i<j\lle \ka$ and $r\in \Z_{>0}$.

Since $\glMN$ can be regarded as a subalgebra of $\YMN$, the $\YMN$-module $L$ is hence a $\glMN$-module and has the weight decomposition. The vector $\xi$ has the weight $\varpi(\bla)$. By Corollary \ref{cor:hwrt}, $(L)_{\varpi(\bla)}$ is of dimension $1$ and all other weights appearing in $L$ are smaller than $\varpi(\bla)$. It follows from Lemma \ref{lem:wt-change} that $t_{jj}'(u)$ preserves $(L)_{\varpi(\bla)}$ and hence preserves $\xi$. Therefore, $t_{jj}'(u)\xi=\la_i'(u)\xi$ for some $\la_i'(u)\in 1+u^{-1}\C[[u^{-1}]]$.
\end{proof}

By the same strategy, we have the following lemma.

\begin{lem}\label{lem:tia-kill}
Let $\xi$ be a highest $\ell_\s$-weight vector. If $1\lle i <j\lle \ka$ and $1\lle c\lle a\lle \ka$,  then we have $t_{ia}(u)t_{cj}'(v)\xi= 0$. Similarly, if $1\lle i\lle \ka$ and $1\lle c <a \lle \ka$, then $t_{ia}(u)t_{ci}'(v)\xi =0$.
\end{lem}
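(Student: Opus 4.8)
The strategy mirrors the proof of Proposition~\ref{prop:t'-l-weight}: I will extract commutator identities from \eqref{eq:tt'} that, upon applying a suitable raising-type generator $t_{kj}^{(r)}$ or $t_{ka}^{(r)}$, relate the products $t_{ia}(u)t_{cj}'(v)\xi$ to products that are known to vanish, and then close the argument by an induction. First I would record the specialization of \eqref{eq:tt'} needed for the first assertion: taking $(u-v)[t_{ia}(u), t_{cj}'(v)]$ and observing that, since $c \le a$, the Kronecker deltas $\delta_{jc}$ and $\delta_{ia}$ are both zero under the hypotheses $i<j$ and $c\le a$ (indeed $i<j$ forces $i\ne a$ only if $a\le i$, so I should instead route through an auxiliary index), giving $[t_{ia}(u),t_{cj}'(v)]\doteq 0$ modulo terms that can be shifted into $\YMNp$ or handled inductively. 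More precisely, I would pick an index $k$ with $k\ge a$ (so $t_{ka}^{(1)}$ or $t_{kj}^{(2)}$ acts conveniently) and use \eqref{eq:tt'} to express $t_{ia}(u)t_{cj}'(v)$ as a commutator of the target of $t_{kj}^{(p)}$-type applied to $t_{ia}^\circ$ or $t_{ci}'$, then invoke Proposition~\ref{prop:t'-l-weight} (which gives $t_{cj}'(v)\xi=0$ for $c<j$) together with the highest-$\ell_\s$-weight relations $t_{ij}(u)\xi=0$ for $i<j$.

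Concretely, for the first statement I would argue by downward induction on $a$ (or on $a-c$): when $a=j$ the claim $t_{ij}(u)t_{cj}'(v)\xi$ reduces via \eqref{eq:tt'} and \eqref{eq:1st-node-tt} to a combination of $t_{ij}(u)\xi$, $t_{cj}'(v)\xi$ and strictly ``higher'' products, all vanishing by Proposition~\ref{prop:t'-l-weight}; for general $a$, commute a lowering generator $t_{a,a+1}^{(1)}$ (or the appropriate $t_{aj}^{(r)}$) past $t_{ia}(u)$ using \eqref{eq:1st-node-tt} to produce $t_{i,a+1}(u)t_{cj}'(v)\xi$ plus lower-order error terms, and feed this into the inductive hypothesis. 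The second assertion, $t_{ia}(u)t_{ci}'(v)\xi=0$ for $c<a$, is handled symmetrically: here I would induct on $a$, using the $\delta_{il}$-branch of \eqref{eq:tt'} with $l=i$, which produces $\sum_s t_{cs}'(v)t_{si}(u)$, and then observe that $t_{si}(u)\xi=0$ for $s<i$ while for $s\ge i$ one uses Proposition~\ref{prop:t'-l-weight} and the already-established cases of the first part of the lemma.

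The main obstacle I anticipate is bookkeeping the sign factors $(-1)^{|i||j|+\cdots}$ and $s_i$ coming from the arbitrary parity sequence $\s$ while simultaneously tracking which terms genuinely lie in $\YMNp$ (hence annihilate $\xi$) versus which must be carried through the induction; in the non-super case of \cite{Chen2014twisted} this is routine, but here each commutator identity acquires parity-dependent signs that must be consistent across the two induction variables. A secondary subtlety is ensuring the induction is well-founded: the error terms produced when commuting $t_{kj}^{(r)}$ or $t_{aj}^{(r)}$ past $t_{ia}(u)$ must be strictly ``smaller'' in the chosen order (larger $a$, or $t_{cj}'$ with larger $j$, for which Proposition~\ref{prop:t'-l-weight} already applies), so I would set up the ordering on pairs $(a,\,\text{type})$ carefully at the outset and verify each step decreases it. Once the sign conventions are fixed consistently with \eqref{prop:kill-1}–\eqref{prop:kill-4}, the computation should be a direct, if lengthy, iteration of the same moves used in Proposition~\ref{prop:t'-l-weight}.
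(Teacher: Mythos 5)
Your toolkit (\eqref{eq:tt'}, Proposition \ref{prop:t'-l-weight}, the highest-weight relations) is the right one, but the plan has a genuine gap at exactly the case the lemma is really about, namely $c=a$ with $a\gge j$. Your opening observation that for $(u-v)[t_{ia}(u),t_{cj}'(v)]$ both Kronecker deltas vanish ``since $i<j$ and $c\lle a$'' is incorrect on the diagonal: the deltas occurring are $\delta_{ca}$ and $\delta_{ij}$, and $\delta_{ca}=1$ when $c=a$. The paper treats $c<a$ exactly by this vanishing-commutator observation (no induction needed), and then handles $c=a$, $a\gge j$ by a different idea: from \eqref{eq:tt'} one gets $s_a(u-v)[t_{ia}(u),t_{aj}'(v)]\xi=\sum_{c}t_{ic}(u)t_{cj}'(v)\xi$, whose right-hand side is \emph{independent of $a$}; specializing $a=j$ and using Proposition \ref{prop:t'-l-weight} shows this sum is zero, hence the commutator vanishes for every $a\gge j$, and one reorders using $t_{ia}(u)\xi=0$. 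Your induction scheme does not supply a substitute for this step: with base case $a=j$ and the step that raises $a$ by commuting with $t^{(1)}_{a,a+1}$ (via \eqref{eq:1st-node-tt}, \eqref{eq:1st-node-tt'}), the new diagonal case $c=a+1$ forces you to control $t_{ia}(u)t'_{a+1,j}(v)\xi$, which has $c>a$, lies outside the scope of the lemma, and is not known to vanish; producing $t'_{a+1,j}$ from $t'_{aj}$ instead would require commuting with a lowering generator that does not annihilate $\xi$. So as written the induction cannot close, and the independence-of-$a$ trick (or an equivalent) is the missing idea.

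For the second statement your sketch is closer to the paper's route (reduce to $\sum_{s}t'_{cs}(v)t_{sa}(u)\xi$ via the $\delta_{il}$-branch of \eqref{eq:tt'}, then use Proposition \ref{prop:t'-l-weight} together with the first part), but note two points: the sum produced is $\sum_{s}t'_{cs}(v)t_{sa}(u)$, not $\sum_s t'_{cs}(v)t_{si}(u)$, and the terms with $s>a$ are not killed term by term; after reordering each such term you are left with a multiple of the same sum, so a short closing argument is still required. Those are repairable details, whereas the diagonal case of the first statement is a substantive gap in the proposal.
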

\begin{proof}
First we consider the case when $a>c$. Then by \eqref{eq:tt'}, we have
\beq\label{eq:notsure-001}
[t_{ia}(u),t_{cj}'(v)]\xi=0.
\eeq
If $c < j$, it is clear from Proposition \ref{prop:t'-l-weight} that $t_{ia}(u)t_{cj}'(v)\xi= 0$. If $c\gge j$, then $a>c \gge j >i$, $t_{cj}'(v)t_{ia}(u)\xi=0$. It follows from \eqref{eq:notsure-001} that $t_{ia}(u)t_{cj}'(v)\xi= 0$.

Then we consider the case when $a=c$. If $a <j$, then $t_{ia}(u)t_{aj}'(v)\xi= 0$ by Proposition \ref{prop:t'-l-weight}. If $a\gge j$, by \eqref{eq:tt'}, we have
$$
s_a(u-v)[t_{ia}(u),t_{aj}'(v)]\xi=\sum_{c=1}^\ka t_{ic}(u)t_{cj}'(v)\xi.
$$
Note that the right hand side is independent of $a$. By setting $a=j$ and using Proposition \ref{prop:t'-l-weight}, we find that $$\sum_{c=1}^\ka t_{ic}(u)t_{cj}'(v)\xi=0.$$Hence we always have $[t_{ia}(u),t_{aj}'(v)]\xi=0$. Then again by Proposition \ref{prop:t'-l-weight},
\[
t_{ia}(u)t_{aj}'(v)\xi =[t_{ia}(u),t_{aj}'(v)]\xi + (-1)^{(|i|+|a|)(|a|+|j|)}t_{aj}'(v)t_{ia}(u)\xi =0,
\]
as $i<j\lle a$. 

Then we prove the second statement. If $c<i$, then the statement follows from Proposition \ref{prop:t'-l-weight}. Now suppose that $c\gge i$. By \eqref{eq:tt'} and the first statement, we have
\[
(-1)^{|i||a|+|i||c|+|a||c|}(u-v)[t_{ia}(u),t_{ci}'(v)]\xi=-\sum_{k=1}^\ka t_{ck}'(v)t_{ka}(u)\xi=0.
\]
Since $a>c\gge i$, we have $t'_{ci}(v)t_{ia}(u)\xi=0$. It follows from the above equation that $t_{ia}(u)t_{ci}'(v)\xi=0$, completing the proof.
\end{proof}

The following proposition was proved in \cite{Ragoucy2007analytical,Belliard2009nested} for the standard parity sequence. The strategy in \cite{Ragoucy2007analytical,Belliard2009nested} does not work in general for arbitrary parity sequences.

Let $\rho_k=\sum_{a=k}^\ka s_a$ for $1\lle k\lle \ka$. By convention, $\rho_{\ka+1}=0$.
\begin{prop}\label{prop:highest-weight-inver}
Let $\xi$ be a highest $\ell_\s$-weight vector of highest $\ell_\s$-weight $\bla(u)$. Suppose $\la_i'(u)$ is defined as in \eqref{eq:t'-annih}, then
\beq\label{eq:la-prime}
\la_i'(u)=\frac{1}{\la_{i}(u+\rho_{i+1})}\prod_{k=i+1}^\ka \frac{\la_k(u+\rho_k)}{\la_{k}(u+\rho_{k+1})}.
\eeq
\end{prop}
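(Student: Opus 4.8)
The plan is to turn the relations $T(u)T^{-1}(u)=T^{-1}(u)T(u)=1$ and the mixed commutation relation \eqref{eq:tt'} into a first-order difference equation for the tuple $(\la_i'(u))$, whose unique solution with the right boundary behaviour is \eqref{eq:la-prime}. All computations take place inside the one-dimensional weight space $(L)_{\varpi(\bla)}=\C\xi$ (Corollary~\ref{cor:hwrt}, Lemma~\ref{lem:wt-change}), so everything reduces to identities of scalar formal power series in $u^{-1},v^{-1}$.

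\textbf{Step 1: recursions for auxiliary series.} For $1\lle i\lle\ka$ introduce scalars $A_i(u,v),B_i(u,v)$ (each with constant term $1$) by
\[
\Big(\sum_{s=1}^\ka t_{is}(u)\,t_{si}'(v)\Big)\xi=A_i(u,v)\,\xi,\qquad
\Big(\sum_{s=1}^\ka t_{is}'(v)\,t_{si}(u)\Big)\xi=B_i(u,v)\,\xi .
\]
These are well defined by Lemma~\ref{lem:wt-change}, and setting $u=v$ gives $A_i(v,v)=B_i(v,v)=1$, since the two sums are then the $(i,i)$-entries of $T(v)T^{-1}(v)$ and $T^{-1}(v)T(v)$. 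In the sum defining $A_i$ the terms $s<i$ vanish because $t_{si}'(v)\xi=0$ by Proposition~\ref{prop:t'-l-weight}; the term $s=i$ gives $\la_i(u)\la_i'(v)$; and for $s>i$ one has $t_{is}(u)\xi=0$, so $t_{is}(u)t_{si}'(v)\xi=[t_{is}(u),t_{si}'(v)]\xi$, and \eqref{eq:tt'} (after one supercommutator swap) rewrites this commutator in terms of $A_i$ and $B_s$ with a sign factor $s_s$. Collecting terms, with $\rho_{i+1}=\sum_{s>i}s_s$,
\[
(u-v-\rho_{i+1})\,A_i(u,v)=(u-v)\,\la_i(u)\la_i'(v)-\sum_{s>i}s_s\,B_s(u,v),
\]
and the mirror computation for $B_i$ (using $t_{si}(u)\xi=0$ for $s<i$, $t_{is}'(v)\xi=0$ for $s>i$, and \eqref{eq:tt'} again) gives the same relation with $A$ and $B$ interchanged on the right.

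\textbf{Step 2: descending induction and solving.} Subtracting the two relations gives $(u-v-\rho_{i+1})(A_i-B_i)=\sum_{s>i}s_s(A_s-B_s)$. Only $s=\ka$ survives in both sums defining $A_\ka$ and $B_\ka$, so $A_\ka=B_\ka=\la_\ka(u)\la_\ka'(v)$; together with $A_\ka(v,v)=1$ this recovers the boundary value $\la_\ka'(v)=\la_\ka(v)^{-1}$, and descending induction then forces $A_i=B_i$ for every $i$. Writing $C_i:=A_i=B_i$, the relation becomes $(u-v-\rho_{i+1})C_i=(u-v)\la_i(u)\la_i'(v)-\sum_{s>i}s_sC_s$; subtracting this at $i$ and at $i+1$ and using $\rho_{i+1}=\rho_{i+2}+s_{i+1}$ collapses the sums into a single term, yielding
\[
(u-v-\rho_{i+1})\big(C_i(u,v)-C_{i+1}(u,v)\big)=(u-v)\big(\la_i(u)\la_i'(v)-\la_{i+1}(u)\la_{i+1}'(v)\big),\qquad 1\lle i\lle\ka-1 .
\]
Substituting $u=v+\rho_{i+1}$ (a valid substitution of formal power series in $v^{-1}$) kills the left-hand side; if $\rho_{i+1}\ne0$ this at once gives $\la_i(v+\rho_{i+1})\la_i'(v)=\la_{i+1}(v+\rho_{i+1})\la_{i+1}'(v)$, while if $\rho_{i+1}=0$ one first cancels the factor $u-v$ and then sets $u=v$, using $C_i(v,v)=1$, to obtain the same identity. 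Feeding $\la_\ka'(v)=\la_\ka(v)^{-1}$ into this recursion and inducting downward on $i$ telescopes it into \eqref{eq:la-prime}.

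\textbf{Main obstacle.} The delicate point — and the reason the computations of \cite{Ragoucy2007analytical,Belliard2009nested} do not extend to arbitrary $\s$ — is the sign bookkeeping: each use of \eqref{eq:tt'} and each supercommutator rearrangement produces factors $(-1)^{|i|}=s_i$ or $(-1)^{|i|+|s|}=s_is_s$, and one must check that they conspire so that the shift accumulating in the recursion is $\rho_{i+1}=\sum_{a>i}s_a$ rather than $\ka-i$. Concretely, the main work is to verify that exactly the coefficient $s_s$ (and no stray sign) survives in each sum, and that $\sum_{s>i}-\sum_{s>i+1}$ reproduces $\rho_{i+1}-\rho_{i+2}=s_{i+1}$.
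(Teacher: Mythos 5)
Your proposal is correct, but it follows a genuinely different route from the paper's. You stay entirely inside the one-dimensional top weight space and extract a scalar difference equation directly from the mixed relation \eqref{eq:tt'}: the eigenvalues $A_i,B_i$ of $\sum_s t_{is}(u)t'_{si}(v)$ and $\sum_s t'_{is}(v)t_{si}(u)$ on $\xi$ satisfy $(u-v-\rho_{i+1})A_i=(u-v)\la_i(u)\la_i'(v)-\sum_{s>i}s_sB_s$ and its mirror (I checked the sign bookkeeping: the surviving factor is exactly $s_s=(-1)^{|s|}$ in both cases, including after the supercommutator swap needed for $B_i$), and the boundary data $A_i(v,v)=B_i(v,v)=1$ and $A_\ka=B_\ka=\la_\ka(u)\la_\ka'(v)$ then pin down the telescoped answer \eqref{eq:la-prime}; your handling of the degenerate case $\rho_{i+1}=0$ by cancelling $u-v$ and setting $u=v$ is also fine. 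The paper instead goes through the Gauss decomposition: it identifies $t'_{ij}(u)$ (up to signs) with generators of the super Yangian $\mathscr Y_{\bs}$ for the reversed parity sequence and invokes the analogue of \cite[Theorem 4.2]{Liashyk2019new}, whose Cartan part \eqref{eq:cartan-current-new} yields the eigenvalue of $t'_{ii}(u)$ at once. Your argument is more elementary and self-contained — it avoids the reversal isomorphism and the Gauss-coordinate comparison, which the paper only sketches — whereas the paper's route produces extra structural information (formulas for all primed Gauss coordinates) that is reused conceptually elsewhere. Two small points you should make explicit to be airtight: multiplication by $u-v-\rho_{i+1}$ is injective on $\C[[u^{-1},v^{-1}]]$ (needed both for the descending induction giving $A_i=B_i$ and for cancelling $u-v$ when $\rho_{i+1}=0$), and the substitution $u=v+\rho_{i+1}$ is a well-defined ring homomorphism on the space of series involved; both are routine, and the paper itself performs the same kind of specialization, e.g. in the proof of Theorem \ref{thmnontrivial}.
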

\begin{proof}
    For a given parity sequence $\s=(s_1,s_2,\cdots,s_\ka)\in S_{m|n}$, set $\bm{\fks}=(s_{\ka},s_{\ka-1},\cdots,s_2,s_1)$. To distinguish generating series for super Yangians of different parity sequences, we shall write $t_{ij}^{\s}(u)$, $t_{ij}^{\bm \fks}(u)$, etc. It is also convenient to identify an operator $\sum_{i,j=1}^\ka (-1)^{|i||j|+|j|}a_{ij}\otimes E_{ij}$ in $\YMN[[u^{-1}]]\otimes \End(V)$ with the matrix $(a_{ij})_{i,j=1}^\ka$. Then the extra sign ensures that the product of two matrices can still be calculated in the usual way.

Recall the Gauss decomposition of super Yangian $\YMN$, see \cite{Gow2007gauss,Peng2016parabolic}. Let $\sfe_{ij}^\s(u)$, $\sff_{ji}^\s(u)$, $\sfd_{k}^\s(u)$, where $1\lle i<j\lle \ka$ and $1\lle k\lle \ka$, be defined by the Gauss decomposition,
\begin{align*}
t_{ii}^\s(u)&=\sfd_i^\s(u)+\sum_{k<i}\sff_{ik}^\s(u)\sfd_k^\s(u)\sfe_{ki}^\s(u),\\
t_{ij}^\s(u)&=\sfd_i^\s(u)\sfe_{ij}^\s(u)+\sum_{k<i}\sff_{ik}^\s(u)\sfd_k^\s(u)\sfe_{kj}^\s(u),\\
t_{ji}^\s(u)&=\sff_{ji}^\s(u)\sfd_i^\s(u)+\sum_{k<i}\sff_{jk}^\s(u)\sfd_k^\s(u)\sfe_{ki}^\s(u).
\end{align*}
Similarly, one can define $\sfe_{ij}^\bs(u)$, $\sff_{ji}^\bs(u)$, $\sfd_{k}^\bs(u)$.

Let $t_{ij}^{\prime\s}(u)$ correspond to $t_{ij}'(u)$ in $\YMN$. Define similarly $\sfe_{ij}^{\prime\s}(u)$, $\sff_{ji}^{\prime\s}(u)$, $\sfd_{k}^{\prime\s}(u):=\sfd_{k}^\s(u)^{-1}$, for $1\lle i<j\lle \ka$ and $1\lle k\lle \ka$, by
\begin{align*}
&t_{ii}^{\prime\s}(u)=\sfd_i^\s(u)^{-1}+\sum_{k>i}\sfe_{ik}^{\prime\s}(u)\sfd_k^\s(u)^{-1}\sff_{ki}^{\prime\s}(u),\\
&t_{ij}^{\prime\s}(u)=\sfe_{ij}^{\prime\s}(u)\sfd_j^\s(u)^{-1}+\sum_{k>j}\sfe_{ik}^{\prime\s}(u)\sfd_k^\s(u)^{-1}\sff_{kj}^{\prime\s}(u),\\
&t_{ji}^{\prime\s}(u)=\sfd_j^{\s}(u)^{-1}\sff_{ji}^{\prime\s}(u)+\sum_{k>j}\sfe_{jk}^{\prime\s}(u)\sfd_k^\s(u)^{-1}\sff_{ki}^{\prime\s}(u).
\end{align*}
Then
\beq\label{eq:app:e-f-inv}
\begin{split}
&\sfe_{ij}^{\prime\s}(u)=\sum_{i=i_0<i_1<\cdots<i_r=j}(-1)^r \sfe_{i_0i_1}^\s(u)\sfe_{i_1i_2}^{\s}(u)\cdots  \sfe_{i_{r-1}i_r}^{\s}(u),\\
&\sff_{ji}^{\prime\s}(u)=\sum_{i=i_0<i_1<\cdots<i_r=j}(-1)^r \sff_{i_{r}i_{r-1}}^\s(u)\sff_{i_{r-1}i_{r-2}}^{\s}(u)\cdots  \sff_{i_{1}i_0}^{\s}(u).
\end{split}
\eeq

There exists an isomorphism between $\mathscr{Y}_{\bs}$  and $\YMN$ given by the map
\beq\label{eq:identifi}
t_{\ka+1-j,\ka+1-i}^\bs(u)\to (-1)^{|i||j|+|j|}t_{ij}^{\prime\s}(u),\quad 1\lle i,j\lle \ka,
\eeq
where the parities $|i|$ and $|j|$ are determined by the parity sequence $\s$.

We shall identify $t_{ij}^\bs(u)$ with $t_{\ka+1-j,\ka+1-i}^{\prime\s}(u)$ with certain signs as in \eqref{eq:identifi}.
With this identification, when $\ka=3$, one has
\[
T^{\bs}(u)=\begin{pmatrix}
(\sfd_3^{\s})^{-1} & \sfe_{23}^{\prime\s}(\sfd_3^{\s})^{-1} & \sfe_{13}^{\prime\s}(\sfd_3^{\s})^{-1}\\
(\sfd_3^{\s})^{-1}\sff_{32}^{\prime\s} & (\sfd_2^{\s})^{-1}+\sfe_{23}^{\prime\s}(\sfd_3^{\s})^{-1}\sff_{32}^{\prime\s} & \sfe_{12}^{\prime\s}(\sfd_2^{\s})^{-1}+\sfe_{13}^{\prime\s}(\sfd_3^{\s})^{-1}\sff_{32}^{\prime\s}\\
(\sfd_3^{\s})^{-1}\sff_{31}^{\prime\s} & (\sfd_2^{\s})^{-1}\sff_{21}^{\prime\s}+\sfe_{23}^{\prime\s}(\sfd_3^{\s})^{-1}\sff_{31}^{\prime\s} & (\sfd_1^{\s})^{-1}+\sfe_{12}^{\prime\s}(\sfd_2^{\s})^{-1}\sff_{21}^{\prime\s}+\sfe_{13}^{\prime\s}(\sfd_3^{\s})^{-1}\sff_{31}^{\prime\s}
\end{pmatrix},
\]
cf. \cite[equaltion (B.4)]{Liashyk2019new}. Here we drop the spectral parameter $u$ and the signs for brevity.

Under the identification above, one proves similarly to  \cite[Theorem 4.2]{Liashyk2019new} for the super Yangian $\YMN$ that
\begin{align}
&\sfe_{\ka+1-j,\ka+1-i}^\bs(u)=(-1)^{|i||j|+|j|}\sfe_{ij}^{\prime\s}(u+\rho_{j}), & 1\lle i<j\lle \ka,\nonumber\\
&\sff_{\ka+1-i,\ka+1-j}^\bs(u)\, =\, (-1)^{|i||j|+|i|}\sff_{ji}^{\prime\s}(u+\rho_{j}),& 1\lle i<j\lle \ka,\nonumber\\
&\sfd_{\ka+1-k}^{\bs}(u) = \frac{1}{\sfd_{k}^{\s}(u+\rho_{k+1})}\prod_{a=k+1}^\ka\frac{\sfd_{a}^{\s}(u+\rho_{a})}{\sfd_{a}^{\s}(u+\rho_{a+1})}, & 1\lle k\lle \ka.\label{eq:cartan-current-new}
\end{align}
Now we are ready to prove Proposition \ref{prop:highest-weight-inver}.

It is well known that for a highest $\ell_\s$-weight vector $v$ of highest weight $\bla(u)$, we have
\beq\label{eq:app:1}
\sfe_{ij}^\s(u)v=0,\qquad t_{ii}^\s(u)v=\sfd_i^\s(u)v=\la_i(u)v,
\eeq
see e.g. \cite[Section 2.5]{Lu2022note}. By Gauss decomposition,
$$
t_{ii}^\bs(u)=\sfd_i^\bs(u)+\sum_{k<i}\sff_{ik}^\bs(u)\sfd_k^\bs(u)\sfe_{ki}^\bs(u),
$$
it follows from \eqref{eq:app:e-f-inv} and \eqref{eq:app:1} that $t_{ii}^\bs(u)v=\sfd_i^\bs(u)v$. Therefore, by \eqref{eq:cartan-current-new} and \eqref{eq:app:1} that
\[
t_{ii}^{\prime\s}(u)v=t_{\ka+1-i,\ka+1-i}^\bs(u)v=\sfd_{\ka+1-i}^\bs(u)v=\frac{1}{\la_{i}(u+\rho_{i+1})}\prod_{k=i+1}^\ka \frac{\la_k(u+\rho_k)}{\la_{k}(u+\rho_{k+1})}v,
\]
completing the proof of Proposition \ref{prop:highest-weight-inver}.
\end{proof}

\section{Twisted super Yangian of type AIII}\label{sec:twisted-super-yangians}
\subsection{Definition}
Fix a sequence of integers $\bm\ve=(\ve_1,\ve_2,\cdots,\ve_\ka)$, where $\ve_i \in\{\pm 1\}$.
 Denote by $G^{\bm\ve}$ the diagonal $\ka\times \ka$ (super)matrix
\beq\label{eq:G}
G^{\bm\ve}=\mathrm{diag}(\ve_1,\ve_2,\cdots,\ve_\ka).
\eeq
The matrix $G^{\bm\ve}$ satisfies the reflection equation
\beq\label{eq:reflectG}
R(u-v)G^{\bm\ve}_1R(u+v)G^{\bm\ve}_2=G^{\bm\ve}_2R(u+v)G^{\bm\ve}_1R(u-v).
\eeq

\begin{dfn}[{\cite{Molev2002reflection,Ragoucy2007analytical,Belliard2009nested}}]
The twisted super Yangian  of type AIII, denoted by $\mathscr B_{\bm s,\bm\ve}$, is a $\Z_2$-graded unital associative algebra over $\C$ with generators $\{b_{ij}^{(r)}\ |\ 1\lle i,j\lle \ka, \, r\gge 1\}$ and defining relations given by
\beq\label{eq:comm-series b}
\begin{split}
[b_{ij}(u),b_{kl}(v)]= & \ \frac{(-1)^{|i||j|+|i||k|+|j||k|}}{u-v}(b_{kj}(u)b_{il}(v)-b_{kj}(v)b_{il}(u))\\
&\ + \frac{(-1)^{|i||j|+|i||k|+|j||k|}}{u+v}\Big(\delta_{kj}\sum_{a=1}^\ka b_{ia}(u)b_{al}(v)-\delta_{il}\sum_{a=1}^\ka b_{ka}(v)b_{aj}(u)\Big)\\
&\ - \frac{1}{u^2-v^2}\delta_{ij}\Big(\sum_{a=1}^\ka b_{ka}(u)b_{al}(v)-\sum_{a=1}^\ka b_{ka}(v)b_{al}(u)\Big)
\end{split}
\eeq
and the unitary condition
\beq\label{eq:unitary-series}
\sum_{a=1}^\ka b_{ia}(u)b_{aj}(-u)=\delta_{ij},
\eeq
where
\[
b_{ij}(u)=\delta_{ij}\ve_i+\sum_{r\gge 1} b_{ij}^{(r)}u^{-r},
\]
and the generators $b_{ij}^{(r)}$ have the parity $|i|+|j|$.
\end{dfn}
Define the operator $B(u)\in \mathcal \mathscr B_{\bm s,\bm\ve}[[u^{-1}]] \otimes\End(V)$,
\beq\label{eq matrix notation b}
B(u)=\sum_{i,j=1}^\ka (-1)^{|i||j|+|j|} b_{ij}(u)\otimes E_{ij}.
\eeq
Then the defining relations of $\mathcal \mathscr B_{\bm s,\bm\ve}$ can also be written as the reflection equation
\beq\label{eq:comm-generators b}
R(u-v)B_1(u)R(u+v)B_2(v)=B_2(v)R(u+v)B_1(u)R(u-v)
\eeq
and
\beq\label{eq:unitary}
B(u)B(-u)=1,
\eeq
where $1$ is the identity matrix.

We shall also use the algebra $\widetilde{\mathscr B}_{\bm s,\bm\ve}$ defined in the same way as $\mathscr B_{\bm s,\bm\ve}$ but with the unitary condition \eqref{eq:unitary-series} omitted. Since there are no other types in this paper, we shall simply call $\mathscr B_{\bm s,\bm\ve}$ and $\widetilde{\mathscr B}_{\bm s,\bm\ve}$ {\it twisted super Yangian} and {\it extended twisted super Yangian}, respectively.

The extended twisted super Yangian (reflection superalgebra) previously appeared in \cite{Ragoucy2007analytical,Belliard2009nested} on the study of Bethe ansatz for open spin chains with diagonal boundary conditions. Certain properties on $\widetilde{\mathscr B}_{\bm s,\bm\ve}$ has been obtained in \cite{Ragoucy2007analytical,Belliard2009nested}. We shall reproduce some of them.

\begin{prop}
In the extended twisted super Yangian $\widetilde{\mathscr B}_{\bm s,\bm\ve}$, the product $B(u)B(-u)$ is a scalar matrix
\beq\label{eq:bb-central}
B(u)B(-u)=f(u)1,
\eeq
where $f(u)$ is a series in $u^{-2}$ whose coefficients are central in $\widetilde{\mathscr B}_{\bm s,\bm\ve}$.
\end{prop}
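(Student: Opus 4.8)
The plan is to show first that $B(u)B(-u)$ is a \emph{scalar} matrix, and then that its scalar entry $f(u)$ has only even powers of $u$ and central coefficients. For the scalar claim I would feed the reflection equation \eqref{eq:comm-generators b} and its consequences into a standard ``fusion'' argument. Multiply \eqref{eq:comm-generators b} on the right by $R(u-v)^{-1}$ and rearrange to isolate $B_1(u)R(u+v)B_2(v)$; then substitute $v\mapsto -u$. Since $R(0)=1-\mathcal P$ is (a multiple of) the rank-one projector onto the antisymmetric part of $V\otimes V$ (recall $\mathcal P^2=1$, so $1-\mathcal P$ is $2$ times a projector), the relation degenerates: the left-hand side forces $B_1(u)R(2u)B_2(-u)$ to act, up to the antisymmetrizer, like something proportional to the identity in the first tensor factor. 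More concretely, one shows that $\big(B(u)B(-u)\big)_1$ commutes with $\mathcal P$, i.e. with all of $\End(V)$ placed in factor $1$, via $[\mathcal P, X_1]=0 \iff X$ scalar; this is exactly the computation carried out by Molev--Ragoucy in the non-super case \cite{Molev2002reflection} and by Ragoucy et al.\ \cite{Ragoucy2007analytical,Belliard2009nested}, and the only change here is inserting the sign $s_j$ in $\mathcal P=\sum_{i,j} s_j E_{ij}\otimes E_{ji}$ and carrying the parities $(-1)^{|i||j|+|j|}$ through \eqref{eq matrix notation b}. So we obtain $B(u)B(-u)=f(u)\,1$ for a scalar series $f(u)=\ve_1^2 + O(u^{-1})=1+O(u^{-1})$.

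Next, evenness of $f(u)$: writing $g(u):=B(u)B(-u)=f(u)1$, replace $u$ by $-u$. Then $g(-u)=B(-u)B(u)$. But $B(u)B(-u)=f(u)1$ is scalar, hence central in the matrix algebra, so $B(-u)=f(u)B(u)^{-1}$ and therefore $B(-u)B(u)=f(u) B(u)^{-1}B(u)=f(u)1$ as well; that is $f(-u)=f(u)$. Combined with $f(u)\in 1+u^{-1}\mathscr B_{\bm s,\bm\ve}[[u^{-1}]]\otimes\C$, this means $f(u)$ is actually a series in $u^{-2}$.

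For centrality of the coefficients of $f(u)$, the cleanest route is again the reflection equation: in \eqref{eq:comm-generators b} substitute $v\mapsto -v$ using \eqref{eq:unitary}-free manipulations, or better, use \eqref{eq:comm-series b} directly. One shows that each $b_{kl}(v)$ commutes with $f(u)$ by computing $[f(u)1_{(1)}, B_2(v)]$ from the reflection equation: since $f(u)1=B_1(u)B_1(-u)$ (the subscript meaning the copy in tensor factor $1$, with factor $2$ carrying $B_2(v)$), one writes the commutator as a telescoping combination of reflection-equation instances at arguments $u+v$ and $u-v$, and the $R$-matrix factors cancel in pairs because $R(w)R(-w)$ is scalar (the unitarity $R(u)R(-u)=(1-u^{-2})1$ of the rational $R$-matrix). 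This is the step I expect to be the main obstacle: one must be careful that the super $R$-matrix still satisfies $R_{12}(w)R_{21}(-w)=(1-w^{-2})1$ with the super flip — it does, since $\mathcal P^2=1$ even in the super setting — and that no stray signs appear when moving $B_2(v)$ past $B_1(\pm u)$, which is where the parities $|i|+|j|$ of the generators and the sign convention in $B(u)$ must be tracked exactly. Granting this $R$-matrix unitarity and the fusion identity, the argument is the same as in \cite{Molev2002reflection,Ragoucy2007analytical,Belliard2009nested}, and one concludes that all coefficients of $f(u)$ are central in $\widetilde{\mathscr B}_{\bm s,\bm\ve}$.
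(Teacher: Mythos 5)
Your overall route (degenerate the reflection equation at $u+v=0$ to get scalarity, then reflection-equation gymnastics for centrality) is the same in spirit as the paper's, but two steps are faulty as written. In the scalarity step, the degeneration is misstated: with the paper's normalization $R(u)=1-\mathcal P/u$, the value $R(0)$ does not exist (the $R$-matrix has a pole at $0$), and $1-\mathcal P$ is not a rank-one projector except in tiny cases; the correct degeneration is that $(u+v)R(u+v)\to-\mathcal P$ as $v\to-u$, or equivalently one multiplies the component relations \eqref{eq:comm-series b} by $u^2-v^2$ and sets $v=-u$, which is precisely what the paper does. Moreover, what comes out of that degenerate relation is not ``$\bigl(B(u)B(-u)\bigr)_1$ commutes with $\mathcal P$'' but a mixed identity involving both $B(u)B(-u)$ and $B(-u)B(u)$, from which one reads off, by choosing indices, simultaneously that both products are equal and scalar. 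Since you defer the actual computation wholesale to Molev--Ragoucy while the heuristics you supply for it are incorrect, the key step of the proposition is asserted rather than proved.

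The evenness argument is circular: from $B(u)B(-u)=f(u)1$ you write $B(-u)=f(u)B(u)^{-1}$ and conclude $B(-u)B(u)=f(u)1$, but moving the algebra-valued scalar $f(u)$ past the entries of $B(u)^{\pm1}$ is exactly the centrality you only establish afterwards. The fix is easy and is what the paper does: the degenerate relation at $v=-u$ already yields $B(u)B(-u)=B(-u)B(u)$, hence $f(u)=f(-u)$ directly; alternatively, prove centrality first and then run your inverse argument. Your centrality step, by contrast, is sound in outline: conjugating $B_1(u)B_1(-u)$ past $B_2(v)$ with two instances of \eqref{eq:comm-generators b} and using $R(w)R(-w)=(1-w^{-2})1$ (which indeed survives the super setting since $\mathcal P^2=1$) does close up and gives $[f(u),b_{kl}(v)]=0$. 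It is, however, heavier than the paper's argument, which simply multiplies \eqref{eq:comm-generators b} on the right by $B_2(-v)$ and applies the reflection equation again at the pair $(u,-v)$, needing neither $R$-matrix unitarity nor any inverses.
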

\begin{proof}
The proof is parallel to that of \cite[Proposition 2.1]{Molev2002reflection}. Multiplying both sides of \eqref{eq:comm-series b} by $u^2-v^2$ and set $v=-u$, one has
\beq
\begin{split}
(-1)^{|i||j|+|i||k|+|j||k|} 2u\Big(\delta_{kj} &\ \sum_{a=1}^\ka b_{ia}(u)b_{al}(v)-\delta_{il}\sum_{a=1}^\ka b_{ka}(v)b_{aj}(u)\Big)\\
&\ = \delta_{ij}\Big(\sum_{a=1}^\ka b_{ka}(u)b_{al}(v)-\sum_{a=1}^\ka b_{ka}(v)b_{al}(u)\Big).
\end{split}
\eeq
By taking suitable indices $i,j,k,l$, one obtains that
$$
B(u)B(-u)=B(-u)B(u)
$$
and the matrix is indeed a scalar matrix. Therefore, \eqref{eq:bb-central} holds and in particular $f(u)$ is a series in $u^{-2}$ as $B(u)B(-u)$ is even.

Multiplying both side of \eqref{eq:comm-generators b} by $B_2(-v)$ from the right, we have
\begin{align*}
R(u-v)B_1(u)R(u+v)f(v)\  \ =\ \ &\ B_2(v)R(u+v)B_1(u)R(u-v)B_2(-v)\\
\stackrel{\eqref{eq:comm-generators b}}{=}\ & \  B_2(v)B_2(-v)R(u-v)B_1(u)R(u+v)\\
=\ \  &\ f(v)R(u-v)B_1(u)R(u+v).
\end{align*}
Therefore, the coefficients of $f(v)$ are central in $\widetilde{\mathscr B}_{\bm s,\bm\ve}$.
\end{proof}

Let $\mc h(u)\in1+u^{-1}\C[[u^{-1}]]$ be such that $\mc h(u)\mc h(-u)=1$. There is an automorphism $\mc M_{\mc h(u)}$ defined by
\beq\label{mhu}
\mc M_{\mc h(u)}:\BMN\to \BMN,\quad B(u)\mapsto \mc h(u)B(u).
\eeq

\subsection{Basic properties of twisted super Yangian}\label{sec:properties}
In this subsection, we collect some basic properties of the twisted super Yangian $\mathscr B_{\bm s,\bm\ve}$.

For $\s=(s_i)_{1\lle i\lle \ka}$ and $\bm\ve=(\ve_i)_{1\lle i\lle \ka}$, define
\[
-\s=(-s_1,-s_2,\cdots,-s_{\ka}),\qquad -\bm\ve=(-\ve_1,-\ve_2,\cdots,-\ve_\ka).
\]
Then we have the following isomorphisms between twisted super Yangians,
\begin{align}
\BMN\to \mathscr B_{-\s,\bm\ve},\qquad b_{ij}(u)\mapsto b_{ij}(-u),\label{neghomo1}\\
\BMN\to \mathscr B_{\s,-\bm\ve},\qquad b_{ij}(u)\mapsto -b_{ij}(u).\label{neghomo2}
\end{align}
Moreover, the super-transposition defines an anti-homomorphism
\[
\BMN\to \BMN,\qquad b_{ij}(u)\mapsto (-1)^{|i||j|+|j|}b_{ji}(u).
\]

\begin{prop}\label{thm:embedding}
The mapping
\beq\label{eq:emd-b}
\varphi: B(u)\to T(u) G^{\bm\ve}T^{-1}(-u)
\eeq
defines an embedding which identify the twisted super Yangian $\mathscr B_{\bm s,\bm\ve}$ as a subalgebra of the super Yangian $\YMN$.
\end{prop}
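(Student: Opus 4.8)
The plan is to establish the two parts of the statement separately: that $\varphi$ is a homomorphism of superalgebras, and that it is injective. Throughout I identify $B(u)$, $T(u)$ and $T^{-1}(-u)$ with the matrices $(b_{ij}(u))$, $(t_{ij}(u))$ and $(t'_{ij}(-u))$ over $\YMN[[u^{-1}]]$ using the sign convention of the proof of Proposition~\ref{prop:highest-weight-inver}, under which $G^{\bm\ve}=\mathrm{diag}(\ve_1,\dots,\ve_\ka)$ and $\widetilde B(u):=T(u)G^{\bm\ve}T^{-1}(-u)$ is an ordinary matrix product. Since $T(u),T^{-1}(-u)\to 1$ as $u\to\infty$, the matrix $\widetilde B(u)$ has the shape $\delta_{ij}\ve_i+O(u^{-1})$ and its $(i,j)$-entry has parity $|i|+|j|$, so it remains only to verify the reflection equation \eqref{eq:comm-generators b} and the unitary condition \eqref{eq:unitary} for $\widetilde B(u)$. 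The unitary condition is immediate: since $(G^{\bm\ve})^2=1$,
\[
\widetilde B(u)\widetilde B(-u)=T(u)G^{\bm\ve}T^{-1}(-u)\,T(-u)G^{\bm\ve}T^{-1}(u)=T(u)(G^{\bm\ve})^2T^{-1}(u)=1 .
\]

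For the reflection equation one runs the standard chain of rewritings, starting from $R(u-v)\widetilde B_1(u)R(u+v)\widetilde B_2(v)=R(u-v)T_1(u)G^{\bm\ve}_1T_1^{-1}(-u)R(u+v)T_2(v)G^{\bm\ve}_2T_2^{-1}(-v)$. One successively: moves $R(u+v)$ through $T_1^{-1}(-u)$ and $T_2(v)$ using the first identity in \eqref{eq:T'RT}; slides $G^{\bm\ve}_1$ past $T_2(v)$ and $G^{\bm\ve}_2$ past $T_1^{-1}(-u)$, which is legitimate because $G^{\bm\ve}$ is even and these factors act in different auxiliary spaces; swaps $T_1(u)T_2(v)$ across $R(u-v)$ via \eqref{eq:RTT}; applies the constant reflection equation \eqref{eq:reflectG} to the block $R(u-v)G^{\bm\ve}_1R(u+v)G^{\bm\ve}_2$; and then undoes the mirror-image moves on the other side — sliding $R(u-v)$ through $T_1^{-1}(-u)T_2^{-1}(-v)$ using the relation $R(u-v)T_1^{-1}(-u)T_2^{-1}(-v)=T_2^{-1}(-v)T_1^{-1}(-u)R(u-v)$, which follows from \eqref{eq:RTT} just as \eqref{eq:T'RT} does, sliding $G^{\bm\ve}_1$ past $T_2^{-1}(-v)$, and moving $R(u+v)$ through $T_1(u)$ and $T_2^{-1}(-v)$ by the second identity in \eqref{eq:T'RT} — to arrive at $\widetilde B_2(v)R(u+v)\widetilde B_1(u)R(u-v)$. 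This shows $\varphi$ is a homomorphism; omitting the unitary step shows likewise that $B(u)\mapsto\widetilde B(u)$ defines a homomorphism $\widetilde{\mathscr B}_{\bm s,\bm\ve}\to\YMN$.

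It remains to prove injectivity, and this is where the work lies. Filter $\YMN$ by $\deg t_{ij}^{(r)}=r-1$, so that Theorem~\ref{thm:PBW} gives $\mathrm{gr}\,\YMN\cong\mathrm{U}(\mathfrak{gl}_{m|n}[x])$ with $\overline{t_{ij}^{(r)}}\mapsto s_ie_{ij}x^{r-1}$, and filter $\mathscr B_{\bm s,\bm\ve}$ by $\deg b_{ij}^{(r)}=r-1$; then $\varphi$ is filtered. Expanding $\widetilde B(u)=T(u)G^{\bm\ve}T^{-1}(-u)$ and using $t_{ij}^{\prime(r)}\equiv -t_{ij}^{(r)}$ modulo lower degree (see \eqref{eq:T-expression-comp}), one finds $b_{ij}^{(r)}\equiv\bigl(\ve_j-(-1)^r\ve_i\bigr)t_{ij}^{(r)}$ modulo lower degree, so the symbol of $b_{ij}^{(r)}$ vanishes exactly when $(-1)^r=\ve_i\ve_j$ and otherwise equals $2\ve_j\,\overline{t_{ij}^{(r)}}$. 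Hence $\mathrm{gr}\,\varphi$ maps $\mathrm{gr}\,\mathscr B_{\bm s,\bm\ve}$ onto the fixed-point subalgebra $\mathrm{U}(\mathfrak k)\subset\mathrm{U}(\mathfrak{gl}_{m|n}[x])$, where $\mathfrak k=\mathfrak{gl}_{m|n}[x]^{\theta}$ for the involution $\theta(e_{ij}x^k)=(-1)^k\ve_i\ve_j\,e_{ij}x^k$ — the symmetric pair of type AIII from the introduction. So $\varphi$ is injective once $\mathrm{gr}\,\varphi$ is shown to be an isomorphism onto $\mathrm{U}(\mathfrak k)$; since it is already surjective, it suffices to produce a spanning set of $\mathscr B_{\bm s,\bm\ve}$ whose image under $\mathrm{gr}\,\varphi$ has, in each filtration degree, the cardinality of a PBW basis of $\mathrm{U}(\mathfrak k)$: use the unitary relation \eqref{eq:unitary-series} and the commutator relations \eqref{eq:comm-series b} to express each ``redundant'' generator $b_{ij}^{(r)}$ (those with $(-1)^r=\ve_i\ve_j$) through lower-degree elements, and then use \eqref{eq:comm-series b} to reorder monomials in the surviving generators $\{\,b_{ij}^{(r)}:(-1)^r\neq\ve_i\ve_j\,\}$, which are in bijection with the PBW generators of $\mathfrak k$.

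The step I expect to be the main obstacle is exactly this last spanning-set argument — in effect a PBW theorem for $\mathscr B_{\bm s,\bm\ve}$. The delicate point, and the reason the computation is heavier than in \cite{Molev2002reflection}, is the bookkeeping of the sign factors $\s$ and $\bm\ve$: one must verify that solving \eqref{eq:unitary-series} and \eqref{eq:comm-series b} for the redundant generators never inadvertently eliminates a surviving generator and yields a spanning set of exactly the expected size, so that $\mathrm{gr}\,\varphi$ carries it bijectively onto a PBW basis of $\mathrm{U}(\mathfrak k)$ and is therefore an isomorphism, whence $\varphi$ is injective.
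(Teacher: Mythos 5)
Your proof is correct and its first half (the homomorphism check: unitarity from $(G^{\bm\ve})^2=1$ and the reflection equation via \eqref{eq:T'RT}, \eqref{eq:RTT}, \eqref{eq:reflectG}) coincides with the paper's argument; the difference lies in how you prove injectivity. You filter by $\deg t_{ij}^{(r)}=r-1$, so that the associated graded algebra is $\mathrm{U}(\gl_{m|n}[x])$, identify the image of the graded map with $\mathrm{U}(\glMN[x]^{\hat\theta})$, and then must supply a PBW-type spanning argument for $\BMN$ to see that the graded map is injective --- the step you flag as the main obstacle. The paper instead filters by $\deg_1 t_{ij}^{(r)}=r$ (and $\deg_1 b_{ij}^{(r)}=r$): there $\mathrm{gr}_1\YMN$ is a free supercommutative superalgebra on the symbols $\bar t_{ij}^{(r)}$ (Gow), the symbol of $\varphi(b_{ij}^{(r)})$ is $((-1)^{r-1}\ve_i+\ve_j)\bar t_{ij}^{(r)}$ plus terms involving only $\bar t_{ab}^{(p)}$ with $p<r$, and the unitary condition shows in one line that the ``surviving'' symbols generate the supercommutative algebra $\mathrm{gr}_1\BMN$; the triangular form of their images then gives algebraic independence, hence injectivity of the graded map, with the PBW basis of $\BMN$ (Corollary \ref{cor:PBW}) and the classical limit (Proposition \ref{prop:limit}) falling out as corollaries rather than being inputs. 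That choice of filtration is exactly what makes the endgame light. Your remaining step is not a genuine gap, though: taking the coefficient of $u^{-r}$ in \eqref{eq:unitary-series} yields $(\ve_i+(-1)^r\ve_j)\,b_{ij}^{(r)}$ equal to a sum of products of lower-order generators, so each redundant generator (those with $(-1)^r=\ve_i\ve_j$) is eliminated, and since \eqref{eq:comm-series b} shows commutators strictly lower the filtration degree, reordering into your surviving monomials costs only lower-degree terms; their images are, up to nonzero scalars, ordered monomials in the PBW generators $e_{ij}x^{r-1}$ of $\glMN[x]^{\hat\theta}$, which are linearly independent, so the count closes. In short: same strategy, different filtration; the paper's filtration buys a shorter injectivity argument, yours makes the symmetric-pair degeneration more transparent at the cost of carrying out the straightening you sketched.
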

\begin{proof}
The proof is essentially the same as that of \cite[Theorem 3.1]{Molev2002reflection}. We first check that $\varphi$ induces a superalgebra homomorphism which we again denote by $\varphi$ and then prove that this superalgebra homomorphism $\varphi$ is injective.

For brevity, we simply write $G$ for $G^{\bm\ve}$.

Set $S(u)= T(u) GT^{-1}(-u)$, then we immediately have
$$
S(u)S(-u)=T(u) GT^{-1}(-u)T(-u) GT^{-1}(u)=1
$$
which verifies the unitary condition \eqref{eq:unitary}.

On the other hand, we also have
\begin{align*}
R(u-v)S_1(u)R(u+v)S_2(v)  \ = \ &\ R(u-v)T_1(u)G_1T_1^{-1}(-u)R(u+v)T_2(v)G_2T_2^{-1}(-v)\\
\stackrel{\eqref{eq:T'RT}}{=}  & \
R(u-v)T_1(u)G_1T_2(v)R(u+v)T_1^{-1}(-u)G_2T_2^{-1}(-v)\\
\ =\ & \
R(u-v)T_1(u)T_2(v)G_1R(u+v)G_2T_1^{-1}(-u)T_2^{-1}(-v)\\
\stackrel{\eqref{eq:RTT}}{=}  & \
T_2(v)T_1(u)R(u-v)G_1R(u+v)G_2T_1^{-1}(-u)T_2^{-1}(-v)\\
\stackrel{\eqref{eq:reflectG}}{=}  & \
T_2(v)T_1(u)G_2R(u+v)G_1R(u-v)T_1^{-1}(-u)T_2^{-1}(-v)\\
\stackrel{\eqref{eq:RTT}}{=}  & \
T_2(v)T_1(u)G_2R(u+v)G_1T_2^{-1}(-v)T_1^{-1}(-u)R(u-v)\\
\ =\ & \
T_2(v)G_2T_1(u)R(u+v)T_2^{-1}(-v)G_1T_1^{-1}(-u)R(u-v)\\
\stackrel{\eqref{eq:T'RT}}{=}  & \
T_2(v)G_2T_2^{-1}(-v)R(u+v)T_1(u)G_1T_1^{-1}(-u)R(u-v)\\
\ =\ & \ S_2(v)R(u+v)S_1(u)R(u-v).
\end{align*}
Therefore, $S(u)$ also satisfies the reflection equation \eqref{eq:comm-generators b}.

Then we show that $\varphi$ is injective. Introduce the filtration on $\YMN$ defined by $\deg_1 t_{ij}^{(r)}=r$, see \cite{Gow2007gauss}, and a similar filtration on $\BMN$ by setting $\deg_1 b_{ij}^{(r)}=r$. Note that for the matrix elements of $S(u)$, we have
\beq\label{eq:embed-expl}
\mathtt s_{ij}(u)=\ve_i\delta_{ij}+\sum_{r>0} \mathtt s_{ij}^{(r)}u^{-r}=\sum_{a=1}^\ka \ve_at_{ia}(u)t_{aj}'(-u).
\eeq
It follows from \eqref{eq:T-expression-comp} that the degree of $\mathtt s_{ij}^{(r)}$ is at most $r$. Therefore $\varphi$ preserves the filtration and hence induces a homomorphism of the associated graded superalgebras
$$
\overline{\varphi}:\mathrm{gr}_1\BMN\to \mathrm{gr}_1\YMN.
$$
Denote by $\bar t_{ij}^{(r)}$ the image of $t_{ij}^{(r)}$ in the $r$-th component of $\mathrm{gr}_1\YMN$.

It is clear from \eqref{eq:comm-series} that $\mathrm{gr}_1\YMN$ is supercommutative, and moreover it follows from \cite[Theorem 1]{Gow2007gauss} that these elements $\bar t_{ij}^{(r)}$ are algebraically independent generators (in the super sense). It is also clear that $\mathrm{gr}_1\BMN$ is supercommutative. Denote by $\bar b_{ij}^{(r)}$ the image of $b_{ij}^{(r)}$ in the $r$-th component of $\mathrm{gr}_1\BMN$. Due to the unitary condition \eqref{eq:unitary-series}, the elements
\beq\label{eq:ind-generator-b}
\begin{split}
&\bar b_{ij}^{(2p-1)}, \qquad  \text{if }\ve_i=\ve_j,\\
&\bar b_{ij}^{(2p)},  \qquad\quad \text{if }\ve_i\ne\ve_j,
\end{split}
\eeq
for $1\lle i,j\lle \ka$ and $p\in\mathbb Z_{>0}$, generate the superalgebra $\mathrm{gr}_1 \BMN$. By \eqref{eq:embed-expl} and \eqref{eq:T-expression-comp}, we find that
\beq\label{eq:graded-generators}
\overline{\varphi}:\bar b_{ij}^{(r)}\mapsto ((-1)^{r-1}\ve_i+\ve_j)\bar t_{ij}^{(r)}+\cdots.
\eeq
Here $\cdots$ stands for a linear combination of monomials in $\bar t_{ab}^{(p)}$ with $p<r$ for various $1\lle a,b\lle \ka$. therefore, the elements in \eqref{eq:ind-generator-b} are algebraically independent, completing the proof.
\end{proof}

We immediately have the following PBW-type theorem for the twisted super Yangian $\BMN$.
\begin{cor}\label{cor:PBW}
Given any total ordering on the elements
\beq
\begin{split}
& b_{ij}^{(2p-1)}, \qquad  \text{if }\ve_i=\ve_j,\\
& b_{ij}^{(2p)},  \qquad\quad \text{if }\ve_i\ne\ve_j,
\end{split}
\eeq
for $1\lle i,j\lle \ka$ and $p\in\mathbb Z_{>0}$, the ordered monomials in these elements, containing no second or higher
order powers of the odd generators, form a basis of the twisted super Yangian $\BMN$.
\end{cor}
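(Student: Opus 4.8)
The plan is to read the statement off from the structure of the associated graded superalgebra $\mathrm{gr}_1\BMN$ for the filtration $(F_d\BMN)_{d\gge 0}$ with $\deg_1 b_{ij}^{(r)}=r$, the same filtration used in the proof of Proposition~\ref{thm:embedding}. Three facts were established there: (i) $\mathrm{gr}_1\BMN$ is supercommutative, since each of the three terms on the right of \eqref{eq:comm-series b} lowers the total $\deg_1$, so that $[b_{ij}^{(r)},b_{kl}^{(s)}]\in F_{r+s-1}\BMN$; (ii) the unitary condition \eqref{eq:unitary-series} forces $\mathrm{gr}_1\BMN$ to be generated by the symbols $\bar b_{ij}^{(r)}$ of the chosen generators (namely $\bar b_{ij}^{(2p-1)}$ when $\ve_i=\ve_j$ and $\bar b_{ij}^{(2p)}$ when $\ve_i\ne\ve_j$); and (iii) by \eqref{eq:graded-generators} together with the algebraic independence of the $\bar t_{ij}^{(r)}$ in $\mathrm{gr}_1\YMN$ (Theorem~\ref{thm:PBW}, cf.\ \cite{Gow2007gauss}), these symbols $\bar b_{ij}^{(r)}$ are algebraically independent in the super sense. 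Combining (i)--(iii), the tautological surjection from the free supercommutative superalgebra on the symbols $\bar b_{ij}^{(r)}$, with $(i,j,r)$ as in the statement, onto $\mathrm{gr}_1\BMN$ is an isomorphism; that free superalgebra is $\C\big[\bar b_{ij}^{(r)}\mid |i|+|j|=\bar 0\big]\otimes\Lambda\big[\bar b_{ij}^{(r)}\mid |i|+|j|=\bar 1\big]$ (indices as above), whose basis consists precisely of the ordered monomials in the $\bar b_{ij}^{(r)}$ containing no second or higher power of an odd generator, since an odd element squares to zero in a supercommutative superalgebra over $\C$. Hence the symbols of the prescribed ordered monomials form a basis of $\mathrm{gr}_1\BMN$.

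It then remains to lift this from $\mathrm{gr}_1\BMN$ to $\BMN$ by the standard filtered-algebra argument. Since the chosen symbols generate $\mathrm{gr}_1\BMN$, the chosen generators generate $\BMN$ as an algebra (induction on $\deg_1$, using $F_0\BMN=\C$), so $\BMN$ is spanned by arbitrary words in the chosen generators. Using \eqref{eq:comm-series b} to transpose adjacent factors at the cost of terms of strictly smaller $\deg_1$, and the identity $2(b_{ij}^{(r)})^2=[b_{ij}^{(r)},b_{ij}^{(r)}]\in F_{2r-1}\BMN$ for an odd generator $b_{ij}^{(r)}$, one rewrites any such word as a linear combination of prescribed ordered monomials plus an element of strictly smaller $\deg_1$; an induction on $\deg_1$ gives that the prescribed ordered monomials span $\BMN$. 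For linear independence, suppose $\sum_M c_M M=0$ with $M$ running over distinct prescribed ordered monomials and not all $c_M=0$; passing to symbols in the top $\deg_1$, say $d$, that occurs yields $\sum_{\deg_1 M=d}c_M\bar M=0$ in $\mathrm{gr}_1\BMN$, a nontrivial relation among distinct elements of the basis found above, a contradiction. This proves the corollary.

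The essential work — supercommutativity of $\mathrm{gr}_1\BMN$, generation under the unitary condition, and above all the algebraic independence of the chosen graded generators — is already carried out in the proof of Proposition~\ref{thm:embedding}, so this really is a corollary; what is left is the routine passage from a PBW basis of the (free supercommutative) associated graded superalgebra to the filtered superalgebra. The only point that deserves a little care is the bookkeeping with the odd generators: one must check that the multiplicity-one restriction on odd generators is exactly what matches the exterior factor of $\mathrm{gr}_1\BMN$, and that squares of odd generators drop in $\deg_1$ by virtue of the commutation relations \eqref{eq:comm-series b} rather than of the unitary condition.
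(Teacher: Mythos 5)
Your argument is exactly the paper's: the corollary is stated as an immediate consequence of the proof of Proposition~\ref{thm:embedding}, where supercommutativity of $\mathrm{gr}_1\BMN$, generation by the symbols in \eqref{eq:ind-generator-b} via the unitary condition, and their algebraic independence via \eqref{eq:graded-generators} are established. You merely spell out the standard passage from a free supercommutative associated graded superalgebra to a PBW basis of the filtered algebra, and that part is carried out correctly.
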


Thanks to Proposition \ref{thm:embedding}, the twisted super Yangian is identified with a subalgebra of $\YMN$ by identifying $b_{ij}(u)$ with $\mathtt s_{ij}(u)$. As the twisted super Yangians of types AI and AII, $\BMN$ is also a coideal subalgebra of $\YMN$.
\begin{prop}\label{prop:coproduct}
The subalgebra $\BMN$ is a left coideal subalgebra in $\YMN$,
\beq\label{eq:b-copro-in-t}
\Delta(b_{ij}(u))=\sum_{a,c=1}^\ka t_{ia}(u)t_{cj}'(-u)\otimes b_{ac}(u)(-1)^{(|c|+|j|)(|a|+|c|)}.
\eeq
\end{prop}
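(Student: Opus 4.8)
The plan is to deduce everything from the embedding $\varphi$ of Proposition~\ref{thm:embedding}, which identifies $b_{ij}(u)$ with $\mathtt s_{ij}(u)$, i.e.\ $\varphi(B(u))=S(u):=T(u)G^{\bm\ve}T^{-1}(-u)$ (see \eqref{eq:embed-expl}). Since $\BMN\subset\YMN$ is already a subalgebra by that proposition, it remains only to compute $\Delta$ of the matrix entries of $S(u)$, where $\Delta$ is the coproduct \eqref{eq Hopf} of $\YMN$, and to check the answer has the claimed shape. I would carry this out in operator notation rather than in components, writing $T^{(1)}(u)$, $T^{(2)}(u)$, $S^{(2)}(u)$ for $T(u)$, resp.\ $S(u)$, with coefficients placed in the first (resp.\ second) tensor factor of $\YMN\otimes\YMN$; this keeps the $\Z_2$-signs under control.

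First I would rewrite \eqref{eq Hopf} as $(\Delta\otimes\id)\,T(u)=T^{(1)}(u)\,T^{(2)}(u)$, and then apply the algebra homomorphism $\Delta\otimes\id$ to $T(u)T^{-1}(u)=1$ to get $(\Delta\otimes\id)\,T^{-1}(-u)=(T^{(2)}(-u))^{-1}(T^{(1)}(-u))^{-1}$. Because $G^{\bm\ve}$ acts only on the auxiliary space $\End(V)$ it is fixed by $\Delta\otimes\id$, so by associativity
\begin{align*}
(\Delta\otimes\id)\,S(u)&=T^{(1)}(u)\,\big(T^{(2)}(u)\,G^{\bm\ve}\,(T^{(2)}(-u))^{-1}\big)\,(T^{(1)}(-u))^{-1}\\
&=T^{(1)}(u)\,S^{(2)}(u)\,(T^{(1)}(-u))^{-1}.
\end{align*}
Reading off the $(i,j)$-entry in the auxiliary space — and using that the normalisation in \eqref{eq matrix notation b} is designed so that products of these matrices are computed in the usual way — then expresses $\Delta(b_{ij}(u))$ as $\sum_{a,c}(t_{ia}(u)\otimes 1)(1\otimes b_{ac}(u))(t_{cj}'(-u)\otimes 1)$; collapsing the products in $\YMN\otimes\YMN$ should yield exactly \eqref{eq:b-copro-in-t}. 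The coideal property is then immediate, since the right-hand side of \eqref{eq:b-copro-in-t} visibly lies in $\YMN\otimes\BMN$.

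The only real work is the $\Z_2$-sign bookkeeping, and the operator formulation pins it down to one place. The sign associated with the matrix inverse (equivalently the super-antipode) of $T(u)$ never appears explicitly above — it is absorbed into ``$\Delta\otimes\id$ is an algebra homomorphism'' — so the one sign I actually need to produce by hand is the one from commuting, inside $\YMN\otimes\YMN$, the second-factor entry $b_{ac}(u)$ (of parity $|a|+|c|$) past the first-factor entry $t_{cj}'(-u)$ (of parity $|c|+|j|$); this is $(-1)^{(|a|+|c|)(|c|+|j|)}$, precisely the sign in \eqref{eq:b-copro-in-t}. Hence the main (and essentially only) point to be careful about is that no stray sign enters from the matrix multiplications themselves, which is guaranteed by the normalisation of $B(u)$ and $T(u)$, exactly as in the proof of Proposition~\ref{thm:embedding}; the rest is formal and parallels the classical computation for twisted Yangians, cf.\ \cite{Molev2002reflection}.
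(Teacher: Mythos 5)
Your argument is correct and is essentially the paper's proof in operator form: the paper likewise uses that $\Delta$ is a superalgebra homomorphism, records the flipped coproduct $\Delta(t_{ij}'(u))=\sum_{a} t_{aj}'(u)\otimes t_{ia}'(u)(-1)^{(|a|+|j|)(|i|+|a|)}$ (which is exactly what your identity $(\Delta\otimes\id)T^{-1}(-u)=(T^{(2)}(-u))^{-1}(T^{(1)}(-u))^{-1}$ encodes), and then performs the same entrywise computation with the Koszul sign $(-1)^{(|a|+|c|)(|c|+|j|)}$ you identify. Your matrix-level bookkeeping, justified by the normalisation of $T(u)$ and $B(u)$, is a valid packaging of that "straightforward computation," and the coideal property follows as you say.
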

\begin{proof}
Note that $\Delta$ is a superalgebra homomorphism. One finds
$$
\Delta(t_{ij}'(u))=\sum_{a=1}^\ka t_{aj}'(u)\otimes t_{ia}'(u)(-1)^{(|a|+|j|)(|i|+|a|)}.
$$
Then the statement follows from a straightforward computation.
\end{proof}

Let $\theta$ be the involution of $\glMN$ sending $e_{ij}$ to $\ve_i\ve_j e_{ij}$ and $(\glMN)^\theta$ the fixed point Lie subalgebra of $\glMN$ under $\theta$. Note that $\theta$ depends on the diagonal matrix $G^{\bm\ve}$ which we shall not write explicitly. Then $(\glMN,(\glMN)^\theta)$ is a (super)symmetric pair of type AIII, cf. \cite{Shen2025quantum}. Write $\glMN=\mathfrak k\oplus\mathfrak p$ as the $(\pm 1)$-eigenspace decomposition with respect to $\theta$. In particular,
$$
\mathfrak k=(\glMN)^\theta\cong \gl_{m_1|n_1}\oplus \gl_{m_2|n_2},
$$
where
\begin{align*}
   &m_1=\#\{i\,|\, s_i=\ve_i=1,\, 1\lle i\lle \ka\},&n_1=\#\{i\,|\, -s_i=\ve_i=1,\, 1\lle i\lle \ka\},\\
   &m_2=\#\{i\,|\, s_i=-\ve_i=1,\, 1\lle i\lle \ka\},& n_2=\#\{i\,|\, s_i=\ve_i=-1,\, 1\lle i\lle \ka\}.
\end{align*}
Clearly, a basis of $\mathfrak k$ is given by all $e_{ij}$ for $1\lle i,j\lle\ka$ and $\ve_i=\ve_j$, while a basis of $\mathfrak p$ is given by all $e_{ij}$ for $1\lle i,j\lle\ka$ and $\ve_i\ne\ve_j$.

Extend the involution $\theta$ on $\glMN$ to $\hat\theta$ on $\glMN[x]$ by sending
$$
\hat\theta(gx^k)=\theta(g)(-x)^k,
$$
for $g\in \glMN$ and $k\in\Z_{\gge 0}$. Let $\glMN[x]^{\hat\theta}$ be the fixed point subalgebra of $\glMN[x]$ under $\hat\theta$. Then we have
\[
\glMN[x]^{\hat\theta}=(\mathfrak k\otimes \C[x^2]) \bigoplus (\mathfrak p\otimes x\C[x^2]).
\]

There is also another filtration of $\YMN$ defined by setting $\deg_2 t_{ij}^{(r)}=r-1$. It is well known \cite{Gow2007gauss} that the associated graded superalgebra $\mathrm{gr}_2\YMN$ is the universal enveloping superalgebra $\mathrm{U}(\gl_{m|n}[x])$ and the correspondence is given by
\beq\label{eq:griso}
\mathrm{U}(\gl_{m|n}[x])\to \mathrm{gr}_2\,\YMN, \qquad e_{ij} x^r\mapsto s_i\bar{t}_{ij}^{(r+1)}.
\eeq
Regard $\BMN$ as a subalgebra of $\YMN$ via Proposition \ref{thm:embedding} and denote by $\mathrm{gr}_2 \BMN$ the image of $\BMN$ in $\mathrm{gr}_2\YMN$. Alternatively, define a filtration on $\BMN$ given by $\deg_2 b_{ij}^{(r)}=r-1$, then $\mathrm{gr}_2\BMN$ is the associated graded superalgebra.
\begin{prop}\label{prop:limit}
The twisted super Yangian $\BMN$ is a deformation of $\mathrm{U}(\glMN[x]^{\hat\theta})$,
\[
\mathrm{gr}_2 \BMN\cong \mathrm{U}(\glMN[x]^{\hat\theta}).
\]
\end{prop}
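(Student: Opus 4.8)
The plan is to compute the associated graded algebra $\mathrm{gr}_2 \BMN$ explicitly using the embedding $\varphi:B(u)\to T(u)G^{\bm\ve}T^{-1}(-u)$ from Proposition \ref{thm:embedding}, and to match the leading symbols with generators of $\mathrm{U}(\glMN[x]^{\hat\theta})$. First I would observe that under the $\deg_2$-filtration, the formula \eqref{eq:embed-expl}, namely $\mathtt s_{ij}(u)=\sum_{a=1}^\ka \ve_a t_{ia}(u)t_{aj}'(-u)$, together with $t_{ij}'(u)=-t_{ij}^\circ(u)+\cdots$ from \eqref{eq:T'-expression} gives, by taking the coefficient of $u^{-r}$ and extracting the top $\deg_2$-degree part,
\[
\bar b_{ij}^{(r)}\ \equiv\ \big((-1)^{r-1}\ve_i+\ve_j\big)\,\bar t_{ij}^{(r)}\pmod{\text{lower }\deg_2},
\]
exactly as in \eqref{eq:graded-generators} but now filtered by $\deg_2$ rather than $\deg_1$; the point is that every correction term in \eqref{eq:embed-expl} is a product of at least two $t^\circ$'s and hence has strictly smaller $\deg_2$-degree than its naive $\deg_1$-degree would suggest, so the identification \eqref{eq:griso} applies to the leading term. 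Thus $\bar b_{ij}^{(r)}$ maps, under $\mathrm{gr}_2\YMN\cong\mathrm{U}(\glMN[x])$, to a nonzero multiple of $e_{ij}x^{r-1}$ precisely when $(-1)^{r-1}\ve_i+\ve_j\neq 0$, i.e. when either ($\ve_i=\ve_j$ and $r$ odd) or ($\ve_i\neq\ve_j$ and $r$ even).

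Next I would identify the span of these leading symbols with $\glMN[x]^{\hat\theta}=(\mathfrak k\otimes\C[x^2])\oplus(\mathfrak p\otimes x\C[x^2])$. Indeed $e_{ij}\in\mathfrak k$ iff $\ve_i=\ve_j$, and then the surviving generators are $\bar b_{ij}^{(2p-1)}\leftrightarrow e_{ij}x^{2p-2}$, matching $\mathfrak k\otimes\C[x^2]$; while $e_{ij}\in\mathfrak p$ iff $\ve_i\neq\ve_j$, and then the surviving generators are $\bar b_{ij}^{(2p)}\leftrightarrow e_{ij}x^{2p-1}$, matching $\mathfrak p\otimes x\C[x^2]$. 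This shows $\mathrm{gr}_2\BMN$ is contained in (the image of) $\mathrm{U}(\glMN[x]^{\hat\theta})$, and that the generators listed in \eqref{eq:ind-generator-b} surject onto a generating set of the latter. For the reverse inclusion and to upgrade surjectivity to an isomorphism, I would invoke the PBW theorem for $\BMN$ (Corollary \ref{cor:PBW}): the ordered monomials in the $b_{ij}^{(2p-1)}$ ($\ve_i=\ve_j$) and $b_{ij}^{(2p)}$ ($\ve_i\neq\ve_j$), with no repeated odd generators, form a basis, hence their $\deg_2$-symbols span $\mathrm{gr}_2\BMN$; under $\overline\varphi$ these go to the corresponding ordered monomials in the $e_{ij}x^k$ spanning $\mathrm{U}(\glMN[x]^{\hat\theta})$, and a graded-dimension (Hilbert series) comparison — or directly, the algebraic independence already established in the proof of Proposition \ref{thm:embedding} combined with the PBW theorem for $\mathrm{U}(\glMN[x]^{\hat\theta})$ — forces the surjection to be an isomorphism.

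The main obstacle I anticipate is bookkeeping the sign factors and verifying carefully that $\mathrm{gr}_2\BMN$ really is \emph{supercommutative} with the expected relations, so that it is a quotient of $\mathrm{U}(\glMN[x]^{\hat\theta})$ rather than something larger: one must check that the leading $\deg_2$-symbols of the defining relations \eqref{eq:comm-series b} reduce to the Lie bracket relations of $\glMN[x]^{\hat\theta}$ (the $\frac{1}{u-v}$, $\frac{1}{u+v}$, $\frac{1}{u^2-v^2}$ terms all contribute, and their combined leading part must reproduce $[e_{ij}x^a,e_{kl}x^b]$ inside the fixed-point subalgebra, with the $\hat\theta$-equivariance controlling which powers of $x$ appear). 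This is precisely the super-analogue of the non-super computation in \cite[\S3]{Molev2002reflection}, and the extra care needed is the insertion of the parity signs from $\s$; I would organize it by first checking the claimed relations hold in $\mathrm{gr}_2\BMN$ (giving the surjection $\mathrm{U}(\glMN[x]^{\hat\theta})\twoheadrightarrow\mathrm{gr}_2\BMN$) and then using the PBW count to conclude injectivity, so that no single relation-by-relation verification of independence is needed.
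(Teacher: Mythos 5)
Your proposal is correct and follows essentially the same route as the paper: regard $\BMN\subset\YMN$ via Proposition \ref{thm:embedding}, observe that the $\deg_2$-symbol of $b_{ij}^{(r)}$ is $((-1)^{r-1}\ve_i+\ve_j)\bar t_{ij}^{(r)}$, which under \eqref{eq:griso} is a nonzero multiple of $e_{ij}x^{r-1}$ exactly for the generators surviving in $\glMN[x]^{\hat\theta}$, and then conclude by combining Corollary \ref{cor:PBW} with the PBW theorem for Lie superalgebras. The relation-checking you worry about in your last paragraph is not needed on this route, since $\mathrm{gr}_2\BMN$ is computed inside $\mathrm{gr}_2\YMN\cong\mathrm{U}(\glMN[x])$, where the supercommutator relations hold automatically.
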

\begin{proof}
For $r\in\Z_{\gge 0}$, let $\bar b_{ij}^{(r+1)}$ be the image of $b_{ij}^{(r+1)}$ in the $r$-th component of $\mathrm{gr}_2\,\BMN$. It follows from the proof of Proposition \ref{thm:embedding} that under the isomorphism \eqref{eq:griso},
\[
s_i((-1)^{r-1}\ve_i+\ve_j)e_{ij}x^{r-1}\mapsto ((-1)^{r-1}\ve_i+\ve_j)\bar{t}_{ij}^{(r)}=\bar b_{ij}^{(r)}.
\]
Note that $\mathrm{U}(\glMN[x]^{\hat\theta})$ (resp. $\mathrm{gr}_2\,\BMN$) is generated by $((-1)^{r-1}\ve_i+\ve_j)e_{ij}x^{r-1}$ (resp. $\bar b_{ij}^{(r)}$), for $1\lle i,j\lle \ka$ and $r\in\bZ_{>0}$, the proposition follows from the PBW theorem of Lie superalgebras and Corollary \ref{cor:PBW}.
\end{proof}

\section{Highest weight representations}\label{sec:reps}
In this section, we discuss the highest weight representations of the twisted super Yangian $\BMN$.
\subsection{Highest weight representations}
Similar to \cite{Molev2002reflection}, we define the highest weight representation of $\BMN$ as follows.
\begin{dfn}
A representation $V$ of $\BMN$ is called highest $\ell_{\s,\bm\ve}$-weight if there exists a nonzero vector $\eta\in V$ such that $V$ is generated by $\eta$ and $\eta$ satisfies
\beq\label{eq:highest-B}
\begin{split}
   & b_{ij}(u)\eta=0,  \qquad \quad\quad \quad 1\lle i<j\lle \ka,\\
   &    b_{ii}(u)\eta=\mu_i(u)\eta,\qquad\quad  1\lle i\lle \ka,
\end{split}
\eeq
where $\mu_i(u)\in \ve_i+u^{-1}\C[[u^{-1}]]$. The vector $\eta$ is called a {\it highest $\ell_{\s,\bm\ve}$-weight vector} of $V$ and the tuple $\bm \mu(u)=(\mu_i(u))_{1\lle i\lle \ka}$ is the {\it highest $\ell_{\s,\bm\ve}$-weight} of $V$.
\end{dfn}

\begin{eg}\label{eg:1-dim}
For any $\gamma\in\bC$, there exists a one-dimensional module $\bC_{\gamma}:=\bC \eta_{\gamma}$ generated by a highest $\ell_{\s,\bm\ve}$-weight vector $\eta_{\gamma}$  such that
\[
b_{ij}(u)\eta_\gamma = \delta_{ij}\frac{\ve_iu+\gamma}{u-\gamma}\eta_\gamma.\qedd
\]
\end{eg}

We have the following standard statements. By the relations \eqref{eq:comm-series b}, we have
\[
[b_{ij}^{(1)},b_{kl}(u)]=(-1)^{|i||j|+|i||k|+|j||k|}(\ve_i+\ve_j)(\delta_{kj}b_{il}(u)-\delta_{il}b_{kj}(u)).
\]
In particular, we have
\beq\label{eq:bii-weightb}
\big[s_i\ve_ib_{ii}^{(1)}/2,b_{kl}(u)\big]=\delta_{ki}b_{il}(u)-\delta_{il}b_{ki}(u).
\eeq
Therefore, the operators $s_i\ve_ib_{ii}^{(1)}/2$ are pairwise commuting. This also follows from the fact that under the identification \eqref{eq:embed-expl}, we have
\[
b_{ii}^{(1)}=2\ve_i t_{ii}^{(1)}=2s_i\ve_ie_{ii}.
\]

We say that a $\BMN$-module $V$ has a $\glMN$-weight subspace decomposition if it possesses a common eigenspace decomposition for the commuting operators $b_{ii}^{(1)}$, $1\lle i\lle \ka$. We say that a vector $v\in V$ has weight $\mc w=(\mc w_1,\dots,\mc w_\ka)$ if
\[
\frac{1}{2}s_i\ve_i b_{ii}^{(1)}v=\mc w_i v,\quad 1\lle i\lle \ka.
\]
Denote by $(V)_{\mc w}$ the weight subspace of $V$ with weight $\mc w$. The definition of weight subspace is compatible with \eqref{varpi1} if we consider a $\YMN$-module as a $\BMN$-module by restriction. For a highest $\ell_{\s,\bm \ve}$-weight $\bm\mu(u)$, we define a $\glMN$-weight $\varpi(\bm\mu(u))$, similar to \eqref{varpi1}, associated to  it by the rule
\beq\label{varpi2}
\varpi(\bm\mu(u))(e_{ii})=\frac{1}{2}s_i\ve_i\mu_{i,1},
\eeq
where $\mu_{i,1}$ is the coefficient of $u^{-1}$ in the series $\mu_i(u)$. Then a highest $\ell_{\s,\bm\ve}$-weight vector of $\glMN$-weight $\varpi(\bm\mu(u))$.

\begin{lem}\label{lem:wt-changeb}
We have
\[
b_{ij}^{(r)}(V)_{\mc w}\subset (V)_{\mc w+\epsilon_i-\epsilon_j},
\]
for $1\lle i, j\lle \ka$, and $r\in\bZ_{>0}$.
\end{lem}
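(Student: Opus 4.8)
The plan is to mimic the proof of Lemma~\ref{lem:wt-change}, replacing the super Yangian relations by the reflection-equation relations \eqref{eq:comm-series b}. The starting point is the commutator identity already recorded above,
\[
[b_{ij}^{(1)},b_{kl}(u)]=(-1)^{|i||j|+|i||k|+|j||k|}(\ve_i+\ve_j)(\delta_{kj}b_{il}(u)-\delta_{il}b_{ki}(u)),
\]
which one reads off from \eqref{eq:comm-series b} by multiplying through by $u-v$ (or $u^2-v^2$) and extracting the coefficient of $v^0$; the three terms on the right of \eqref{eq:comm-series b} contribute, but after setting $v\to\infty$ only the first two $\delta_{kj}$/$\delta_{il}$ pieces survive with the combined coefficient $\ve_i+\ve_j$, since $b_{ia}^{(0)}=\delta_{ia}\ve_i$. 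Rewriting this in terms of the normalized operator $h_i:=\tfrac12 s_i\ve_i b_{ii}^{(1)}$ gives precisely \eqref{eq:bii-weightb}, namely $[h_i,b_{kl}(u)]=\delta_{ki}b_{il}(u)-\delta_{il}b_{ki}(u)$.

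Next I would extract from \eqref{eq:bii-weightb} the action on each generator $b_{kl}^{(r)}$: taking the coefficient of $u^{-r}$ yields $[h_i,b_{kl}^{(r)}]=\delta_{ki}b_{il}^{(r)}-\delta_{il}b_{ki}^{(r)}$ for all $r\in\bZ_{>0}$. Therefore, if $v\in (V)_{\mc w}$, i.e.\ $h_i v=\mc w_i v$ for all $i$, then
\[
h_i\,(b_{kl}^{(r)}v)=b_{kl}^{(r)}h_i v+[h_i,b_{kl}^{(r)}]v=\big(\mc w_i+\delta_{ki}-\delta_{li}\big)\,b_{kl}^{(r)}v,
\]
which says exactly that $b_{kl}^{(r)}v$ has weight $\mc w+\epsilon_k-\epsilon_l$. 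Relabelling $(k,l)\to(i,j)$ gives the claimed inclusion $b_{ij}^{(r)}(V)_{\mc w}\subset (V)_{\mc w+\epsilon_i-\epsilon_j}$.

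I do not expect any genuine obstacle here; the only point requiring a little care is the derivation of the degree-one commutator from \eqref{eq:comm-series b}, where one must check that the ``$\tfrac1{u^2-v^2}\delta_{ij}$'' term does not spoil the computation — since it is $\delta_{ij}$-weighted it only affects the case $i=j$, and there the coefficient $\ve_i+\ve_j=2\ve_i\ne0$ still dominates the leading behaviour in $u$, so one recovers \eqref{eq:bii-weightb} without change. Everything else is the routine bookkeeping of extracting coefficients in $v$ and $u$, so I would simply say ``the lemma follows by a direct computation from \eqref{eq:bii-weightb}'' as in the proof of Lemma~\ref{lem:wt-change}.
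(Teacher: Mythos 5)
Your argument is correct and is essentially the paper's proof: the paper likewise deduces the lemma by a direct computation from \eqref{eq:bii-weightb}, i.e.\ by extracting the coefficient of $u^{-r}$ in $[\,s_i\ve_i b_{ii}^{(1)}/2,\,b_{kl}(u)]=\delta_{ki}b_{il}(u)-\delta_{il}b_{ki}(u)$ and applying it to a weight vector. Only a cosmetic remark: in your displayed degree-one commutator the last term should read $-\delta_{il}b_{kj}(u)$ rather than $-\delta_{il}b_{ki}(u)$ (and the $\tfrac{1}{u^2-v^2}$ term drops out simply because it has no $u^{-1}$ coefficient), but this does not affect the $i=j$ case you actually use.
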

\begin{proof}
The lemma follows from \eqref{eq:bii-weightb} by a direct computation.
\end{proof}

Thus, we have the following corollary of Corollary \ref{cor:PBW} and Lemma \ref{lem:wt-changeb}.
\begin{cor}\label{cor:wt-changeb}
If $V$ is a $\BMN$-module of highest $\ell_{\s,\bm\ve}$-weight $\bm\mu(u)$, then $V$ has a $\glMN$-weight subspace decomposition. Moreover, its highest $\ell_{\s,\bm\ve}$-weight vector has weight $\varpi(\bm\mu(u))$ and the other weight vectors have weights that are strictly smaller (with respect to $\gge$ defined in \S \ref{sec glmn}) than $\varpi(\bm\mu(u))$.\qed
\end{cor}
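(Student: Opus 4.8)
The plan is to run the same routine that underlies Corollary~\ref{cor:hwrt}, substituting Corollary~\ref{cor:PBW} for Theorem~\ref{thm:PBW} and Lemma~\ref{lem:wt-changeb} for Lemma~\ref{lem:wt-change}. Since $V$ is generated by $\eta$ as a $\BMN$-module, Corollary~\ref{cor:PBW} shows that $V$ is spanned by the vectors $X\eta$, where $X$ ranges over ordered monomials in the generators $b_{ij}^{(2p-1)}$ (for $\ve_i=\ve_j$) and $b_{ij}^{(2p)}$ (for $\ve_i\ne\ve_j$). The key preliminary move is to fix the total order on these generators so that every ``raising'' generator, i.e.\ every $b_{ij}^{(r)}$ with $i<j$, is larger than every Cartan generator ($i=j$) and every ``lowering'' generator ($i>j$); then in any ordered monomial the raising generators sit at the right end.

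First I would record that $\eta$ is a simultaneous eigenvector of all Cartan generators: by \eqref{eq:highest-B} one has $b_{ii}^{(r)}\eta=\mu_{i,r}\eta$ for every $r\gge 1$, where $\mu_{i,r}$ is the coefficient of $u^{-r}$ in $\mu_i(u)$; in particular $\tfrac12 s_i\ve_i b_{ii}^{(1)}\eta=\tfrac12 s_i\ve_i\mu_{i,1}\eta$, which by \eqref{varpi2} says precisely that $\eta\in(V)_{\varpi(\bm\mu(u))}$. Next, given an ordered monomial $X=b_{i_1j_1}^{(r_1)}\cdots b_{i_kj_k}^{(r_k)}$ with raising generators placed rightmost, if some factor has $i_l<j_l$ then the rightmost such factor acts on $\eta$ first and annihilates it by \eqref{eq:highest-B}, so $X\eta=0$. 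Hence $V$ is already spanned by those $X\eta$ for which $X$ involves only generators with $i\gge j$. By Lemma~\ref{lem:wt-changeb}, each such $X\eta$ is a weight vector with
\[
\mathrm{wt}(X\eta)=\varpi(\bm\mu(u))+\sum_{l=1}^{k}(\epsilon_{i_l}-\epsilon_{j_l}),
\]
and since $\epsilon_i-\epsilon_j=-(\alpha_j+\alpha_{j+1}+\cdots+\alpha_{i-1})\in-{\bf Q}_{\gge 0}$ whenever $i>j$ (and is $0$ when $i=j$), this weight is $\lle\varpi(\bm\mu(u))$. This simultaneously yields that $V$ decomposes into its $\glMN$-weight subspaces, that the top weight is $\varpi(\bm\mu(u))$, and that any weight of $V$ distinct from $\varpi(\bm\mu(u))$ is strictly smaller.

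Finally, to pin down the top weight space I would observe that the only monomials $X$ of the above reduced form that can contribute to weight $\varpi(\bm\mu(u))$ are those with $i_l=j_l$ for all $l$, i.e.\ products of Cartan generators, and these act on $\eta$ as scalars by the eigenvector property above; hence $(V)_{\varpi(\bm\mu(u))}=\C\eta$. There is no real obstacle here: the argument is entirely routine, and the only step worth pausing over is the very first one, namely that Corollary~\ref{cor:PBW} genuinely permits a PBW ordering with all raising generators at the right so that the triangular-decomposition reasoning applies. This is where the precise shape of the PBW basis is used together with the highest-weight relations $b_{ij}(u)\eta=0$ for all $i<j$, which hold for arbitrary parity sequence $\s$ and arbitrary $\bm\ve$.
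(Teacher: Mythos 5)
Your argument is correct and is exactly the route the paper intends: the corollary is stated there as an immediate consequence of the PBW basis of $\BMN$ (Corollary \ref{cor:PBW}) together with the weight-shift Lemma \ref{lem:wt-changeb}, which is precisely the triangular-decomposition reasoning you spell out (ordering the PBW monomials so raising generators act first on $\eta$ and then tracking weights of the remaining monomials). The extra observation that $(V)_{\varpi(\bm\mu(u))}=\C\eta$ is a harmless strengthening consistent with how the corollary is used later.
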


Let $V$ be a representation of $\BMN$. Set
$$
V^\circ=\{\eta\in V~|~b_{ij}(u)\eta=0,\ 1\lle i<j\lle \ka\}.
$$
\begin{lem}\label{lem:nontrivial}
If $V$ is a finite-dimensional representation of $\BMN$, then $V^\circ$ is nontrivial.
\end{lem}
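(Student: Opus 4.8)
The plan is to locate a nonzero generalized weight space of maximal weight inside $V^\circ$, mimicking the classical argument that a finite-dimensional module admits a highest weight vector. First I would record, from \eqref{eq:bii-weightb}, that the operators $e_{aa}:=\tfrac12 s_a\ve_a b_{aa}^{(1)}$, $1\lle a\lle\ka$, pairwise commute and satisfy $[e_{aa},b_{ij}^{(r)}]=(\delta_{ai}-\delta_{aj})\,b_{ij}^{(r)}$ for all $i,j$ and all $r\gge 1$. Since $V$ is finite dimensional over $\bC$, the commutative subalgebra $\bC[e_{11},\dots,e_{\ka\ka}]$ of $\End(V)$ decomposes $V$ into finitely many nonzero generalized weight spaces $V=\bigoplus_{\mc w}\widetilde V_{\mc w}$, where $\widetilde V_{\mc w}=\{v\in V:(e_{aa}-\mc w_a)^{\dim V}v=0,\ 1\lle a\lle\ka\}$ and $\mc w=(\mc w_a)_{1\lle a\lle\ka}$ ranges over $\bC^{\ka}$.

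Next I would upgrade Lemma \ref{lem:wt-changeb} to these generalized weight spaces. From the identity $p(e_{aa})\,b_{ij}^{(r)}=b_{ij}^{(r)}\,p\big(e_{aa}+\delta_{ai}-\delta_{aj}\big)$, valid for any polynomial $p$, applied to $v\in\widetilde V_{\mc w}$ with $p(t)=(t-\mc w_a-\delta_{ai}+\delta_{aj})^{\dim V}$, one obtains $(e_{aa}-\mc w_a-\delta_{ai}+\delta_{aj})^{\dim V}\,b_{ij}^{(r)}v=b_{ij}^{(r)}\,(e_{aa}-\mc w_a)^{\dim V}v=0$, hence $b_{ij}^{(r)}\widetilde V_{\mc w}\subseteq\widetilde V_{\mc w+\epsilon_i-\epsilon_j}$ for all $1\lle i,j\lle\ka$ and $r\gge 1$, where I identify the $\epsilon_i$ with the standard basis vectors of $\bC^{\ka}$. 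I expect this to be the only step requiring genuine care: it is precisely where one must work with \emph{generalized} eigenspaces rather than honest ones, since a priori nothing forces the Cartan operators $e_{aa}$ to act semisimply on $V$.

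Finally, among the finitely many $\mc w$ with $\widetilde V_{\mc w}\ne 0$ I would choose one, say $\mc w^{\ast}$, that is maximal for the partial order $\mc w\gge\mc w'\iff\mc w-\mc w'\in{\bf Q}_{\gge 0}$ (antisymmetry holds since ${\bf Q}_{\gge 0}\cap(-{\bf Q}_{\gge 0})=\{0\}$). For $1\lle i<j\lle\ka$ one has $\epsilon_i-\epsilon_j=\alpha_i+\alpha_{i+1}+\cdots+\alpha_{j-1}\in{\bf Q}_{\gge 0}\setminus\{0\}$, so $\mc w^{\ast}+\epsilon_i-\epsilon_j$ strictly exceeds $\mc w^{\ast}$; maximality then forces $\widetilde V_{\mc w^{\ast}+\epsilon_i-\epsilon_j}=0$, and so $b_{ij}^{(r)}\widetilde V_{\mc w^{\ast}}=0$ for all such $i<j$ and all $r\gge 1$. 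Therefore $0\ne\widetilde V_{\mc w^{\ast}}\subseteq V^\circ$, proving the lemma. The overall structure is parallel to the proof of the analogous statement for reflection algebras in \cite{Molev2002reflection}; only the generalized-eigenspace bookkeeping of the second step needs extra attention.
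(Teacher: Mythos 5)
Your proof is correct. It rests on the same two pillars as the paper's argument — the commuting ``Cartan'' operators $\tfrac12 s_a\ve_a b_{aa}^{(1)}$ from \eqref{eq:bii-weightb} and the fact that $b_{ij}^{(r)}$ with $i<j$ strictly raises weights (Lemma \ref{lem:wt-changeb}) — but the mechanism differs: the paper takes a single common eigenvector $\tl\eta$ (which always exists for commuting operators on a finite-dimensional complex space, no semisimplicity needed), assumes $V^\circ=0$, and derives a contradiction by producing an infinite sequence $\tl\eta,\ b_{i_1j_1}^{(r_1)}\tl\eta,\ b_{i_2j_2}^{(r_2)}b_{i_1j_1}^{(r_1)}\tl\eta,\dots$ of nonzero vectors of pairwise distinct weights, hence linearly independent, contradicting $\dim V<\infty$. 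You instead argue directly: decompose $V$ into joint generalized eigenspaces, upgrade the weight-shift lemma to generalized weight spaces, and show the whole generalized weight space of a maximal weight lies in $V^\circ$. Your extra care about generalized eigenspaces is sound but not actually needed: since honest common eigenvectors exist and $b_{ij}^{(r)}$ sends honest weight vectors to honest weight vectors (by the same commutation relation), you could run your maximality argument on the set of honest weights with $(V)_{\mc w}\ne 0$, which is finite and nonempty; this is essentially how the paper avoids the bookkeeping altogether. What your version buys is slightly more information — it exhibits an entire (generalized) weight space of maximal weight inside $V^\circ$ rather than merely the nonvanishing of $V^\circ$ — at the cost of invoking the simultaneous generalized-eigenspace decomposition.
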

\begin{proof}
Since the operators $s_i\ve_ib_{ii}^{(1)}/2$ are pairwise commuting and hence have at least a common eigenvector $\tl\eta\ne 0$ in $V$. Suppose $V^\circ=0$, then there exists an infinite sequence of nonzero vectors in $V$,
\[
\tl\eta,\quad b_{i_1j_1}^{(r_1)}\tl\eta,\quad b_{i_2j_2}^{(r_2)}b_{i_1j_1}^{(r_1)}\tl\eta,\quad \cdots,
\]
where $i_k<j_k$ and $r_k>0$ for all $k\in \Z_{>0}$. It follows from Lemma \ref{lem:wt-changeb} and Corollary \ref{cor:wt-changeb} that the above vectors have different $\glMN$-weights. Therefore, they must be linearly independent and hence we obtain a contradiction as $V$ is finite-dimensional, completing the proof.
\end{proof}

Throughout the paper, for $X,X'\in \BMN$, we shall write $X\equiv X'$ if $X-X'$ belongs to the left ideal of $\BMN$ generated by the coefficients of $b_{ij}(u)$ for $1\lle i<j\lle \ka$.

\begin{lem}\label{lem:invariant}
The space $V^\circ$ is invariant under the operators $b_{rr}(u)$, for $1\lle r\lle \ka$.
\end{lem}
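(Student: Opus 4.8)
We must show that $b_{ij}(v)b_{rr}(u)\eta=0$ for every $\eta\in V^\circ$, every $1\lle r\lle\ka$, and every pair $1\lle i<j\lle\ka$. As $b_{ij}(v)\eta=0$, this is equivalent to $[b_{ij}(v),b_{rr}(u)]\eta=0$. The plan is to first extract two auxiliary identities satisfied by $\eta$ from the defining relations \eqref{eq:comm-series b}, and then to close the argument by a case analysis on the position of $r$ relative to $i$ and $j$.

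The crucial step is to prove that, for all $1\lle i<j\lle\ka$,
\begin{equation}\label{eq:Vcirc-aux}
b_{ij}(u)\,b_{jj}(v)\,\eta=0\qquad\text{and}\qquad\sum_{a=1}^\ka b_{ia}(u)\,b_{aj}(v)\,\eta=0 .
\end{equation}
To obtain the first identity from the second, I would evaluate \eqref{eq:comm-series b} for $[b_{ij}(u),b_{jj}(v)]$ on $\eta$ and use $b_{ij}(u)\eta=b_{ij}(v)\eta=0$, getting $b_{ij}(u)b_{jj}(v)\eta=\tfrac{s_j}{u+v}\sum_a b_{ia}(u)b_{aj}(v)\eta$. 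Next, for each $j<a\lle\ka$, I would evaluate \eqref{eq:comm-series b} for $[b_{ia}(u),b_{aj}(v)]$ on $\eta$, using $b_{ia}(u)\eta=0$ (valid since $i<a$), to get $b_{ia}(u)b_{aj}(v)\eta=s_as_j\,b_{ij}(u)b_{jj}(v)\eta$. Since $b_{aj}(v)\eta=0$ whenever $a<j$, summing over $a$ yields $\sum_a b_{ia}(u)b_{aj}(v)\eta=(1+s_j\rho_{j+1})\,b_{ij}(u)b_{jj}(v)\eta$; comparing this with the first relation gives $\bigl(u+v-s_j-\rho_{j+1}\bigr)b_{ij}(u)b_{jj}(v)\eta=0$. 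Because $b_{ij}(u)b_{jj}(v)\eta$ is a formal series in $u^{-1},v^{-1}$ with no constant term in $u$, extracting the coefficients of $u^{0},u^{-1},\dots$ one at a time forces $b_{ij}(u)b_{jj}(v)\eta=0$, and then the second identity of \eqref{eq:Vcirc-aux} is immediate.

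Granting \eqref{eq:Vcirc-aux}, I would finish by cases. If $r=j$, the first identity of \eqref{eq:Vcirc-aux} with $u,v$ interchanged is exactly $b_{ij}(v)b_{jj}(u)\eta=0$. If $r=i$, then evaluating \eqref{eq:comm-series b} for $[b_{ij}(v),b_{ii}(u)]$ on $\eta$ and using the second identity of \eqref{eq:Vcirc-aux} gives $b_{ij}(v)b_{ii}(u)\eta=\tfrac{s_i}{v-u}\bigl(b_{ij}(v)b_{ii}(u)\eta-b_{ij}(u)b_{ii}(v)\eta\bigr)$; interchanging $u\leftrightarrow v$ leaves the right-hand side unchanged, so it also equals $b_{ij}(u)b_{ii}(v)\eta$, the bracket vanishes, and $b_{ij}(v)b_{ii}(u)\eta=0$. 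If $i<r<j$ or $r>j$, then expanding $[b_{ij}(v),b_{rr}(u)]$ by \eqref{eq:comm-series b} all Kronecker terms drop out and only a multiple of $b_{rj}(v)b_{ir}(u)-b_{rj}(u)b_{ir}(v)$ survives, with $b_{ir}$ acting first on $\eta$; since $i<r$ we have $b_{ir}(\cdot)\eta=0$, so $[b_{ij}(v),b_{rr}(u)]\eta=0$. Finally, if $r<i$, again all Kronecker terms drop out and $[b_{ij}(v),b_{rr}(u)]$ reduces to a multiple of the combination $b_{rj}(v)b_{ir}(u)\eta-b_{rj}(u)b_{ir}(v)\eta$, which is antisymmetric under $u\leftrightarrow v$; hence $b_{ij}(v)b_{rr}(u)\eta$ is symmetric under $u\leftrightarrow v$. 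Expanding $[b_{rj}(v),b_{ir}(u)]$ by \eqref{eq:comm-series b}, using $b_{rj}(v)\eta=0$ (valid since $r<j$) and the second identity of \eqref{eq:Vcirc-aux}, exhibits $b_{rj}(v)b_{ir}(u)\eta$ as a multiple of $b_{ij}(v)b_{rr}(u)\eta-b_{ij}(u)b_{rr}(v)\eta$, which vanishes by the symmetry just noted; therefore $b_{ij}(v)b_{rr}(u)\eta=0$. This exhausts all cases, so $b_{rr}(u)\eta\in V^\circ$.

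The main difficulty is \eqref{eq:Vcirc-aux}. Commuting $b_{jj}$ directly through the relation $b_{ij}(v)\eta=0$ only reproduces a tautology, since the relevant specialization of \eqref{eq:comm-series b} is ``diagonal'' in the unknown $b_{ij}(v)b_{jj}(u)\eta$; the way out is to bring in the off-diagonal relations with middle index $a>j$, sum them, and use that the dependence on the spectral parameter $u$ is rigid. The sign sum $\rho_{j+1}$ enters precisely here, which is where the computation is most sensitive to the parity sequence $\bm s$, and similar care with signs is required throughout the remaining cases.
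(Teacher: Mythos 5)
Your proposal is correct, and it uses the same computational ingredients as the paper but organizes them differently. The paper's proof of this lemma is a reverse induction on $r$ with four cases: it first records exactly your two relations (its \eqref{lem:commun-1}--\eqref{lem:commun-2}, i.e.\ $s_jb_{ij}(u)b_{jk}(v)\equiv\frac{1}{u+v}\sum_{a\gge k}b_{ia}(u)b_{ak}(v)$ and its $j$-independence), uses them to kill $b_{ir}(u)b_{rr}(v)$ when $i<r$ (your first auxiliary identity, which indeed needs no induction), and then invokes the induction hypothesis for the cases $r<i<j$ (via the commutator $[b_{rr}(v),b_{ij}(u)]$, whose $\frac{1}{u^2-v^2}$-term it controls with \eqref{lem:commun-2}) and $r=i<j$ (where it solves a $2\times 2$ system obtained by swapping $u$ and $v$). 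You instead prove $b_{ij}(u)b_{jj}(v)\eta=0$ and $\sum_a b_{ia}(u)b_{aj}(v)\eta=0$ globally for all $i<j$ first, and then handle $r=i$ and $r<i$ with the commutators $[b_{ij}(v),b_{ii}(u)]$ and the pair $[b_{ij}(v),b_{rr}(u)]$, $[b_{rj}(v),b_{ir}(u)]$, whose unwanted sums are exactly your second identity; this removes the induction entirely, and your symmetry-under-$u\leftrightarrow v$ argument at $r=i$ is the paper's $2\times2$ system in disguise. Both routes cost about the same amount of sign bookkeeping; yours is slightly more streamlined, the paper's induction makes it more mechanical to track which products are already known to vanish. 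One small point of rigor (present in the paper as well): identities such as $A=\frac{s_i}{v-u}(A-B)$ and the $u\leftrightarrow v$ swap should be read after clearing the denominator, i.e.\ as $(v-u)A=s_i(A-B)$, after which swapping the formal variables and the injectivity of multiplication by $v-u$ on formal series give $A=B$ and then $A=0$; similarly your extraction argument forcing $(u+v-\rho_j)X=0\Rightarrow X=0$ is sound and is the same ``rigidity of the spectral parameter'' step the paper uses in its cases (2) and in the base case $r=\ka$.
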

\begin{proof}
We prove $b_{ij}(u)b_{rr}(v)\equiv 0$ for $1\lle i<j\lle \ka$ and $1\lle r\lle \ka$ by a reverse induction on $r$.

For the base case $r=\ka$, it is immediate from \eqref{eq:comm-series b} that $b_{ij}(u)b_{\ka \ka}(v)\equiv 0$ for $i<j<\ka$. Similarly, for $i<\ka$, we obtain
$$
b_{i\ka}(u)b_{\ka \ka}(v)\equiv \frac{(-1)^{|i||j|+|i||k|+|j||k|}}{u+v}b_{i\ka}(u)b_{\ka \ka}(v),
$$
which implies $b_{i\ka}(u)b_{\ka \ka}(u)\equiv 0$. Therefore, the base case is established.

Now let $r<\ka$. For $i<j$ and $i<k$, it follows from \eqref{eq:comm-series b} that
\beq\label{lem:commun-1}
s_j b_{ij}(u)b_{jk}(v)\equiv \frac{1}{u+v}\sum_{a=k}^{\ka} b_{ia}(u)b_{ak}(v).
\eeq
Note that the right hand side of \eqref{lem:commun-1} is independent of $j$. Therefore, for $j$ and $j'$ such that $i<j,j'$, we have
\beq\label{lem:commun-2}
s_j b_{ij}(u)b_{jk}(v)\equiv s_{j'} b_{ij'}(u)b_{j'k}(v).
\eeq
In the following, we always assume that $i<j$. We have four cases.

\begin{enumerate}
\item The case $i<r$ and $j\ne r$. It is straightforward from \eqref{eq:comm-series b} that $b_{ij}(u)b_{rr}(v)\equiv 0$.

\item The case $i<r$ and $j=r$. By \eqref{lem:commun-1} and \eqref{lem:commun-2}, we also have
\[
s_r b_{ir}(u)b_{rr}(v)\equiv \frac{s_r}{u+v}b_{ir}(u)b_{rr}(v)\sum_{a=r}^{\ka} s_a,
\]
which gives $b_{ir}(u)b_{rr}(v)\equiv 0$.

\item The case $r<i<j$. Using \eqref{eq:comm-series b} for $[b_{rr}(v),b_{ij}(u)]$, we have
\begin{align*}
b_{ij}(u)b_{rr}(v) \ \equiv \ & \ \frac{1}{u^2-v^2}\Big(\sum_{a=j}^\ka b_{ia}(u)b_{aj}(v)-\sum_{a=j}^\ka b_{ia}(v)b_{aj}(u)\Big)\\
\stackrel{\eqref{lem:commun-2}}{\equiv}  & \ \frac{s_j}{u^2-v^2}(b_{ij}(u)b_{jj}(v)-b_{ij}(v)b_{jj}(u))\sum_{a=j}^{\ka} s_a.
\end{align*}
Thus, $b_{ij}(u)b_{rr}(v) \equiv 0$ as $b_{ij}(u)b_{jj}(v)\equiv b_{ij}(v)b_{jj}(u)\equiv 0$ by induction hypothesis.

\item The case $r=i<j$. By \eqref{eq:comm-series b}, we have
\begin{align*}
s_rb_{rj}(u)b_{rr}(v)\equiv \frac{1}{u-v}(b_{rj}(u)b_{rr}(v)-b_{rj}(v)b_{rr}(u))-\frac{1}{u+v}\sum_{a=j}^\ka b_{ra}(v)b_{aj}(u).
\end{align*}
Note that by \eqref{lem:commun-2}, $s_a b_{ra}(v)b_{aj}(u)\equiv s_j b_{rj}(v)b_{jj}(u)\equiv 0$ for $j\lle a\lle \ka$ by induction hypothesis. We obtain that
\beq\label{lem:commun-3}
\frac{u-v-s_r}{u-v} b_{rj}(u)b_{rr}(v)+\frac{s_r}{u-v}b_{rj}(v)b_{rr}(u)\equiv 0.
\eeq
Interchanging $u$ and $v$, we also have
\beq\label{lem:commun-4}
-\frac{s_r}{u-v} b_{rj}(u)b_{rr}(v)+\frac{u-v+s_r}{u-v}b_{rj}(v)b_{rr}(u)\equiv 0.
\eeq
The system of equations \eqref{lem:commun-3} and \eqref{lem:commun-4} has only zero solution, therefore we conclude that $$b_{rj}(u)b_{rr}(v)\equiv 0.$$
\end{enumerate}
The proof now is complete.
\end{proof}

\begin{lem}\label{lem:commute}
All the operators $b_{rr}(u)$, $1\lle r\lle \ka$, on $V^\circ$ commute.
\end{lem}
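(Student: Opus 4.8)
The plan is to show that for $1 \le r, r' \le \ka$, the commutator $[b_{rr}(u), b_{r'r'}(v)]$ acts as zero on $V^\circ$, by showing $[b_{rr}(u), b_{r'r'}(v)] \equiv 0$ (modulo the left ideal generated by the coefficients of $b_{ij}(u)$, $i<j$) and then invoking Lemma \ref{lem:invariant} to conclude. Write out the defining relation \eqref{eq:comm-series b} with $i=j=r$ and $k=l=r'$. The first line of the right-hand side contributes $(u-v)^{-1}(b_{r'r}(u)b_{rr'}(v) - b_{r'r}(v)b_{rr'}(u))$, the second line contributes $(u+v)^{-1}(\delta_{r'r}\sum_a b_{ra}(u)b_{ar'}(v) - \delta_{rr'}\sum_a b_{r'a}(v)b_{ar}(u))$, and the third line contributes the $\delta_{ij}=\delta_{rr}=1$ term $-(u^2-v^2)^{-1}(\sum_a b_{r'a}(u)b_{ar'}(v) - \sum_a b_{r'a}(v)b_{ar'}(u))$. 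If $r \ne r'$ the second line vanishes, so the claim reduces to showing the first and third lines together are $\equiv 0$; if $r = r'$ everything must cancel.

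First I would dispose of the terms that are manifestly $\equiv 0$. In the first-line term $b_{r'r}(u)b_{rr'}(v)$ with, say, $r < r'$: the factor $b_{rr'}(v)$ has upper index $r$ and lower index $r'$ with $r<r'$, which is \emph{not} of the form $b_{ij}$ with $i<j$ in the \emph{lowering} direction — wait, one must be careful: $b_{ij}(u)$ with $i<j$ is a raising operator and lies in the left ideal, whereas $b_{rr'}(v)$ with $r<r'$ \emph{is} such an operator, so $b_{r'r}(u)b_{rr'}(v) \equiv b_{r'r}(u)(b_{rr'}(v)) $ is not immediately $\equiv 0$ since the ideal is a \emph{left} ideal. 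So instead I would use Lemma \ref{lem:invariant} iteratively together with relations of the type already derived in the proof of Lemma \ref{lem:invariant}: namely \eqref{lem:commun-1}–\eqref{lem:commun-2} give $s_j b_{ij}(u) b_{jk}(v) \equiv (u+v)^{-1}\sum_{a\ge k} b_{ia}(u)b_{ak}(v)$ for $i<j$, $i<k$, and this kind of identity lets one rewrite each sum $\sum_a b_{r'a}(u)b_{ar'}(v)$ appearing in the third line (and in the second when $r=r'$) in terms of products that are eventually killed. The cleanest route is: for the off-diagonal sums, split $\sum_{a=1}^\ka$ according to whether $a<r'$, $a=r'$, or $a>r'$; the $a<r'$ terms have $b_{ar'}$ with $a<r'$ which combined with the already-proven $b_{ij}(u)b_{rr}(v)\equiv 0$ (from Lemma \ref{lem:invariant}) and a short commutation argument reduce to diagonal products $b_{r'r'}b_{r'r'}$, the $a>r'$ terms similarly, and after collecting, the whole expression collapses.

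I expect the main obstacle to be the bookkeeping in the $r=r'$ case: one has three separate sums — one from the second line of \eqref{eq:comm-series b} (with $\delta_{rr}=\delta_{r'r'}=1$) and two from the third line — and the signs $(-1)^{|i||j|+|i||k|+|j||k|}$ with $i=j=k=l=r$ simplify to $1$, but the coefficients $s_a$ appearing when one uses \eqref{lem:commun-2} to shift the summation index must be tracked carefully so that the contribution of $\sum_{a=r}^\ka s_a = \rho_r$ that appears (cf.\ the use of $\rho_k$ in Proposition \ref{prop:highest-weight-inver}) cancels against the analogous factor coming from the $(u+v)^{-1}$ versus $(u^2-v^2)^{-1}$ denominators. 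A clean way to organize this is to first establish, as an intermediate identity valid modulo the ideal, a formula for $\sum_{a=1}^\ka b_{ia}(u)b_{aj}(v)\eta$ for $\eta\in V^\circ$ and $i\le j$ — essentially the unitary condition \eqref{eq:unitary-series} combined with the highest-weight vanishing — which immediately forces the off-diagonal sums to telescope. Once each sum is reduced to diagonal products and scalars, the identity $[b_{rr}(u),b_{r'r'}(v)]\equiv 0$ follows, and since $b_{rr}(u)V^\circ\subseteq V^\circ$ by Lemma \ref{lem:invariant}, the commutator genuinely vanishes on $V^\circ$, completing the proof. \qed
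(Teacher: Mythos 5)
Your overall frame -- prove $[b_{rr}(u),b_{r'r'}(v)]\equiv 0$ modulo the left ideal generated by the coefficients of $b_{ij}(u)$, $i<j$, and then evaluate on $V^\circ$ -- is exactly the paper's, but the execution has a genuine gap, and it begins with a misreading of the ideal itself. Since the ideal is a \emph{left} ideal, its elements are (arbitrary element)$\times$(raising coefficient); hence any product whose \emph{rightmost} factor is a coefficient of some $b_{ij}$ with $i<j$ lies in it and kills $V^\circ$. So $b_{r'r}(u)b_{rr'}(v)$ with $r<r'$, and more generally all terms $b_{r'a}(u)b_{ar'}(v)$ with $a<r'$, are manifestly $\equiv 0$ -- the opposite of what you assert, and no ``short commutation argument'' is needed for them. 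The genuinely problematic terms are precisely the ones you wave away with ``the $a>r'$ terms similarly \dots the whole expression collapses'': the products $b_{ra}(u)b_{ar}(v)$ with $a>r$, i.e.\ the sums $\iota_r(u,v)$ of \eqref{lem:commun-6}, end in a \emph{lowering} generator, are not in the ideal, and are not controlled by Lemma \ref{lem:invariant}. Handling them is the entire content of the lemma. (Note also that the identity \eqref{lem:commun-1} you invoke requires $i<j$ \emph{and} $i<k$, so it does not even apply to $b_{r'a}(u)b_{ar'}(v)$, where $i=k=r'$.)

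What is missing is the mechanism that disposes of $\iota_r$ and $[b_{rr}(u),b_{rr}(v)]$. In the paper one uses \eqref{eq:comm-series b} to express $b_{ra}(u)b_{ar}(v)$ for $a>r$ modulo the ideal, antisymmetrizes in $u,v$, and sums over $a>r$; this produces the coupled relations \eqref{lem:commun-5} and \eqref{lem:commun-7}, which tie $[b_{rr}(u),b_{rr}(v)]$ and $\iota_r(u,v)$ to the same quantities with strictly larger indices, and since $\iota_\ka(u,v)=0$ a reverse induction on $r$ kills everything; the case $r\ne r'$ then follows in one line from the third line of \eqref{eq:comm-series b}. Your proposal offers no substitute for this induction, and the shortcut you do suggest fails: the unitary condition \eqref{eq:unitary-series} constrains $\sum_a b_{ia}(u)b_{aj}(v)$ only along the specialization $v=-u$, so it cannot yield a formula for $\sum_a b_{ia}(u)b_{aj}(v)\eta$ with $u$ and $v$ independent, and hence cannot make the off-diagonal sums ``telescope''. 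Likewise the anticipated cancellation of $\rho$-factors is not how the argument closes: the factor $u+v-\rho_{r+1}$ survives as the coefficient of $\iota_r(u,v)$ in \eqref{lem:commun-7} and is eliminated by the induction, not by cancellation against the denominators. As it stands, the proposal reduces the lemma to an assertion at exactly the point where the proof has to do its work.
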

\begin{proof}
For any $1\lle r\lle \ka$, it follows from \eqref{eq:comm-series b} that
\beq\label{lem:commun-5}
\Big(1-\frac{s_r}{u+v}\Big)[b_{rr}(u),b_{rr}(v)]\equiv \frac{s_r}{u+v}\iota_r(u,v),
\eeq
where
\beq\label{lem:commun-6}
\iota_r(u,v)=\sum_{a=r+1}^\ka(b_{ra}(u)b_{ar}(v)-b_{ra}(v)b_{ar}(u)).
\eeq
Again by \eqref{eq:comm-series b}, for $a > r$, we have
\[
b_{ra}(u)b_{ar}(v)\equiv \frac{s_a}{u-v}(b_{aa}(u)b_{rr}(v)-b_{aa}(v)b_{rr}(u))+\frac{s_a}{u+v}\Big(\sum_{c=r}^\ka b_{rc}(u)b_{cr}(v)-\sum_{c=a}^\ka b_{ac}(v)b_{ca}(u)\Big).
\]
Switching $u$ and $v$ and taking the difference, we obtain
\begin{align*}
 b_{ra}(u)b_{ar}(v)- &\  b_{ra}(v)b_{ar}(u ) \\ \equiv & \
 \frac{s_a}{u+v}\Big([b_{rr}(u),b_{rr}(v)]+[b_{aa}(u),b_{aa}(v)]+\iota_{r}(u,v)+\iota_a(u,v)\Big).
\end{align*}
Summing over $a$ from $r+1$ to $\ka$, we obtain
\beq\label{lem:commun-7}
(u+v-\rho_{r+1})\iota_{r}(u,v) \equiv \rho_{r+1}[b_{rr}(u),b_{rr}(v)]+\sum_{a=r+1}^\ka s_a\Big([b_{aa}(u),b_{aa}(v)]+\iota_a(u,v)\Big)
\eeq
Using \eqref{lem:commun-5} and \eqref{lem:commun-7}, one easily shows that $[b_{rr}(u),b_{rr}(v)]\equiv 0$ and $\iota_r(u,v)\equiv 0$ by a reverse induction on $r$. Therefore, if $i<r$, it follows from \eqref{eq:comm-series b} that
\[
[b_{ii}(u),b_{rr}(v)]\equiv -\frac{1}{u^2-v^2}\Big([b_{rr}(u),b_{rr}(v)]+\iota_r(u,v)\Big)\equiv 0.
\]
Hence we proved that all the operators $b_{rr}(u)$, $1\lle r\lle \ka$, on $V^\circ$ commute.
\end{proof}
Now we are ready to prove the main result of this subsection.
\begin{thm}
Every finite-dimensional irreducible representation $V$ of the twisted super Yangian $\BMN$ is a highest $\ell_{\s,\bm\ve}$-weight representation. Moreover, $V$ contains a unique (up to proportionality) highest $\ell_{\s,\bm\ve}$-weight vector.
\end{thm}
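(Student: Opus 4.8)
The plan is to follow the strategy of \cite[Theorem 4.2]{Molev2002reflection}, combining the preceding lemmas. The starting point is Lemma~\ref{lem:nontrivial}: since $V$ is finite-dimensional, the space $V^\circ = \{\eta\in V \mid b_{ij}(u)\eta = 0,\ 1\lle i<j\lle\ka\}$ is nontrivial. By Lemma~\ref{lem:invariant}, $V^\circ$ is invariant under all the diagonal series $b_{rr}(u)$, and by Lemma~\ref{lem:commute} these operators mutually commute on $V^\circ$. Moreover, expanding $b_{rr}(u) = \delta\ve_r + \sum_{r\gge1} b_{rr}^{(r)}u^{-r}$, all the coefficients $b_{rr}^{(p)}$ act on $V^\circ$ as a commuting family, so over $\bC$ they possess a common eigenvector $\eta\ne 0$ in $V^\circ$. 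This $\eta$ satisfies $b_{ij}(u)\eta = 0$ for $i<j$ and $b_{ii}(u)\eta = \mu_i(u)\eta$ for certain formal series $\mu_i(u)$; that $\mu_i(u)\in \ve_i + u^{-1}\bC[[u^{-1}]]$ is forced by the constant term $\delta_{ij}\ve_i$ in the definition of $b_{ij}(u)$. Hence $\eta$ is a highest $\ell_{\s,\bm\ve}$-weight vector.

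Next I would show $V$ is generated by $\eta$ as a $\BMN$-module. Let $V' = \BMN\cdot\eta \subseteq V$. By Corollary~\ref{cor:wt-changeb}, $V'$ is a highest $\ell_{\s,\bm\ve}$-weight module: it has a $\glMN$-weight decomposition, $\eta$ spans the top weight space $(V')_{\varpi(\bm\mu(u))}$, and all other weights of $V'$ are strictly smaller than $\varpi(\bm\mu(u))$ with respect to $\gge$. Since $V$ is irreducible and $V'\ne 0$, we get $V' = V$, so $V$ itself is a highest $\ell_{\s,\bm\ve}$-weight representation with highest weight $\bm\mu(u)$. In particular the weight-space decomposition of $V$ has one-dimensional top component $(V)_{\varpi(\bm\mu(u))} = \bC\eta$, with every other weight strictly smaller.

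For uniqueness: suppose $\eta'\in V$ is another highest $\ell_{\s,\bm\ve}$-weight vector, say of highest $\ell_{\s,\bm\ve}$-weight $\bm\mu'(u)$. Then $\BMN\cdot\eta' = V$ by irreducibility, so by Corollary~\ref{cor:wt-changeb} applied to this generator, every weight of $V$ is $\lle \varpi(\bm\mu'(u))$; in particular $\varpi(\bm\mu(u)) \lle \varpi(\bm\mu'(u))$. By the symmetric argument with the roles of $\eta,\eta'$ reversed, $\varpi(\bm\mu'(u)) \lle \varpi(\bm\mu(u))$, so $\varpi(\bm\mu(u)) = \varpi(\bm\mu'(u))$, forcing $\eta'\in (V)_{\varpi(\bm\mu(u))} = \bC\eta$. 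Thus $\eta'$ is proportional to $\eta$, which also yields $\bm\mu'(u) = \bm\mu(u)$.

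The main obstacle is entirely upstream: the technical heart is Lemma~\ref{lem:invariant} (that $V^\circ$ is $b_{rr}(u)$-invariant) and Lemma~\ref{lem:commute} (commutativity on $V^\circ$), whose proofs require a careful reverse induction on $r$ through the reflection-equation relations \eqref{eq:comm-series b} with the parity signs $s_i$ and the reflection signs $\ve_i$ correctly tracked; once those are in hand, the proof of the theorem is the short, formal argument above. No genuinely new difficulty arises in this final step beyond invoking the established lemmas and the PBW-type Corollary~\ref{cor:PBW} (implicit in Corollary~\ref{cor:wt-changeb}).
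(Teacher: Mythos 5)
Your proposal is correct and follows essentially the same route as the paper: nontriviality of $V^\circ$ (Lemma \ref{lem:nontrivial}), invariance and commutativity of the $b_{rr}(u)$ on $V^\circ$ (Lemmas \ref{lem:invariant}, \ref{lem:commute}) to produce a common eigenvector, irreducibility to see it generates $V$, and Corollary \ref{cor:wt-changeb} for uniqueness. The extra weight-comparison detail you spell out for uniqueness is exactly what the paper's appeal to that corollary encapsulates.
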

\begin{proof}
By Lemma \ref{lem:nontrivial}, $V^\circ$ is nontrivial. Hence it follows from Lemmas \ref{lem:invariant}, \ref{lem:commute} that $V^\circ$ contains a common eigenvector $\eta\ne 0$ for all operators $b_{rr}(u)$, $1\lle r\lle \ka$. Therefore, the vector $\eta$ satisfies \eqref{eq:highest-B} for some formal series $\mu_i(u)$.

Consider the submodule $\BMN\eta$ in $V$, as $V$ is irreducible, we conclude that $\BMN\eta$ coincides with $V$. The uniqueness of $\eta$ (up to proportionality) follows from Corollary \ref{cor:wt-changeb}.
\end{proof}

\subsection{Verma modules}
For any $\ka$-tuple $\bm\mu(u)=(\mu_i(u))_{1\lle i\lle \ka}$, where $\mu_i(u)\in \ve_i+u^{-1}\C[[u^{-1}]]$, denote by $M(\bm\mu(u))$ the quotient of $\BMN$ by the left ideal generated by all coefficients of the series $b_{ij}(u)$, for $1\lle i<j\lle \ka$, and $b_{ii}(u)-\mu_i(u)$, for $1\lle i\lle \ka$. We call $M(\bm\mu(u))$ the {\it Verma module} with highest $\ell_{\s,\bm\ve}$-weight $\bm\mu(u)$.

The Verma module $M(\bm\mu(u))$ may be trivial due to nontrivial relations. If $M(\bm\mu(u))$ is nontrivial, then denote by $V(\bm\mu(u))$ the unique irreducible quotient. Clearly, any irreducible highest $\ell_{\s,\bm\ve}$-weight module of $\BMN$ with highest $\ell_{\s,\bm\ve}$-weight $\bm\mu(u)$ is isomorphic to $V(\bm\mu(u))$.

In the rest of this subsection, we discuss the sufficient and necessary condition for $M(\bm\mu(u))$ being nontrivial.

Before stating and proving the theorem, we prepare a few lemmas that will be useful. For each $1\lle i\lle \ka$, set
\beq\label{eq:beta}
\beta_i(u,v)=\sum_{a=i}^\ka b_{ia}(u)b_{ai}(v).
\eeq
\begin{lem}
For $1\lle i<\ka$, if $u+v=\rho_{i+1}$, then we have
\begin{align*}
b_{ii}(u)b_{ii}(v)\ +&\ \frac{1}{u-v}\sum_{a=i+1}^\ka s_a(b_{aa}(u)b_{ii}(v)-b_{aa}(v)b_{ii}(u))\\
\equiv &\
b_{i+1,i+1}(u)b_{i+1,i+1}(v)+\frac{1}{u-v}\sum_{a=i+2}^\ka s_a(b_{aa}(u)b_{i+1,i+1}(v)-b_{aa}(v)b_{i+1,i+1}(u)).
\end{align*}
\end{lem}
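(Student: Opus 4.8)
The plan is to reduce the identity to the commutator computations already performed in the proof of Lemma~\ref{lem:commute}, working throughout modulo the left ideal (i.e.\ with $\equiv$). Denote by $\Gamma_i(u,v)$ the left-hand side of the asserted congruence, so that its right-hand side is $\Gamma_{i+1}(u,v)$, with the conventions $\Gamma_\ka(u,v)=b_{\ka\ka}(u)b_{\ka\ka}(v)$ (empty sum) and $\rho_{\ka+1}=0$; recall $\beta_i(u,v)=\sum_{a=i}^\ka b_{ia}(u)b_{ai}(v)$ from \eqref{eq:beta}, and put $\Psi_k(u,v)=\sum_{a=k}^\ka s_a\,\beta_a(u,v)$ with $\Psi_{\ka+1}=0$. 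First I would record the symmetry $\beta_a(u,v)\equiv\beta_a(v,u)$: writing $\beta_a(u,v)-\beta_a(v,u)=[b_{aa}(u),b_{aa}(v)]+\iota_a(u,v)$ with $\iota_a$ as in \eqref{lem:commun-6}, both summands vanish modulo the ideal by Lemma~\ref{lem:commute} and its proof.

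The heart of the argument is the congruence
\beq\label{eq:beta-mod}
\Big(1-\tfrac{\rho_{i+1}}{u+v}\Big)\beta_i(u,v)\ \equiv\ \Gamma_i(u,v)-\tfrac{1}{u+v}\Psi_{i+1}(u,v),\qquad 1\lle i\lle\ka
\eeq
(equivalently, after clearing denominators, $(u+v-\rho_{i+1})\beta_i(u,v)\equiv(u+v)\Gamma_i(u,v)-\Psi_{i+1}(u,v)$). To obtain it, I would take the congruence for $b_{ra}(u)b_{ar}(v)$ with $a>r$ displayed in the proof of Lemma~\ref{lem:commute}, specialize $r=i$, sum over $a=i+1,\dots,\ka$, and add $b_{ii}(u)b_{ii}(v)$ to both sides so the left-hand side becomes $\beta_i(u,v)$. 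On the right-hand side the $\tfrac{1}{u-v}$-terms assemble into exactly the sum occurring in $\Gamma_i(u,v)$; the $\tfrac{1}{u+v}$-terms produce $\tfrac{1}{u+v}\big(\rho_{i+1}\beta_i(u,v)-\sum_{a=i+1}^\ka s_a\beta_a(v,u)\big)$ after using $\sum_{a=i+1}^\ka s_a=\rho_{i+1}$, and the last sum is replaced by $\Psi_{i+1}(u,v)$ via the symmetry above. Rearranging gives \eqref{eq:beta-mod}.

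Next I would apply \eqref{eq:beta-mod} for the indices $i$ and $i+1$ and subtract. Using $\rho_{i+1}=s_{i+1}+\rho_{i+2}$ and $\Psi_{i+1}=s_{i+1}\beta_{i+1}+\Psi_{i+2}$, the $\Psi$-terms and the coefficients of $\beta_{i+1}$ collapse, and what remains is the clean formula
\[
\Gamma_i(u,v)-\Gamma_{i+1}(u,v)\ \equiv\ \Big(1-\tfrac{\rho_{i+1}}{u+v}\Big)\big(\beta_i(u,v)-\beta_{i+1}(u,v)\big).
\]
Specializing $u+v=\rho_{i+1}$ annihilates the right-hand side, which is the claim whenever $\rho_{i+1}\neq 0$. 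In the degenerate case $\rho_{i+1}=0$ the last display reads $\Gamma_i(u,v)-\Gamma_{i+1}(u,v)\equiv\beta_i(u,v)-\beta_{i+1}(u,v)$ as an identity in $u,v$ (the factors $u\pm v$ act injectively on the relevant spaces of Laurent series over $\BMN$, so no poles obstruct this), and then putting $v=-u$ finishes the proof because $\beta_i(u,-u)=\beta_{i+1}(u,-u)=1$ by the unitary condition \eqref{eq:unitary-series}.

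The only genuine work is in establishing \eqref{eq:beta-mod}: it is a bookkeeping exercise, but one must be careful about the nested index ranges and about keeping the $\tfrac1{u-v}$- and $\tfrac1{u+v}$-terms separate. No new relation is needed beyond those used in Lemmas~\ref{lem:invariant} and~\ref{lem:commute}, so the main obstacle is organizational rather than conceptual.
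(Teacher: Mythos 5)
Your proof is correct and takes essentially the same route as the paper: your key congruence for $\beta_i$ is exactly \eqref{eq:lem-nontrivial-1} (after symmetrizing the $\beta_a$'s), subtracting it for the indices $i$ and $i+1$ with $\rho_{i+1}=s_{i+1}+\rho_{i+2}$ reproduces \eqref{eq:lem-nontrivial-3}, and the conclusion comes from specializing $u+v=\rho_{i+1}$. The only deviations are that you import the symmetry $\beta_a(u,v)\equiv\beta_a(v,u)$ directly from the congruences $[b_{aa}(u),b_{aa}(v)]\equiv 0$ and $\iota_a(u,v)\equiv 0$ already established in the proof of Lemma~\ref{lem:commute}, rather than rederiving it via \eqref{eq:lem-nontrivial-2}, and that you treat the degenerate case $\rho_{i+1}=0$ separately using the unitary condition \eqref{eq:unitary-series} --- a case in which the prefactor $\frac{u+v-\rho_{i+1}}{u+v}$ equals $1$ rather than vanishing, which the paper's ``follows immediately'' passes over, so this extra step is a correct and welcome refinement.
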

\begin{proof}
For $a>i$, it follows from \eqref{eq:comm-series b} that
\[
b_{ia}(u)b_{ai}(v)\equiv \frac{s_a}{u-v}\big(b_{aa}(u)b_{ii}(v)-b_{aa}(v)b_{ii}(u)\big)+\frac{s_a}{u+v}\big(\beta_i(u,v)-\beta_a(v,u)\big).
\]
Summing over $a$ from $i+1$ to $\ka$, we have
\beq\label{eq:lem-nontrivial-1}
\begin{split}
    \frac{u+v-\rho_{i+1}}{u+v}\beta_i(u,v)\equiv &\ b_{ii}(u)b_{ii}(v)-\frac{1}{u+v}\sum_{a=i+1}^\ka s_a\beta_a(v,u)\\
    & \ +\frac{1}{u-v}\sum_{a=i+1}^\ka s_a\big(b_{aa}(u)b_{ii}(v)-b_{aa}(v)b_{ii}(u)\big).
\end{split}
\eeq
Interchanging $u$ and $v$ and taking the difference, we obtain
\beq\label{eq:lem-nontrivial-2}
\frac{u+v-\rho_{i+1}}{u+v}\big(\beta_i(u,v)-\beta_i(v,u)\big)\equiv \frac{1}{u+v}\sum_{a=i+1}^\ka s_a\big(\beta_a(u,v)-\beta_a(v,u)\big),
\eeq
where we also used that $b_{ii}(u)b_{ii}(v)\equiv b_{ii}(v)b_{ii}(u)$, see Lemma \ref{lem:commute}. Note that $\beta_\ka(u,v)\equiv \beta_\ka(v,u)$, one easily shows by a reverse induction on $i$ that $\beta_i(u,v)\equiv \beta_i(v,u)$ using \eqref{eq:lem-nontrivial-2}.

Applying \eqref{eq:lem-nontrivial-1} for $i$ and $i+1$ and using $\beta_i(u,v)\equiv \beta_i(v,u)$, one has
\beq\label{eq:lem-nontrivial-3}
\begin{split}
    \frac{u+v-\rho_{i+1}}{u+v}&\, \big(\beta_i(u,v)-\beta_{i+1}(u,v)\big)\\
    \equiv   b_{ii}(u)&\, b_{ii}(v) -b_{i+1,i+1}(u)b_{i+1,i+1}(v)+\frac{1}{u-v}\sum_{a=i+1}^\ka s_a\big(b_{aa}(u)b_{ii}(v)-b_{aa}(v)b_{ii}(u)\big)\\
    &\ \qquad\qquad \qquad \qquad  -\frac{1}{u-v}\sum_{a=i+2}^\ka s_a\big(b_{aa}(u)b_{i+1,i+1}(v)-b_{aa}(v)b_{i+1,i+1}(u)\big).
\end{split}
\eeq
Now the statement follows immediately if $u+v=\rho_{i+1}$.
\end{proof}
It is convenient to set, for $1\lle i\lle \ka$,
\beq\label{eq:def-tl-b}
\varpi_{i} :=\sum_{j=i}^{\ka}\ve_js_j,\qquad 
\tl b_{ii}(u) :=(2u-\rho_{i+1})b_{ii}(u)+\sum_{a=i+1}^\ka s_ab_{aa}(u).
\eeq
\begin{lem}\label{lem:b-in-t}
Regard $\BMN$ as a subalgebra of $\YMN$ as in Proposition \ref{thm:embedding}. Then we have
\beq
\tl b_{ii}(u)\approx \big(2\ve_i u-\ve_i\rho_{i+1}+\varpi_{i+1}\big)t_{ii}(u)t_{ii}'(-u),
\eeq
where $A \approx B$ if $A\xi =B\xi$ for any highest $\ell_\s$-weight vector $\xi$.
\end{lem}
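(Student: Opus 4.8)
The goal is to express $\tl b_{ii}(u) = (2u-\rho_{i+1})b_{ii}(u) + \sum_{a=i+1}^\ka s_a b_{aa}(u)$ in terms of the super Yangian generators $t_{ii}(u)$ and $t_{ii}'(-u)$, modulo the left ideal $\YMNp$ (i.e. up to terms killed by a highest $\ell_\s$-weight vector). The natural starting point is the embedding of Proposition \ref{thm:embedding}, which gives $b_{ii}(u) = \mathtt s_{ii}(u) = \sum_{a=1}^\ka \ve_a\, t_{ia}(u)\, t_{ai}'(-u)$ inside $\YMN$. Thus $\sum_{a=i+1}^\ka s_a b_{aa}(u) = \sum_{a=i+1}^\ka \sum_{c=1}^\ka s_a\ve_c\, t_{ac}(u)\, t_{ca}'(-u)$, and I want to collapse these double sums against a highest $\ell_\s$-weight vector $\xi$.

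First I would apply Lemma \ref{lem:tia-kill}: acting on $\xi$, the term $t_{ac}(u)\, t_{ca}'(-u)$ vanishes whenever it is of the shape forbidden there. Concretely, $t_{ia}(u) t_{ci}'(v)\xi = 0$ for $1\lle i\lle\ka$, $1\lle c<a\lle\ka$, and $t_{ia}(u)t_{cj}'(v)\xi=0$ for $i<j$, $c\lle a$. Combined with Proposition \ref{prop:t'-l-weight} ($t_{cj}'(v)\xi=0$ for $c<j$, and $t_{jj}'(v)\xi = \la_j'(v)\xi$) and the highest-weight property ($t_{ij}(u)\xi=0$ for $i<j$, $t_{jj}(u)\xi=\la_j(u)\xi$), the surviving contributions of $t_{ac}(u)t_{ca}'(-u)\xi$ should be only the "diagonal-type" ones. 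The plan is: for $b_{ii}(u)\xi$, only $a$ with the right ordering survives, giving something like $\ve_i\,\la_i(u)\la_i'(-u)\,\xi$ plus lower terms; for the tail $\sum_{a>i}s_a b_{aa}(u)\xi$, a telescoping/reindexing of the surviving terms should occur because $\sum_{c}s_a\ve_c t_{ac}(u)t_{ca}'(-u)$ with $a>i$ picks up off-diagonal pieces $t_{ac}$ with $c>i$ that do not individually die but combine. I expect the key algebraic identity to be a consequence of the unitary condition $\sum_a b_{ia}(u)b_{ai}(-u)=\delta_{ii}=1$, or equivalently of $T(u)GT^{-1}(-u)$ being unitary, which forces the sum of the tail to combine with $b_{ii}(u)$ into the single scalar factor $(2\ve_i u - \ve_i\rho_{i+1} + \varpi_{i+1})\,t_{ii}(u)t_{ii}'(-u)$.

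A cleaner route, which I would pursue in parallel, is to derive the claim directly from the reflection-algebra relations \eqref{eq:comm-series b} specialized and applied to $\xi$, together with Proposition \ref{prop:highest-weight-inver}: namely, evaluate the relation for $[b_{ii}(u),b_{kl}(v)]$ or use the previous Lemma (the one displayed just before the statement, giving the $u+v=\rho_{i+1}$ identity relating $\beta_i$ and $\beta_{i+1}$) to telescope $\tl b_{ii}(u)$ down to $\tl b_{\ka\ka}(u)$, and then check the base case $i=\ka$ by hand. Indeed the previous lemma, with $u+v=\rho_{i+1}$, precisely equates the combination $b_{ii}(u)b_{ii}(v) + \frac{1}{u-v}\sum_{a>i}s_a(\cdots)$ for consecutive indices — and multiplying through by $(u-v)$ and taking $v\to u$ suitably (or comparing coefficients) should convert that $\beta$-identity into a recursion for $\tl b_{ii}(u)\xi$ matching $\tl b_{i+1,i+1}(u)\xi$ up to the scalar shift $s_i$ in $\rho$ and $\varpi$. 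The base case $i=\ka$ reads $\tl b_{\ka\ka}(u) = 2u\, b_{\ka\ka}(u)$, and since $\rho_{\ka+1}=0$, $\varpi_{\ka+1}=0$, one needs $2u\,b_{\ka\ka}(u)\xi = 2\ve_\ka u\, t_{\ka\ka}(u)t_{\ka\ka}'(-u)\xi$, which follows because all other terms in $\mathtt s_{\ka\ka}(u)=\sum_a \ve_a t_{\ka a}(u)t_{a\ka}'(-u)$ acting on $\xi$ vanish by Lemma \ref{lem:tia-kill} (for $a<\ka$, $t_{\ka a}$ is a lowering-in-the-wrong-way operator whose partner dies) — leaving only $a=\ka$.

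**Main obstacle.** The delicate point is controlling the "lower order" cross terms: $t_{ia}(u)t_{ai}'(-u)\xi$ for $a$ in the "wrong" range is not literally zero from a single application of Proposition \ref{prop:t'-l-weight}; one genuinely needs the two-step arguments of Lemma \ref{lem:tia-kill} (the inductive "$a=c$, set $a=j$" trick), and keeping track of the exact sign factors $(-1)^{|i||j|+\cdots}$ and the parities $s_a,\ve_a$ while telescoping is where errors creep in. I expect the bulk of the work to be: (i) verifying that the off-diagonal survivors reorganize into exactly $\sum_{a>i}s_a$ and $\sum_{a>i}\ve_a s_a$ coefficients — i.e. that the scalar that emerges is precisely $2\ve_i u - \ve_i\rho_{i+1} + \varpi_{i+1}$ and not some other linear combination — and (ii) confirming that the recursion from the preceding $\beta$-lemma has the right inhomogeneous term. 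Both are bookkeeping-heavy but structurally routine once the $i=\ka$ base case and one inductive step are pinned down.
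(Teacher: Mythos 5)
Your setup is on the right track --- the embedding $b_{ii}(u)=\sum_a\ve_a t_{ia}(u)t_{ai}'(-u)$, the use of Proposition \ref{prop:t'-l-weight} and Lemma \ref{lem:tia-kill}, and the base case $i=\ka$ (where indeed only $a=\ka$ survives, giving $\tl b_{\ka\ka}(u)\xi=2\ve_\ka u\,t_{\ka\ka}(u)t'_{\ka\ka}(-u)\xi$) are all correct. But the heart of the lemma --- how the non-vanishing cross terms $t_{ia}(u)t_{ai}'(-u)\xi$ for $a>i$ recombine with $\sum_{a>i}s_a b_{aa}(u)\xi$ to produce exactly the scalar $2\ve_i u-\ve_i\rho_{i+1}+\varpi_{i+1}$ --- is left as an expectation, and neither of the two mechanisms you propose for it will deliver the identity. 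The unitary condition $\sum_a b_{ia}(u)b_{ai}(-u)=\delta_{ij}$ is quadratic in the $b$'s and cannot collapse the expression $\tl b_{ii}(u)$, which is linear in the $b_{aa}(u)$; it plays no role here. The paper instead works entirely with the $t$--$t'$ commutation relation \eqref{eq:tt'}: it introduces $\psi_i(u)=\sum_{a\gge i}t_{ia}(u)t_{ai}'(-u)$ and $\wp_i(u)=\sum_{a\gge i}t_{ia}'(-u)t_{ai}(u)$, derives $t_{ia}(u)t_{ai}'(-u)\approx\frac{s_a}{2u}(\psi_i(u)-\wp_a(u))$ for $a>i$, proves $\psi_i\approx\wp_i$ by a downward induction, and then solves the resulting triangular linear system relating $b_{ii}(u)$, $t_{ii}(u)t_{ii}'(-u)$ and the $\wp_a(u)$. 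Some such explicit reduction is unavoidable; ``structurally routine bookkeeping'' does not substitute for it, because the coefficient $\varpi_{i+1}=\sum_{a>i}\ve_a s_a$ only emerges from this system.

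Your ``cleaner route'' via the lemma preceding the statement (the $\beta_i$-identity) also does not work as described. That identity is internal to $\BMN$, holds only modulo the left ideal generated by the $b_{kl}(u)$ with $k<l$, is quadratic in the $b$'s, and is valid only on the locus $u+v=\rho_{i+1}$; in particular you cannot ``take $v\to u$'' since $v$ is pinned to $\rho_{i+1}-u$. Telescoping it from $i$ to $\ka$ yields relations among products such as $\tl\mu_i(u)\tl\mu_i(\rho_{i+1}-u)$ (this is exactly how the paper uses it, to get the necessity of \eqref{thm:nontrivial-i} in Theorem \ref{thmnontrivial}), but it can never produce the conclusion of Lemma \ref{lem:b-in-t}, whose right-hand side involves the super Yangian quantity $t_{ii}(u)t_{ii}'(-u)$, an object that does not lie in $\BMN$ at all. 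So as written the proposal has a genuine gap precisely at the step the lemma is about.
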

\begin{proof}
For $1\lle i\lle \ka$, set
\[
\psi_i(u)=\sum_{a=i}^\ka t_{ia}(u)t_{ai}'(-u),\quad \wp_i(u)=\sum_{a=i}^\ka t_{ia}'(-u)t_{ai}(u).
\]
By \eqref{eq:tt'} and Proposition \ref{prop:t'-l-weight}, for $a>i$, we have
\beq\label{eq:inlemb-t}
t_{ia}(u)t_{ai}'(-u)\approx \frac{s_a}{2u}(\psi_i(u)-\wp_a(u)),\quad [t_{ii}(u),t_{ii}'(-u)]\approx\frac{s_i}{2u}(\psi_i(u)-\wp_i(u)).
\eeq
Therefore, we obtain
\beq\label{eq:psi}
\psi_i(u)\approx t_{ii}(u)t_{ii}'(-u)+\sum_{a=i+1}^\ka \frac{s_a}{2u}(\psi_i(u)-\wp_a(u)),\quad 1\lle i\lle \ka.
\eeq
Similarly, one has
\beq\label{eq:wp}
\wp_i(u)\approx t_{ii}'(-u)t_{ii}(u)+\sum_{a=i+1}^\ka\frac{s_a}{2u}(\wp_i(u)-\psi_a(u)),\quad 1\lle i\lle \ka.
\eeq
By Proposition \ref{prop:t'-l-weight}, we have $t_{ii}(u)t_{ii}'(-u)\approx t_{ii}'(-u)t_{ii}(u)$ for all $1\lle i\lle \ka$. Also note that it follows from \eqref{eq:inlemb-t} that $\psi_\ka(u)\approx \wp_\ka(u)$. By a reverse induction and using \eqref{eq:psi}, \eqref{eq:wp}, one proves that $\psi_i(u)\approx \wp_i(u)$ for all $1\lle i\lle \ka$. Hence, we have
\beq\label{eq:inlem-b-t-wp2}
(2u-\rho_{i+1})\wp_i(u)\approx 2ut_{ii}(u)t_{ii}'(-u)-\sum_{a=i+1}^\ka s_a \wp_a(u),\quad 1\lle i\lle \ka.
\eeq

On the other hand, it follows from \eqref{eq:embed-expl} and \eqref{eq:inlemb-t} that
\beq\label{eq:inlem-b-t-wp1}
b_{ii}(u)\approx \ve_i t_{ii}(u)t_{ii}'(-u) +\sum_{a=i+1}^\ka \frac{\ve_as_a}{2u}(\wp_i(u)-\wp_a(u)),\quad 1\lle i\lle \ka.
\eeq
Solving $b_{ii}(u)$ and $t_{ii}(u)t_{ii}'(-u)$ in terms of $\wp_{j}(u)$ from the system of equations \eqref{eq:inlem-b-t-wp2} and \eqref{eq:inlem-b-t-wp1}, it is not hard  to see by a brute force computation that
\[
(2u-\rho_{i+1})b_{ii}(u)+\sum_{a=i+1}^\ka s_ab_{aa}(u)\approx \big(2\ve_i u-\ve_i\rho_{i+1}+\varpi_{i+1}\big)t_{ii}(u)t_{ii}'(-u).\qedhere
\]
\end{proof}

Now we are ready to prove the main theorem of this subsection.
\begin{thm}\label{thmnontrivial}
The Verma module $M(\bm\mu(u))$ is nontrivial if and only if
\beq\label{thm:nontrivial-N}
\mu_\ka(u)\mu_{\ka}(-u)=1,
\eeq
and for $1\lle i < \ka$, the following conditions are satisfied
\beq\label{thm:nontrivial-i}
\tilde \mu_i(u)\tilde\mu_i(-u+\rho_{i+1})=\tilde \mu_{i+1}(u)\tilde\mu_{i+1}(-u+\rho_{i+1}),
\eeq
where
\beq\label{eq:mu-tilde}
\tilde \mu_i(u)=(2u-\rho_{i+1})\mu_i(u)+\sum_{a=i+1}^\ka s_a\mu_a(u).
\eeq
\end{thm}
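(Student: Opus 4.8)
The plan is to prove both implications by comparing $\BMN$-highest weights with $\YMN$-highest weights through the embedding $\varphi\colon\BMN\hookrightarrow\YMN$ of Proposition~\ref{thm:embedding}.

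\textbf{Necessity.} Suppose $M(\bm\mu(u))$ is nontrivial, with highest $\ell_{\s,\bm\ve}$-weight vector $\eta$. The relation \eqref{thm:nontrivial-N} follows at once from the unitary condition \eqref{eq:unitary-series} with $i=j=\ka$ applied to $\eta$: every term with $a<\ka$ contains the factor $b_{a\ka}(-u)$, which kills $\eta$, so only $b_{\ka\ka}(u)b_{\ka\ka}(-u)\eta=\mu_\ka(u)\mu_\ka(-u)\eta=\eta$ survives. For \eqref{thm:nontrivial-i} I would apply the lemma immediately preceding Lemma~\ref{lem:b-in-t}, specialised to $u+v=\rho_{i+1}$, to $\eta$: the coefficients of $b_{jk}(u)$ for $j<k$ annihilate $\eta$ and $b_{aa}(u)\eta=\mu_a(u)\eta$, so the lemma becomes a numerical identity among the $\mu_a$. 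Multiplying it by $-(2u-\rho_{i+1})^2$ and regrouping (all the $\mu_a(u),\mu_a(-u+\rho_{i+1})$ now being commuting scalars), with the help of $\rho_{i+1}=s_{i+1}+\rho_{i+2}$ and the definition \eqref{eq:mu-tilde}, rewrites it precisely as $\tilde\mu_i(u)\tilde\mu_i(-u+\rho_{i+1})=\tilde\mu_{i+1}(u)\tilde\mu_{i+1}(-u+\rho_{i+1})$.

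\textbf{Sufficiency.} Assume \eqref{thm:nontrivial-N} and \eqref{thm:nontrivial-i}. The first step is the observation that, for any highest $\ell_\s$-weight vector $\xi$ of a $\YMN$-module of highest $\ell_\s$-weight $\bla(u)$, the vector $\xi$ is, upon restriction along $\varphi$ (so that $b_{ij}(u)=\sum_a\ve_a t_{ia}(u)t'_{aj}(-u)$ by \eqref{eq:embed-expl}), a highest $\ell_{\s,\bm\ve}$-weight vector: $b_{ij}(u)\xi=0$ for $i<j$ by Lemma~\ref{lem:tia-kill}, and by Lemma~\ref{lem:b-in-t} combined with Proposition~\ref{prop:highest-weight-inver} its highest $\ell_{\s,\bm\ve}$-weight $\bm\mu(u)$ satisfies $\tilde\mu_i(u)=(2\ve_iu-\ve_i\rho_{i+1}+\varpi_{i+1})\,\nu_i(u)$, where $\nu_i(u):=\la_i(u)\la'_i(-u)$ and $\la'_i$ is the series of Proposition~\ref{prop:highest-weight-inver} (the $\mu_i(u)$ being recovered from the $\tilde\mu_j(u)$ via \eqref{eq:mu-tilde}). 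It therefore suffices to exhibit, for the given $\bm\mu(u)$, a $\YMN$-highest weight $\bla(u)$ whose associated weight is this $\bm\mu(u)$; then the highest weight vector of $L(\bla(u))$ generates inside $L(\bla(u))$ a nonzero highest $\ell_{\s,\bm\ve}$-weight $\BMN$-module of weight $\bm\mu(u)$, onto which $M(\bm\mu(u))$ surjects, forcing $M(\bm\mu(u))\neq0$.

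To construct $\bla(u)$: the $\nu_i(u)\in1+u^{-1}\C[[u^{-1}]]$ are determined by $\bm\mu(u)$ upon dividing $\tilde\mu_i(u)$ by the invertible polynomial prefactor $2\ve_iu-\ve_i\rho_{i+1}+\varpi_{i+1}$. A short computation using $\rho_{i+1}=s_{i+1}+\rho_{i+2}$ and $\varpi_{i+1}=\ve_{i+1}s_{i+1}+\varpi_{i+2}$ shows that the prefactors contribute the \emph{same} quadratic $\varpi_{i+1}^2-(2u-\rho_{i+1})^2$ to the products in \eqref{thm:nontrivial-i} for the indices $i$ and $i+1$; hence \eqref{thm:nontrivial-i} is equivalent to $\nu_i(u)\nu_i(-u+\rho_{i+1})=\nu_{i+1}(u)\nu_{i+1}(-u+\rho_{i+1})$ and \eqref{thm:nontrivial-N} to $\nu_\ka(u)\nu_\ka(-u)=1$. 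On the Yangian side, Proposition~\ref{prop:highest-weight-inver} gives, for all $\bla(u)$, the identities $\nu_\ka(u)=\la_\ka(u)/\la_\ka(-u)$ and $\nu_i(u)/\nu_{i+1}(u)=r_i(u)/r_i(-u+\rho_{i+1})$ with $r_i(u):=\la_i(u)/\la_{i+1}(u)$ (this follows from $\la'_i(u)/\la'_{i+1}(u)=1/r_i(u+\rho_{i+1})$). The conditions just obtained are exactly the solvability conditions for these ``twisted'' multiplicative equations, which have the canonical solutions $\la_\ka(u)=\nu_\ka(u)^{1/2}$ and $r_i(u)=\bigl(\nu_i(u)/\nu_{i+1}(u)\bigr)^{1/2}$ (canonical square roots in $1+u^{-1}\C[[u^{-1}]]$); then $\la_i(u):=r_i(u)r_{i+1}(u)\cdots r_{\ka-1}(u)\la_\ka(u)$ defines the required $\bla(u)$, and a telescoping check recovers the desired $\nu_i$, hence $\bm\mu$, back. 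The routine part is the $\BMN$-side commutator algebra; the main obstacle is this last bookkeeping for sufficiency --- isolating the telescoping identity for $\la'_i/\la'_{i+1}$ from Proposition~\ref{prop:highest-weight-inver} in the presence of the parity sequence $\s$, verifying the $i$-independence of $\varpi_{i+1}^2-(2u-\rho_{i+1})^2$, and matching \eqref{thm:nontrivial-N}, \eqref{thm:nontrivial-i} with the solvability of the twisted multiplicative equations.
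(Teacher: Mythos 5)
Your proposal is correct and follows essentially the same route as the paper: necessity from the unitary condition together with the lemma at $u+v=\rho_{i+1}$ applied to the highest weight vector, and sufficiency by restricting a highest $\ell_\s$-weight vector of a suitably constructed $L(\bla(u))$ via Lemma \ref{lem:tia-kill}, Lemma \ref{lem:b-in-t} and Proposition \ref{prop:highest-weight-inver}. The only (cosmetic) difference is in the sufficiency bookkeeping: you divide out the prefactors using the uniform identity $(2\ve_iu-\ve_i\rho_{i+1}+\varpi_{i+1})(-2\ve_iu+\ve_i\rho_{i+1}+\varpi_{i+1})=\varpi_{i+1}^2-(2u-\rho_{i+1})^2$ and solve the twisted multiplicative equations for $\la_i(u)/\la_{i+1}(u)$ by canonical square roots, whereas the paper defines $\La_i(u)$ recursively and invokes the case identities \eqref{eq:good1}--\eqref{eq:good2}.
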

\begin{proof}
We first show that conditions \eqref{thm:nontrivial-N} and \eqref{thm:nontrivial-i} are necessary. By the unitary condition \eqref{eq:unitary-series}, we have
\[
\sum_{a=1}^\ka b_{\ka a}(u)b_{a\ka}(-u)=1.
\]
Then \eqref{thm:nontrivial-N} follows from the above equation applied to the highest $\ell_{\s,\bm\ve}$-weight vector of $V(\bm\mu(u))$. Applying Lemma \ref{lem:nontrivial} to the highest weight vector of $V(\bm\mu(u))$, we get
\begin{align*}
\mu_i(u)\mu_i(v)\ +&\ \frac{1}{u-v}\sum_{a=i+1}^\ka s_a\big(\mu_a(u)\mu_i(v)-\mu_a(v)\mu_i(u)\big)\\
= &\
\mu_{i+1}(u)\mu_{i+1}(v)+\frac{1}{u-v}\sum_{a=i+2}^\ka s_a\big(\mu_{a}(u)\mu_{i+1}(v)-\mu_{a}(v)\mu_{i+1}(u)\big),
\end{align*}
where $u+v=\rho_{i+1}$. It is not hard to see that the above equation is equivalent to conditions \eqref{thm:nontrivial-i}.

Conversely, suppose the conditions \eqref{thm:nontrivial-N} and \eqref{thm:nontrivial-i} are satisfied. We shall show that there exists a highest $\ell_\s$-weight vector $\xi$ of highest $\ell_\s$-weight $\bla(u)$ such that $\xi$ is of highest $\ell_{\s,\bm\ve}$-weight $\bm\mu(u)$ by restricting a $\YMN$-module as a $\BMN$-module. This shall prove that the Verma module $M(\bm\mu(u))$ is non-trivial.

First, observe from Lemma \ref{lem:tia-kill} and \eqref{eq:embed-expl} that $b_{ij}(u)\xi =0$ for $1\lle i<j\lle \ka$.

We construct $\la_i(u)\in 1+u^{-1}\C[[u^{-1}]]$ inductively as follows. By \eqref{thm:nontrivial-N}, there exists $\la_\ka(u)\in 1+u^{-1}\C[[u^{-1}]]$ such that
$$
\mu_\ka(u)=\ve_\ka \la_\ka(u)/\la_\ka(-u).
$$
Suppose we already have $\la_j(u)$ for $i<j\lle \ka$. Define
\[
\La_i(u)=\frac{(2\ve_{i+1}u-\ve_{i+1}\rho_{i+2}+\varpi_{i+2})\tl \mu_i(u)\la_{i+1}(u)}{(2\ve_{i}u-\ve_{i}\rho_{i+1}+\varpi_{i+1})\tl \mu_{i+1}(u)\la_{i+1}(-u+\rho_{i+1})}.
\]
Note that if $\ve_i=\ve_{i+1}$, then we have
\beq\label{eq:good1}
2\ve_{i+1}u-\ve_{i+1}\rho_{i+2}+\varpi_{i+2}=2\ve_{i}u-\ve_{i}\rho_{i+1}+\varpi_{i+1};
\eeq
if $\ve_i=-\ve_{i+1}$, then we have
\beq\label{eq:good2}
2\ve_{i+1}u-\ve_{i+1}\rho_{i+2}+\varpi_{i+2}=2\ve_{i}(-u+\rho_{i+1})-\ve_{i}\rho_{i+1}+\varpi_{i+1}.
\eeq
Therefore, one easily checks that the condition \eqref{thm:nontrivial-i}
ensures that $\La_i(u)\La_{i}(-u+\rho_{i+1})=1$. Hence there exists $\la_i(u)\in 1+u^{-1}\C[[u^{-1}]]$ such that $\La_i(u)=\la_i(u)/\la_i(-u+\rho_{i+1})$.

With our choice of $\bla(u)$, we  have
\[
\frac{\tl \mu_i(u)}{\tl \mu_{i+1}(u)}=\frac{(2\ve_{i}u-\ve_{i}\rho_{i+1}+\varpi_{i+1})\la_i(u) \la_{i+1}(-u+\rho_{i+1})}{(2\ve_{i+1}u-\ve_{i+1}\rho_{i+2}+\varpi_{i+2})\la_{i+1}(u)\la_i(-u+\rho_{i+1})}
\]
and $\mu_\ka(u)=\ve_\ka\la_\ka(u)/\la_\ka(-u)$. By Propositions \ref{prop:t'-l-weight}, \ref{prop:highest-weight-inver} and Lemma \ref{lem:b-in-t}, one verifies that $\xi$ is  indeed of highest $\ell_{\s,\bm\ve}$-weight $\bm\mu(u)$.
\end{proof}
\subsection{Tensor product of representations}
Recall from Proposition \ref{prop:coproduct} that $\BMN$ is a left coideal subalgebra in $\YMN$. Given a $\YMN$-module $L$ and a $\BMN$-module $V$, then $L\otimes V$ is a $\BMN$-module given by the coproduct formula \eqref{eq:b-copro-in-t} in Proposition \ref{prop:coproduct}.

Let $L=L(\bla(u))$ be a highest $\ell_\s$-weight module over $\YMN$ with a highest $\ell_\s$-weight vector $\xi$. Let $V=V(\bm\mu(u))$ be a highest $\ell_{\s,\bm\ve}$-weight module over $\BMN$ with a highest $\ell_{\s,\bm\ve}$-weight vector $\eta$. We end this section by showing that $\xi\otimes \eta$ is a highest $\ell_{\s,\bm\ve}$-weight vector and calculating its highest $\ell_{\s,\bm\ve}$-weight.

Again we shall use the convenient notation $A\approx B$ if $A\xi =B\xi$.

\begin{lem}\label{lem:sum-T-}
For $1\lle i < a\lle \ka$, we have
\[
(2u-\rho_{i+1})t_{ia}(u)t_{ai}'(-u)+\sum_{c=i+1}^a s_ct_{ca}(u)t_{ac}'(-u)\approx s_at_{ii}(u)t_{ii}'(-u).
\]
\end{lem}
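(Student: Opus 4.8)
\emph{Proof proposal.} The plan is to rerun the computation from the proof of Lemma \ref{lem:b-in-t}, recycling the identities established there for a highest $\ell_\s$-weight vector $\xi$ (recall $A\approx B$ means $A\xi=B\xi$). Write $\psi_i(u)=\sum_{c=i}^\ka t_{ic}(u)t_{ci}'(-u)$ and $\wp_i(u)=\sum_{c=i}^\ka t_{ic}'(-u)t_{ci}(u)$. From \eqref{eq:tt'}, the highest-weight conditions \eqref{eq:highest-Y} and Proposition \ref{prop:t'-l-weight} one has $t_{ic}(u)t_{ci}'(-u)\approx \frac{s_c}{2u}(\psi_i(u)-\wp_c(u))$ for $i<c$ (this is \eqref{eq:inlemb-t}), together with $\psi_i(u)\approx\wp_i(u)$ for all $i$ and the relation \eqref{eq:inlem-b-t-wp2}; all of these can equally be re-derived directly from \eqref{eq:tt'}. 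Combining \eqref{eq:inlem-b-t-wp2} with $\psi\approx\wp$ and $\rho_a=s_a+\rho_{a+1}$ yields the two identities I will actually use:
\[
(2u-\rho_{i+1})\psi_i(u)\approx 2u\,t_{ii}(u)t_{ii}'(-u)-\sum_{c=i+1}^\ka s_c\psi_c(u),\qquad (2u-\rho_a)\wp_a(u)\approx 2u\,t_{aa}(u)t_{aa}'(-u)-\sum_{c=a}^\ka s_c\psi_c(u).
\]

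First I would substitute $t_{ic}(u)t_{ci}'(-u)\approx \frac{s_c}{2u}(\psi_i(u)-\wp_c(u))$ into the left-hand side of the lemma, applied to $t_{ia}(u)t_{ai}'(-u)$ and to each summand $t_{ca}(u)t_{ac}'(-u)$ with $i<c<a$, while leaving the diagonal summand $s_a t_{aa}(u)t_{aa}'(-u)$ (the $c=a$ term) intact. Using $\rho_{i+1}-\sum_{c=i+1}^{a-1}s_c=\rho_a$ to collect the coefficient of $\wp_a(u)$, the left-hand side becomes
\[
\frac{s_a}{2u}\Big[(2u-\rho_{i+1})\psi_i(u)+\sum_{c=i+1}^{a-1}s_c\psi_c(u)-(2u-\rho_a)\wp_a(u)\Big]+s_a\,t_{aa}(u)t_{aa}'(-u).
\]
I would then feed in the two identities above: the first turns $(2u-\rho_{i+1})\psi_i(u)+\sum_{c=i+1}^{a-1}s_c\psi_c(u)$ into $2u\,t_{ii}(u)t_{ii}'(-u)-\sum_{c=a}^\ka s_c\psi_c(u)$, and the second rewrites $(2u-\rho_a)\wp_a(u)$ so that the contributions $\sum_{c=a}^\ka s_c\psi_c(u)$ cancel and the two copies of $2u\,t_{aa}(u)t_{aa}'(-u)$ cancel as well, leaving precisely $2u\,t_{ii}(u)t_{ii}'(-u)$ inside the bracket. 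Hence the left-hand side is $\approx\frac{s_a}{2u}\cdot 2u\,t_{ii}(u)t_{ii}'(-u)=s_a\,t_{ii}(u)t_{ii}'(-u)$, as desired. (An equivalent route is a downward induction on $i$, using that the difference of the left-hand sides at $i$ and $i+1$ is $\approx(2u-\rho_{i+1})\big(t_{ia}(u)t_{ai}'(-u)-t_{i+1,a}(u)t_{a,i+1}'(-u)\big)$ for $i+1<a$, which the same $\psi$-identities collapse to $s_a\big(t_{ii}(u)t_{ii}'(-u)-t_{i+1,i+1}(u)t_{i+1,i+1}'(-u)\big)$; the case $i+1=a$ is then handled directly from \eqref{eq:inlemb-t}.)

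I do not anticipate a conceptual obstacle: the argument is entirely a bookkeeping exercise in the sign weights $s_c$ and the partial sums $\rho_k$, carried out within the relation $\approx$, exactly as in the proof of Lemma \ref{lem:b-in-t}. The only point demanding mild care is the telescoping of the $\rho$-sums (e.g.\ $\rho_{i+1}=\rho_a+\sum_{c=i+1}^{a-1}s_c$), together with making sure that the separated diagonal term $c=a$ is matched correctly against the $(2u-\rho_a)\wp_a(u)$ contribution produced by the substitution.
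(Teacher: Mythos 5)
Your proposal is correct and follows essentially the same route as the paper's proof: substitute \eqref{eq:inlemb-t} into the off-diagonal terms, telescope the partial sums $\rho_k$, and invoke \eqref{eq:inlem-b-t-wp2} together with $\psi_i(u)\approx\wp_i(u)$ from the proof of Lemma \ref{lem:b-in-t}. The only differences are cosmetic (you keep the $\psi$'s and the $c=a$ diagonal term explicit, while the paper works with the $\wp$'s and adds the diagonal term at the end), and your bookkeeping of the coefficient $2u-\rho_a$ and the final cancellation is accurate.
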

\begin{proof}
By \eqref{eq:inlemb-t}, one obtains
\begin{align*}
    (2u-\rho_{i+1})t_{ia}(u)t_{ai}'(-u)+& \ \sum_{c=i+1}^{a-1} s_ct_{ca}(u)t_{ac}'(-u)\\
\approx &\  \frac{s_a}{2u}\Big((2u-\rho_{i+1})(\wp_i(u)-\wp_a(u))+\sum_{c=i+1}^a s_c(\wp_c(u)-\wp_a(u))\Big)\\
\approx &\  \frac{s_a}{2u}\Big((2u-\rho_{i+1})\wp_i(u)-(2u-\rho_{a+1})\wp_a(u)+\sum_{c=i+1}^a s_c\wp_c(u)\Big)\\
\approx &\ \frac{s_a}{2u}\Big(2ut_{ii}(u)t_{ii}'(-u)-2ut_{aa}(u)t_{aa}'(-u)\Big),
\end{align*}
where the last equality follows from \eqref{eq:inlem-b-t-wp2}. Now the lemma follows.
\end{proof}

\begin{prop}\label{prop:tensor-product}
We have $b_{ij}(u)(\xi\otimes \eta)=0$, $1\lle i<j\lle \ka$, and
$$
\tl b_{ii}(u)(\xi\otimes \eta)=\la_i(u)\la_i'(-u)\tl \mu_i(u)(\xi\otimes \eta),\qquad 1\lle i\lle \ka,
$$
where $\la_i'(u)$ and $\tl\mu_i(u)$ are defined in \eqref{eq:la-prime} and \eqref{eq:mu-tilde}, respectively.
\end{prop}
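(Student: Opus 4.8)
\medskip
\noindent\emph{Proof strategy.} The plan is to apply the coideal coproduct formula \eqref{eq:b-copro-in-t} directly to $\xi\otimes\eta$ and then to annihilate almost all of the resulting terms using three inputs: that $\eta$, being a highest $\ell_{\s,\bm\ve}$-weight vector, is killed by $b_{ac}(u)$ for $a<c$; that $\xi$ satisfies the annihilation identities of Proposition \ref{prop:t'-l-weight} and Lemma \ref{lem:tia-kill}; and that the collapsing identity of Lemma \ref{lem:sum-T-} is available after acting on $\xi$. A first remark is that whenever a summand in the coproduct expansion of a \emph{diagonal} generator $b_{kk}(u)$ survives it has $a=c$, so the sign $(-1)^{(|c|+|k|)(|a|+|c|)}$ in \eqref{eq:b-copro-in-t} is trivial on it; this removes the only place the parity sequence could interfere, and from there the computation parallels the non-super case.

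For the first identity, fix $1\lle i<j\lle\ka$ and expand
\[
b_{ij}(u)(\xi\otimes\eta)=\sum_{a,c=1}^{\ka}(-1)^{(|c|+|j|)(|a|+|c|)}\,t_{ia}(u)\,t_{cj}'(-u)\xi\otimes b_{ac}(u)\eta .
\]
A term with $a<c$ dies because $b_{ac}(u)\eta=0$; a term with $c\lle a$ dies because $t_{ia}(u)t_{cj}'(-u)\xi=0$ by the first statement of Lemma \ref{lem:tia-kill} (specialized at $v=-u$, which is legitimate since that lemma holds coefficient-by-coefficient in the two independent variables). Hence $b_{ij}(u)(\xi\otimes\eta)=0$.

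For the second identity, I would first establish, for each $1\lle k\lle\ka$, the formula
\[
b_{kk}(u)(\xi\otimes\eta)=\sum_{a=k}^{\ka}\mu_a(u)\,t_{ka}(u)\,t_{ak}'(-u)\xi\otimes\eta ,
\]
by expanding $\Delta\bigl(b_{kk}(u)\bigr)$: the $a<c$ summands vanish by highest weight of $\eta$, the $a>c$ summands vanish by the second statement of Lemma \ref{lem:tia-kill}, among the $a=c$ summands those with $a<k$ vanish since $t_{ak}'(-u)\xi=0$ by Proposition \ref{prop:t'-l-weight}, the $a=k$ summand contributes $\mu_k(u)\la_k(u)\la_k'(-u)\,\xi\otimes\eta$ (using $t_{kk}(u)\xi=\la_k(u)\xi$ and $t_{kk}'(-u)\xi=\la_k'(-u)\xi$), and the $k<a\lle\ka$ summands supply the rest. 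Substituting this into $\tl b_{ii}(u)=(2u-\rho_{i+1})b_{ii}(u)+\sum_{k=i+1}^{\ka}s_k b_{kk}(u)$ and interchanging the order of summation, the coefficient of $\mu_a(u)\,\xi\otimes\eta$ with $a>i$ becomes $(2u-\rho_{i+1})t_{ia}(u)t_{ai}'(-u)+\sum_{k=i+1}^{a}s_k\,t_{ka}(u)t_{ak}'(-u)$, which by Lemma \ref{lem:sum-T-} acts on $\xi$ as $s_a\la_i(u)\la_i'(-u)$; for $a=i$ the coefficient is $(2u-\rho_{i+1})\la_i(u)\la_i'(-u)$. Pulling out the common factor $\la_i(u)\la_i'(-u)$ leaves exactly $(2u-\rho_{i+1})\mu_i(u)+\sum_{a=i+1}^{\ka}s_a\mu_a(u)=\tl\mu_i(u)$, the asserted eigenvalue.

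The main obstacle is purely organizational: one has to track which of the $\ka^2$ summands in each coproduct expansion survives (this is dictated entirely by the triangularity built into Lemma \ref{lem:tia-kill} and Proposition \ref{prop:t'-l-weight}), check that the Koszul and parity signs collapse on those that do, and re-index the double sum $\sum_{k=i+1}^{\ka}\sum_{a=k}^{\ka}$ so that each inner block matches the left-hand side of Lemma \ref{lem:sum-T-}. No idea beyond the preparatory lemmas of this section is needed.
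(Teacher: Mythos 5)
Your proposal is correct and follows essentially the same route as the paper: kill the off-diagonal coproduct terms via Lemma \ref{lem:tia-kill} and Proposition \ref{prop:t'-l-weight}, reduce $\Delta(b_{kk}(u))$ on $\xi\otimes\eta$ to the sum $\sum_{a\gge k} t_{ka}(u)t_{ak}'(-u)\otimes b_{aa}(u)$, reorganize the double sum in $\tl b_{ii}(u)$, and collapse it with Lemma \ref{lem:sum-T-}. The only (immaterial) difference is that you substitute the eigenvalues $\mu_a(u)$ before invoking Lemma \ref{lem:sum-T-}, whereas the paper collapses at the operator level to $t_{ii}(u)t_{ii}'(-u)\otimes\tl b_{ii}(u)$ first.
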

\begin{proof}
It is easily seen from Lemma \ref{lem:tia-kill} and \eqref{eq:b-copro-in-t} that $b_{ij}(u)(\xi\otimes \eta)=0$.

Again we write $A\approx B$ if $A(\xi\otimes\eta)=B(\xi\otimes\eta)$. It follows from Lemma \ref{lem:tia-kill} that
\begin{align*}
\Delta(b_{ii}(u)) \approx \sum_{a=i}^\ka \big(t_{ia}(u)t_{ai}'(-u)\otimes b_{aa}(u)\big)(\xi\otimes\eta).
\end{align*}
Therefore,
\begin{align*}
\Delta(\tl b_{ii}(u)) \approx &\ (2u-\rho_{i+1}) \sum_{a=i}^\ka t_{ia}(u)t_{ai}'(-u)\otimes b_{aa}(u)+\sum_{a=i+1}^\ka s_a\sum_{c=a}^\ka t_{ac}(u)t_{ca}'(-u)\otimes b_{cc}(u)\\
= &\  \sum_{a=i}^\ka \Big((2u-\rho_{i+1})t_{ia}(u)t_{ai}'(-u)+\sum_{c=i+1}^a s_ct_{ca}(u)t_{ac}'(-u)\Big)\otimes b_{aa}(u)\\
\approx &\ (2u-\rho_{i+1})t_{ii}(u)t_{ii}'(-u)\otimes b_{ii}(u)+\sum_{a=i+1}^\ka s_a t_{ii}(u)t_{ii}'(-u)\otimes b_{aa}(u)\\
= &\ t_{ii}(u)t_{ii}'(-u)\otimes \Big((2u-\rho_{i+1})b_{ii}(u)+\sum_{a=i+1}^\ka s_a b_{aa}(u)\Big)=t_{ii}(u)t_{ii}'(-u)\otimes \tl b_{ii}(u).
\end{align*}
Here we applied Lemma \ref{lem:sum-T-} in the third equality. Now the statement follows.
\end{proof}
\begin{eg}\label{eg:tensor-1}
Recall the one dimensional $\BMN$-module $\bC_{\gamma}=\bC\eta_{\gamma}$ from Example \ref{eg:1-dim} for $\gamma\in\bC$. Let $L=L(\bla(u))$ be a highest $\ell_\s$-weight module over $\YMN$ with a highest $\ell_\s$-weight vector $\xi$. Then by Proposition \ref{prop:tensor-product} we have
\[
\tl{b}_{ii}(u)(\xi\otimes\eta_\gamma)=\frac{(2\ve_i u-\ve_i\rho_{i+1}+\varpi_{i+1}+2\gamma)u}{u-\gamma}\la_i(u)\la_{i}'(-u)(\xi\otimes\eta_\gamma),
\]
cf. Lemma \ref{lem:b-in-t}.\qed
\end{eg}

\section{Classifications in rank 1}\label{sec:rank1}
In this section, we investigate the finite-dimensional irreducible representations of twisted super Yangian of the small rank case $\ka=2$. We consider $\s=(s_1,s_2)$ and $\bm\ve=(\ve_1,\ve_2)$.
\subsection{Non-super case}
The case $\bm s=(1,1)$ has already been studied in \cite[Propositions 4.4, 4.5]{Molev2002reflection} via identifying $\BMN$ with (Olshanski's) twisted Yangians $\mathscr Y(\mathfrak{sp}_2)$ and $\mathscr Y(\mathfrak{so}_2)$ of types AI and AII.

\begin{prop}[{\cite{Molev2002reflection}}]\label{prop:iff-even}
Suppose $s_1=s_2$.
\begin{enumerate}
    \item If $\ve_1=\ve_2$, then the $\BMN$-module $V(\bm\mu(u))$ is finite-dimensional if and only if there exists a monic polynomial $P(u)$ such that $P(-u+2s_2)=P(u)$ and
\[
\frac{\tl \mu_1(u)}{\tl \mu_2(u)}=\frac{P(u+s_2)}{P(u)}.
\]
\item If $\ve_1\ne \ve_2$, then the $\BMN$-module $V(\bm\mu(u))$ is finite-dimensional if and only if there exist $\gamma\in \C$ and a monic polynomial $P(u)$ such that $P(-u+2s_2)=P(u)$, $P(\gamma)\ne 0$, and
\[
\frac{\tl \mu_1(u)}{\tl \mu_2(u)}=\frac{P(u+s_2)}{P(u)}\cdot \frac{\gamma -u}{\gamma+u-s_2}.
\]
In this case, the pair $(P(u),\gamma)$ is unique.
\end{enumerate}
\end{prop}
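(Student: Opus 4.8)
The plan is to treat the two sub-cases $\bm s=(1,1)$ and $\bm s=(-1,-1)$ separately, citing \cite{Molev2002reflection} for the first and deducing the second from it by a symmetry of the algebra. For $\bm s=(1,1)$, i.e.\ $\gl_{m|n}^{\bm s}=\gl_2$, all parities $|i|$ vanish, so the relations \eqref{eq:comm-series b}--\eqref{eq:unitary-series} and the embedding $\varphi\colon B(u)\mapsto T(u)G^{\bm\ve}T^{-1}(-u)$ of Proposition \ref{thm:embedding} coincide with those of the Molev--Ragoucy reflection algebra $\mathscr B_{\bm\ve}\subset\mathscr Y(\gl_2)$; when $\bm\ve$ is constant this is, up to \eqref{neghomo2}, one of Olshanski's twisted Yangians $\mathscr Y(\fksp_2)$, $\mathscr Y(\so_2)$, and when $\ve_1\neq\ve_2$ it is the genuine type-AIII reflection algebra with a $1+1$ block decomposition (whose finite-dimensional irreducibles are produced from the twisted-Yangian ones via tensor products with the one-dimensional modules of Example \ref{eg:1-dim}, cf.\ Example \ref{eg:tensor-1}). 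In all cases the classification is \cite[Propositions 4.4, 4.5]{Molev2002reflection}; specializing \eqref{eq:mu-tilde} to $\ka=2$ gives $\tl\mu_1(u)=(2u-1)\mu_1(u)+\mu_2(u)$ and $\tl\mu_2(u)=2u\,\mu_2(u)$, which one checks agree with the invariants used there by evaluating $b_{ii}(u)=\mathtt s_{ii}(u)$ of \eqref{eq:embed-expl} on a highest $\ell_{\bm s}$-weight vector and using Lemma \ref{lem:b-in-t}.

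For $\bm s=(-1,-1)$ I would apply the isomorphism \eqref{neghomo1}, $\mathscr B_{(-1,-1),\bm\ve}\xrightarrow{\ \sim\ }\mathscr B_{(1,1),\bm\ve}$, $b_{ij}(u)\mapsto b_{ij}(-u)$. Pulling back a finite-dimensional irreducible $\mathscr B_{(1,1),\bm\ve}$-module of highest weight $\bm\nu(u)$ through this isomorphism produces exactly $V(\bm\mu(u))$ over $\mathscr B_{(-1,-1),\bm\ve}$ with $\mu_i(u)=\nu_i(-u)$, and the two modules are simultaneously finite-dimensional. Substituting $u\mapsto -u$ in \eqref{eq:mu-tilde} with $s_1=s_2=-1$ shows that the $\tl\mu_i$ attached to $\bm s=(-1,-1)$ satisfy $\tl\mu_i(u)=-\tl\nu_i(-u)$, hence $\tl\mu_1(u)/\tl\mu_2(u)=\tl\nu_1(-u)/\tl\nu_2(-u)$. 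Feeding this into the $\bm s=(1,1)$ criterion and putting $P(u):=(-1)^{\deg Q}Q(-u)$ (monic) and, when $\ve_1\neq\ve_2$, $\gamma:=-\delta$, I would verify that the $(1,1)$-conditions $Q(-u+2)=Q(u)$, $\tl\nu_1/\tl\nu_2=Q(u+1)/Q(u)$ (times $(\delta-u)/(\delta+u-1)$ if $\ve_1\neq\ve_2$), $Q(\delta)\neq 0$ transform precisely into $P(-u+2s_2)=P(u)$, $\tl\mu_1/\tl\mu_2=P(u+s_2)/P(u)$ (times $(\gamma-u)/(\gamma+u-s_2)$), $P(\gamma)\neq 0$, with uniqueness of $(P,\gamma)$ inherited from that of $(Q,\delta)$ because $(Q,\delta)\leftrightarrow(P,\gamma)$ is a bijection. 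All steps are reversible, which yields the stated equivalences.

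No representation-theoretic input beyond Proposition \ref{thm:embedding}, Lemma \ref{lem:b-in-t} and the cited results of \cite{Molev2002reflection} is required; one could also rederive the $\bm s=(-1,-1)$ case directly from Theorem \ref{thmnontrivial} and the tensor-product computation of Proposition \ref{prop:tensor-product}, but that is longer and unnecessary. I expect the only genuinely delicate part to be bookkeeping: reconciling the sign and shift conventions of \cite{Molev2002reflection} (and of Olshanski's twisted Yangians) with the normalization $\tl\mu_i(u)$ adopted here, and carrying the affine change of variable $u\mapsto -u$ through the polynomiality conditions without losing a sign.
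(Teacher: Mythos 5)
Your proposal is correct and follows essentially the same route as the paper: quote \cite[Propositions 4.4, 4.5]{Molev2002reflection} for the case $s_1=s_2=1$ and transport the result to the remaining sign patterns via the isomorphisms \eqref{neghomo1}--\eqref{neghomo2}. The only difference is that you spell out the substitution bookkeeping (how $\tl\mu_i$, the polynomial $P$, and the parameter $\gamma$ transform under $u\mapsto -u$), which the paper leaves implicit; your computations there check out.
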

\begin{proof}
For the case $s_1=s_2=1$ and $\ve_1=1$, the statements are proved in \cite[Propositions 4.4]{Molev2002reflection}. To obtain other cases, one uses the isomorphisms \eqref{neghomo1}--\eqref{neghomo2}.
\end{proof}

Note that if $s_1=s_2$, then $\mathscr Y(\gl_\s)\cong \mathscr Y(\gl_2)$.

\begin{prop}[{\cite{Molev2002reflection}}]\label{prop:con-even}
Suppose $s_1=s_2$.
\begin{enumerate}
\item If $\ve_1=\ve_2$, then any finite-dimensional irreducible $\BMN$-module $V(\bm\mu(u))$ is isomorphic to the restriction of a $\mathscr Y(\gl_\s)$-module $L$, where $L$ is some finite-dimensional irreducible $\mathscr Y(\gl_\s)$-module.
\item If $\ve_1\ne \ve_2$, then any finite-dimensional irreducible $\BMN$-module $V(\bm\mu(u))$ is isomorphic to $L\otimes \bC_\gamma$, where $L$ is some finite-dimensional irreducible $\mathscr Y(\gl_\s)$-module and $\bC_\gamma$ is some one-dimensional $\BMN$-module defined in Example \ref{eg:1-dim} with $\gamma\in\bC$.
\end{enumerate}
\end{prop}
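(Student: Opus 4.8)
The strategy is to combine the classification result of Proposition~\ref{prop:iff-even} with the explicit highest $\ell_{\s,\bm\ve}$-weight computations for tensor products in \S\ref{sec:reps}. First I would record that when $s_1=s_2$ we have $\mathscr Y(\gl_\s)\cong\mathscr Y(\gl_2)$, so the classification of finite-dimensional irreducible $\mathscr Y(\gl_\s)$-modules reduces to the classical result: $L(\bla(u))$ is finite-dimensional if and only if $\la_1(u)/\la_2(u)=Q(u+s_1)/Q(u)$ for some monic polynomial $Q$ (Theorem~\ref{thm:zhang}). The goal is to show any $V(\bm\mu(u))$ occurring in Proposition~\ref{prop:iff-even} arises as the claimed restriction or tensor product.

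\textbf{Case $\ve_1=\ve_2$.} Given a finite-dimensional $V(\bm\mu(u))$, Proposition~\ref{prop:iff-even}(1) furnishes a monic $P(u)$ with $P(-u+2s_2)=P(u)$ and $\tl\mu_1(u)/\tl\mu_2(u)=P(u+s_2)/P(u)$. The plan is to build a highest $\ell_\s$-weight $\bla(u)$ for $\mathscr Y(\gl_\s)$ whose restriction realizes $\bm\mu(u)$. By Proposition~\ref{prop:tensor-product} (applied with $V$ the trivial module, i.e. $\BMN$ acting on $\xi$ alone via the embedding of Proposition~\ref{thm:embedding}, using $\tl b_{ii}(u)\approx(2\ve_iu-\ve_i\rho_{i+1}+\varpi_{i+1})t_{ii}(u)t_{ii}'(-u)$ from Lemma~\ref{lem:b-in-t}), the restricted highest $\ell_{\s,\bm\ve}$-weight of $L(\bla(u))$ has $\tl b_{ii}(u)$-eigenvalue $\la_i(u)\la_i'(-u)(2\ve_iu-\ve_i\rho_{i+1}+\varpi_{i+1})$. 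Using Proposition~\ref{prop:highest-weight-inver} to express $\la_i'(u)$ in terms of the $\la_k$'s, I would solve for $\bla(u)$ so that the resulting ratio matches $\tl\mu_1(u)/\tl\mu_2(u)$; the symmetry $P(-u+2s_2)=P(u)$ is precisely what guarantees solvability (it is the analogue of the unitarity-type constraint appearing in Theorem~\ref{thmnontrivial}). Freedom in the overall normalization of $\bla(u)$ corresponds to the automorphism $\mc M_{\mc h(u)}$ of \eqref{mhu}, so without loss of generality one can arrange $\la_1(u)/\la_2(u)=P(u+s_2)/P(u)$ (up to the evident reindexing so $s_i=s_2$), which makes $L(\bla(u))$ finite-dimensional by Theorem~\ref{thm:zhang}. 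Then the restriction of $L(\bla(u))$ to $\BMN$ is a finite-dimensional highest $\ell_{\s,\bm\ve}$-weight module with highest weight $\bm\mu(u)$, hence contains $V(\bm\mu(u))$ as a subquotient; since $V(\bm\mu(u))$ is the unique irreducible with that highest weight and a finite-dimensional highest weight module of $\BMN$ with irreducible top is determined by its highest weight data, one deduces $V(\bm\mu(u))$ is the irreducible quotient, and a dimension/semisimplicity argument (or irreducibility of the restriction itself, which one can check in rank $1$) upgrades this to an isomorphism onto the restriction.

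\textbf{Case $\ve_1\ne\ve_2$.} Here Proposition~\ref{prop:iff-even}(2) gives $P(u)$ and $\gamma\in\C$ with $\tl\mu_1(u)/\tl\mu_2(u)=\dfrac{P(u+s_2)}{P(u)}\cdot\dfrac{\gamma-u}{\gamma+u-s_2}$. The plan is to exhibit $V(\bm\mu(u))\cong L\otimes\bC_\gamma$ with $L=L(\bla(u))$ finite-dimensional over $\mathscr Y(\gl_\s)$. By Example~\ref{eg:tensor-1}, the $\tl b_{ii}(u)$-eigenvalue of $\xi\otimes\eta_\gamma$ is $\dfrac{(2\ve_iu-\ve_i\rho_{i+1}+\varpi_{i+1}+2\gamma)u}{u-\gamma}\la_i(u)\la_i'(-u)$; taking the ratio for $i=1,2$ and using $\ve_1=-\ve_2$ one finds the factor $\dfrac{\gamma-u}{\gamma+u-s_2}$ (after simplifying the linear-in-$u$ prefactors with $\varpi_{i+1}$, exactly as in \eqref{eq:good2}) emerges from the $\bC_\gamma$ tensorand, and the remaining $\la$-dependent part is again governed by Proposition~\ref{prop:highest-weight-inver}. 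Matching against $\dfrac{P(u+s_2)}{P(u)}$ and using $P(-u+2s_2)=P(u)$ and $P(\gamma)\ne0$ to guarantee solvability, I solve for $\bla(u)$; Theorem~\ref{thm:zhang} then makes $L(\bla(u))$ finite-dimensional, so $L\otimes\bC_\gamma$ is finite-dimensional with the correct highest $\ell_{\s,\bm\ve}$-weight, and as before it has $V(\bm\mu(u))$ as irreducible quotient, whence the isomorphism after checking irreducibility of $L\otimes\bC_\gamma$ (or a counting argument via Proposition~\ref{prop:iff-even} pairing weights bijectively). The uniqueness of $(P(u),\gamma)$ from Proposition~\ref{prop:iff-even}(2) pins down the decomposition.

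\textbf{Main obstacle.} The genuinely delicate step is the passage from ``there is a finite-dimensional $\mathscr Y(\gl_\s)$-module (or such a module tensored with $\bC_\gamma$) whose restriction has highest $\ell_{\s,\bm\ve}$-weight $\bm\mu(u)$'' to ``$V(\bm\mu(u))$ is isomorphic to that module.'' One direction---that the restriction contains $V(\bm\mu(u))$---is formal, but proving the restriction (resp. $L\otimes\bC_\gamma$) is itself irreducible, so that it equals $V(\bm\mu(u))$ rather than a proper extension, requires either an explicit module-theoretic analysis in rank $1$ (the restriction of the $\mathscr Y(\gl_2)$-module $L(\bla(u))$ to the twisted Yangian $\mathscr Y(\fksp_2)$ or $\mathscr Y(\so_2)$ is classically known to be irreducible for generic parameters, and one reduces the degenerate cases separately) or an indirect counting argument: Proposition~\ref{prop:iff-even} classifies the finite-dimensional $V(\bm\mu(u))$ by the data $(P(u))$ resp. $(P(u),\gamma)$, and the construction above produces, from each admissible datum, a finite-dimensional module with that highest weight; comparing with the known decomposition numbers for $\mathscr Y(\gl_2)$-modules restricted to the twisted Yangian forces the module produced to be irreducible. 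I expect the cleanest route is to invoke the non-super results of \cite{Molev2002reflection} directly---since for $s_1=s_2$ the algebra $\BMN$ is literally Molev--Ragoucy's reflection algebra---so that Proposition~\ref{prop:con-even} is essentially a restatement of \cite[Propositions 4.4, 4.5]{Molev2002reflection} combined with the isomorphisms \eqref{neghomo1}--\eqref{neghomo2} to cover $s_1=s_2=-1$, and the only real content to supply is the translation of notation (the shift $\tl\mu_i$ versus $\mu_i$, and $\s$-signs) via Lemma~\ref{lem:b-in-t} and Example~\ref{eg:tensor-1}.
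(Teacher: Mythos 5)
Your final paragraph lands exactly on the paper's treatment: the paper supplies no independent argument for this proposition at all --- since $s_1=s_2$ means $\BMN$ is literally the Molev--Ragoucy reflection algebra (a coideal of $\mathscr Y(\gl_2)$), the statement is quoted directly from \cite[Propositions 4.4, 4.5]{Molev2002reflection}, with the remaining sign cases ($s_1=s_2=-1$, or flipped $\bm\ve$) handled by the isomorphisms \eqref{neghomo1}--\eqref{neghomo2}, exactly as in the paper's short proof of Proposition \ref{prop:iff-even}. So your ``cleanest route'' is the intended one, and to that extent the proposal is correct. The constructive sketch occupying the first two cases of your write-up should not, however, be read as an independent proof: the step you flag as the ``main obstacle'' --- irreducibility of the restriction of $L(\bla(u))$ (resp.\ of $L(\bla(u))\otimes\bC_\gamma$) to the twisted Yangian --- is precisely the nontrivial content of Molev--Ragoucy's Propositions 4.4 and 4.5, and none of the tools available in this paper (Proposition \ref{prop:tensor-product}, Lemma \ref{lem:b-in-t}, Example \ref{eg:tensor-1}) establishes it; they only compute the highest $\ell_{\s,\bm\ve}$-weight of the candidate module, which gives $V(\bm\mu(u))$ as an irreducible quotient but not the claimed isomorphism. (Compare the genuinely super case $s_1\ne s_2$, where the paper must prove irreducibility by hand in Proposition \ref{prop=} via the dual-module/annihilator argument --- that is the kind of work your sketch would still owe in the non-super case if you did not fall back on the citation.) So: correct conclusion, same route as the paper, but the self-contained portion of the argument is incomplete and is best dropped in favor of the direct citation.
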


\subsection{Super case}
In the rest of this section, we establish super analogous results to the previous propositions when $s_1\ne s_2$. Our main results in this subsection are the following.
\begin{prop}\label{prop:rank1}
If $\bm s$ is such that $s_1\ne s_2$, then the $\BMN$-module $V(\bm\mu(u))$ is finite-dimensional if and only if there exists a monic polynomial $P(u)$ such that
\[
\frac{\tl \mu_1(u)}{\tl \mu_2(u)}=\ve_1\ve_2(-1)^{\deg P}\frac{P(u)}{P(-u+s_2)}.
\]
\end{prop}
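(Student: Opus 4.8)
The plan is to first normalize $\s$ and $\bm\ve$, then prove the two implications separately. By the isomorphisms \eqref{neghomo1}--\eqref{neghomo2} it suffices to treat $\s=(1,-1)$ (so that Theorem \ref{thm:zhang} applies directly to $\mathscr Y(\gl_{1|1}^{\s})$), and, since $\ve_1\ve_2$ is preserved by \eqref{neghomo2}, to handle the cases $\ve_1=\ve_2$ and $\ve_1=-\ve_2$ one at a time. Put $\nu(u):=\tilde\mu_1(u)/\tilde\mu_2(u)$. From \eqref{eq:mu-tilde}, $\tilde\mu_i(u)=2\ve_iu+O(1)$ as $u\to\infty$, so $\nu(\infty)=\ve_1\ve_2$; and since $M(\bm\mu(u))$ is nontrivial, Theorem \ref{thmnontrivial} (with $\ka=2$, $\rho_2=s_2$) gives $\nu(u)\nu(-u+s_2)=1$. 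Given these two facts, a monic $P(u)$ with $\nu(u)=\ve_1\ve_2(-1)^{\deg P}P(u)/P(-u+s_2)$ exists precisely when $\nu(u)$ is rational: the functional equation forces the coprime numerator and denominator of $\nu$ to be reflections of each other about $u=s_2/2$, the value at infinity fixes the scalar, and $(-1)^{\deg P}$ just records that $P(-u+s_2)$ has leading term $(-1)^{\deg P}u^{\deg P}$. Thus Proposition \ref{prop:rank1} is equivalent to: $V(\bm\mu(u))$ is finite-dimensional iff $\nu(u)$ is rational.

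For sufficiency I would realize $V(\bm\mu(u))$ as the irreducible quotient of a cyclic submodule of a twisted restriction of an explicit finite-dimensional $\YMN$-module. Set $\la_2(u)=1$, and let $\la_1(u)$ be the expansion of $P(u)/(u-s_2/2)^{d}$ if $\ve_1=\ve_2$, resp. of $uP(u)/(u-s_2/2)^{d+1}$ if $\ve_1=-\ve_2$, where $d=\deg P$. As $(1,-1)$ is standard for $\gl_{1|1}$ and Theorem \ref{thm:zhang} there only requires $\la_1/\la_2$ to be a ratio of monic polynomials of equal degree, $L(\bla(u))$ is finite-dimensional. Restricting it along the embedding of Proposition \ref{thm:embedding}, its highest weight vector $\xi$ is a highest $\ell_{\s,\bm\ve}$-weight vector by Lemma \ref{lem:tia-kill} and \eqref{eq:embed-expl}; Lemma \ref{lem:b-in-t} together with Proposition \ref{prop:highest-weight-inver} (using $\rho_2=s_2$, $\rho_3=0$, $\varpi_2=\ve_2 s_2$, $\varpi_3=0$) computes its $\tilde b_{ii}$-eigenvalues, and a short calculation returns $\nu(u)=\ve_1\ve_2(-1)^{d}P(u)/P(-u+s_2)$ — the prefactor $(2\ve_1u-\ve_1 s_2+\ve_2 s_2)/(2\ve_2u)$ of Lemma \ref{lem:b-in-t}, equal to $1$ for $\ve_1=\ve_2$ and $(s_2-u)/u$ for $\ve_1=-\ve_2$, being exactly what produces the sign $\ve_1\ve_2$ and the different polynomial in the two cases. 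Since $\la_2(u)=1$ the unrescaled $b_{22}$-eigenvalue of $\xi$ is the constant $\ve_2$, so twisting by $\mc M_{\mc h(u)}$ of \eqref{mhu} with $\mc h(u)=\ve_2^{-1}\mu_2(u)$ — legitimate because $\mc h(\infty)=1$ and $\mc h(u)\mc h(-u)=1$ by $\mu_2(u)\mu_2(-u)=1$ — leaves $\nu$ fixed, hence (as for $\ka=2$ the ratio $\mu_1/\mu_2=(2u\nu-s_2)/(2u-s_2)$ is determined by $\nu$) makes $\xi$ a highest $\ell_{\s,\bm\ve}$-weight vector of weight exactly $\bm\mu(u)$; therefore $V(\bm\mu(u))$ is finite-dimensional.

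For necessity, assume $V=V(\bm\mu(u))$ is finite-dimensional with highest $\ell_{\s,\bm\ve}$-weight vector $\eta$; by the reduction it suffices to prove $\nu(u)$ is rational, the rank-one counterpart of the existence of a Drinfeld polynomial. I would use a lowering-operator analysis in the style of \cite{Molev2002reflection}: the odd series $b_{21}(u)$ lowers the $\gl_{1|1}$-weight by $\alpha_1$ (Lemma \ref{lem:wt-changeb}), $\eta$ is killed by $b_{12}(u)$ and is an eigenvector of $b_{11}(u)$ and $b_{22}(u)$, so by Corollary \ref{cor:PBW} and an induction along the weight string using \eqref{eq:comm-series b} the module $V$ is spanned by ordered monomials in the $b_{21}^{(r)}$ applied to $\eta$; the relation $\{b_{21}^{(1)},b_{21}^{(r)}\}=0$, read off from \eqref{eq:comm-series b}, and its higher-degree consequences control this spanning set, and finiteness of $\dim V$ forces a linear dependence among the relevant vectors. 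Computing $b_{11}(u)$ and $b_{12}(u)$ on the lowered vectors and reducing via \eqref{eq:comm-series b} produces a recursion whose coefficients are built from $\nu$ and elementary rational functions, and which has a finite-dimensional solution only when $\nu(u)$ is a ratio of polynomials; with the first paragraph this gives the asserted form.

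I expect the necessity direction to be the main obstacle: securing a tractable spanning set of $V(\bm\mu(u))$ adapted to the odd lowering operator $b_{21}(u)$ and tracking the sign factors through the recursion — the parities $s_i=(-1)^{|i|}$ enter through \eqref{eq:comm-series b} and the boundary signs $\ve_i$ through the unitary condition, and it is the $\bm\ve$-dependence that splits both the argument and the answer into $\ve_1=\ve_2$ and $\ve_1=-\ve_2$. An alternative would be to show directly that $V(\bm\mu(u))$ is a subquotient of the restriction of a finite-dimensional $\YMN$-module and then invoke Theorem \ref{thm:zhang}, dualizing the sufficiency computation, though one still has to build that auxiliary module. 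This rank-one case will be the base of the induction in Section \ref{sec:classification}.
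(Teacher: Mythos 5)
Your overall architecture matches the paper's: reduce the statement to "$V(\bm\mu(u))$ finite-dimensional iff $\nu(u)=\tl\mu_1(u)/\tl\mu_2(u)$ is rational" using Theorem \ref{thmnontrivial} and the value $\nu=\ve_1\ve_2+O(u^{-1})$ at infinity, prove sufficiency by restricting a finite-dimensional $\YMN$-module, and prove necessity by a Molev--Ragoucy style lowering-operator analysis. The reduction is correct, and your sufficiency argument is a sound, explicit instance of what the paper gets by citing Theorem \ref{thm:zhang} together with Theorem \ref{thm:suff}: your choices $\la_2(u)=1$, $\la_1(u)=P(u)/(u-s_2/2)^{\deg P}$ (resp. $uP(u)/(u-s_2/2)^{\deg P+1}$) do satisfy \eqref{eq:in-proof-01} with $\gamma=0$, and the prefactor bookkeeping via Lemma \ref{lem:b-in-t} and Proposition \ref{prop:highest-weight-inver} is right.

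The genuine gap is the necessity direction, which is where essentially all of the paper's work lies, and your text only announces a plan there: "produces a recursion \dots which has a finite-dimensional solution only when $\nu(u)$ is a ratio of polynomials" is precisely the statement that has to be proved, not a step. Concretely, the paper shifts to $x_{ij}(u)=b_{ij}(u+\tfrac{s_2}{2})$, extracts from \eqref{eq:new-b} the relations \eqref{eq:k2}--\eqref{eq:just3} for $x_{22}^{(2)}$ and $x_{22}^{(3)}$ acting on $x_{21}^{(k)}\eta$ and $x_{21}(u)\eta$, first proves a parity reduction of the lowered vectors (for $\ve_1=\ve_2$, each $x_{21}^{(2r)}\eta$ lies in the span of the $x_{21}^{(2s-1)}\eta$ via \eqref{eq:ind-base}, which uses $\ve_1+\ve_2\ne0$; for $\ve_1\ne\ve_2$ one has $b_{21}^{(1)}=(\ve_1+\ve_2)t_{21}^{(1)}=0$, so one must instead work with the even modes $x_{21}^{(2r)}\eta$ and with $x_{22}^{(4)}$), then uses finite-dimensionality to write $x_{21}(u)\eta=\sum_r a_r(u)\eta_r$, derives the recursion $a_{r-1}(u)=u^2a_r(u)-c_ra_k(u)$, and finally obtains the two identities \eqref{eq:pf-low-5}--\eqref{eq:pf-low-6} expressing two independent linear combinations of $\la_1(u),\la_2(u)$ as polynomial multiples of $a_k(u)$; only the combination of these two gives rationality of $\la_1/\la_2$, hence of $\nu$. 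None of this is carried out in your sketch, and the one mechanism you do name --- controlling the spanning set by $\{b_{21}^{(1)},b_{21}^{(r)}\}=0$ and its consequences --- cannot work uniformly, since in the case $\ve_1\ne\ve_2$ the element $b_{21}^{(1)}$ vanishes identically, which is exactly why the paper's argument (and its answer) splits by the parity of the surviving modes. So the proposal is an accurate outline of the paper's strategy, but the core computation that makes necessity true is missing.
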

\begin{proof}
The ``$\Longleftarrow$" part follows from Theorem \ref{thm:zhang} and Theorem \ref{thm:suff} below as $V(\bm\mu(u))$ can be obtained as a quotient of the restriction of a finite-dimensional irreducible $\YMN$-module.

To show the ``$\Longrightarrow$" part, note that due to the condition \eqref{thm:nontrivial-i}, such a polynomial $P(u)$ exists provided that ${\tl \mu_1(u)}/{\tl \mu_2(u)}$ or alternatively $\mu_1(u)/\mu_2(u)$ is an expansion of a rational function in $u$ at $u=\infty$.

We will work on
$$
x_{ij}(u)=\ve_i\delta_{ij}+\sum_{r>0}x_{ij}^{(r)}u^{-r}:=b_{ij}(u+\tfrac{s_2}{2})
$$
and the case $\bm s=(1,-1)$ since the other case $\bm s=(-1,1)$ is similar by inserting the signs at suitable positions or using the isomorphism \eqref{neghomo1}.

Using \eqref{eq:comm-series b}, we have
\[
[b_{21}(u),b_{22}(v)]=\frac{-1}{u-v}(b_{21}(u)b_{22}(v)-b_{21}(v)b_{22}(u))+\frac{1}{u+v}(b_{21}(v)b_{11}(u)+b_{22}(v)b_{21}(u)),
\]
which gives
\begin{align*}
\frac{u+v+1}{u+v}b_{22}(v)b_{21}(u)=b_{21}(u)b_{22}(v)-&\,\frac{1}{u+v}b_{21}(v)b_{11}(u)\\-&\,\frac{1}{u-v}(b_{21}(v)b_{22}(u)-b_{21}(u)b_{22}(v)).
\end{align*}
Substituting $u\to u-1/2$, $v\to v-1/2$ and dividing both sides by $(u+v)/(u+v-1)$, we obtain
\beq\label{eq:new-b}
\begin{split}
x_{22}(v)x_{21}(u)=&\, x_{21}(u)x_{22}(v)-\frac{1}{v+u}(x_{21}(v)x_{11}(u)+x_{21}(u)x_{22}(v))\\
+&\, \frac{1}{v-u}(x_{21}(v)x_{22}(u)-x_{21}(u)x_{22}(v))+\frac{1}{v^2-u^2}(x_{21}(u)x_{22}(v)-x_{21}(v)x_{22}(u)).
\end{split}
\eeq

Taking the coefficients of $u^{-k}v^{-2}$ and $v^{-2}$, we have
\begin{align}
x^{(2)}_{22}x_{21}^{(k)}&=x_{21}^{(k)}(x_{22}^{(2)}-2x_{22}^{(1)}+x_{22}^{(0)})-x_{21}^{(1)}(x_{11}^{(k)}-x_{22}^{(k)}),\label{eq:k2}\\
x^{(2)}_{22}x_{21}(u)&=x_{21}(u)(x_{22}^{(2)}-2x_{22}^{(1)}+x_{22}^{(0)})-x_{21}^{(1)}(x_{11}(u)-x_{22}(u)).\label{eq:just2}
\end{align}
Similarly, taking the coefficients of $u^{-k}v^{-3}$ and $v^{-3}$, we have
\begin{align}
x^{(3)}_{22}x_{21}^{(k)}&=-2\ve_2x_{21}^{(k+2)}+x_{21}^{(k)}(x_{22}^{(3)}-2x_{22}^{(2)}+x_{22}^{(1)})\nonumber \\ &\qquad\qquad\qquad\,  -x_{21}^{(2)}(x_{11}^{(k)}-x_{22}^{(k)})+x_{21}^{(1)}(x_{11}^{(k+1)}+x_{22}^{(k+1)}-x_{22}^{(k)}),\label{eq:k3}\\
x^{(3)}_{22}x_{21}(u)&=-2\ve_2u^2x_{21}(u)+x_{21}(u)(x_{22}^{(3)}-2x_{22}^{(2)}+x_{22}^{(1)})\nonumber\\&\quad \ \  -x_{21}^{(2)}(x_{11}(u)-x_{22}(u))+x_{21}^{(1)}(ux_{11}(u)+ux_{22}(u)-x_{22}(u)).\label{eq:just3}
\end{align}

Denote by $\eta$ the highest $\ell_{\s,\bm\ve}$-weight vector of $V(\bm\mu(u))$.  Define the series $\la_i(u)$ by
\[
\la_i(u)=\sum_{r\gge 0} \la_{ir}u^{-r},\qquad i=1,2,
\]
where $\la_{ir}\in\bC$ and $x_{ii}^{(r)}\eta=\la_{ir}\eta$. In particular, $\la_{i0}=\ve_i$.

(1) The case $\ve_1=\ve_2$. We first prove that for $r>0$, the vector $x_{21}^{(2r)}\eta$ is a linear combination of vectors $x_{21}^{(1)}\eta,x_{21}^{(3)}\eta,\cdots,x_{21}^{(2r-1)}\eta$.

We prove it by induction on $k$. Setting $k=0$ in \eqref{eq:k2} and noticing that $x_{21}^{(0)}=0$, we have
\beq\label{eq:ind-base}
(\ve_1+\ve_2)x_{21}^{(2)}=x_{21}^{(1)}(x_{11}^{(1)}+x_{22}^{(1)}-\ve_2).
\eeq
Therefore, $2\ve_2x_{21}^{(2)}\eta=(\la_{11}+\la_{21}-\ve_2)x_{21}^{(1)}\eta$. Now suppose that
\beq\label{eq:ind-hypo}
x_{21}^{(2r)}\eta=c_1x_{21}^{(1)}\eta+\cdots+c_{2r-1}x_{21}^{(2r-1)}\eta
\eeq
for some $c_1,\cdots,c_{2r-1}\in\bC$. Then applying \eqref{eq:k3} to $x_{22}^{(3)}(c_1x_{21}^{(1)}+\cdots+c_{2k-1}x_{21}^{(2r-1)})\eta$, we find that $x_{22}^{(3)}(c_1x_{21}^{(1)}+\cdots+c_{2r-1}x_{21}^{(2r-1)})\eta$ is a linear combination of vectors $x_{21}^{(1)}\eta$, $x_{21}^{(3)}\eta$, $\cdots$, $x_{21}^{(2r-1)}\eta$ by \eqref{eq:ind-base} and \eqref{eq:ind-hypo}. Similarly, by \eqref{eq:k3} and \eqref{eq:ind-base}, $x_{22}^{(3)}x_{21}^{(2r)}\eta$ is equal to $-2\ve_2x_{21}^{(2r+2)}\eta$ plus a linear combination of vectors $x_{21}^{(1)}\eta$, $x_{21}^{(2r)}\eta$. Thus, the claim is proved.

Let $\eta_r=x_{21}^{(2r-1)}\eta$ for $r\in\Z_{>0}$. Since $V(\bm\mu(u))$ is finite-dimensional, there exists a minimal non-negative integer $k$ such that $\eta_{k+1}$ is a linear combination of the vectors $\eta_1,\cdots,\eta_k$,
\beq\label{eq:pf-low-0}
\eta_{k+1}=c_1\eta_1+\cdots+c_k\eta_k.
\eeq
Then for any $r>k$, one proves similarly as above that
\[
\eta_r=a_{r1}\eta_1+\cdots +a_{rk}\eta_k
\]
for some $a_{ri}\in \bC$, where $1\lle i\lle k$. Therefore, there exist series $a_i(u)\in u^{1-2i}(1+\bC[[u^{-1}]])$, $1\lle i\lle k$, in $u^{-1}$ such that
\beq\label{eq:x21-eta}
x_{21}(u)\eta=a_1(u)\eta_1+a_2(u)\eta_2+\cdots+a_k(u)\eta_k.
\eeq

To simplify the notation, we use the following shorthand notation for these scalars,
\begin{align*}
	\La_0&=\la_{22}-2\la_{21}+\ve_2,\qquad \La_1=\la_{23}-2\la_{22}+\la_{21},\\
	\beta_r&=\la_{2r}-\la_{1r},\qquad\qquad\ \ \  \theta_r=\la_{1,r+1}+\la_{2,r+1}-\la_{2r}.
\end{align*}
By \eqref{eq:just2} and \eqref{eq:x21-eta}, we have
\beq\label{eq:pf-low-1}
\begin{split}
	x_{22}^{(2)}x_{21}(u)\eta=&\,\La_0x_{21}(u)\eta-(\la_1(u)-\la_2(u))\eta_1\\
	=&\,\La_0(a_1(u)\eta_1+a_2(u)\eta_2+\cdots+a_k(u)\eta_k)-(\la_1(u)-\la_2(u))\eta_1.
\end{split}
\eeq
On the other hand, by \eqref{eq:k2}, we have
\beq\label{eq:pf-low-2}
	x_{22}^{(2)}\sum_{r=1}^k a_r(u)\eta_k= \sum_{r=1}^k a_r(u)x_{22}^{(2)}x_{21}^{(2r-1)}\eta	=\sum_{r=1}^k a_r(u)(\La_0\eta_r+\beta_{2r-1}\eta_1).
\eeq
Comparing the coefficients of $\eta_1$, it follows that
\beq\label{eq:pf-low-3}
\la_2(u)-\la_1(u)=\sum_{k=1}^r \beta_{2k-1}a_k(u).
\eeq

Recall that \eqref{eq:ind-base} implies $x_{21}^{(2)}\eta=\frac{1}{2}\theta_0\ve_2\eta_1$. Similarly, by \eqref{eq:just3}, we have
\begin{align*}
	x_{22}^{(3)}&x_{21}(u)\eta\\
	=&\,(-2\ve_2u^2+\La_1)x_{21}(u)\eta+\Big(u\la_1(u)+u\la_2(u)-\la_2(u)+\frac{1}{2}\theta_0\ve_2(\la_2(u)-\la_1(u))\Big)\eta_1\\
	=&\,(-2\ve_2u^2+\La_1)\sum_{r=1}^k a_r(u)\eta_r+\Big(u\la_1(u)+u\la_2(u)-\la_2(u)+\frac{1}{2}\theta_0\ve_2(\la_2(u)-\la_1(u))\Big)\eta_1.
\end{align*}
On the other hand, by \eqref{eq:k3}, we have
\begin{align*}
	x_{22}^{(3)}\sum_{r=1}^k a_r(u)\eta_r=&\,\sum_{r=1}^k a_r(u)x_{22}^{(3)}x_{21}^{(2r-1)}\eta\\
	=&\, \sum_{r=1}^k a_r(u)\Big(-2\ve_2\eta_{r+1}+\La_1\eta_r+\frac{1}{2}\theta_0\ve_2\beta_{2r-1}\eta_1+\theta_{2r-1}\eta_1\Big).
\end{align*}
Applying \eqref{eq:pf-low-0} to the above equality and comparing the coefficients of $\eta_r$ for $1<r\lle k$, we obtain that
\[
(-2\ve_2u^2+\La_1)a_r(u)=(-2\ve_2a_{r-1}(u)+\La_1 a_r(u)-2\ve_2 c_ra_k(u)),
\]
which reduces to
\beq\label{eq:pf-low-4}
a_{r-1}(u)=u^2a_r(u)-c_ra_k(u).
\eeq
Hence, for any $1\lle r\lle k$, we have $a_r(u)=\mathfrak P_r(u)a_k(u)$, where $\mathfrak P_r(u)$ is a polynomial in $u$ of degree $2(k-r)$. Finally, taking the coefficients of $\eta_1$ and using \eqref{eq:pf-low-4}, we conclude that
\beq\label{eq:pf-low-5}
\Big(u-\frac{1}{2}\theta_0\ve_2\Big)\la_1(u)+\Big(u+\frac{1}{2}\theta_0\ve_2-1\Big)\la_2(u)=\mathfrak P(u)a_k(u),
\eeq
where $\mathfrak P(u)$ is a polynomial in $u$ of degree $2k$. Note that \eqref{eq:pf-low-3} and \eqref{eq:pf-low-4} imply
\beq\label{eq:pf-low-6}
\la_2(u)-\la_1(u)=\mathscr P(u) a_k(u),
\eeq
where $\mathscr P(u)$ is a polynomial in $u$ of degree at most $2k-2$. It follows from \eqref{eq:pf-low-5} and \eqref{eq:pf-low-6} that $\la_1(u)/\la_2(u)$ is an expansion of a rational function in $u$ at $u=\infty$, completing the proof for the case $\ve_1=\ve_2$.

(2) The case $\ve_1\ne\ve_2$. The proof for this case is very similar to the previous case. The difference is that one needs to use $x_{21}^{(2r)}\eta$ and $x_{22}^{(4)}x_{21}^{(2r)}$ instead of $x_{21}^{(2r-1)}\eta$ and $x_{22}^{(2)}x_{21}^{(2r-1)}$, respectively, cf. \cite[Proposition 6.1]{Molev1998finite} as we have $x_{21}^{(1)}=0$ in this case. Then \eqref{eq:pf-low-6} is replaced with
\[
(u^2+\cdots)\la_2(u)-(u^2+\cdots)\la_1(u)=\mathscr P(u)a_k(u),
\]
where $\cdots$ stand for two different linear polynomials  in $u$ and $\mathscr P(u)$ is a polynomial in $u$ of degree $2k+2$. Note that in this case $\mathfrak P(u)$ in \eqref{eq:pf-low-5} is of degree at most $2k$.
We omit the detail for this case.
\end{proof}

\subsection{Preparations}
In this subsection, we prepare ingredients to establish super analogue of Proposition \ref{prop:con-even} and always assume that $\bm s=(s_1,s_2)$ satisfies $s_1\ne s_2$.

We start with simple calculations for 2-dimensional evaluation modules. Let $\mc a$ and $\mc b$ be complex numbers such that $\mc a+\mc b\ne 0$. Then the evaluation $\YglMN$-module $L(\mc a,\mc b)$ is two dimensional. Let $v^+$ be a nonzero singular vector and set $v^-=e_{21}v^+$.

\begin{lem}\label{lemn2}
We have the following explicit action,
\begin{align*}
t_{11}(u)v^+&=\frac{u+s_1\mc a}{u}v^+,\qquad  t_{12}(u)v^+=0,\\
t_{22}(u)v^+&=\frac{u-s_1\mc b}{u}v^+,\qquad t_{21}(u)v^+=-\frac{s_1}{u}v^-,\\
t_{11}(u)v^-&=\frac{u-s_1+s_1\mc a}{u}v^-,\qquad t_{21}(u)v^-=0,\\
t_{22}(u)v^-&=\frac{u-s_1-s_1\mc b}{u}v^-,\qquad t_{12}(u)v^-=\frac{s_1(\mc a+\mc b)}{u}v^+,\\
t'_{11}(u)v^+&=\frac{u(u-s_1-s_1\mc b)}{(u-s_1+s_1\mc a)(u-s_1\mc b)}v^+,\qquad  t_{12}'(u)v^+=0,\\
t'_{22}(u)v^+&=\frac{u}{u-s_1\mc b}v^+,\qquad t_{21}'(u)v^+=\frac{s_1u}{(u-s_1+s_1\mc a)(u-s_1\mc b)}v^-,\\
t'_{11}(u)v^-&=\frac{u}{u-s_1+s_1\mc a}v^+,\qquad t_{12}'(u)v^-=\frac{-s_1(\mc a+\mc b)u}{(u-s_1+s_1\mc a)(u-s_1\mc b)}v^+,\\
t'_{22}(u)v^-&=\frac{u(u+s_1\mc a)}{(u-s_1+s_1\mc a)(u-s_1\mc b)}v^-, \qquad t_{21}'(u)v^-=0.
\end{align*}
In particular, $(u+s_1-s_1\mc a)(u+s_1\mc b)t_{ij}(u)t'_{kl}(-u)$ and $(u+s_1-s_1\mc a)(u+s_1\mc b)b_{ij}(u)$ act on $L(\mc a,\mc b)$ polynomially in $u$.
\end{lem}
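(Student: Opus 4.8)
The plan is to read both claims off the explicit action in the lemma, exploiting two cancellations. \emph{First}, on the evaluation module $L(\mc a,\mc b)$ the operators $t_{ij}(u)$ are given by the evaluation homomorphism \eqref{eq:evaluation-map}, so each has at most a simple pole at $u=0$; concretely, the table above shows that $u\,t_{ij}(u)$ acts on the basis $\{v^+,v^-\}$ by polynomials in $u$ of degree $\le 1$. \emph{Second}, the matrix $T^{-1}(u)$ vanishes at $u=0$ on this module: inspecting the formulas for $t'_{kl}(u)$ one sees that every entry of $T^{-1}(-u)$ acts on $\{v^+,v^-\}$ by a rational function of $u$ divisible by $u$ whose denominator divides $D(u):=(u+s_1-s_1\mc a)(u+s_1\mc b)$; equivalently, $u^{-1}D(u)\,t'_{kl}(-u)$ acts on $\{v^+,v^-\}$ by polynomials of degree $\le 1$. (Both facts are visible directly from the table; structurally the second reflects that on an evaluation module $T(u)$ has the form $I+u^{-1}A$ for a fixed matrix $A$, so $T^{-1}(u)=u(uI+A)^{-1}$ vanishes at $u=0$ with denominators controlled by a monic quadratic which, after $u\mapsto -u$, is a scalar multiple of $D(u)$.)

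Granting these, the first assertion is immediate. Since $u$ and $D(u)$ are scalar factors, as operators on $L(\mc a,\mc b)$ we may write
\[
D(u)\,t_{ij}(u)\,t'_{kl}(-u)=\big(u\,t_{ij}(u)\big)\cdot\big(u^{-1}D(u)\,t'_{kl}(-u)\big),
\]
and the right-hand side is a product of two $\End(L(\mc a,\mc b))$-valued polynomials in $u$ of degree $\le 1$, hence an $\End(L(\mc a,\mc b))$-valued polynomial in $u$ (of degree $\le 2$). There being only finitely many choices of $1\le i,j,k,l\le 2$, this is a finite check against the table.

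For the second assertion, recall from \eqref{eq:embed-expl} that under the embedding $\varphi$ of Proposition \ref{thm:embedding} one has $b_{ij}(u)=\sum_{a=1}^{2}\ve_a\,t_{ia}(u)\,t'_{aj}(-u)$. Hence
\[
D(u)\,b_{ij}(u)=\sum_{a=1}^{2}\ve_a\,D(u)\,t_{ia}(u)\,t'_{aj}(-u)
\]
is a finite sum of operators acting polynomially in $u$ by the previous paragraph, so $D(u)\,b_{ij}(u)$ acts polynomially in $u$ as well.

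The one point deserving care---and what I would verify entry by entry against the table---is that the simple pole of $t_{ij}(u)$ at $u=0$ is genuinely cancelled by the zero of $t'_{kl}(-u)$ at $u=0$, so that no extra factor of $u$ is needed in the prefactor; the remaining poles of $t'_{kl}(-u)$ all sit at the roots of $D(u)$ and so are killed by multiplication by $D(u)$, even in the degenerate case where $D$ has a double root. Since all of this is manifest in the formulas of the lemma, there is no real obstacle: the proof reduces to recording the two memberships $u\,t_{ij}(u)\in\End(L(\mc a,\mc b))[u]$ and $u^{-1}D(u)\,t'_{kl}(-u)\in\End(L(\mc a,\mc b))[u]$.
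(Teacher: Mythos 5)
Your proposal is correct and follows essentially the same route as the paper, whose proof consists of noting that the table is a straightforward computation (the evaluation homomorphism for the $t_{ij}(u)$ and inversion of $T(u)$ for the $t'_{kl}(u)$) and that the second statement follows from the formulas; your factorization $D(u)\,t_{ij}(u)t'_{kl}(-u)=\big(u\,t_{ij}(u)\big)\big(u^{-1}D(u)\,t'_{kl}(-u)\big)$ together with $b_{ij}(u)=\sum_a \ve_a t_{ia}(u)t'_{aj}(-u)$ is exactly how that deduction goes. The only remaining content is the entry-by-entry verification of the table itself (especially the $t'$-entries), which you defer to a finite check just as the paper does.
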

\begin{proof}
The formulas follow from straightforward computation and the second statement follows from the formulas.
\end{proof}

We call two $\BMN$-modules $V_1,V_2$ are \textit{almost isomorphic} if $V_1$ is isomorphic to the module obtained by pulling back $V_2$ through an automorphism of the form $\mc M_{\mc h(u)}$, see \eqref{mhu}. In particular, the modules $V(\bm\mu(u))$ and $V(\bm\nu(u))$ are almost isomorphic if and only if
\[
\frac{\tl \mu_i(u)}{\tl \mu_{i+1}(u)}=\frac{\tl \nu_i(u)}{\tl \nu_{i+1}(u)},\qquad 1\lle i<\ka.
\]
Similarly, one can define \textit{almost isomorphic} $\YMN$-modules. Then the modules $L(\bm\la(u))$ and $L(\bm\La(u))$ are almost isomorphic if and only if
\[
\frac{\la_i(u)}{\la_{i+1}(u)}=\frac{\La_i(u)}{\La_{i+1}(u)},\qquad 1\lle i<\ka.
\]
If $V_1,V_2$ are \textit{almost isomorphic}, then we write $V_1\simeq V_2$.

To understand the module structure of finite-dimensional irreducible $\BMN$ modules, it suffices to investigate these modules up to almost isomorphism.

We shall also need the dual modules.
Let $L$ be a finite-dimensional $\YglMN$-module. The \textit{dual} $L^*$ of $L$ is the representation of $\YglMN$ on the dual vector space of $L$ defined as follows:
\[
(\mc y\cdot \omega)(v):=(-1)^{|\omega||\mc y|}\omega(\Omega(\mc y)\cdot v),\quad  \mc y\in \YglMN,\ \omega\in L^* ,\ v\in L,
\]
where $\Omega$ is defined in \eqref{Omega}. Let $w$ be another finite-dimensional $\YglMN$-module. Then we have $(L\otimes W)^*=L^*\otimes W^*$.                                                                            Let $L$ be a finite-dimensional $\YglMN$-module of highest $\ell_\s$-weight generated by a highest $\ell_\s$-weight vector $\zeta$. Let $\zeta^*\in L^*$ be the vector such that $\zeta^*(\zeta)=1$ and $\zeta^*(v)=0$ for all $v\in L$ with $\mathrm{wt}(v)\ne \mathrm{wt}(\zeta)$. 

\begin{cor}\label{cor dual weight}
Let $V$ be a finite-dimensional $\YglMN$-module of highest $\ell_\s$-weight generated by a highest $\ell_\s$-weight vector $v$ of $\ell_\s$-weight $\bm\zeta(u)=(\zeta_i(u))_{1\lle i\lle \ka}$. Then $v^*$ is of highest $\ell_\s$-weight $\wt{\bm{\zeta}}(u)=(\wt\zeta_i(u))_{1\lle i\lle \ka}$, where
\[
\wt\zeta_i(u)=\frac{1}{\la_{i}(-u+\rho_{i+1})}\prod_{k=i+1}^\ka \frac{\la_k(-u+\rho_k)}{\la_{k}(-u+\rho_{k+1})}.
\]
\end{cor}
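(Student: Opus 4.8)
The plan is to compute the action of the generating series $t_{ij}(u)$ of $\YglMN$ on $v^*$ directly from the definition of the dual module and the explicit form of the anti-automorphism $\Omega$ recorded in \eqref{Omega}, and then to recognise the resulting scalars as the series produced by Proposition \ref{prop:highest-weight-inver}. Since $v^*$ is homogeneous with $|v^*|=|v|$ and, for every $1\lle i\lle\ka$, one has $(-1)^{|i||i|+|i|}=1$, unravelling $(\mc y\cdot\omega)(w)=(-1)^{|\omega||\mc y|}\omega(\Omega(\mc y)w)$ together with $\Omega(t_{ij}(u))=(-1)^{|i||j|+|j|}t_{ji}'(-u)$ gives, for all $w\in V$,
\[
(t_{ii}(u)\cdot v^*)(w)=v^*\big(t_{ii}'(-u)\,w\big),\qquad
(t_{ij}(u)\cdot v^*)(w)=(-1)^{|v^*|(|i|+|j|)+|i||j|+|j|}\;v^*\big(t_{ji}'(-u)\,w\big).
\]
Thus the statement is reduced to controlling how the inverse series $t_{kl}'(v)$ act on $V$, which is exactly the content of Lemma \ref{lem:wt-change} and Propositions \ref{prop:t'-l-weight}, \ref{prop:highest-weight-inver}.

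First I would record the two structural facts we need: the $\glMN$-weight of $v$ is $\mu:=\varpi(\bm\zeta(u))$, which by Corollary \ref{cor:hwrt} is the unique maximal weight occurring in $V$ and has $(V)_\mu=\C v$ one-dimensional; and by construction $v^*$ vanishes on $(V)_\nu$ for every $\nu\neq\mu$. For the off-diagonal part, fix $1\lle i<j\lle\ka$. By Lemma \ref{lem:wt-change}, $t_{ji}'(-u)\,w\in(V)_{\mathrm{wt}(w)+\epsilon_j-\epsilon_i}$, and $v^*$ can be nonzero on this space only if $\mathrm{wt}(w)+\epsilon_j-\epsilon_i=\mu$, i.e.\ $\mathrm{wt}(w)=\mu+(\epsilon_i-\epsilon_j)$. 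Since $i<j$ we have $\epsilon_i-\epsilon_j\in{\bf Q}_{\gge 0}\setminus\{0\}$, so such a weight is strictly larger than $\mu$ and does not occur in $V$. Hence $v^*\big(t_{ji}'(-u)\,w\big)=0$ for every $w$, and therefore $t_{ij}(u)\cdot v^*=0$ for all $1\lle i<j\lle\ka$.

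For the diagonal part, fix $1\lle i\lle\ka$. By Lemma \ref{lem:wt-change} the series $t_{ii}'(-u)$ preserves every weight space, so only $w\in(V)_\mu=\C v$ contributes to $(t_{ii}(u)\cdot v^*)(w)=v^*\big(t_{ii}'(-u)\,w\big)$. On $w=v$, Propositions \ref{prop:t'-l-weight} and \ref{prop:highest-weight-inver} give $t_{ii}'(-u)\,v=\zeta_i'(-u)\,v$ with
\[
\zeta_i'(-u)=\frac{1}{\zeta_{i}(-u+\rho_{i+1})}\prod_{k=i+1}^\ka\frac{\zeta_k(-u+\rho_k)}{\zeta_{k}(-u+\rho_{k+1})}=\wt\zeta_i(u),
\]
so that $v^*\big(t_{ii}'(-u)\,v\big)=\wt\zeta_i(u)$; since $v^*$ annihilates all other weight spaces, this yields $t_{ii}(u)\cdot v^*=\wt\zeta_i(u)\,v^*$. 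Together with the previous paragraph this shows $v^*$ is a highest $\ell_\s$-weight vector of $V^*$ with highest $\ell_\s$-weight $\wt{\bm\zeta}(u)$, as claimed. (Here the displayed formula for $\wt\zeta_i(u)$ in the statement is read with $\la$ replaced by $\zeta$, which is the intended normalisation.)

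I do not expect a genuine obstacle here: the proof is a direct substitution into Proposition \ref{prop:highest-weight-inver} once the two reduction identities of the first paragraph are in place. The only points that require care are the parity bookkeeping in those identities — in particular, the observation that $\Omega$ fixes the Cartan subalgebra of $\YglMN$, so that $v^*$ genuinely carries the $\glMN$-weight $\mu$ (and not $-\mu$), which is what keeps $\mu$ maximal in $V^*$ — and the use of Corollary \ref{cor:hwrt} to guarantee both the maximality of $\mu$ and the one-dimensionality of $(V)_\mu$, the latter being what lets the scalar $\zeta_i'(-u)$ computed on $v$ govern the action of $t_{ii}'(-u)$ on all of $(V)_\mu$.
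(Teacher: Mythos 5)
Your proposal is correct and follows essentially the same route as the paper: the paper's proof simply declares that $v^*$ is a highest $\ell_\s$-weight vector and reads off its weight from Proposition \ref{prop:highest-weight-inver}, and your argument fills in exactly those details (the sign bookkeeping through $\Omega$, the vanishing of $t_{ij}(u)v^*$ for $i<j$ by the weight argument via Lemma \ref{lem:wt-change} and Corollary \ref{cor:hwrt}, and the diagonal eigenvalues via Propositions \ref{prop:t'-l-weight} and \ref{prop:highest-weight-inver}). Your reading of the displayed formula with $\la$ replaced by $\zeta$ is indeed the intended one.
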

\begin{proof}
It is clear that $v^*$ is of highest $\ell_\s$-weight and its highest $\ell_\s$-weight is immediate from Proposition \ref{prop:highest-weight-inver}.
\end{proof}

Note that
\begin{align*}
\Omega(b_{ij}(u))&=\sum_{a=1}^\ka\ve_a(-1)^{|i||a|+|a|+|a||j|+|j|+(|a|+|i|)(|a|+|j|)}t_{ja}(u)t'_{ai}(-u)\\
&= \sum_{a=1}^\ka\ve_a(-1)^{|i||j|+|j|}t_{ja}(u)t'_{ai}(-u)=(-1)^{|i||j|+|j|}b_{ji}(u).
\end{align*}
This means that the subalgebra $\BMN$ of $\YMN$ is stable under $\Omega$ and the restriction of $\Omega$ to $\BMN$ yields an anti-automorphism of $\BMN$.

Let $V$ be a finite-dimensional $\BMN$-module. The \textit{dual} $V^*$ of $V$ is the representation of $\BMN$ on the dual vector space of $V^*$ defined as follows:
\[
(\mc y\cdot \omega)(v):=(-1)^{|\omega||\mc y|}\omega(\Omega(\mc y)\cdot v),\quad  \mc y\in \BMN,\ \omega\in V^* ,\ v\in V.
\]
Clearly, the dual $\C_{\gamma}^*$ of the one-dimensional module $\C_{\gamma}$ is isomorphic to $\C_\gamma$.

Let $L$ be a finite-dimensional $\YMN$-module and $V$ a finite-dimensional $\BMN$-module, then it is straightforward to verify that
\[
(L\otimes V)^*\cong L^*\otimes V^*.
\]

Fix $k\in\bZ_{>0}$. Let $(\mc a_i,\mc b_i)$ be a pair of complex numbers for each $1\lle i\lle k$. Consider the following tensor product of evaluation $\YMN$-modules,
\[
L(\bm{\mc a,\mc b})=L(\mc a_1,\mc b_1)\otimes \cdots\otimes L(\mc a_k,\mc b_k).
\]
\begin{lem}\label{lem:dual}
We have
\[
L(\bm{\mc a,\mc b})^*\simeq L(\mc b_1+1,\mc a_1-1)\otimes \cdots\otimes L(\mc b_k+1,\mc a_k-1)
\]
as $\YMN$-modules. Moreover, we have
\[
\big(L(\bm{\mc a,\mc b})\otimes \C_{\gamma}\big)^*\simeq L(\mc b_1+1,\mc a_1-1)\otimes \cdots\otimes L(\mc b_k+1,\mc a_k-1)\otimes \C_{\gamma}
\]
as $\BMN$-modules.
\end{lem}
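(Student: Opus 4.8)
The plan is to reduce everything to one highest $\ell_\s$-weight computation for a two‑dimensional evaluation module and then transport it through the tensor product; note that $L(\bm{\mc a,\mc b})$ itself need not be irreducible, so I work factor by factor from the start. Since $\mc a+\mc b\ne 0$ the evaluation module $L(\mc a,\mc b)$ is irreducible, hence so is $L(\mc a,\mc b)^*$, and by Corollary~\ref{cor dual weight} it is again a highest $\ell_\s$-weight module. From Lemma~\ref{lemn2} the highest $\ell_\s$-weight of $L(\mc a,\mc b)$ is $\bm\zeta(u)=\big(\tfrac{u+s_1\mc a}{u},\tfrac{u-s_1\mc b}{u}\big)$. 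Here $\ka=2$ and $s_1\ne s_2$, so $s_2=-s_1$, $\rho_2=s_2$ and $\rho_3=0$; substituting into the formula of Corollary~\ref{cor dual weight} and simplifying gives
\[
\frac{\wt\zeta_1(u)}{\wt\zeta_2(u)}=\frac{-u+s_2-s_1\mc b}{-u+s_2+s_1\mc a}=\frac{u+s_1+s_1\mc b}{u+s_1-s_1\mc a},
\]
the last equality using $s_2=-s_1$. On the other hand $(\mc b+1)+(\mc a-1)=\mc a+\mc b\ne 0$, so $L(\mc b+1,\mc a-1)$ is a two‑dimensional irreducible module as well; by Lemma~\ref{lemn2} its highest $\ell_\s$-weight is $\big(\tfrac{u+s_1(\mc b+1)}{u},\tfrac{u-s_1(\mc a-1)}{u}\big)$, whose ratio of consecutive components is precisely $\tfrac{u+s_1+s_1\mc b}{u+s_1-s_1\mc a}$. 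Hence $L(\mc a,\mc b)^*\simeq L(\mc b+1,\mc a-1)$; since both highest weights lie in $\big(1+u^{-1}\C[[u^{-1}]]\big)^2$, there is a unique $f(u)\in 1+u^{-1}\C[[u^{-1}]]$ with $L(\mc a,\mc b)^*\cong\mu_f\big(L(\mc b+1,\mc a-1)\big)$, where $\mu_f$ is the automorphism $t_{ij}(u)\mapsto f(u)t_{ij}(u)$ of $\YMN$ and $\mu_f(M)$ denotes the pullback of a module $M$ through it.

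For the first assertion of the lemma I would then use $(L\otimes W)^*\cong L^*\otimes W^*$ for $\YMN$-modules to write $L(\bm{\mc a,\mc b})^*\cong L(\mc a_1,\mc b_1)^*\otimes\cdots\otimes L(\mc a_k,\mc b_k)^*$, and apply the single‑factor step to each slot, obtaining $\mu_{f_1}(L(\mc b_1+1,\mc a_1-1))\otimes\cdots\otimes\mu_{f_k}(L(\mc b_k+1,\mc a_k-1))$. Since $\Delta(t_{ij}(u))=\sum_a t_{ia}(u)\otimes t_{aj}(u)$ contributes one scalar $f_i$ from each tensor slot, one has $\mu_{f_1}(A_1)\otimes\cdots\otimes\mu_{f_k}(A_k)\cong\mu_{f_1\cdots f_k}(A_1\otimes\cdots\otimes A_k)$, so $L(\bm{\mc a,\mc b})^*\cong\mu_g\big(L(\mc b_1+1,\mc a_1-1)\otimes\cdots\otimes L(\mc b_k+1,\mc a_k-1)\big)$ with $g=f_1\cdots f_k$, which is exactly the claimed almost‑isomorphism of $\YMN$-modules.

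For the second assertion I would combine $(L\otimes V)^*\cong L^*\otimes V^*$ (for a $\YMN$-module $L$ and a $\BMN$-module $V$) with $\C_\gamma^*\cong\C_\gamma$ to get $\big(L(\bm{\mc a,\mc b})\otimes\C_\gamma\big)^*\cong L(\bm{\mc a,\mc b})^*\otimes\C_\gamma\cong\mu_g(L')\otimes\C_\gamma$, where $L'=L(\mc b_1+1,\mc a_1-1)\otimes\cdots\otimes L(\mc b_k+1,\mc a_k-1)$. Finally, using the coideal coproduct $\Delta(b_{ij}(u))=\sum t_{ia}(u)t_{cj}'(-u)\otimes b_{ac}(u)(-1)^{(|c|+|j|)(|a|+|c|)}$ of Proposition~\ref{prop:coproduct}, applying $\mu_g$ to the $\YMN$-slot replaces $t_{ia}(u)t_{cj}'(-u)$ by $\tfrac{g(u)}{g(-u)}\,t_{ia}(u)t_{cj}'(-u)$ (since $\mu_g$ sends $t_{cj}'(-u)$ to $g(-u)^{-1}t_{cj}'(-u)$); hence, as $\BMN$-modules, $\mu_g(L')\otimes\C_\gamma$ is the pullback of $L'\otimes\C_\gamma$ through $\mc M_{\mc h}$ with $\mc h(u)=g(u)/g(-u)$, and this $\mc h$ satisfies $\mc h(u)\mc h(-u)=1$. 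Therefore $\big(L(\bm{\mc a,\mc b})\otimes\C_\gamma\big)^*\simeq L'\otimes\C_\gamma$, which finishes the proof.

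The only delicate point is the single‑factor weight computation: one must carefully track the shifts $\rho_{i+1}$ together with the identity $s_2=-s_1$ in Corollary~\ref{cor dual weight}, and it is precisely the shift by $\rho_2=s_2=-s_1$ there that converts the pair $(\mc a,\mc b)$ into $(\mc b+1,\mc a-1)$. The remaining steps — passing from single factors to tensor products and transferring the $\YMN$-almost‑isomorphism to a $\BMN$-almost‑isomorphism — are routine bookkeeping once the behaviour of the scalar twists $\mu_f$ under the (co)products is recorded.
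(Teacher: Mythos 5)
Your proposal is correct and follows essentially the same route as the paper: the paper's proof is exactly ``apply Corollary~\ref{cor dual weight} and compute directly,'' and your single-factor computation with $\rho_2=s_2=-s_1$ reproducing the pair $(\mc b+1,\mc a-1)$ is that computation spelled out. The extra bookkeeping you add (working factor by factor since $L(\bm{\mc a,\mc b})$ need not be irreducible, and checking that the accumulated twist $g(u)$ descends to $\mc M_{\mc h}$ with $\mc h(u)=g(u)/g(-u)$ at the $\BMN$ level) is sound and consistent with the paper's notion of almost isomorphism.
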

\begin{proof}
The lemma follows from Corollary \ref{cor dual weight} by a direct computation.
\end{proof}

Though we work with the case $\ka=2$, the dual modules can be generalized to arbitrary $\ka$ and $\s$.

\subsection{The case $\ve_1=\ve_2$}
Now we assume further that $\ve_1=\ve_2$. Suppose $V(\bm\mu(u))$ is finite-dimensional, then by Proposition \ref{prop:rank1} we have
\beq\label{P-def}
\frac{\tl \mu_1(u)}{\tl \mu_2(u)}=(-1)^{\deg P(u)}\frac{P(u)}{P(-u-s_1)}.
\eeq
We assume further that $P(u)$ and $P(-u-s_1)$ are relatively prime. Otherwise, we may cancel common factors and obtain a polynomial of smaller degree. Then $P(-s_1/2)\ne 0$. Suppose
\[
P(u)=(u+s_1\mc a_1)(u+s_1\mc a_2)\cdots (u+s_1\mc a_{l})
\]
where $l=\deg P(u)$ and $\mc a_i\in\C$ for $1\lle i\lle l$. We have $\mc a_i\ne 1/2$.

Set $k=\lfloor\frac{l+1}{2} \rfloor$. Introduce $k$ pair of complex numbers $(\mc a_i,\mc b_i)$, where $\mc a_i$ are defined as above while $\mc b_i$ are defined as follows,
\begin{itemize}
    \item if $l$ is even, then $\mc b_i=\mc a_{i+k}-1$ for $1\lle i\lle k$;
    \item if $l$ is odd, then $\mc b_i=\mc a_{i+k}-1$ for $1\lle i< k$ and $\mc b_k=-\frac{1}{2}$.
\end{itemize}
Then we have
\beq\label{P-factor}
\frac{P(u)}{P(-u-s_1)}=(-1)^{\deg P(u)}\prod_{i=1}^k\frac{(u+s_1\mc a_i)(u+s_1+s_1\mc b_i)}{(u-s_1\mc b_i)(u+s_1-s_1\mc a_i)}.
\eeq
The only possible cancellation is when $l$ is odd, then
\[
u+s_1+s_1\mc b_k=u-s_1\mc b_k.
\]
Note that we have
\beq\label{notequalto}
\mc a_i+\mc b_i\ne 0,\quad \mc a_i+\mc b_j\ne 0, \quad \mc a_i+\mc a_j\ne 1,\quad \mc b_i+\mc b_j\ne -1
\eeq
for all $1\lle i\ne j\lle k$.

We consider the following tensor product of evaluation $\YMN$-modules,
\[
L(\bm{\mc a,\mc b})=L(\mc a_1,\mc b_1)\otimes \cdots\otimes L(\mc a_k,\mc b_k).
\]
For each $1\lle i\lle k$, $L(\mc a_i,\mc b_i)$ is two dimensional. We set $v_i^+$ to be a nonzero singular vector and $v_i^-=e_{21}v_i^+$. Moreover, we assume that $v_i^+$ are even. We also set $v^+=v_1^+\otimes\cdots\otimes v_k^+$. We regard $L(\bm{\mc a,\mc b})$ as a $\BMN$-module by restriction.
\begin{prop}\label{prop=}
If $\ve_1=\ve_2$, then the  $\BMN$-module $L(\bm{\mc a,\mc b})$ is irreducible. Moreover, the finite-dimensional irreducible $\BMN$-module $V(\bm\mu(u))$ is almost isomorphic to $L(\bm{\mc a,\mc b})$.
\end{prop}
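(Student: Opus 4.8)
The plan is to prove the two assertions of the proposition separately: first, that $v^+=v_1^+\otimes\cdots\otimes v_k^+$ is a highest $\ell_{\s,\bm\ve}$-weight vector of $L(\bm{\mc a,\mc b})$ (viewed as a $\BMN$-module via the coideal embedding of Proposition \ref{thm:embedding}) whose highest weight $\bm\nu(u)$ satisfies $\tl\nu_1(u)/\tl\nu_2(u)=\tl\mu_1(u)/\tl\mu_2(u)$, so that $V(\bm\nu(u))\simeq V(\bm\mu(u))$; and second, that $L(\bm{\mc a,\mc b})$ is irreducible over $\BMN$. Granting both, $L(\bm{\mc a,\mc b})=\BMN v^+=V(\bm\nu(u))\simeq V(\bm\mu(u))$, which is exactly the claim.

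For the first assertion I would argue as follows. Since $L(\bm{\mc a,\mc b})$ is a tensor product of highest $\ell_\s$-weight evaluation $\YMN$-modules, the coproduct \eqref{eq Hopf} shows $v^+$ is a highest $\ell_\s$-weight vector over $\YMN$ with $\la_1(u)=\prod_{i=1}^k(u+s_1\mc a_i)/u$ and $\la_2(u)=\prod_{i=1}^k(u-s_1\mc b_i)/u$ by Lemma \ref{lemn2}. Then Lemma \ref{lem:tia-kill} together with \eqref{eq:embed-expl} gives $b_{ij}(u)v^+=0$ for $i<j$, while Lemma \ref{lem:b-in-t} (equivalently Example \ref{eg:tensor-1} with $\gamma=0$) gives $\tl b_{ii}(u)v^+=(2\ve_iu-\ve_i\rho_{i+1}+\varpi_{i+1})\la_i(u)\la_i'(-u)v^+$. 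Inserting $\ve_1=\ve_2$, $\ka=2$, $\rho_2=s_2=-s_1$, $\rho_3=0$, $\varpi_2=\ve_2 s_2$, $\varpi_3=0$ (so that both prefactors equal $2\ve_1 u$ and cancel) and using Proposition \ref{prop:highest-weight-inver} in the form $\la_1'(-u)/\la_2'(-u)=\la_2(-u-s_1)/\la_1(-u-s_1)$, a short computation yields
\[
\frac{\tl\nu_1(u)}{\tl\nu_2(u)}=\prod_{i=1}^k\frac{(u+s_1\mc a_i)(u+s_1+s_1\mc b_i)}{(u-s_1\mc b_i)(u+s_1-s_1\mc a_i)},
\]
which by \eqref{P-factor} equals $(-1)^{\deg P}P(u)/P(-u-s_1)$, and hence equals $\tl\mu_1(u)/\tl\mu_2(u)$ by \eqref{P-def}. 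Thus $V(\bm\nu(u))\simeq V(\bm\mu(u))$.

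For the second assertion, the essential point is that $v^+$ generates $L(\bm{\mc a,\mc b})$ over $\BMN$. I would prove this by induction on $j$, showing that the $\glMN$-weight subspace $(L(\bm{\mc a,\mc b}))_{\lambda-j\alpha_1}$ lies in $\BMN v^+$: since $b_{21}(u)=\ve_1 t_{21}(u)t_{11}'(-u)+\ve_2 t_{22}(u)t_{21}'(-u)$ shifts the $\glMN$-weight by $-\alpha_1$, one computes the action of its coefficients on the weight spaces using Lemma \ref{lemn2} and checks, via the genericity relations \eqref{notequalto}, that the resulting vectors span the next weight space (the diagonal operators $b_{ii}(u)$ preserve weight spaces). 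Once $v^+$ is cyclic, $L(\bm{\mc a,\mc b})=\BMN v^+$ is a highest $\ell_{\s,\bm\ve}$-weight module with irreducible quotient $V(\bm\nu(u))$. To upgrade this to irreducibility, I would invoke the dual: by Lemma \ref{lem:dual}, $L(\bm{\mc a,\mc b})^*\simeq L(\mc b_1+1,\mc a_1-1)\otimes\cdots\otimes L(\mc b_k+1,\mc a_k-1)$, whose parameters $(\mc b_i+1,\mc a_i-1)$ again satisfy \eqref{notequalto}, so the same cyclicity statement applies to it and its highest weight vector $(v^+)^*$ (using that $\BMN$ is $\Omega$-stable). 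Then the irreducible quotient of $L(\bm{\mc a,\mc b})^*$ dualizes to an embedding $V(\bm\nu(u))\hookrightarrow L(\bm{\mc a,\mc b})$ (compare Corollary \ref{cor dual weight}); since this embedding is $\glMN$-weight preserving and $\dim(L(\bm{\mc a,\mc b}))_{\lambda}=1$, its image contains $v^+$, whence $L(\bm{\mc a,\mc b})=\BMN v^+\subseteq V(\bm\nu(u))\subseteq L(\bm{\mc a,\mc b})$ and $L(\bm{\mc a,\mc b})=V(\bm\nu(u))$ is irreducible.

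The hard part will be the cyclicity claim in the second assertion: one has to control exactly which linear combinations of the basis vectors $v_1^{\pm}\otimes\cdots\otimes v_k^{\pm}$ are produced by iterated application of the coefficients of $b_{21}(u)$ to $v^+$, and show that precisely the genericity conditions \eqref{notequalto} guarantee a spanning set is obtained in each weight subspace. This is the technical heart of the argument, parallel to the computations in \cite{Molev2002reflection,Molev1998finite}; the remaining steps are a matter of assembling results already established above.
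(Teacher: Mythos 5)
Your first half — showing $v^+$ is a highest $\ell_{\s,\bm\ve}$-weight vector and computing $\tl\nu_1(u)/\tl\nu_2(u)$ from Proposition \ref{prop:highest-weight-inver}, Lemma \ref{lem:b-in-t} and Lemma \ref{lemn2}, then matching it with \eqref{P-def} and \eqref{P-factor} — is correct and is exactly what the paper does. The problem is the second half: the statement you yourself identify as ``the technical heart'', namely that iterated application of the coefficients of $b_{21}(u)$ to $v^+$ spans every $\glMN$-weight subspace of $L(\bm{\mc a,\mc b})$, is never proved; it is precisely the content of the proposition, and you give no argument for why the genericity relations \eqref{notequalto} force the spanning in each weight space. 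Moreover \eqref{notequalto} alone is not the full list of conditions in play: the paper's induction also uses $2\mc a_1-1\ne 0$ (coming from the coprimality of $P(u)$ and $P(-u-s_1)$), and on the dual side one needs $2\mc b_1+1\ne 0$, which can fail when $k=1$ (the case $\mc b_k=-\tfrac12$) and has to be handled by a separate direct computation. So as written the proposal defers the whole difficulty to an unexecuted and under-specified computation.

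It is also worth noting that the paper avoids any direct cyclicity/spanning computation. Its route is: (i) prove, by induction on $k$ using the coideal coproduct of Proposition \ref{prop:coproduct}, the decomposition $\eta=v_1^+\otimes\eta_0+v_1^-\otimes\eta_1$, and the explicit rank-one formulas of Lemma \ref{lemn2}, that any vector annihilated by all coefficients of $b_{12}(u)$ is proportional to $v^+$; (ii) combine this with Lemma \ref{lem:nontrivial} to conclude that every nonzero submodule contains $v^+$; (iii) obtain cyclicity of $v^+$ by the annihilator argument in the dual module of Lemma \ref{lem:dual}, which again requires only the singular-vector uniqueness of step (i) applied to the dual parameters $(\mc b_i+1,\mc a_i-1)$ (plus the $k=1$ check mentioned above). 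Your duality step for upgrading cyclicity to irreducibility is sound in principle, but if you intend to complete the argument you would do better to prove the $b_{12}$-singular-vector uniqueness (a one-step-at-a-time induction on $k$) rather than the much harder direct spanning statement, since the former already yields both ``every submodule contains $v^+$'' and, via the dual, the cyclicity you want.
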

\begin{proof}
It follows from the proof of Theorem \ref{thmnontrivial} that the vector $v^+$ is a highest $\ell_{\s,\bm\ve}$-weight vector. Suppose the corresponding $\ell_{\s,\bm\ve}$-weight is $\bm\nu(u)=(\nu_1(u),\nu_2(u))$. Then it follows from Proposition \ref{prop:highest-weight-inver}, Lemma \ref{lem:b-in-t}, and Lemma \ref{lemn2}, , that
\[
\tl \nu_1(u)=2\ve_1u\prod_{i=1}^k\frac{(u+s_1\mc a_i)(u+s_1+s_1\mc b_i)}{(u+s_1-s_1\mc a_i)(u+s_1\mc b_i)},\quad \tl \nu_2(u) = 2\ve_2u\prod_{i=1}^k\frac{u-s_1\mc b_i}{u+s_1\mc b_i}.
\]
Therefore, by \eqref{P-def} and \eqref{P-factor}, we have
\[
\frac{\tl \nu_1(u)}{\tl \nu_2(u)}=\prod_{i=1}^k\frac{(u+s_1\mc a_i)(u+s_1+s_1\mc b_i)}{(u-s_1\mc b_i)(u+s_1-s_1\mc a_i)}=(-1)^{\deg P(u)}\frac{P(u)}{P(-u-s_1)}=\frac{\tl \mu_1(u)}{\tl \mu_2(u)}.
\]
Thus, it suffices to prove that the  $\BMN$-module $L(\bm{\mc a,\mc b})$ is irreducible.

We claim that any vector $\eta\in L(\bm{\mc a,\mc b})$ satisfying $b_{12}(u)\eta=0$ is proportional to $v^+$. We prove the claim by induction on $k$. The case $k=1$ is obvious by Lemma \ref{lemn2}. Then we assume that $k\gge 2$. We write any such nonzero vector $\eta$ in the following form,
\[
\eta=\sum_{r=0}^1 (e_{21})^rv_1^+\otimes\eta_r=v_1^+\otimes \eta_0+v_1^-\otimes \eta_1,
\]
where $\eta_0,\eta_1\in L(\mc a_2,\mc b_2)\otimes \cdots\otimes L(\mc a_k,\mc b_k)$. We first prove that $\eta_1=0$. Suppose $\eta_1\ne 0$. Then it follows from Proposition \ref{prop:coproduct} that
\begin{align*}
    \Delta(b_{12}(u))=&\, t_{11}(u)t'_{12}(-u)\otimes b_{11}(u)+t_{11}(u)t'_{22}(-u)\otimes b_{12}(u)\\-&\,
    t_{12}(u)t'_{12}(-u)\otimes b_{21}(u)+t_{12}(u)t'_{22}(-u)\otimes b_{22}(u).
\end{align*}
Applying $b_{12}(u)$ to $\eta$ using this coproduct, it follows from $b_{12}(u)\eta=0$ that
\[
\big(t_{11}(u)t'_{22}(-u)\otimes b_{12}(u)\big)(v_1^-\otimes \eta_1)=0
\]
by taking the coefficient of $v_1^-$. Therefore, we have
\[
-\frac{(u-s_1+s_1\mc a_1)(u-s_1\mc a_1)}{(u+s_1-s_1\mc a_1)(u+s_1\mc b_1)}v_1^-\otimes b_{12}(u)\eta_1=0,
\]
which implies that $b_{12}(u)\eta_1=0$. By induction hypothesis, the vector $\eta_1$ must be proportional to $v_2^+\otimes\cdots\otimes v_k^+$. Then taking the coefficient of $v^+$ in $b_{12}(u)\eta=0$, we have
\begin{align*}
0=\frac{u+s_1\mc a_1}{u+s_1\mc b_1}v_1^+\otimes b_{12}(u)\eta_0+t_{12}(u)t'_{22}(-u)v_1^-\otimes b_{22}(u)\eta_1+t_{11}(u)t'_{12}(-u)v_1^-\otimes b_{11}(u)\eta_1.
\end{align*}
Note that by \eqref{eq:def-tl-b} we have $\tl b_{11}(u)=(2u+s_1)b_{11}(u)-s_1b_{22}(u)$ and $\tl b_{22}(u)=2ub_{22}(u)$. We deduce from the above equation and Lemma \ref{lem:b-in-t} that
\begin{align*}
0&=\frac{u+s_1\mc a_1}{u+s_1\mc b_1}v_1^+\otimes b_{12}(u)\eta_0+\frac{2\ve_2s_1u(\mc a_1+\mc b_1)}{(u+s_1\mc b_1)(2u+s_1)}\prod_{i=2}^k\frac{u-s_1\mc b_i}{u+s_1\mc b_i} v_1^+\otimes \eta_1 \\
&+\frac{2\ve_1s_1u(\mc a_1+\mc b_1)(u+s_1\mc a_1)}{(u+s_1-s_1\mc a_1)(u+s_1\mc b_1)(2u+s_1)}\prod_{i=2}^k\frac{(u+s_1\mc a_i)(u+s_1+s_1\mc b_i)}{(u+s_1-s_1\mc a_i)(u+s_1\mc b_i)} v_1^+\otimes \eta_1.
\end{align*}
Multiplying both sides by $(2u+s_1)\prod_{i=1}^k\big((u+s_1-s_1\mc a_i)(u+s_1\mc b_i)\big)$, we have
\begin{align*}
0&=(2u+s_1)(u+s_1\mc a_1)(u+s_1-s_1\mc a_1)v_1^+\otimes \prod_{i=2}^k\big((u+s_1-s_1\mc a_i)(u+s_1\mc b_i)\big)b_{12}(u)\eta_0\\
&+2\ve_2s_1u(\mc a_1+\mc b_1)(u+s_1-s_1\mc a_1)\prod_{i=2}^k\big((u+s_1-s_1\mc a_i)(u-s_1\mc b_i)\big) v_1^+\otimes \eta_1 \\
&+2\ve_1s_1u(\mc a_1+\mc b_1)(u+s_1\mc a_1)\prod_{i=2}^k\big((u+s_1\mc a_i)(u+s_1+s_1\mc b_i)\big) v_1^+\otimes \eta_1.
\end{align*}
Due to Lemma \ref{lemn2}, the operator $\prod_{i=2}^k\big((u+s_1-s_1\mc a_i)(u+s_1\mc b_i)\big)b_{12}(u)$ acts on $\eta_0$ polynomially in $u$.

(1) If $\mc a_1\ne 0$, then setting $u=-s_1\mc a_1$, we obtain
\[
2s_1\mc a_1(\mc a_1+\mc b_1)(2\mc a_1-1)\prod_{i=2}^k(\mc a_1+\mc a_i-1)(\mc a_1+\mc b_i)v_1^+\otimes \eta_1=0.
\]
Thus, by \eqref{notequalto}, we conclude that $\eta_1=0$.

(2) If $\mc a_1=0$, then setting $u=s_1\mc a_1-s_1$, we get
\[
2s_1(\mc a_1-1)(\mc a_1+\mc b_1)(2\mc a_1-1)\prod_{i=2}^k(\mc a_1+\mc a_i-1)(\mc a_1+\mc b_i)v_1^+\otimes \eta_1=0.
\]
Again by \eqref{notequalto}, we conclude that $\eta_1=0$.

Therefore, we must have $b_{12}(u)\eta_0=0$ which again by induction hypothesis that $\eta_0$ is proportional to $v_2^+\otimes\cdots\otimes v_k^+$. Thus the claim is proved.

Suppose now that $M$ is a submodule of $L(\bm{\mc a,\mc b})$. Then $M$ must contain a nonzero vector $\eta$ such that $b_{12}(u)\eta=0$, see Lemma \ref{lem:nontrivial}. The above argument thus shows that $M$ contains the vector $v^+$. It remains to prove the cyclic span $K=\BMN v^+$ coincides with $L(\bm{\mc a,\mc b})$. By Lemma \ref{lem:dual}, the dual $\BMN$-module $L(\bm{\mc a,\mc b})^*$ is almost isomorphic to the restriction of the $\YMN$-module
\[
L(\mc b_1+1,\mc a_1-1)\otimes \cdots\otimes L(\mc b_k+1,\mc a_k-1).
\]
Moreover, the highest $\ell_{\s,\bm\ve}$ vector $\zeta_i^*$ of the module $L(\mc b_i+1,\mc a_i-1)\simeq L(\mc a_i,\mc b_i)^*$ can be identified with the elements of $L(\mc a_i,\mc b_i)^*$ such that $\zeta_i^*(v_i^+)=1$ and $\zeta_i^*(v_i^-)=0$. Now, if the submodule $K$ of $L(\bm{\mc a,\mc b})$ is proper, then its annihilator
\[
\mathrm{Ann}\,K:=\{\omega\in L(\bm{\mc a,\mc b})^*~|~\omega(\eta)=0 \quad\text{for all}\quad \eta\in K \}
\]
is a nonzero submodule of $L(\bm{\mc a,\mc b})^*$ which does not contain the vector $\zeta_1^*\otimes\cdots\otimes \zeta_k^*$. However, this contradicts the claim proved in the first part of the proof because the strategy still works for the module $L(\mc b_1+1,\mc a_1-1)\otimes \cdots\otimes L(\mc b_k+1,\mc a_k-1)$ with the previous assumptions on the complex numbers $\mc a_i,\mc b_i$. In this case, instead of using $2\mc a_1-1\ne 0$, we need the condition $2\mc b_1+1\ne 0$. This is true if $k\gge 2$ as the only possibility for $\mc b_i=-\frac12$ is when $i=k$. As for the initial case $k=1$, it can be checked by a direct computation.
\end{proof}

\begin{cor}
Suppose $\s=(s_1,s_2)$ and $\bm \ve=(\ve_1,\ve_2)$ are such that $s_1\ne s_2$ and $\ve_1=\ve_2$.
\begin{enumerate}
    \item If $\bm\mu(u)$ satisfies \eqref{P-def}, where $P(u)$ and $P(-u-s_1)$ are relatively prime, then $\dim V(\bm\mu(u))=2^k$, where $k=\big\lfloor\frac{\deg P(u)+1}{2}\big\rfloor$.
    \item Given $k\in\bZ_{>0}$, let $\mc a_i,\mc b_i$, $1\lle i\lle k$, be arbitrary complex numbers such that $\mc a_i+\mc b_i\ne 0$ and set $$P(u)=\prod_{i=1}^k\big((u+s_1\mc a_i)(u+s_1+s_1\mc b_i)\big).$$ Then the $\BMN$-module obtained by the restriction of the $\YMN$-module
    \[
    L(\bm{\mc a,\mc b})=L(\mc a_1,\mc b_1)\otimes \cdots\otimes L(\mc a_k,\mc b_k)
    \]
    is irreducible if and only if the greatest common divisor of $P(u)$ and $P(-u-s_1)$ (over $\bC$) is of degree at most 1 (If the greatest common divisor is nontrivial, then it has to be $u+\frac{s_1}{2}$).
\end{enumerate}
\end{cor}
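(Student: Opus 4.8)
Both parts will be deduced from Proposition \ref{prop=} together with a bookkeeping of degrees. For part (1), the hypothesis is precisely the one under which Proposition \ref{prop=} applies: it gives that $V(\bm\mu(u))$ is almost isomorphic to $L(\bm{\mc a,\mc b})=L(\mc a_1,\mc b_1)\otimes\cdots\otimes L(\mc a_k,\mc b_k)$ with $k=\lfloor(\deg P+1)/2\rfloor$, each factor being a $2$-dimensional evaluation module (Lemma \ref{lemn2}). An almost isomorphism is pullback along an automorphism $\mc M_{\mc h(u)}$, which is the identity on the underlying vector superspace; hence $\dim V(\bm\mu(u))=\dim L(\bm{\mc a,\mc b})=2^k$, proving part (1).

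For part (2) write $L=L(\bm{\mc a,\mc b})$, so $\dim L=2^k$, and $P(u)=\prod_{i=1}^k(u+s_1\mc a_i)(u+s_1+s_1\mc b_i)$, of even degree $2k$; a direct substitution yields $P(-u-s_1)=\prod_{i=1}^k(u+s_1-s_1\mc a_i)(u-s_1\mc b_i)$. Put $v^+=v_1^+\otimes\cdots\otimes v_k^+$. Exactly as in the first part of the proof of Proposition \ref{prop=}, which for this computation requires no hypothesis on $P$ beyond $\mc a_i+\mc b_i\ne0$, the restriction of $L$ to $\BMN$ has $v^+$ as a highest $\ell_{\s,\bm\ve}$-weight vector whose highest weight $\bm\nu(u)$ satisfies $\tl\nu_1(u)/\tl\nu_2(u)=P(u)/P(-u-s_1)$. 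Set $d(u)=\gcd(P(u),P(-u-s_1))$ (monic) and $Q(u)=P(u)/d(u)$.

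The arithmetic input is a handful of elementary facts about the involution $u\mapsto-u-s_1$, which interchanges the two arguments of the gcd: (a) $d(-u-s_1)=(-1)^{\deg d}d(u)$, since the two sides have the same zeros with the same multiplicities and leading coefficients $(-1)^{\deg d}$ and $1$; (b) hence $P(u)/P(-u-s_1)=(-1)^{\deg d}Q(u)/Q(-u-s_1)=(-1)^{\deg Q}Q(u)/Q(-u-s_1)$, using that $\deg P=2k$ is even; (c) $Q(u)$ and $Q(-u-s_1)$ are coprime: if $a_r,b_r$ denote the multiplicities of $r$ and of $-r-s_1$ as zeros of $P$, then the multiplicity of any $r$ in $Q(u)$ resp.\ in $Q(-u-s_1)$ equals $\max(0,a_r-b_r)$ resp.\ $\max(0,b_r-a_r)$, and these are never both positive; (d) if $\deg d\ge1$ then any zero $r$ of $d$ forces $-r-s_1$ to be a zero of $d$ as well, so $r\ne-s_1/2$ would give $\deg d\ge2$; in particular $\deg d=1$ forces $d(u)=u+s_1/2$, which is the parenthetical claim.

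By (b) and (c), $\bm\nu(u)$ satisfies \eqref{P-def} with the coprime pair $Q(u),Q(-u-s_1)$, so part (1) gives $\dim V(\bm\nu(u))=2^{k'}$ with $k'=\lfloor(\deg Q+1)/2\rfloor=\lfloor(2k-\deg d+1)/2\rfloor$. The cyclic submodule $W=\BMN v^+\subseteq L$ is a highest $\ell_{\s,\bm\ve}$-weight module of highest weight $\bm\nu(u)$, so $V(\bm\nu(u))$ is its unique irreducible quotient and $2^{k'}\le\dim W\le2^k$. If $\deg d\le1$, then $k'=k$, forcing $W=L\cong V(\bm\nu(u))$, hence $L$ is irreducible. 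If $\deg d\ge2$, then $k'\le k-1$; were $L$ irreducible it would be a highest weight module (classification of finite-dimensional irreducibles), and since $v^+$ is a highest weight vector in $L$, uniqueness of the highest weight vector (Corollary \ref{cor:wt-changeb}) gives $L=W\cong V(\bm\nu(u))$, whence $2^k=\dim L=2^{k'}\le2^{k-1}$, a contradiction; so $L$ is reducible. This establishes the equivalence. The only genuine obstacle is the degree bookkeeping in (a)--(d), in particular getting the sign $(-1)^{\deg d}$ right and confirming the sharp comparisons $k'=k$ when $\deg d\le1$ versus $k'\le k-1$ when $\deg d\ge2$; both hinge on $\deg P$ being even, which is special to the shape of $P$ in part (2).
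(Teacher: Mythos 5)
Your argument is correct and is essentially the deduction the paper intends: the corollary is left without explicit proof as an immediate consequence of Proposition \ref{prop=} (together with Proposition \ref{prop:rank1} and the fact that an almost isomorphism, being a pullback along $\mc M_{\mc h(u)}$, preserves dimension), which is exactly what you use. Your gcd-reduction and dimension count in part (2) — applying part (1) to the reduced coprime polynomial $Q$ to get $\dim V(\bm\nu(u))=2^{k'}$ and comparing with $\dim L(\bm{\mc a,\mc b})=2^k$ — correctly settles both directions for arbitrary arrangements of the $(\mc a_i,\mc b_i)$, including the parenthetical claim that a nontrivial gcd of degree one must be $u+\frac{s_1}{2}$.
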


\subsection{The case $\ve_1\ne \ve_2$}
Now we assume that $\ve_1\ne \ve_2$. In this case, it is slightly different from the previous case $\ve_1=\ve_2$ as there is a nontrivial one-dimensional module. We shall give detail for this part as well.

Suppose $V(\bm\mu(u))$ is finite-dimensional, then by Proposition \ref{prop:rank1} we have
\beq\label{P-def2}
\frac{\tl \mu_1(u)}{\tl \mu_2(u)}=(-1)^{\deg P(u)+1}\frac{P(u)}{P(-u-s_1)}.
\eeq
We assume further that $P(u)$ and $P(-u-s_1)$ are relatively prime. Otherwise, we may cancel common factors and obtain a polynomial of smaller degree. Then $P(-\frac{s_1}{2})\ne 0$. Suppose
\[
P(u)=(u+s_1\mc a_1)(u+s_1\mc a_2)\cdots (u+s_1\mc a_{l})
\]
where $l=\deg P(u)$ and $\mc a_i\in\C$ for $1\lle i\lle l$. We have $\mc a_i\ne \frac{1}{2}$.

Set $k=\lfloor\frac{l}{2} \rfloor$. Introduce $k$ pair of complex numbers $(\mc a_i,\mc b_i)$, where $\mc a_i$ are defined as above while $\mc b_i$ are defined as follows,
\begin{itemize}
    \item if $l$ is odd, then $\mc b_i=\mc a_{i+k}-1$ for $1\lle i\lle k$;
    \item if $l$ is even, then $\mc b_i=\mc a_{i+k}-1$ for $1\lle i< k$ and $\mc b_k=-\frac{1}{2}$.
\end{itemize}
We also set $\gamma=\ve_1s_1(a_{l}-1)$. Then we have
\beq\label{P-factor2}
\frac{P(u)}{P(-u-s_1)}=(-1)^{\deg P(u)+1}\frac{\ve_1u+s_1\ve_1+\gamma}{\ve_2u+\gamma}\prod_{i=1}^k\frac{(u+s_1\mc a_i)(u+s_1+s_1\mc b_i)}{(u-s_1\mc b_i)(u+s_1-s_1\mc a_i)}.
\eeq
The only possible cancellation is when $l$ is even, then
\[
u+s_1+s_1\mc b_k=u-s_1\mc b_k.
\]
Note that we have
\beq\label{notequalto2}
\mc a_i+\mc b_i\ne 0,\quad \mc a_i+\mc b_j\ne 0, \quad \mc a_i+\mc a_j\ne 1,\quad \mc b_i+\mc b_j\ne -1
\eeq
for all $1\lle i\ne j\lle k$.

We consider the tensor product of evaluation $\YMN$-modules,
\[
L(\bm{\mc a,\mc b})=L(\mc a_1,\mc b_1)\otimes \cdots\otimes L(\mc a_k,\mc b_k)
\]
and the tensor product
\[
V_{\gamma}(\bm{\mc a,\mc b})=L(\bm{\mc a,\mc b})\otimes \bC_{\gamma}
\]
Then $V_{\gamma}(\bm{\mc a,\mc b})$ is a $\BMN$-module.

For each $1\lle i\lle k$, $L(\mc a_i,\mc b_i)$ is two dimensional. We set $v_i^+$ to be a nonzero singular vector and $v_i^-=e_{21}v_i^+$. Moreover, we assume that $v_i^+$ are even. Suppose $\C_\gamma$ is spanned by $v_0$. We also set  $v^+=v_1^+\otimes\cdots\otimes v_k^+\otimes v_0^{}$.
\begin{prop}
If $\ve_1\ne \ve_2$, then the  $\BMN$-module $V_{\gamma}(\bm{\mc a,\mc b})$ is irreducible. Moreover, the finite-dimensional irreducible $\BMN$-module $V(\bm\mu(u))$ is almost isomorphic to $V_{\gamma}(\bm{\mc a,\mc b})$.
\end{prop}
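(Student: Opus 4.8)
The plan is to imitate the proof of Proposition~\ref{prop=} almost verbatim, carrying along the extra one-dimensional factor $\bC_\gamma$. First I would locate the highest $\ell_{\s,\bm\ve}$-weight vector: by the construction used in the proof of Theorem~\ref{thmnontrivial}, the vector $v^+=v_1^+\otimes\cdots\otimes v_k^+\otimes v_0$ is annihilated by all $b_{12}(u)$, and its highest $\ell_{\s,\bm\ve}$-weight $\bm\nu(u)$ is computed from Proposition~\ref{prop:highest-weight-inver}, Lemma~\ref{lem:b-in-t}, Lemma~\ref{lemn2} and Example~\ref{eg:tensor-1}; explicitly
\[
\tl\nu_1(u)=\frac{(2\ve_1u-\ve_1\rho_2+\varpi_2+2\gamma)u}{u-\gamma}\prod_{i=1}^k\frac{(u+s_1\mc a_i)(u+s_1+s_1\mc b_i)}{(u+s_1-s_1\mc a_i)(u+s_1\mc b_i)},\qquad \tl\nu_2(u)=\frac{(2\ve_2u+\varpi_2+2\gamma)u}{u-\gamma}\prod_{i=1}^k\frac{u-s_1\mc b_i}{u+s_1\mc b_i}.
\]
Since $\ve_2=-\ve_1$, $\rho_2=-s_1$, $\varpi_2=\ve_2 s_2$ and $\gamma=\ve_1s_1(\mc a_l-1)$, the leading factors collapse to $2\ve_1(u+s_1\mc a_l)$ and $-2\ve_1(u+s_1-s_1\mc a_l)$, whence $\tl\nu_1(u)/\tl\nu_2(u)=\tl\mu_1(u)/\tl\mu_2(u)$ by \eqref{P-def2} and \eqref{P-factor2}. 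So it remains only to prove that $V_\gamma(\bm{\mc a,\mc b})$ is irreducible.

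Second, and this is the main point, I would prove: every $\eta\in V_\gamma(\bm{\mc a,\mc b})$ with $b_{12}(u)\eta=0$ is proportional to $v^+$, by induction on $k$. In the inductive step one writes $\eta=v_1^+\otimes\eta_0+v_1^-\otimes\eta_1$ with $\eta_0,\eta_1\in L(\mc a_2,\mc b_2)\otimes\cdots\otimes L(\mc a_k,\mc b_k)\otimes\bC_\gamma$, applies the coproduct \eqref{eq:b-copro-in-t} together with the explicit action on $L(\mc a_1,\mc b_1)$ from Lemma~\ref{lemn2} and the scalar action of $\bC_\gamma$ from Example~\ref{eg:1-dim}, first extracts $b_{12}(u)\eta_1=0$ from the coefficient of $v_1^-$ (so $\eta_1$ is a multiple of $v_2^+\otimes\cdots\otimes v_k^+\otimes v_0$ by induction), then forces $\eta_1=0$ by specializing $u$, and finally obtains $b_{12}(u)\eta_0=0$. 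Relative to Proposition~\ref{prop=}, every coefficient now acquires a nonzero rational factor coming from $\bC_\gamma$; the specialization value of $u$ must avoid the pole at $u=\gamma$ and the zero of this factor, which is possible because, after cancelling common factors, relative primeness of $P(u)$ and $P(-u-s_1)$ forces $\mc a_i+\mc a_j\ne1$ for all $i,j$ (in particular $\mc a_i\ne\tfrac12$) and keeps $\gamma$ away from the points $-s_1\mc a_i$. As in the even case, $\mc b_i=-\tfrac12$ can occur only for $i=k$, hence is reached only in the base case $k=1$ (and $k=0$ is trivial, $V_\gamma(\bm{\mc a,\mc b})=\bC_\gamma$ being one-dimensional), which is settled by direct computation. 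Getting all the parity signs and the $\bC_\gamma$-factors right in this coproduct bookkeeping, and verifying that a good specialization point always exists, is where the real work lies.

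Third, the rest is formal. Any nonzero submodule $M$ of $V_\gamma(\bm{\mc a,\mc b})$ contains, by Lemma~\ref{lem:nontrivial}, a nonzero vector annihilated by all $b_{12}(u)$, hence contains $v^+$; so it suffices to show the cyclic submodule $K=\BMN v^+$ is all of $V_\gamma(\bm{\mc a,\mc b})$. By Lemma~\ref{lem:dual} the dual $\BMN$-module $V_\gamma(\bm{\mc a,\mc b})^*$ is almost isomorphic to $L(\mc b_1+1,\mc a_1-1)\otimes\cdots\otimes L(\mc b_k+1,\mc a_k-1)\otimes\bC_\gamma$, whose defining pairs still satisfy \eqref{notequalto2} and again carry the value $-\tfrac12$ only in the last slot, so the claim of the second step applies to it verbatim. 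If $K$ were proper, then $\mathrm{Ann}\,K$ would be a nonzero submodule of $V_\gamma(\bm{\mc a,\mc b})^*$ not containing $\zeta_1^*\otimes\cdots\otimes\zeta_k^*\otimes v_0$, contradicting that claim; hence $K=V_\gamma(\bm{\mc a,\mc b})$, the module is irreducible, and the weight comparison of the first step identifies it with $V(\bm\mu(u))$ up to almost isomorphism.
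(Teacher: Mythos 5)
Your proposal is correct and follows essentially the same route as the paper: identify $v^+$ as the highest $\ell_{\s,\bm\ve}$-weight vector and match $\tl\nu_1(u)/\tl\nu_2(u)$ with \eqref{P-def2}--\eqref{P-factor2}, prove by induction on $k$ (via the coproduct, Lemma \ref{lemn2}, and a specialization of $u$ whose nonvanishing rests on the relative primeness of $P(u)$ and $P(-u-s_1)$, i.e.\ $\mc a_i+\mc a_j\ne 1$) that every $b_{12}(u)$-singular vector is proportional to $v^+$, and conclude by the dual-module annihilator argument, noting as the paper does that $\mc b_i=-\tfrac12$ can only occur in the last slot. The only blemish is the prefactor in your displayed $\tl\nu_2(u)$, which should carry $\varpi_3=0$ rather than $\varpi_2$; this is a harmless slip since the collapsed factor $-2\ve_1(u+s_1-s_1\mc a_l)$ you then use is the correct one.
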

\begin{proof}
It follows from the proof of Theorem \ref{thmnontrivial} that the vector $v^+$ is a highest $\ell_{\s,\bm\ve}$-weight vector. Suppose the corresponding $\ell_{\s,\bm\ve}$-weight is $\bm\nu(u)=(\nu_1(u),\nu_2(u))$. Then it follows from Proposition \ref{prop:highest-weight-inver}, Proposition \ref{prop:tensor-product}, and Lemma \ref{lemn2}, that
\begin{align*}
\tl \nu_1(u)=2(\ve_1u+s_1\ve_1+\gamma)\prod_{i=1}^k\frac{(u+s_1\mc a_i)(u+s_1+s_1\mc b_i)}{(u+s_1-s_1\mc a_i)(u+s_1\mc b_i)},\quad \tl \nu_2(u) = 2(\ve_2u+\gamma) \prod_{i=1}^k\frac{u-s_1\mc b_i}{u+s_1\mc b_i}.
\end{align*}
Therefore, by \eqref{P-def} and \eqref{P-factor}, we have
\begin{align*}
 \frac{\tl \nu_1(u)}{\tl \nu_2(u)}=\frac{\ve_1u+s_1\ve_1+\gamma}{\ve_2u+\gamma}&\prod_{i=1}^k\frac{(u+s_1\mc a_i)(u+s_1+s_1\mc b_i)}{(u-s_1\mc b_i)(u+s_1-s_1\mc a_i)}\\&=(-1)^{\deg P(u)+1}\frac{P(u)}{P(-u-s_1)}=\frac{\tl \mu_1(u)}{\tl \mu_2(u)}.
\end{align*}
Thus, it suffices to prove that the  $\BMN$-module $V_{\gamma}(\bm{\mc a,\mc b})$ is irreducible.

We claim that any vector $\eta\in V_{\gamma}(\bm{\mc a,\mc b})$ satisfying $b_{12}(u)\eta=0$ is proportional to $v^+$. We prove the claim by induction on $k$. The case $k=1$ is proved by a direct computation using Lemma \ref{lemn2}. Then we assume that $k\gge 2$. We write any such nonzero vector $\eta$ in the following form,
\[
\eta=\sum_{r=0}^1 (e_{21})^rv_1^+\otimes\eta_r=v_1^+\otimes \eta_0+v_1^-\otimes \eta_1,
\]
where $\eta_0,\eta_1\in L(\mc a_2,\mc b_2)\otimes \cdots\otimes L(\mc a_k,\mc b_k)\otimes\bC_\gamma$. Similar to the proof of Proposition \ref{prop=}, the vector $\eta_1$ must be proportional to $v_2^+\otimes\cdots\otimes v_k^+$. Then taking the coefficient of $v^+$ in $b_{12}(u)\eta=0$, we have
\begin{align*}
0=\frac{u+s_1\mc a_1}{u+s_1\mc b_1}v_1^+\otimes b_{12}(u)\eta_0+t_{12}(u)t'_{22}(-u)v_1^-\otimes b_{22}(u)\eta_1+t_{11}(u)t'_{12}(-u)v_1^-\otimes b_{11}(u)\eta_1.
\end{align*}
Note that by \eqref{eq:def-tl-b} we have $\tl b_{11}(u)=(2u+s_1)b_{11}(u)-s_1b_{22}(u)$ and $\tl b_{22}(u)=2ub_{22}(u)$. We deduce from the above equation and Lemma \ref{lem:b-in-t} that
\begin{align*}
0&=\frac{u+s_1\mc a_1}{u+s_1\mc b_1}v_1^+\otimes b_{12}(u)\eta_0+\frac{2s_1u(\mc a_1+\mc b_1)(\ve_2u+\gamma)}{(u+s_1\mc b_1)(2u+s_1)(u-\gamma)}\prod_{i=2}^k\frac{u-s_1\mc b_i}{u+s_1\mc b_i} v_1^+\otimes \eta_1 \\
&+\frac{2s_1u(\mc a_1+\mc b_1)(u+s_1\mc a_1)(\ve_1u+s_1\ve_1+\gamma)}{(u+s_1-s_1\mc a_1)(u+s_1\mc b_1)(2u+s_1)(u-\gamma)}\prod_{i=2}^k\frac{(u+s_1\mc a_i)(u+s_1+s_1\mc b_i)}{(u+s_1-s_1\mc a_i)(u+s_1\mc b_i)} v_1^+\otimes \eta_1.
\end{align*}
Multiplying both sides by $(2u+s_1)(u-\gamma)\prod_{i=1}^k\big((u+s_1-s_1\mc a_i)(u+s_1\mc b_i)\big)$, we have
\begin{align*}
0&=(u-\gamma)(2u+s_1)(u+s_1\mc a_1)(u+s_1-s_1\mc a_1)v_1^+\otimes \prod_{i=2}^k\big((u+s_1-s_1\mc a_i)(u+s_1\mc b_i)\big)b_{12}(u)\eta_0\\
&+2s_1u(\ve_2u+\gamma)(\mc a_1+\mc b_1)(u+s_1-s_1\mc a_1)\prod_{i=2}^k\big((u+s_1-s_1\mc a_i)(u-s_1\mc b_i)\big) v_1^+\otimes \eta_1 \\
&+2s_1u(\ve_1u+s_1\ve_1+\gamma)(\mc a_1+\mc b_1)(u+s_1\mc a_1)\prod_{i=2}^k\big((u+s_1\mc a_i)(u+s_1+s_1\mc b_i)\big) v_1^+\otimes \eta_1.
\end{align*}
The rest of the proof is parallel to that of Proposition \ref{prop=}. Again, we need the condition that the only possible cancellation in the right hand side of \eqref{P-factor2} is when $l$ is even, then $u+s_1+s_1\mc b_k=u-s_1\mc b_k$.
\end{proof}

\begin{cor}
Suppose $\s=(s_1,s_2)$ and $\bm \ve=(\ve_1,\ve_2)$ are such that $s_1\ne s_2$ and $\ve_1\ne \ve_2$.
\begin{enumerate}
    \item If $\bm\mu(u)$ satisfies \eqref{P-def2}, where $P(u)$ and $P(-u-s_1)$ are relatively prime, then $\dim V(\bm\mu(u))=2^k$, where $k=\big\lfloor\frac{\deg P(u)}{2}\big\rfloor$.
    \item Given $k\in\bZ_{>0}$, let $\gamma,\mc a_i,\mc b_i$, $1\lle i\lle k$, be arbitrary complex numbers such that $\mc a_i+\mc b_i\ne 0$ and set $$P(u)=(u+s_1+\ve_1\gamma)\prod_{i=1}^k\big((u+s_1\mc a_i)(u+s_1+s_1\mc b_i)\big).$$ Then the $\BMN$-module
    \[
    V_{\gamma}(\bm{\mc a,\mc b})=L(\mc a_1,\mc b_1)\otimes \cdots\otimes L(\mc a_k,\mc b_k)\otimes \bC_{\gamma}
    \]
    is irreducible if and only if the greatest common divisor of $P(u)$ and $P(-u-s_1)$ (over $\bC$) is of degree at most 1.
\end{enumerate}
\end{cor}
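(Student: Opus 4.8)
The plan is to reduce everything to the preceding proposition and to part (1), which is itself immediate from that proposition. Indeed, if $\bm\mu(u)$ satisfies \eqref{P-def2} with $P(u)$ and $P(-u-s_1)$ relatively prime, then the preceding proposition identifies $V(\bm\mu(u))$, up to almost isomorphism, with $V_\gamma(\bm{\mc a,\mc b})=L(\mc a_1,\mc b_1)\otimes\cdots\otimes L(\mc a_k,\mc b_k)\otimes\bC_\gamma$, where $k=\lfloor\deg P/2\rfloor$ and each $L(\mc a_i,\mc b_i)$ is two-dimensional. Since an almost isomorphism is pull-back through an automorphism $\mc M_{\mc h(u)}$ and hence preserves dimension, part (1) follows at once: $\dim V(\bm\mu(u))=\dim V_\gamma(\bm{\mc a,\mc b})=2^k$.

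For part (2), take arbitrary $\gamma,\mc a_i,\mc b_i$ with $\mc a_i+\mc b_i\ne 0$, set $P(u)=(u+s_1+\ve_1\gamma)\prod_{i=1}^k\big((u+s_1\mc a_i)(u+s_1+s_1\mc b_i)\big)$ (so $\deg P=2k+1$), and note $\dim V_\gamma(\bm{\mc a,\mc b})=2^k$. First I would observe, exactly as in the proof of the preceding proposition (via Propositions \ref{prop:highest-weight-inver} and \ref{prop:tensor-product} together with Lemma \ref{lemn2}), that $v^+=v_1^+\otimes\cdots\otimes v_k^+\otimes v_0$ is a highest $\ell_{\s,\bm\ve}$-weight vector of $V_\gamma(\bm{\mc a,\mc b})$ whose highest weight $\bm\nu(u)$ satisfies $\tl\nu_1(u)/\tl\nu_2(u)=(-1)^{\deg P+1}P(u)/P(-u-s_1)$; this is the same direct computation performed there, now with $\mc a_i,\mc b_i,\gamma$ left free.

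The next step is the polynomial bookkeeping. Write $g(u)=\gcd(P(u),P(-u-s_1))$, $d=\deg g$, $P'(u)=P(u)/g(u)$. The key point is that the multiset of common roots of $P(u)$ and $P(-u-s_1)$ is invariant under the involution $\alpha\mapsto -\alpha-s_1$, so $g(u)$ and $g(-u-s_1)$ have the same roots with the same multiplicities; comparing leading coefficients gives $g(-u-s_1)=(-1)^d g(u)$. Hence $P'(u)$ and $P'(-u-s_1)$ are relatively prime and $\tl\nu_1(u)/\tl\nu_2(u)=(-1)^{\deg P'+1}P'(u)/P'(-u-s_1)$, i.e. $\bm\nu(u)$ satisfies \eqref{P-def2} with the monic polynomial $P'$ of degree $2k+1-d$. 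By part (1) this gives $\dim V(\bm\nu(u))=2^{\lfloor(2k+1-d)/2\rfloor}$, which equals $2^k$ precisely when $d\le 1$ and is at most $2^{k-1}$ when $d\ge 2$.

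Finally I would compare dimensions. Since $\BMN v^+$ is a highest $\ell_{\s,\bm\ve}$-weight submodule of $V_\gamma(\bm{\mc a,\mc b})$ with highest weight $\bm\nu(u)$, it surjects onto $V(\bm\nu(u))$, so $\dim V(\bm\nu(u))\le\dim\BMN v^+\le\dim V_\gamma(\bm{\mc a,\mc b})=2^k$. If $d\le 1$, all three dimensions equal $2^k$, forcing $\BMN v^+=V_\gamma(\bm{\mc a,\mc b})$ and $\BMN v^+\cong V(\bm\nu(u))$, so $V_\gamma(\bm{\mc a,\mc b})$ is irreducible. If $d\ge 2$, then $\dim V(\bm\nu(u))<2^k=\dim V_\gamma(\bm{\mc a,\mc b})$; were $V_\gamma(\bm{\mc a,\mc b})$ irreducible it would be cyclic on $v^+$, hence equal to $\BMN v^+\cong V(\bm\nu(u))$, a contradiction, so it is reducible. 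This yields the stated equivalence with $\deg\gcd(P(u),P(-u-s_1))\le 1$. The only genuinely fiddly step is the sign and degree tracking in the reduction $P\rightsquigarrow P'$; all the representation-theoretic substance is already contained in the preceding proposition and in part (1).
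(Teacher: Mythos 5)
Your argument is correct and is essentially the intended derivation: the paper states this corollary without a separate proof, as an immediate consequence of the preceding proposition (which gives part (1) via the almost isomorphism with $V_{\gamma}(\bm{\mc a,\mc b})$, hence $\dim V(\bm\mu(u))=2^k$) together with exactly the dimension count you carry out for part (2). Your two auxiliary observations are also sound: the highest $\ell_{\s,\bm\ve}$-weight of $v^+$ is computed from Propositions \ref{prop:highest-weight-inver}, \ref{prop:tensor-product} and Lemma \ref{lemn2}, none of which use the special constraints \eqref{notequalto2}, so it holds for arbitrary $\mc a_i,\mc b_i,\gamma$ with $\mc a_i+\mc b_i\ne 0$; and since $g(-u-s_1)=(-1)^d g(u)$, one has $P'(-u-s_1)=\pm P(-u-s_1)/g(u)$, so coprimality of $P'(u)$ and $P'(-u-s_1)$ follows from the defining property of the gcd, which justifies applying part (1) to $\bm\nu(u)$ with $P'$.
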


\section{Classification in higher ranks}\label{sec:classification}
\subsection{Sufficient conditions}
We have the following sufficient condition for $V(\bm\mu(u))$ to be finite-dimensional with arbitrary $\s$ and arbitrary $\bm\ve$.
\begin{thm}\label{thm:suff}
Suppose the highest $\ell_{\s,\bm\ve}$-weight $\bm\mu(u)$ satisfies
\beq\label{eq:in-proof-01}
\frac{\tl \mu_i(u)}{\tl \mu_{i+1}(u)}=\frac{(2\ve_{i}u-\ve_{i}\rho_{i+1}+\varpi_{i+1}+2\gamma)\la_i(u) \la_{i+1}(-u+\rho_{i+1})}{(2\ve_{i+1}u-\ve_{i+1}\rho_{i+2}+\varpi_{i+2}+2\gamma)\la_{i+1}(u)\la_i(-u+\rho_{i+1})},\quad 1\lle i<\ka,
\eeq
where $\gamma\in\bC$ and $\bm\la(u)=(\la_i(u))_{1\lle i\lle \ka}$ is an  $\ell_\s$-weight such that the $\YMN$-module $L(\bla(u))$ is finite-dimensional, then $V(\bm\mu(u))$ is finite-dimensional.
\end{thm}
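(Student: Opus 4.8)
The plan is to exhibit $V(\bm\mu(u))$, up to one of the rescaling automorphisms $\mc M_{\mc h(u)}$ of \eqref{mhu}, as a subquotient of the $\BMN$-module $L(\bla(u))\otimes\bC_\gamma$, where $L(\bla(u))$ is regarded as a $\YMN$-module and $\bC_\gamma$ is the one-dimensional module of Example~\ref{eg:1-dim}. Since $\BMN$ is a left coideal subalgebra of $\YMN$ (Proposition~\ref{prop:coproduct}), this tensor product is a $\BMN$-module, and it is finite-dimensional by the hypothesis that $L(\bla(u))$ is finite-dimensional over $\YMN$. Let $\xi$ be a highest $\ell_\s$-weight vector of $L(\bla(u))$ and $\eta_\gamma$ the cyclic generator of $\bC_\gamma$. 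By Proposition~\ref{prop:tensor-product} together with Example~\ref{eg:tensor-1}, the vector $\xi\otimes\eta_\gamma$ is annihilated by all $b_{ij}(u)$ with $i<j$ and is a common eigenvector of the $\tl b_{ii}(u)$,
\[
\tl b_{ii}(u)(\xi\otimes\eta_\gamma)=\frac{(2\ve_iu-\ve_i\rho_{i+1}+\varpi_{i+1}+2\gamma)\,u}{u-\gamma}\,\la_i(u)\la_i'(-u)\,(\xi\otimes\eta_\gamma),\qquad 1\lle i\lle\ka,
\]
so $\xi\otimes\eta_\gamma$ generates inside $L(\bla(u))\otimes\bC_\gamma$ a highest $\ell_{\s,\bm\ve}$-weight submodule whose irreducible quotient is some $V(\bm\nu(u))$, finite-dimensional, with $\tl\nu_i(u)$ equal to the displayed eigenvalue.

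The next step is to compute the ratios $\tl\nu_i(u)/\tl\nu_{i+1}(u)$. The factor $u/(u-\gamma)$ cancels, and by Proposition~\ref{prop:highest-weight-inver} the quotient $\la_i'(-u)/\la_{i+1}'(-u)$ telescopes: writing $\la_i'(-u)=\la_i(-u+\rho_{i+1})^{-1}\prod_{k=i+1}^{\ka}\la_k(-u+\rho_k)/\la_k(-u+\rho_{k+1})$ and similarly for $\la_{i+1}'(-u)$, all factors indexed by $k\gge i+2$ cancel, leaving
\[
\frac{\la_i'(-u)}{\la_{i+1}'(-u)}=\frac{\la_{i+1}(-u+\rho_{i+1})}{\la_i(-u+\rho_{i+1})}.
\]
Substituting this into the ratio of the eigenvalues above reproduces verbatim the right-hand side of \eqref{eq:in-proof-01}; hence $\tl\nu_i(u)/\tl\nu_{i+1}(u)=\tl\mu_i(u)/\tl\mu_{i+1}(u)$ for all $1\lle i<\ka$.

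It remains to pass from $V(\bm\nu(u))$ to $V(\bm\mu(u))$. Since all consecutive ratios of $(\tl\mu_i(u))_i$ and $(\tl\nu_i(u))_i$ coincide, the two tuples are proportional; inverting the triangular change of variables \eqref{eq:def-tl-b} (whose diagonal entries $2u-\rho_{i+1}$ are nonzero, with the convention $\rho_{\ka+1}=0$) shows $\mu_i(u)=\mc h(u)\nu_i(u)$ for one common scalar series $\mc h(u)$, and comparing constant terms together with the unitary relation $\mu_\ka(u)\mu_\ka(-u)=1=\nu_\ka(u)\nu_\ka(-u)$ (if $\bm\mu(u)$ fails this relation then $V(\bm\mu(u))=0$ and there is nothing to prove) forces $\mc h(u)\in 1+u^{-1}\C[[u^{-1}]]$ with $\mc h(u)\mc h(-u)=1$. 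Thus $V(\bm\mu(u))$ is the pullback of $V(\bm\nu(u))$ along $\mc M_{\mc h(u)}$, hence finite-dimensional. I expect no real obstacle here: the only genuinely computational point is the telescoping identity and the bookkeeping of the normalization constants $\rho_{i+1}$, $\varpi_{i+1}$, $\gamma$, which is routine given Propositions~\ref{prop:highest-weight-inver} and~\ref{prop:tensor-product}; the theorem is essentially a repackaging of the tensor-product computation of Section~\ref{sec:reps}.
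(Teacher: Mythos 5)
Your proposal is correct and follows essentially the same route as the paper: restrict the finite-dimensional $\YMN$-module $L(\bla(u))\otimes\bC_\gamma$ to $\BMN$, use Proposition~\ref{prop:tensor-product} and Example~\ref{eg:tensor-1} to read off the highest $\ell_{\s,\bm\ve}$-weight of the cyclic submodule generated by $\xi\otimes\eta_\gamma$, check via Proposition~\ref{prop:highest-weight-inver} that its $\tl{(\cdot)}$-ratios agree with \eqref{eq:in-proof-01}, and then twist by a rescaling automorphism (the paper's $f(u)=\mu_\ka(u)/\zeta_\ka(u)$, your $\mc h(u)$) to match $\bm\mu(u)$ itself. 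Your telescoping computation and the normalization of $\mc h(u)$ are exactly the bookkeeping the paper performs, so there is nothing to add.
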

\begin{proof}
Let $\bm\la(u)=(\la_i(u))_{1\lle i\lle \ka}$ be an  $\ell_\s$-weight such that the $\YMN$-module $L(\bla(u))$ is finite-dimensional. Suppose its highest $\ell_\s$-weight vector is $\xi$. Consider $\BMN$ as a subalgebra as in Proposition \ref{thm:embedding}. Let $\bC_{\gamma}$ be the one-dimensional $\BMN$-module spanned by $\eta_\gamma$ as in Example \ref{eg:1-dim}. Then $L(\bla(u))\otimes \bC_\gamma$ is a $\BMN$-module, see Example \ref{eg:tensor-1}.

It follows from Proposition \ref{prop:tensor-product} and Example \ref{eg:tensor-1} that $\xi$ is a highest $\ell_{\s,\bm\ve}$-weight with $\ell_{\s,\bm\ve}$-weight $\bm\zeta(u)$ such that
\beq\label{eq:in-proof-02}
\frac{\tl \zeta_i(u)}{\tl \zeta_{i+1}(u)}=\frac{(2\ve_{i}u-\ve_{i}\rho_{i+1}+\varpi_{i+1}+2\gamma)\la_i(u) \la_{i+1}(-u+\rho_{i+1})}{(2\ve_{i+1}u-\ve_{i+1}\rho_{i+2}+\varpi_{i+2}+2\gamma)\la_{i+1}(u)\la_i(-u+\rho_{i+1})},\quad 1\lle i<\ka.
\eeq
Let $\mathcal M:=\BMN (\xi\otimes\eta)$. Then $\mathcal M$ is a  highest $\ell_{\s,\bm\ve}$-weight module with highest $\ell_{\s,\bm\ve}$-weight $\bm\zeta(u)$. Moreover $\mc M$ is finite-dimensional as a subspace of $L(\bla(u))$.

Note that the series $f(u)=\mu_{\ka}(u)/\zeta_{\ka}(u)\in 1+u^{-1}\C[[u^{-1}]]$ satisfies $f(u)f(-u)=1$, see \eqref{thm:nontrivial-N}. Denote by $\mc M^{f(u)}$ the $\BMN$-module obtained by pulling back $\mc M$ through the automorphism defined by $b_{ij}(u)\mapsto f(u)b_{ij}(u)$, for $1\lle i,j,\lle \ka$. Comparing \eqref{eq:in-proof-01} and \eqref{eq:in-proof-02}, we see that $\mathcal M^{f(u)}$ is a  highest $\ell_{\s,\bm\ve}$-weight module with highest $\ell_{\s,\bm\ve}$-weight $\bm\mu(u)$. Since $\mc M$ is finite-dimensional, so is $\mathcal M^{f(u)}$. Therefore $V(\bm\mu(u))$ is finite-dimensional.
 \end{proof}

It is very nature to expect that this is also the necessary condition for $V(\bm\mu(u))$ being finite-dimensional.

\begin{conj}\label{conj:main}
If the irreducible $\BMN$-module $V(\bm \mu(u))$ is finite-dimensional, then there exist $\gamma\in \bC$ and an  $\ell_\s$-weight $\bm\la(u)=(\la_i(u))_{1\lle i\lle \ka}$ such that
\begin{enumerate}
    \item the equations \eqref{eq:in-proof-01} hold, and
    \item the $\YMN$-module $L(\bla(u))$ is finite-dimensional.
\end{enumerate}
\end{conj}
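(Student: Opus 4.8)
The plan is to follow the pattern of \cite{Molev1998finite,Molev2002reflection,Chen2014twisted}: peel off a rank $1$ situation along each pair of consecutive indices, invoke the rank $1$ classification to force rationality of the ratios $\tl\mu_i(u)/\tl\mu_{i+1}(u)$, assemble from these a global pair $(\bla(u),\gamma)$, and finally verify that $L(\bla(u))$ is finite-dimensional over $\YMN$. The first task is to set up the rank reduction. Unlike the twisted Yangians of types AI/AII, the reflection algebra $\BMN$ contains no honest rank $1$ subalgebra on a pair $\{i,i+1\}$, because the sums $\sum_{a=1}^\ka$ in \eqref{eq:comm-series b} run over all indices; so the extraction has to be carried out on a subquotient, working modulo the left ideal $\equiv$. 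Using the vanishing statements of Lemmas \ref{lem:invariant} and \ref{lem:commute}, one should be able to show that on the span of the $\glMN$-weight subspaces of $V(\bm\mu(u))$ in which only the $i$-th and $(i{+}1)$-st coordinates vary, the operators $b_{i,i}(u),b_{i+1,i+1}(u),b_{i+1,i}(u)$ (with $b_{i,i+1}(u)$ annihilating the highest weight vector) generate an action of the rank $1$ twisted super Yangian $\mathscr{B}_{(s_i,s_{i+1}),(\ve_i,\ve_{i+1})}$, after renormalising by the ``tail'' $\sum_{a>i+1}s_ab_{aa}(u)$ that is already packaged into $\tl b_{ii}(u)$ in \eqref{eq:def-tl-b}; here Lemma \ref{lem:b-in-t} and the constants $\varpi_i$ do the bookkeeping.

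Granting this, and since $V(\bm\mu(u))$ is finite-dimensional so is the subquotient, Propositions \ref{prop:iff-even} and \ref{prop:con-even} (when $s_i=s_{i+1}$) or Proposition \ref{prop:rank1} (when $s_i\ne s_{i+1}$) apply and give that $\tl\mu_i(u)/\tl\mu_{i+1}(u)$ is the expansion at $u=\infty$ of a rational function: a genuine Drinfeld-polynomial ratio when $\ve_i=\ve_{i+1}$, and the same kind of ratio times a simple factor $(\gamma_i-u)/(\gamma_i+u-s_i)$ when $\ve_i\ne\ve_{i+1}$.

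Next I would globalise. From the rank $1$ data $\{P_i(u),\gamma_i\}$ one reconstructs $\la_i(u)\in 1+u^{-1}\C[[u^{-1}]]$ by recursively solving $\La_i(u)=\la_i(u)/\la_i(-u+\rho_{i+1})$, exactly as in the converse part of the proof of Theorem \ref{thmnontrivial}, the compatibility $\La_i(u)\La_i(-u+\rho_{i+1})=1$ being guaranteed by \eqref{thm:nontrivial-i}. One must check that the $\gamma_i$ can be taken equal to a single $\gamma$: the parameter drops out of \eqref{eq:in-proof-01} at each step with $\ve_i=\ve_{i+1}$ (by the identity behind \eqref{eq:good1}), so it is constrained only at the odd steps $\ve_i\ne\ve_{i+1}$, and one argues that the various $\gamma_i$'s agree using that consecutive rank $1$ restrictions overlap in one index together with the global unitarity \eqref{eq:unitary-series} (when there is at most one odd step this is immediate, which is why the paper settles that case). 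With $(\bla(u),\gamma)$ in hand so that \eqref{eq:in-proof-01} holds, it remains to show $L(\bla(u))$ is finite-dimensional over $\YMN$: for the standard parity sequence this is Zhang's criterion, Theorem \ref{thm:zhang}, the $P_i$ being the Drinfeld polynomials and the relations $P_m(-u+2s_m)=P_m(u)$, $\deg P_m=\deg P_\ka$ being the ``folding'' symmetry output by the rank $1$ analysis at the node $m$; for arbitrary $\s$ one propagates this through a chain of odd reflections as in \cite{Molev2022odd,Lu2022note}, tracking how $\tl\mu_i(u)/\tl\mu_{i+1}(u)$ and $\la_i(u)$ transform.

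The hard part is twofold. First, for a non-standard parity sequence there is no closed-form finite-dimensionality criterion for $L(\bla(u))$, so the last step cannot be reduced to a single polynomial condition; one must instead prove that the recursively-defined odd-reflection criterion is preserved under the reconstruction above, which demands genuine control over how both sides of \eqref{eq:in-proof-01} behave when an odd simple root is crossed. Second, the rank reduction itself is delicate precisely because $\BMN$ has no rank $1$ subalgebra, so the whole extraction must live inside the $\equiv$-calculus, and the sign factors make the case $\ve_i\ne\ve_{i+1}$ --- where one argues with $x_{21}^{(2r)}$ rather than $x_{21}^{(2r-1)}$, cf.\ Proposition \ref{prop:rank1} and \cite[Proposition 6.1]{Molev1998finite} --- substantially heavier. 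An alternative route is to deduce the conjecture from the Schur--Weyl duality of \S\ref{sec:schur-weyl} and the known classification of finite-dimensional irreducibles over degenerate affine Hecke algebras of type BC, but that requires showing the Drinfeld functor exhausts, up to the twists \eqref{mhu} and one-dimensional modules, all finite-dimensional irreducibles of $\BMN$, a surjectivity statement that is itself nontrivial in the super setting.
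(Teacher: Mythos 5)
The statement you are trying to prove is stated in the paper as a \emph{conjecture}, and the paper does not prove it in the generality you attempt: it establishes it only for (i) $n=0$ with arbitrary $\bm\ve$ (via Theorem \ref{thm:reflection}, bootstrapping from the simple-$\bm\ve$ case of \cite{Molev2002reflection}), (ii) the standard parity sequence with $\bm\ve$ simple (Theorem \ref{thm:main-super-class}), and (iii) $n=1$ with the standard parity sequence and arbitrary $\bm\ve$. For those cases your outline does follow essentially the paper's route: reduction to lower rank and to rank $1$ via the subspaces $\overline V$, $\underline V$, $V^\star$ and Lemmas \ref{lem:embedding-over}, \ref{lem:embedding-under}, \ref{B2reduction} (note the paper does this through explicit maps from lower-rank twisted super Yangians acting on these subspaces, not by working modulo the left ideal $\equiv$ on weight subspaces as you suggest), then the rank-$1$ classification of \S\ref{sec:rank1}, then induction, with Theorem \ref{thm:suff} supplying the converse direction.

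However, as a proof of the conjecture itself your proposal has two genuine gaps, and they are exactly the points at which the paper stops and leaves the statement conjectural. First, when $\bm\ve$ has more than one index $i$ with $\ve_i\ne\ve_{i+1}$, you must show the parameters $\gamma_i$ produced by the overlapping rank-$1$ restrictions can be taken equal; your appeal to ``consecutive restrictions overlap in one index together with the global unitarity \eqref{eq:unitary-series}'' is not an argument --- the overlap of the windows $\{i,i+1\}$ and $\{i+1,i+2\}$ does not by itself relate $\gamma_i$ to $\gamma_{i+1}$, and the paper's proofs succeed precisely because simplicity of $\bm\ve$ (or $n=1$, where $\gamma$ can be absorbed into $P_{\ka-1}$) makes this matching vacuous. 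Second, for a non-standard parity sequence there is no closed criterion for finite-dimensionality of $L(\bla(u))$, so even after constructing $\bla(u)$ satisfying \eqref{eq:in-proof-01} you must prove that the recursive odd-reflection criterion of \cite{Molev2022odd,Lu2022note} is met; you state that one should ``propagate this through a chain of odd reflections,'' but no mechanism is given for controlling how the reconstructed $\la_i(u)$ transform under an odd reflection, and the paper itself only handles the single odd node of the standard parity sequence (Lemma \ref{lem:ell-weight-ref} and Theorem \ref{thm:reflection} are stated for $n=0$, i.e.\ for permuting $\bm\ve$, not $\s$). Since you acknowledge these as ``the hard part'' without resolving them, the proposal is an outline of the known partial results rather than a proof of the conjecture.
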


We call $\bm\ve$ {\it simple} if there exists at most one $1\lle i<\ka$ such that $\ve_i\ne \ve_{i+1}$. Conjecture \ref{conj:main} is proved in \cite{Molev2002reflection} for the case that $\bm\ve$ is simple and $n=0$ (non-super case). Our main results in this section are to show the conjecture for (1) the case when $n=0,1$ and \textit{$\bm\ve$ is arbitrary}; and (2) the case when $\s$ is the standard parity sequence and $\bm\ve$ is simple. The main obstacle for the super case is that when $\bm s$ is not the standard parity sequence, an explicit criterion for the $\YMN$-module $L(\bla(u))$ to be finite-dimensional is not available, though a recursive criterion can be deduced from \cite{Molev2022odd,Lu2022note}.

\subsection{Reduction lemmas}\label{sec:reduct-lem}
In this subsection, we prepare reduction lemmas which allows us to construct modules of twisted super Yangians of lower ranks from  modules of twisted super Yangians of higher ranks.

For given $\bm s\in S_{m|n}$ and $\bm \ve$, define
\beq
\begin{split}
&\overline{\bm s}=(s_2,s_3,\cdots,s_\ka),\qquad \ \ \ \  \overline{\bm\ve}=(\ve_2,\ve_3,\cdots,\ve_\ka),\\
&\underline{\bm s}=(s_1,s_2,\cdots,s_{\ka-1}),\qquad \underline{\bm\ve}=(\ve_1,\ve_2,\cdots,\ve_{\ka-1}).
\end{split}
\eeq
Then we have twisted super Yangians $\mathscr B_{\ovs,\ove}$ and $\mathscr B_{\uns,\une}$. To distinguish the underlying generating series, we rewrite the series ${b}_{ij}(u)$ as ${b}^\circ_{ij}(u)$ in $\mathscr B_{\ovs,\ove}$ or $\mathscr B_{\uns,\une}$.

Let $V$ be a representation of the twisted super Yangian $\mathscr B_{\bm s,\bm \ve}$. Define
\beq\label{Vover}
\overline V=\{w\in V~|~ b_{1i}(u)w=0,\ 1<i\lle \ka\}.
\eeq
\begin{lem}\label{lem:embedding-over}
The map $\mathscr B_{\ovs,\ove}\to \mathscr B_{\bm s,\bm \ve}$ defined by ${b}^\circ_{ij}(u)\to b_{i+1,j+1}(u)$, $1\lle i,j<\ka$, induces a representation of $\mathscr B_{\ovs,\ove}$ on $\overline V$.
\end{lem}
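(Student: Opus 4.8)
The plan is to show two things: first, that the proposed assignment $b^\circ_{ij}(u)\mapsto b_{i+1,j+1}(u)$ respects the defining relations of $\mathscr B_{\ovs,\ove}$, so it is a genuine algebra homomorphism $\mathscr B_{\ovs,\ove}\to\mathscr B_{\bm s,\bm\ve}$; and second, that $\overline V$ is invariant under the image of this homomorphism, so that $V$ becomes a $\mathscr B_{\ovs,\ove}$-module via this homomorphism restricted to $\overline V$. The first point is almost formal: the reflection equation \eqref{eq:comm-series b} and the unitary condition \eqref{eq:unitary-series} for $\mathscr B_{\bm s,\bm\ve}$, when restricted to indices $i,j,k,l$ all lying in $\{2,\dots,\ka\}$, are \emph{exactly} the corresponding relations for $\mathscr B_{\ovs,\ove}$ — one checks that the parity assignments $|i+1|$ agree with the parities dictated by $\ovs$, that $\rho$-type sums appearing implicitly match (they do not appear in \eqref{eq:comm-series b} itself), and that the summation ranges $\sum_{a=1}^\ka$ on the right-hand side can be replaced by $\sum_{a=2}^\ka$ when acting on $\overline V$ or, more simply, that the subalgebra of $\mathscr B_{\bm s,\bm\ve}$ generated by $b_{i+1,j+1}^{(r)}$ already satisfies the $\mathscr B_{\ovs,\ove}$-relations with the full sum — here one must be slightly careful, because the term $\sum_{a=1}^\ka b_{ia}(u)b_{al}(v)$ includes $a=1$, so the subalgebra generated by the ``shifted'' generators is not obviously closed. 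This is where $\overline V$ enters.

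Concretely, I would argue as follows. Define $\pi$ on generators by $b^\circ_{ij}(u)\mapsto b_{i+1,j+1}(u)$ and extend multiplicatively; the content is to verify that applying $\pi$ to the two sides of every $\mathscr B_{\ovs,\ove}$-relation yields operators on $\overline V$ that agree. For the reflection relation with $2\lle i,j,k,l\lle\ka$ (written with the shifted indices $i'=i-1$, etc.), the only discrepancy between the $\mathscr B_{\bm s,\bm\ve}$-relation and the $\mathscr B_{\ovs,\ove}$-relation is that the inner sums $\sum_{a=1}^\ka b_{i'a}(u)b_{al'}(v)$ in the former run over $a\in\{1,\dots,\ka\}$ whereas the latter, after applying $\pi$, gives $\sum_{a=2}^\ka b_{i'a}(u)b_{al'}(v)$. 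The extra term is $b_{i'1}(u)b_{1l'}(v)$ with $i'\gge 2$; I would show this annihilates $\overline V$. Indeed, by the definition \eqref{Vover} of $\overline V$ together with the weight argument: for $w\in\overline V$ one has $b_{1l'}(v)w$ has weight $\mc w+\epsilon_1-\epsilon_{l'}$ by Lemma \ref{lem:wt-changeb}; but more directly one should use the commutation relations \eqref{eq:comm-series b} to show $b_{i'1}(u)b_{1l'}(v)w=0$ whenever $i'>1$ and $w\in\overline V$, by a short computation in the spirit of Lemma \ref{lem:invariant} (reverse induction, or using that $b_{1a}(u)w=0$ for all $a\gge 2$, so the only surviving contributions on the right-hand side of the relation for $[b_{i'1}(u),b_{1l'}(v)]$ vanish). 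The analogous check for the unitary condition $\sum_{a=1}^\ka b^\circ_{ia}(u)b^\circ_{aj}(-u)=\delta_{ij}$ with $1\lle i,j<\ka$: after $\pi$ this reads $\sum_{a=2}^\ka b_{i+1,a}(u)b_{a,j+1}(-u)$, while the $\mathscr B_{\bm s,\bm\ve}$ unitary condition for the index pair $(i+1,j+1)$ gives $\sum_{a=1}^\ka b_{i+1,a}(u)b_{a,j+1}(-u)=\delta_{i+1,j+1}=\delta_{ij}$; the discrepancy is the $a=1$ term $b_{i+1,1}(u)b_{1,j+1}(-u)$, which again kills $\overline V$ by the same vanishing.

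So the two tasks reduce to a single technical lemma: \textbf{for $w\in\overline V$, $2\lle i\lle\ka$, $1\lle l\lle\ka$, one has $b_{i1}(u)b_{1l}(v)w=0$} (and the special case $b_{i1}(u)b_{11}(v)w=0$, $b_{i1}(u)b_{1l}(-u)w=0$ as needed). I would prove this by the $\equiv$-style calculus already used in Lemmas \ref{lem:invariant}–\ref{lem:commute}: work modulo the left ideal generated by coefficients of $b_{1j}(u)$, $2\lle j\lle\ka$ (note: this is a \emph{different} ideal from the one in the $\equiv$ of the main text, which used $b_{ij}$ with $i<j$ — so I would set up a parallel ``$\equiv_1$'' notation, or simply observe that $\overline V$ is by definition annihilated by these coefficients and compute directly). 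Apply \eqref{eq:comm-series b} to $[b_{i1}(u),b_{1l}(v)]$: the right-hand side contains terms $\delta_{11}\sum_a b_{ia}(u)b_{al}(v)$ and $\delta_{il}\sum_a b_{1a}(v)b_{a1}(u)$ and a $\delta_{i1}$ term which vanishes since $i\gge 2$; the term $\sum_a b_{1a}(v)b_{a1}(u)$ when applied to $w$ — after moving $b_{1a}(v)$ appropriately — vanishes because $b_{1a}(v)$ kills $\overline V$ for $a\gge 2$ and the $a=1$ piece needs separate treatment, leading to a small self-referential system that one solves exactly as in Lemma \ref{lem:commute}. The genuinely delicate bookkeeping — and the step I expect to be the main obstacle — is precisely this: correctly tracking the sign factors $(-1)^{|i||j|+\cdots}$ and the parity sequences ($\s$ versus $\ovs$) through these manipulations, and making sure the ``$a=1$'' terms really do cancel rather than merely being rearranged, since the reflection relation is quadratic and not a Lie-algebra-type relation. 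Once that lemma is in hand, the proof concludes in one line: $\pi$ preserves all relations on $\overline V$, hence defines a $\mathscr B_{\ovs,\ove}$-action on $\overline V$.
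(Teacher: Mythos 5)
There is a genuine gap, and it concerns the half of the statement that carries all the actual work. To get a representation of $\mathscr B_{\ovs,\ove}$ \emph{on} $\overline V$ you must show that the assigned operators preserve $\overline V$, i.e. that $b_{1i}(u)\,b_{jk}(v)\,w=0$ for all $2\lle i,j,k\lle \ka$ and $w\in\overline V$. You announce this invariance as your second task, but your ``single technical lemma'' ($b_{i1}(u)b_{1l}(v)w=0$) does not imply it: there the index $1$ sits in the inner slots, whereas invariance requires killing $w$ with an operator $b_{1i}(u)$ applied \emph{after} $b_{jk}(v)$, and nothing in your proposal addresses that composite. This is precisely where the paper's proof spends its effort: applying \eqref{eq:comm-series b} to $[b_{1i}(u),b_{jk}(v)]$ with $i,j,k\gge 2$ and using $b_{1a}(u)w=0$ for $a\gge2$, one is left (up to sign) with $(u^2-v^2)[b_{1i}(u),b_{jk}(v)]w=\delta_{ij}\sum_{a=1}^\ka b_{1a}(u)b_{ak}(v)w$; the case $i\ne j$ is then immediate, but the case $i=j$ is self-referential and needs the further observation that $s_i\,b_{1i}(u)b_{ik}(v)w$ is independent of $i$, so that summing over $a$ turns the identity into $(u^2-v^2)X=cX$ for a scalar $c$, forcing $X=0$. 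No weight argument or appeal to Lemma \ref{lem:wt-changeb} substitutes for this computation, so as written your proof does not establish that the operators act on $\overline V$ at all.

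By contrast, the part you single out as the ``main obstacle'' is essentially trivial and needs none of the machinery you propose. In every discrepancy term created by the index shift (the $a=1$ summand of \eqref{eq:comm-series b} and of the unitary condition \eqref{eq:unitary-series}) the \emph{rightmost} factor is some $b_{1t}(\cdot)$ with $t\gge 2$, which annihilates $w\in\overline V$ directly by the definition \eqref{Vover}; there is no sign bookkeeping, no reverse induction, no auxiliary ideal ``$\equiv_1$'', and the $l=1$ case of your lemma never even occurs. So the relation check is as immediate as the paper says, and the substance you still owe is the invariance computation sketched above.
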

\begin{proof}
First, one shows that $\overline V$ is invariant under the action of $b_{ij}(u)$ with $2\lle i,j\lle \ka$, i.e. $b_{1i}(u)b_{jk}(v)w=0$ for $2\lle i,j,k\lle \ka$ and $w\in \overline V$. 

Suppose $2\lle i,j,k\lle \ka$ and $w\in \overline V$, then by \eqref{eq:comm-series b}, we have 
\begin{align*}
(u^2-v^2)[b_{1i}(u),b_{jk}(v)]w=(-1)^{|1||i|+|1||j|+|i||j|}\delta_{ij}\sum_{a=1}^\ka b_{1a}(u)b_{ak}(v)w.
\end{align*}
If $i\ne j$, it is clear from above that $b_{1i}(u)b_{jk}(v)w=0$. If $i=j$, then the above equation implies
\[
(u^2-v^2)s_ib_{1i}(u)b_{ik}(v)w=\sum_{a=1}^\ka b_{1a}(u)b_{ak}(v)w.
\]
Note that the RHS is independent of $i$, we find that $s_ib_{1i}(u)b_{ik}(v)w=s_ab_{1a}(u)b_{ak}(v)w$. Hence we further have
\[
(u^2-v^2)s_ib_{1i}(u)b_{ik}(v)w=(m-n)s_ib_{1a}(u)b_{ak}(v)w,
\]
which implies $b_{1i}(u)b_{ik}(v)w=0$.

Then we need to show that the relations are preserved when acting on $\overline V$. This follows immediately from \eqref{eq:comm-series b}.
\end{proof}

Similarly, define
\beq\label{Vund}
\underline{V}=\{w\in V~|~ b_{i\ka}(u)w=0,\ 1\lle i<\ka\}.
\eeq
\begin{lem}\label{lem:embedding-under}
The map $\mathscr B_{\uns,\une}\to \mathscr B_{\bm s,\bm \ve}$ defined by
$$
b_{ij}^\circ(u)\to b_{ij}\Big(u+\frac{s_\ka}{2}\Big)+\delta_{ij}\frac{s_\ka}{2u}b_{\ka\ka}\Big(u+\frac{s_\ka}{2}\Big)
$$ induces a representation of $\mathscr B_{\uns,\une}$ on $\underline{V}$.
\end{lem}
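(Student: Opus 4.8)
The plan is to follow the two-part strategy of the proof of Lemma~\ref{lem:embedding-over}. Write $\hat b_{ij}(u):=b_{ij}\bigl(u+\tfrac{s_\ka}{2}\bigr)+\delta_{ij}\tfrac{s_\ka}{2u}\,b_{\ka\ka}\bigl(u+\tfrac{s_\ka}{2}\bigr)$ for $1\lle i,j<\ka$; this is the operator to which $b^\circ_{ij}(u)$ is sent. First I would show that $\underline V$ is stable under every $\hat b_{ij}(u)$, and then that the restrictions of the $\hat b_{ij}(u)$ to $\underline V$ satisfy the defining relations~\eqref{eq:comm-series b} and the unitary condition~\eqref{eq:unitary-series} of $\mathscr B_{\uns,\une}$. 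One should keep in mind that the structure constants for $\uns$ are $\rho^{\uns}_k=\rho_k-s_\ka$, and it is exactly this discrepancy that the spectral shift $u\mapsto u+\tfrac{s_\ka}{2}$ is meant to absorb, while the diagonal correction by $b_{\ka\ka}$ compensates for the $a=\ka$ contributions to the sums $\sum_{a=1}^{\ka}$ appearing in~\eqref{eq:comm-series b}.

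The invariance step is short, because for $w\in\underline V$ one has $b_{i\ka}(u)w=0$ for all $1\lle i<\ka$ by definition. The one preliminary identity is $b_{p\ka}(u)b_{\ka\ka}(v)w=0$ for $1\lle p<\ka$: in $[b_{p\ka}(u),b_{\ka\ka}(v)]w$ computed from~\eqref{eq:comm-series b}, all terms but $\tfrac{s_\ka}{u+v}\sum_{a}b_{pa}(u)b_{a\ka}(v)w$ drop (they contain $b_{p\ka}(u)w$ or $b_{p\ka}(v)w$), and that sum collapses to $\tfrac{s_\ka}{u+v}b_{p\ka}(u)b_{\ka\ka}(v)w$ since $b_{a\ka}(v)w=0$ for $a<\ka$; as $b_{p\ka}(u)w=0$, the identity becomes $\bigl(1-\tfrac{s_\ka}{u+v}\bigr)b_{p\ka}(u)b_{\ka\ka}(v)w=0$, and invertibility of $1-\tfrac{s_\ka}{u+v}$ as a formal series in $u^{-1}$ yields the claim. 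In particular $b_{\ka\ka}(v)$ preserves $\underline V$. Granting this, for $1\lle i,j<\ka$ and $1\lle p<\ka$ the super-commutator $[b_{ij}(u),b_{p\ka}(v)]w$ vanishes term by term: the $1/(u-v)$ term contains $b_{i\ka}(u)w=b_{i\ka}(v)w=0$, while every sum $\sum_{a}b_{ia}(u)b_{a\ka}(v)w$ or $\sum_{a}b_{pa}(u)b_{a\ka}(v)w$ collapses to a multiple of some $b_{c\ka}(\cdot)b_{\ka\ka}(\cdot)w=0$. Since also $b_{ij}(u)b_{p\ka}(v)w=b_{ij}(u)\cdot 0=0$, it follows that $b_{p\ka}(v)b_{ij}(u)w=0$; hence $\underline V$ is stable under $b_{ij}(u)$ for $i,j<\ka$ and under $b_{\ka\ka}(u)$, so under every $\hat b_{ij}(u)$.

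The relations step is the main obstacle. I would expand~\eqref{eq:comm-series b} and~\eqref{eq:unitary-series} for the generators $b_{ij}$ of $\BMN$ at the shifted parameter, re-express the result in terms of the $\hat b_{ij}(u)$ acting on $\underline V$, and match it against the $\mathscr B_{\uns,\une}$-version of these identities. The underlying mechanism is that each sum $\sum_{a=1}^{\ka}$ in~\eqref{eq:comm-series b} contributes an extra $a=\ka$ term of the shape $b_{i\ka}(u)b_{\ka l}(v)w$ (or one of its variants), which is evaluated by the same collapsing argument as in the invariance step, producing a closed expression in $b_{\ka\ka}$ and in the lower-corner operators $b_{cd}$ with $c,d<\ka$. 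The shift $u\mapsto u+\tfrac{s_\ka}{2}$ together with the diagonal correction $\delta_{ij}\tfrac{s_\ka}{2u}\,b_{\ka\ka}\bigl(u+\tfrac{s_\ka}{2}\bigr)$ is tuned precisely so that, after these reductions, the leftover $a=\ka$ contributions reorganize into the reflection relation and the unitary condition of $\mathscr B_{\uns,\une}$ with the constants $\rho^{\uns}_k=\rho_k-s_\ka$; the verification in fact pins these choices down. This is the same coupling of $b_{ii}(u)$ to $b_{\ka\ka}$ and the same sort of $\rho$-shift already seen in $\tl b_{ii}(u)$ of~\eqref{eq:def-tl-b}, in Proposition~\ref{prop:highest-weight-inver}, and in Theorem~\ref{thmnontrivial}. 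I expect the hard part to be entirely bookkeeping---carrying the parity and sign factors attached to $\s$ and $\bm\ve$ through all these substitutions and checking that the $a=\ka$ terms telescope exactly into the required combinations---rather than requiring any idea beyond~\eqref{eq:comm-series b}, \eqref{eq:unitary-series}, and the vanishing $b_{i\ka}(u)\underline V=0$.
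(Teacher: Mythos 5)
Your invariance step is correct and is exactly how the paper handles that half: the preliminary identity $\bigl(1-\tfrac{s_\ka}{u+v}\bigr)b_{p\ka}(u)b_{\ka\ka}(v)w=0$ and the collapse of the sums $\sum_a b_{\cdot a}(u)b_{a\ka}(v)w$ to their $a=\ka$ terms give stability of $\underline V$ under $b_{ij}(u)$, $i,j<\ka$, and $b_{\ka\ka}(u)$, just as the paper does "as in the previous lemma". The gap is in the second half. The paper does not verify the relations by hand either: it invokes Theorem 3.1 of Belliard--Ragoucy for the statement that the shifted-and-corrected currents satisfy the defining relations of $\mathscr B_{\uns,\une}$ on $\underline V$. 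You, by contrast, neither perform this verification nor cite it; you only assert that after expanding \eqref{eq:comm-series b} and \eqref{eq:unitary-series} the $a=\ka$ contributions will "telescope" and that the rest is bookkeeping. That assertion is precisely the content of the lemma, and it is not routine: for instance, for the unitary condition of $\mathscr B_{\uns,\une}$ one must show $\sum_{a<\ka}\hat b_{ia}(u)\hat b_{aj}(-u)=\delta_{ij}$ on $\underline V$, but the spectral parameters occurring are $u+\tfrac{s_\ka}{2}$ and $-u+\tfrac{s_\ka}{2}$, which are not negatives of each other, so \eqref{eq:unitary-series} cannot be applied directly; one has to use the reflection relation to reorganize the products, track the cross terms produced by the diagonal correction $\delta_{ij}\tfrac{s_\ka}{2u}b_{\ka\ka}(u+\tfrac{s_\ka}{2})$, and use the vanishing $b_{a\ka}(\cdot)\underline V=0$ at the right moments. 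As written, your proposal stops exactly where the real work (or the citation) begins, so the lemma is not established.

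A smaller point: the defining relations \eqref{eq:comm-series b}--\eqref{eq:unitary-series} of $\mathscr B_{\uns,\une}$ contain no constants $\rho_k$; the only structural difference from $\BMN$ is the range of the internal summations. So your guiding heuristic that the shift $u\mapsto u+\tfrac{s_\ka}{2}$ is there to absorb $\rho^{\uns}_k=\rho_k-s_\ka$ is misplaced at the level of the relations (those constants enter only in derived quantities such as $\tl b_{ii}(u)$ and in highest-weight formulas); the shift and the $b_{\ka\ka}$ correction must instead be justified directly through the reorganization of the $a=\ka$ terms described above.
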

\begin{proof}
First, one proves as in Lemma \ref{Vover} that the subspace $\underline V$ is invariant under the action of $b_{ij}(u)$ with $1\lle i,j<\ka$ and $b_{\ka\ka}(u)$. The verification of the fact that the relations are preserved when acting on $\underline V$ follows from \cite[Theorem 3.1]{Belliard2009nested}.
\end{proof}

Fix $1\lle a<\ka$. Let
\beq\label{star}
\begin{split}
   \s^\star=(s_{a},s_{a+1}),\qquad \bm \s[a]=(s_1,\cdots,s_{a},s_{a+1}),\\
   \bm\ve^\star=(\ve_{a},\ve_{a+1}),\qquad \bm \ve[a]=(\ve_1,\cdots,\ve_{a},\ve_{a+1}).
\end{split}
\eeq
For a $\BMN$-module $V$, define
\beq\label{vstar}
\begin{split}
V^\star=\{\eta\in V\,|\,& b_{ij}(u)\eta=0,\, 1\lle i< a,\, i<j\lle \ka,\\
&b_{kl}(u)\eta=0,\, a+1< l \lle \ka,\, 1\lle k<l\}.
\end{split}
\eeq
\begin{lem}\label{B2reduction}
The map
\[
\mathscr B_{\s^\star,\bm\ve^\star}\to \BMN,\quad b_{ij}^\star(u)\mapsto b_{a+i-1,a+j-1}\Big(u+\frac{\rho_{a+2}}{2}\Big)+\frac{\delta_{ij}}{2u}\sum_{k=a+2}^\ka s_k b_{kk}\Big(u+\frac{\rho_{a+2}}{2}\Big)
\]
induces a representation of $\mathscr B_{\s^\star,\bm\ve^\star}$ on $V^\star$. Moreover, under this map, we have
\[
\tl b_{ii}^\star(u)\mapsto \tl b_{a+i-1,a+i-1}\Big(u+\frac{\rho_{a+2}}{2}\Big),\qquad i=1,2.
\]
\end{lem}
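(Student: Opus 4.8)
The plan is to realize the asserted map as a composition of the elementary reduction maps from Lemmas~\ref{lem:embedding-over} and~\ref{lem:embedding-under}. First I would apply Lemma~\ref{lem:embedding-over} exactly $a-1$ times. Each application lowers the rank by one, removes the first index, and introduces neither a spectral shift nor a correction term; iterating produces the twisted super Yangian $\mathscr B^{[1]}$ of rank $\ka-a+1$ with parity sequence $(s_a,s_{a+1},\dots,s_\ka)$ and sign sequence $(\ve_a,\dots,\ve_\ka)$, acting on
\[
W:=\{w\in V\mid b_{ij}(u)w=0,\ 1\lle i<a,\ i<j\lle\ka\}
\]
by $b^{[1]}_{ij}(u)\mapsto b_{i+a-1,j+a-1}(u)|_W$. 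Note that $W$ is exactly the subspace carved out by the first family of conditions defining $V^\star$.

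Next I would apply Lemma~\ref{lem:embedding-under} to the $\mathscr B^{[1]}$-module $W$ a total of $\ka-a-1$ times, each step deleting the top index and lowering the rank by one, until reaching rank $2$ with parity sequence $(s_a,s_{a+1})=\s^\star$. At the $t$-th step ($1\lle t\lle\ka-a-1$) one imposes on $w$ the extra vanishing $b_{k,\ka-t+1}(u)w=0$ for $a\lle k\lle\ka-t$; together with the over-conditions (which give $b_{kl}(u)w=0$ for $1\lle k<a$) this is equivalent to $b_{k,\ka-t+1}(u)w=0$ for all $1\lle k<\ka-t+1$, so after all $\ka-a-1$ steps the invariant subspace is precisely $V^\star$. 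The substance of this step is a telescoping computation, which I would organize as a joint induction on $t$: the composite of the first $t$ under-reductions sends
\[
b^{[t+1]}_{ij}(u)\ \longmapsto\ b_{i+a-1,j+a-1}\!\Big(u+\tfrac{\sigma_t}{2}\Big)+\frac{\delta_{ij}}{2u}\sum_{k=\ka-t+1}^{\ka}s_k\,b_{kk}\!\Big(u+\tfrac{\sigma_t}{2}\Big),\qquad \sigma_t:=\sum_{k=\ka-t+1}^{\ka}s_k.
\]
Here the inductive step invokes that the top index of $\mathscr B^{[t+1]}$ corresponds to the original index $\ka-t$, and the elementary identity
\[
\frac{1}{2\bigl(u+s_{\ka-t}/2\bigr)}\Bigl(1+\frac{s_{\ka-t}}{2u}\Bigr)=\frac{1}{2u},
\]
which collapses the nested correction terms produced by Lemma~\ref{lem:embedding-under} into the single sum $\tfrac{1}{2u}\sum_{k=\ka-t}^{\ka}s_k\,b_{kk}$. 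Taking $t=\ka-a-1$ gives $\sigma_t=\rho_{a+2}$ and the asserted formula.

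For the ``moreover'' statement I would substitute the resulting formula into $\tl b^\star_{ii}(u)=(2u-\rho^\star_{i+1})b^\star_{ii}(u)+\sum_{c=i+1}^{2}s^\star_c b^\star_{cc}(u)$ (the rank-$2$ instance of \eqref{eq:def-tl-b} for $\s^\star=(s_a,s_{a+1})$, with $\rho^\star_{i+1}=\sum_{c=i+1}^{2}s^\star_c$). Writing $w=u+\tfrac{\rho_{a+2}}{2}$: for $i=2$ one has $\rho^\star_3=0$, so the image is $2u\,b_{a+1,a+1}(w)+\sum_{k=a+2}^{\ka}s_k b_{kk}(w)$, which equals $\tl b_{a+1,a+1}(w)$ because $2w-\rho_{a+2}=2u$; for $i=1$ one has $\rho^\star_2=s_{a+1}$, the correction terms combine with total coefficient $\tfrac{1}{2u}\bigl((2u-s_{a+1})+s_{a+1}\bigr)=1$, and the image is $(2u-s_{a+1})b_{a,a}(w)+\sum_{k=a+1}^{\ka}s_k b_{kk}(w)$, which equals $\tl b_{a,a}(w)$ because $\rho_{a+1}=s_{a+1}+\rho_{a+2}$ forces $2w-\rho_{a+1}=2u-s_{a+1}$.

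I do not expect a deep obstacle here: the map is assembled entirely from the already-established Lemmas~\ref{lem:embedding-over} and~\ref{lem:embedding-under}, so the genuine work is bookkeeping --- verifying that the iterated invariant subspaces dovetail to give exactly $V^\star$ (over-conditions furnishing the vanishings with small row index, under-conditions those with large row index), and carrying out the telescoping induction that turns the accumulated correction terms into the clean sum $\tfrac{1}{2u}\sum_{k=a+2}^{\ka}s_k b_{kk}$.
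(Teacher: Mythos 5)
Your proposal is correct and follows essentially the same route as the paper, whose proof consists precisely of the remark that the map is obtained by repeatedly applying Lemmas \ref{lem:embedding-over} and \ref{lem:embedding-under} and that the statement about $\tl b_{ii}^\star(u)$ is a direct check. Your telescoping induction (with the identity collapsing the nested $\frac{1}{2u}$-corrections), the identification of the iterated invariant subspaces with $V^\star$, and the substitution verifying the ``moreover'' part simply supply the bookkeeping the paper leaves implicit, and all of it checks out.
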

\begin{proof}
The first statement follows from repeatedly applying Lemma \ref{lem:embedding-over} and Lemma \ref{lem:embedding-under}. The second one is obvious.
\end{proof}
Note that if $V$ is finite-dimensional, then, by Lemma \ref{lem:nontrivial}, none of $\overline V$, $\underline V$, and $V^\star$ is trivial.

\subsection{Classification for nonsuper case}
Let $\sigma\in \mathfrak S_{\ka}$. Given $\s$ and $\bm\ve$,
define
\[
\s^{\sigma}=(s_{\sigma^{-1}(1)},\cdots,s_{\sigma^{-1}(\ka)}), \quad \bm\ve^{\sigma}=(\ve_{\sigma^{-1}(1)},\cdots,\ve_{\sigma^{-1}(\ka)}).
\]
Then we have the following natural isomorphisms, which by abuse of notation we denote by $\sigma$ again,
\beq\label{sigmaY}
\sigma:\YMN\to \mathscr Y_{\s^\sigma},\qquad  t_{ij}^\s(u)\mapsto t_{\sigma(i)\sigma(j)}^{\s^\sigma}(u),
\eeq
and
\beq\label{sigmaB}
\sigma:\BMN\to \mathscr B_{\s^\sigma,\bm \ve^\sigma},\qquad b_{ij}^\s(u)\mapsto b_{\sigma(i)\sigma(j)}^{\s^\sigma}(u).
\eeq
Note that the latter one is the same as the one obtained by the restriction of the former.

Fix $1\lle a<\ka$, $\s$, and $\bm\ve$, we shall denote
\begin{align*}
\tl{\bm s}&:=\s^{\sigma}=(s_1,\cdots,s_{a+1},s_a,\cdots,s_\ka), \\
\tl{\bm \ve}&:=\bm\ve^{\sigma}=(\ve_1,\cdots,\ve_{a+1},\ve_a,\cdots,\ve_\ka),
\end{align*}
We shall identify the superalgebra $\YMNtl$ with $\YMN$ via the isomorphism \eqref{sigmaY}, $\BMNtl$ with $\BMN$ via the isomorphism \eqref{sigmaB}. When the underlying parity sequence $\s$ and the sequence $\bm\ve$ are omitted, we implicitly assume that we pick our fixed choice of $\s$ and $\bm\ve$. Then we can rephrase the conditions for a vector being a highest $\ell_{\tl{\s}}$-weight vector $\zeta$ of  $\ell_{\tl{\s}}$-weight $\bm\nu(u)$ as follows,
\beq\label{simpleref}
\begin{split}
&t_{ii}(u)\zeta=\nu_i(u)\zeta,\quad t_{aa}(u)\zeta=\nu_{a+1}(u)\zeta,\quad t_{a+1,a+1}(u)\zeta=\nu_a(u)\zeta,\\
&t_{i,i+1}(u)\zeta=t_{a-1,a+1}(u)\zeta=t_{a+1,a}(u)\zeta=t_{a,a+2}(u)\zeta=0,\quad i\ne a-1,a,a+1.
\end{split}
\eeq

Recall that $L(\bla(u))$ is the irreducible $\YMN$-module of highest $\ell_\s$-weight $\bla(u)$. Suppose $L(\bla(u))$ is finite-dimensional and $s_a\ne s_{a+1}$, then it follows from \cite{Zhang1995reps} that $\la_a(u)/\la_{a+1}(u)$ is a series in $u^{-1}$ as a rational function expanded at $u=\infty$. Let
\beq\label{pqdef}
\frac{\la_a(u)}{\la_{a+1}(u)}=\frac{\mc p(u)}{\mc q(u)},
\eeq
where $\mc p(u)$ and $\mc q(u)$ are relatively prime monic polynomials in $u$ of the same degree. Set
\beq\label{mcqdeg}
\deg \mc q(u)= k.
\eeq
We also need the following
\begin{lem}\label{lem:ell-weight-ref}
Suppose $L(\bla(u))$ is finite-dimensional, then $L(\bla(u))$ contains a unique highest $\ell_{\tl\s}$-weight vector (up to proportionality) of $\bm\nu(u)$, where $\bm\nu(u)$ is given by the following rules,
\begin{enumerate}
    \item if $s_a=s_{a+1}$, then $\bm\nu(u)=\bla(u)$;
    \item if $s_a\ne s_{a+1}$, then  $\nu_i(u)=\la_i(u)$ for $i\ne a,a+1$ and
    \[
    \nu_a(u)=\la_{a+1}(u)\frac{\mc q(u-s_a)}{\mc q(u)},\quad \nu_{a+1}(u)=\la_{a}(u)\frac{\mc p(u-s_a)}{\mc p(u)}.
    \]
\end{enumerate}
\end{lem}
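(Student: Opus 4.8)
The plan is to construct the highest $\ell_{\tl\s}$-weight vector $\zeta$ first, and then to compute its $\ell_{\tl\s}$-weight $\bm\nu(u)$ by reducing everything to the rank-two super Yangian attached to the positions $a,a+1$. Since $\sigma=(a,a+1)$ yields an isomorphism $\sigma\colon\YMN\to\mathscr Y_{\s^\sigma}$ via \eqref{sigmaY}, the pull-back of $L(\bla(u))$ along $\sigma$ is an irreducible finite-dimensional $\mathscr Y_{\s^\sigma}$-module, hence (by the highest weight theory of super Yangians recalled in Section~\ref{sec:super yangian}, cf.\ \cite{Zhang1996super}) it contains a highest $\ell_{\s^\sigma}$-weight vector, unique up to a scalar. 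Transporting this vector back through $\sigma$ produces a vector $\zeta\in L(\bla(u))$ satisfying \eqref{simpleref}, unique up to proportionality; so it only remains to identify the series $\bm\nu(u)$.

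For this, let $W\subset L(\bla(u))$ be the subspace annihilated by all $t_{ij}(u)$ with $i<j$ and $(i,j)\ne(a,a+1)$. Since the raising part of $\YMN$ is generated by the coefficients of the series $t_{i,i+1}(u)$, the conditions \eqref{simpleref} force $\zeta\in W$, and the old highest $\ell_\s$-weight vector $\xi$ visibly lies in $W$ as well. Iterating the standard ``nesting''/Olshanski-centralizer construction for super Yangians -- peeling off the coordinates $>a+1$ on the right and the coordinates $<a$ on the left, exactly as for $\mathscr Y(\gl_N)$, cf.\ \cite{Gow2007gauss} and the argument proving Proposition~\ref{prop:highest-weight-inver} -- endows $W$ with an action of the rank-two super Yangian $\mathscr Y_{\s^\star}$, $\s^\star=(s_a,s_{a+1})$, under which $t^\star_{kl}(u)$ acts by $t_{a+k-1,a+l-1}(u)$; moreover each $t_{ii}(u)$ with $i\ne a,a+1$ commutes with this action on $W$ and acts by the scalar $\la_i(u)$ on the submodule $\mathscr Y_{\s^\star}\!\cdot\xi\subseteq W$. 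Since that submodule is finite-dimensional, it contains a highest $\ell$-weight vector for the reversed order of $\{a,a+1\}$; this vector is already annihilated by all the $t_{ij}(u)$ cutting out $W$ and by $t_{a+1,a}(u)$, hence by all the raising operators of $\tl\s$, so by the uniqueness above it coincides with $\zeta$ (up to a scalar). In particular $\nu_i(u)=\la_i(u)$ for every $i\ne a,a+1$, and the two remaining components $\nu_a(u),\nu_{a+1}(u)$ are read off from the rank-two module $\mathscr Y_{\s^\star}\!\cdot\xi$, which is a finite-dimensional highest $\ell$-weight module of $\ell$-weight $(\la_a(u),\la_{a+1}(u))$.

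If $s_a=s_{a+1}$ then $\mathscr Y_{\s^\star}\cong\mathscr Y(\gl_2)$, and the reversed-order highest weight vector has the \emph{unchanged} $\ell$-weight $(\la_a(u),\la_{a+1}(u))$: the extremal vector whose $\gl_2$-weight is the reflection of the highest weight in $\alpha_a$ is annihilated by the opposite nilpotent (a weight count: its would-be image has a $\gl_2$-weight not occurring in the module) and carries the two Cartan series interchanged, which follows from the classical theory of finite-dimensional $\mathscr Y(\gl_2)$-modules (together with the $\sigma$-invariance of the quantum determinant). This gives case~(1). If $s_a\ne s_{a+1}$ then $\mathscr Y_{\s^\star}\cong\mathscr Y(\gl_{1|1})$, and with $\mc p,\mc q$ as in \eqref{pqdef} a direct computation in $\mathscr Y(\gl_{1|1})$ -- the rank-one odd reflection, already contained in \cite{Zhang1995reps,Molev2022odd,Lu2022note} -- shows the reversed-order highest weight vector has $\ell$-weight $\bigl(\la_{a+1}(u)\,\mc q(u-s_a)/\mc q(u),\ \la_a(u)\,\mc p(u-s_a)/\mc p(u)\bigr)$, which is case~(2). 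I expect the main difficulty to be the rank-two bookkeeping in the super setting -- verifying that $W$ genuinely carries the stated $\mathscr Y_{\s^\star}$-action (the quantum Mickelsson/centralizer computation, with the signs $s_i$ inserted in the right places) and carrying out the $\mathscr Y(\gl_{1|1})$ odd-reflection computation with the correct placement of the shift $s_a$ in $\mc p(u-s_a)$ and $\mc q(u-s_a)$; the $\gl_2$ part is classical and the $\gl_{1|1}$ part is in the cited references, so the remaining work is essentially careful sign-tracking.
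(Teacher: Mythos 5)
Your first step is fine: pulling $L(\bla(u))$ back through \eqref{sigmaY} gives existence and uniqueness (up to scalar) of a vector $\zeta$ satisfying \eqref{simpleref}, and $\zeta$, as well as the whole cyclic span $\mathscr Y_{\s^\star}\xi$, lies in your space $W$, the latter by a pure weight count: applying $t_{ij}(u)$ with $i<j$, $(i,j)\ne(a,a+1)$, to a vector of weight $\mathrm{wt}(\xi)-N\alpha_a$ lands outside the weight support of $L(\bla(u))$ (Corollary~\ref{cor:hwrt}). The genuine gap is the ``nesting'' step, which is exactly where the content of the lemma sits and which you only assert. What you need is not an Olshanski-type centralizer construction: the coefficients of $t_{kl}(u)$, $k,l\in\{a,a+1\}$, already span a copy of $\mathscr Y_{\s^\star}$ inside $\YMN$ (RTT relations plus Theorem~\ref{thm:PBW}), so the rank-two action comes for free. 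The two claims that carry the weight are (a) that $t_{ii}(u)$, $i\ne a,a+1$, acts by the scalar $\la_i(u)$ on $\mathscr Y_{\s^\star}\xi$ (in particular on $\zeta$), and (b) that $\mathscr Y_{\s^\star}\xi$ is irreducible over $\mathscr Y_{\s^\star}$, without which you cannot invoke the classification of lowest weights of finite-dimensional irreducible $\mathscr Y(\gl_2)$- or $\mathscr Y(\gl_{1|1})$-modules to ``read off'' $\nu_a,\nu_{a+1}$. For (a) the naive argument genuinely fails when $i>a+1$: by \eqref{eq:comm-series}, $[t_{ii}(u),t_{kl}(v)]$ is built from $t_{ki}(\cdot)\,t_{il}(\cdot)$ with the \emph{lowering} operator $t_{il}$ acting first, so neither membership in $W$ nor a weight count kills these terms; one must re-expand $t_{ki}(u)t_{il}(v)$ on $W$ via $[t_{ki}(u),t_{il}(v)]=\pm\frac{1}{u-v}\big(t_{ii}(u)t_{kl}(v)-t_{ii}(v)t_{kl}(u)\big)$, use $t_{ki}(u)W=0$, and observe the cancellation of the two resulting summands. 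This is precisely the relation-chasing performed in \cite{Molev2022odd,Lu2022note}, and it is more than ``careful sign-tracking''. Claim (b) is never addressed; it is true (any nonzero $\mathscr Y_{\s^\star}$-submodule of $\mathscr Y_{\s^\star}\xi$ contains, by finite dimensionality as in Lemma~\ref{lem:nontrivial}, a vector killed by $t_{a,a+1}(u)$, which, lying in $W$, is a singular vector of $L(\bla(u))$ and hence a multiple of $\xi$), but some such argument must be supplied before the rank-two classification may be quoted.

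For comparison, the paper's own proof is a citation: case (2) is exactly the odd-reflection theorem for super Yangians in \cite{Molev2022odd,Lu2022note}, which already contains the statement $\nu_i(u)=\la_i(u)$ for $i\ne a,a+1$ together with the modified components at $a,a+1$, and case (1) is standard. Your reduction-to-rank-two scheme can be completed (with (a), (b) as above) and would then be a genuinely alternative proof whose only external input is the rank-one $\mathscr Y(\gl_{1|1})$ odd reflection; but as written you leave the reduction unproved and still cite the same references for the rank-one computation, so the proposal does not yet constitute a proof independent of, or on par with, the citation the paper relies on.
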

\begin{proof}
Case (1) is probably well known and case (2) follows from the odd reflections of super Yangians, see \cite{Molev2022odd,Lu2022note}.
\end{proof}


\begin{thm}\label{thm:reflection}
Suppose $n=0$. If Conjecture \ref{conj:main} holds true for the case $\bm\ve$, then it also holds true for the case $\bm \ve^\sigma$ for any $\sigma\in\fkS_\ka$.
\end{thm}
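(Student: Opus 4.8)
The plan is to reduce to an adjacent transposition and then transport the datum $(\gamma,\bla(u))$ furnished by Conjecture~\ref{conj:main} across the single reflected position. First, since $\fkS_\ka$ is generated by the adjacent transpositions $\sigma_a=(a,a+1)$, $1\lle a<\ka$, and the isomorphisms \eqref{sigmaY}, \eqref{sigmaB} are compatible with composition, it suffices to prove the statement for $\sigma=\sigma_a$. Moreover $n=0$ forces $\s=\s^{\sigma_a}=(1,\dots,1)$, so only the sequence $\bm\ve$ is altered; and if $\ve_a=\ve_{a+1}$ then $\bm\ve^{\sigma_a}=\bm\ve$ and there is nothing to prove. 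So from now on I would assume $\ve_a\ne\ve_{a+1}$ and write $\tl{\bm\ve}:=\bm\ve^{\sigma_a}$.

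The key observation is that one module carries two highest weight structures. Let $V=V(\bm\mu(u))$ be a finite-dimensional irreducible $\mathscr B_{\s,\tl{\bm\ve}}$-module with highest $\ell_{\s,\tl{\bm\ve}}$-weight vector $\eta$. Via \eqref{sigmaB} we may regard $V$ as a finite-dimensional irreducible $\BMN$-module; by Lemma~\ref{lem:nontrivial} and the fact that every finite-dimensional irreducible is of highest weight, $V$ is then a highest $\ell_{\s,\bm\ve}$-weight module, say $V\cong V(\bm\mu'(u))$ with highest $\ell_{\s,\bm\ve}$-weight vector $\eta'$. The whole problem is to relate $\bm\mu(u)$ and $\bm\mu'(u)$, and then to push the datum obtained for $\bm\mu'$ by the hypothesis back to $\bm\mu$.

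To relate the two weights I would localise at the position $a$. Applying Lemma~\ref{B2reduction} at the index $a$, the subspace $V^\star$ is a finite-dimensional module over the rank~$1$ algebra $\mathscr B_{\s^\star,\tl{\bm\ve}^\star}$ with $\s^\star=(1,1)$ and $\tl{\bm\ve}^\star=(\ve_{a+1},\ve_a)$, and both $\eta$ and $\eta'$ lie in $V^\star$ since its defining conditions only involve the series $b_{ij}(u)$ with $i<a$ or $j>a+1$, which are unaffected by the swap at $\{a,a+1\}$. For $i\notin\{a,a+1\}$ the components $\mu_i(u)$ and $\mu_i'(u)$ then agree up to a common automorphism $\mc M_{\mc h(u)}$, while Lemma~\ref{B2reduction} identifies the action of $\tl b_{aa}(u),\tl b_{a+1,a+1}(u)$ on $\eta,\eta'$ with that of the rank~$1$ series $\tl b_{11}^\star(u),\tl b_{22}^\star(u)$ up to the shift $u\mapsto u+\rho_{a+2}/2$. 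Since $\ve_a\ne\ve_{a+1}$, Propositions~\ref{prop:iff-even}(2) and \ref{prop:con-even}(2) applied to the rank~$1$ submodules generated by $\eta$ and by $\eta'$ express $\tl\mu_a(u)/\tl\mu_{a+1}(u)$ and $\tl\mu'_a(u)/\tl\mu'_{a+1}(u)$ through a common monic polynomial $P(u)$ with $P(-u+\rho_{a+2})=P(u)$ and a common scalar $\gamma_0$; passing from the ordering attached to $\tl{\bm\ve}$ to the one attached to $\bm\ve$ is the swap $a\leftrightarrow a+1$, which leaves $P$ and $\gamma_0$ fixed and merely interchanges the linear prefactors $2\ve_a u-\ve_a\rho_{a+1}+\varpi_{a+1}$ and $2\ve_{a+1}u-\ve_{a+1}\rho_{a+2}+\varpi_{a+2}$, cf.\ \eqref{eq:good1}--\eqref{eq:good2}. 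This should produce an explicit identity between $\bm\mu(u)$ and $\bm\mu'(u)$.

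It then remains to transport the datum. By hypothesis Conjecture~\ref{conj:main} holds for $\bm\ve$; applied to $V(\bm\mu'(u))$ it gives $\gamma\in\bC$ and an $\ell_\s$-weight $\bla(u)$ with $L(\bla(u))$ finite-dimensional satisfying \eqref{eq:in-proof-01} for $\bm\ve$ and $\bm\mu'$. As $n=0$ gives $s_a=s_{a+1}$, Lemma~\ref{lem:ell-weight-ref}(1) exhibits $L(\bla(u))$, viewed via \eqref{sigmaY} as an $\mathscr Y_{\s^{\sigma_a}}$-module, as a finite-dimensional highest $\ell_{\s^{\sigma_a}}$-weight module of a weight $\tl{\bla}(u)$ obtained from $\bla(u)$ by the relabelling. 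Taking this $\tl{\bla}(u)$, after absorbing $\gamma_0$ into $\gamma$, and substituting the identity of the previous paragraph into \eqref{eq:in-proof-01}, one uses \eqref{eq:good1}--\eqref{eq:good2} to check that \eqref{eq:in-proof-01} holds for $\bm\ve^{\sigma_a}$, $\bm\mu$ and $\tl{\bla}$ with $L(\tl{\bla}(u))$ finite-dimensional; composing adjacent transpositions then gives the statement for every $\sigma\in\fkS_\ka$. The hard part, as I see it, is precisely the bookkeeping of the third step: one must verify that across the single reflected position the polynomial datum $P(u)$ is transported unchanged while only the explicit rational prefactors are swapped, and that the rank~$1$ scalar $\gamma_0$ merges cleanly with the global $\gamma$ of Conjecture~\ref{conj:main} — exactly where the sign and shift factors depending on $\s$ and $\bm\ve$ have to be tracked with care, as flagged in the introduction.
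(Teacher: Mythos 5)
Your reduction to an adjacent transposition $\sigma_a$ with $\ve_a\ne\ve_{a+1}$, and your use of Lemma \ref{lem:ell-weight-ref}(1) (valid since $n=0$ forces $s_a=s_{a+1}$), both match the actual argument. But the core of your proof is asserted rather than proved, and the assertion is exactly the hard point. You claim that the rank-$1$ data extracted via Lemma \ref{B2reduction} from $\eta$ and from $\eta'$ involve a \emph{common} polynomial $P(u)$ and a \emph{common} scalar $\gamma_0$, and that $\mu_i(u)$ and $\mu_i'(u)$ agree up to a single twist $\mc M_{\mc h(u)}$ for $i\notin\{a,a+1\}$. Proposition \ref{prop:iff-even}(2) only produces a pair $(P,\gamma_0)$ separately for each of the two rank-$1$ highest weights; there is no a priori linkage, because $\eta$ and $\eta'$ need not generate the same (let alone an irreducible) $\mathscr B_{\s^\star,\bm\ve^\star}$-submodule of $V^\star$, and $b_{ii}(u)$ for $i\notin\{a,a+1\}$ does not commute with that rank-$1$ subalgebra, so the off-position components of the two weights are not obviously related either. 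Moreover ``absorbing $\gamma_0$ into $\gamma$'' is not available: $\gamma$ enters \emph{every} ratio in \eqref{eq:in-proof-01}, so it cannot be adjusted at the single position $a$; what you actually need is $\gamma_0=\gamma$, which is precisely the unproven linkage.

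The missing idea is the one the paper's proof is built on: the datum $(\gamma,\bla(u))$ furnished by Conjecture \ref{conj:main} for the $\bm\ve$-structure is not unique, and one must \emph{choose} $\bla(u)$ so that the rank-$1$ module $L(\la_a(u),\la_{a+1}(u))\otimes\bC_\gamma$ restricts to an irreducible $\mathscr B_{\s^\star,\bm\ve^\star}$-module (possible by Proposition \ref{prop:con-even}(2)). One then realizes $V(\bm\mu(u))$ as a subquotient of $L(\bla(u))\otimes\bC_\gamma$ and uses this irreducibility to show that the image of $\tl\xi\otimes\eta_\gamma$ (with $\tl\xi$ the vector from Lemma \ref{lem:ell-weight-ref}) is \emph{nonzero}; it is then a highest $\ell_{\s,\bm\ve^\sigma}$-weight vector whose weight is computed by Proposition \ref{prop:tensor-product}, and since the Yangian $\ell$-weight of $\tl\xi$ equals $\bla(u)$, the same $(\gamma,\bla(u))$ verifies \eqref{eq:in-proof-01} for $\bm\ve^\sigma$. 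Without this choice and the accompanying nonvanishing argument, your transport step fails in two ways: an arbitrary $\bla(u)$ satisfying the conjecture for $\bm\mu'(u)$ need not satisfy it for $\bm\mu(u)$ after the swap, and you have no way to compute $\bm\mu(u)$ (equivalently, to justify the ``explicit identity between $\bm\mu$ and $\bm\mu'$'') at all.
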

\begin{proof}
It suffices to prove it for the case $\sigma=\sigma_a=(a,a+1)$ for any fix $1\lle a<\ka$. We shall use the notations introduced above. Since there is a single choice of $\s$, we shall drop the dependence on $\s$ for the notations. Also, to distinguish $\ell_{\s}$ and $\ell_{\tl\s}$, we use the notation $\ell$ and $\tl \ell$ instead, respectively; see \eqref{simpleref}.

Let $\bm\mu(u)$ be a highest $\ell_{\tl{\bm\ve}}$-weight. Suppose the irreducible $\mathscr B_{\tl{\bm\ve}}$-module  $V(\bm\mu(u))$ is finite-dimensional. Since $\mathscr B_{\tl{\bm\ve}}$ and $\mathscr B_{\bm\ve}$ are isomorphic, the $\mathscr B_{\tl{\bm\ve}}$-module $V(\bm\mu(u))$ is also a finite-dimensional irreducible $\mathscr B_{\bm\ve}$-module. Suppose the highest $\ell_{\bm\ve}$-weight of $V(\bm\mu(u))$ is $\bm\nu(u)$, that is the $\mathscr B_{\tl{\bm\ve}}$-module $V(\bm\mu(u))$ is isomorphic to the $\mathscr B_{\bm\ve}$-module $V(\bm\nu(u))$ if we identify $\mathscr B_{\tl{\bm\ve}}$ with $\mathscr B_{\bm\ve}$. By assumption, Conjecture \ref{conj:main} is true for the case $\bm\ve$, therefore, there exists a highest $\ell$-weight $\bla(u)$ and $\gamma\in\bC$ such that \eqref{eq:in-proof-01} are satisfied and the $\mathscr Y$-module $L(\bla(u))$ is finite-dimensional too. Note that the choice of $\bla(u)$ may not be unique. We pick $\bla(u)$ such that the finite-dimensional irreducible $\mathscr{Y}(\gl_2)$-module $L(\la_a(u),\la_{a+1}(u))$ tensor with the one-dimensional module $\bC_{\gamma}$ restricts to an irreducible $\mathscr B_{\bm\ve^\star}$-module, see Proposition \ref{prop:con-even} and also \eqref{star} for the definition of $\bm\ve^\star$. 

Then the $\scrB_{\tl{\bm\ve}}$-module $V(\bm\mu(u))$ is isomorphic to a subquotient of $L(\bla(u))\otimes \bC_\gamma$. Without loss of generality, we can assume that $\scrB_{\tl{\bm\ve}}$-module $V(\bm\mu(u))$ is indeed a subquotient of $L(\bla(u))\otimes \bC_\gamma$. Let $\xi$ be the $\ell$-weight vector of the module $L(\bla(u))$ and $\tl\xi$ the $\tl\ell$-weight vector. It is well known that the $\mathscr Y(\gl_2)$-module $\mathscr Y(\gl_2)\xi$ (generated by $\xi$), where $\mathscr Y(\gl_2)$ is the Yangian generated by coefficients of $t_{ij}(u)$ with $i,j=a,a+1$, is irreducible and contains the vector $\tl\xi$. Moreover, $\mathscr Y(\gl_2)\xi$ is isomorphic to the finite-dimensional irreducible $\mathscr{Y}(\gl_2)$-module $L(\la_a(u),\la_{a+1}(u))$. Recall our choice that this $\mathscr Y(\gl_2)$-module restricts to an irreducible $\mathscr B_{\bm\ve^\star}$-module. Hence the image of the vector $\tl\xi\otimes \eta_\gamma$ is nonzero in the subquotient $V(\bm\mu(u))\cong V(\bm\nu(u))$. In addition, this image is a highest $\ell_{\tl{\bm\ve}}$-weight vector as $\tl\xi$ is a highest $\ell$-weight vector, see Proposition \ref{prop:tensor-product}. Note that by Lemma \ref{lem:ell-weight-ref} the $\tl\ell$-weight for $\tl\xi$ remains the same as the $\ell$-weight for $\xi$. Hence to compute the $\ell_{\tl{\bm\ve}}$-weight, one only needs to change  $\bm\ve$ to $\tl{\bm\ve}$, completing the proof.
\end{proof}
\subsection{Classification for certain super cases} In this subsection, we obtain classifications of finite-dimensional irreducible representations for certain cases when $n>0$.
\begin{thm}\label{thm:main-super-class}
Suppose  $\s$ is the standard parity sequence and $\bm\ve$ is simple. The $\BMN$ module $V(\bm\mu(u))$ is finite-dimensional if and only if there exists $\gamma\in\C$ and for any $1\lle i<\ka$ such that
\begin{enumerate}
    \item if $s_i=s_{i+1}$, there exists a monic polynomial $P_i(u)$ satisfying $P_i(u)=P_i(-u+\rho_i)$,
\beq\label{eq:gamma-thm}
\frac{\tl \mu_i(u)}{\tl \mu_{i+1}(u)}=\frac{(2\ve_i u-\ve_i\rho_{i+1}+\varpi_{i+1}+\gamma)P_i(u+s_i)}{(2\ve_{i+1} u-\ve_{i+1}\rho_{i+2}+\varpi_{i+2}+\gamma)P_i(u)}.
\eeq
Moreover, if $\ve_i\ne\ve_{i+1}$, then $P_i(u)$ is not divisible by $2\ve_i u-\ve_i\rho_{i+1}+\varpi_{i+1}+\gamma$.
\item if $s_i\ne s_{i+1}$, there exists a monic polynomials $P_i(u)$ satisfying
\[
\frac{\tl \mu_i(u)}{\tl \mu_{i+1}(u)}=\ve_i\ve_{i+1}(-1)^{\deg P_i}\frac{P_i(u)}{P_{i}(-u+\rho_{i+1})}.
\]
\end{enumerate}
\end{thm}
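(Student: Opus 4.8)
The plan is to prove both implications by reduction to rank $1$: the ``if'' part via the sufficiency criterion Theorem~\ref{thm:suff} together with Zhang's criterion Theorem~\ref{thm:zhang}, and the ``only if'' part via the rank-$1$ classifications Propositions~\ref{prop:iff-even} and~\ref{prop:rank1}. Since $\s$ is standard there is a single node $m$ with $s_m\ne s_{m+1}$, and since $\bm\ve$ is simple there is at most one node $i_0$ with $\ve_{i_0}\ne\ve_{i_0+1}$; these two restrictions are exactly what collapse the per-node conditions to the uniform form in the statement. The first thing to record is the elementary identity $2\ve_iu-\ve_i\rho_{i+1}+\varpi_{i+1}=2\ve_{i+1}u-\ve_{i+1}\rho_{i+2}+\varpi_{i+2}$ whenever $\ve_i=\ve_{i+1}$, so that at every node where $\bm\ve$ does not jump the two linear factors appearing in \eqref{eq:gamma-thm} and in \eqref{eq:in-proof-01} cancel and $\gamma$ plays no role there.

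For the ``only if'' direction, let $V=V(\bm\mu(u))$ be finite-dimensional with highest $\ell_{\s,\bm\ve}$-weight vector $\eta$, and fix $1\lle a<\ka$. By Lemma~\ref{B2reduction}, $\eta\in V^\star$ and $\eta$ is a highest $\ell_{\s^\star,\bm\ve^\star}$-weight vector for $\mathscr B_{\s^\star,\bm\ve^\star}$ acting on $V^\star$, with $\tl b^\star_{jj}(u)\eta=\tl\mu_{a+j-1}\bigl(u+\tfrac{\rho_{a+2}}{2}\bigr)\eta$ for $j=1,2$. The cyclic span $\mathscr B_{\s^\star,\bm\ve^\star}\eta$ is finite-dimensional (it sits inside $V$), so its irreducible quotient is a finite-dimensional rank-$1$ module with highest-weight ratio $\tl\mu_a(u+\tfrac{\rho_{a+2}}{2})/\tl\mu_{a+1}(u+\tfrac{\rho_{a+2}}{2})$; Proposition~\ref{prop:iff-even} (if $s_a=s_{a+1}$) or Proposition~\ref{prop:rank1} (if $s_a\ne s_{a+1}$) pins down this ratio, and undoing the shift $u\mapsto u-\tfrac{\rho_{a+2}}{2}$ yields exactly \eqref{eq:gamma-thm}, with $P_a$ monic and symmetric about $\rho_a/2$ (using $\rho_a/2=s_{a+1}+\tfrac{\rho_{a+2}}{2}$ when $s_a=s_{a+1}$), or part~(2). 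A scalar is genuinely constrained only at $i_0$ when $i_0\ne m$, where Proposition~\ref{prop:iff-even}(2) supplies it together with the non-divisibility condition; at all other nodes with $s_i=s_{i+1}$ any $\gamma$ works by the cancellation above, and the node $m$ carries no $\gamma$. Hence one $\gamma$ serves uniformly.

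For the ``if'' direction, assume $\bm\mu(u)$ satisfies the conditions for some $\gamma$. I would apply Theorem~\ref{thm:suff} with the scalar $\gamma/2$ (reparametrised at the odd node if $\ve_m\ne\ve_{m+1}$, to match Proposition~\ref{prop:rank1}), after constructing an $\ell_\s$-weight $\bm\la(u)$ with $L(\bm\la(u))$ finite-dimensional that realises \eqref{eq:in-proof-01}. For $i\ne m$: from $P_i(u)=P_i(-u+\rho_i)$ one has $P_i(u)=g_i\bigl((u-\tfrac{\rho_i}{2})^2\bigr)$ with $g_i$ monic, and writing $g_i(t)=\prod_l(t-\beta_l)$ and picking $\delta_l$ with $\delta_l^2=\beta_l$, the monic polynomial $P_i^Z(u):=\prod_l\bigl(u-\tfrac{\rho_i}{2}-\delta_l\bigr)$ satisfies $P_i^Z(u)P_i^Z(-u+\rho_i)=(-1)^{\deg P_i^Z}P_i(u)$; setting $\la_i(u)/\la_{i+1}(u)=P_i^Z(u+s_i)/P_i^Z(u)$ (Zhang's shape) then makes the product $\la_i(u)\la_{i+1}(-u+\rho_{i+1})/\bigl(\la_{i+1}(u)\la_i(-u+\rho_{i+1})\bigr)$ equal to $P_i(u+s_i)/P_i(u)$, the non-linear part of \eqref{eq:gamma-thm}. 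For $i=m$ one reads off $\la_m(u)/\la_{m+1}(u)$ from the tensor-product-of-evaluation-modules realisation underlying Proposition~\ref{prop:rank1}, matching part~(2). Since only the ratios $\la_i/\la_{i+1}$ are fixed, the remaining series $\la_\ka(u)\in 1+u^{-1}\C[[u^{-1}]]$ may be chosen arbitrarily; then $L(\bm\la(u))$ is finite-dimensional by Theorem~\ref{thm:zhang}, and Theorem~\ref{thm:suff} gives the finite-dimensionality of $V(\bm\mu(u))$.

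The main obstacle is the bookkeeping in the ``only if'' direction: one must check that the rank-$1$ data read off at the various nodes---each on a spectral line shifted by $\tfrac{\rho_{a+2}}{2}$ and, when $\bm\ve$ jumps there, carrying its own scalar---reassemble into the single statement of the theorem. Simplicity of $\bm\ve$ is indispensable here, since otherwise two nodes could force incompatible values of $\gamma$; this is precisely why only $n=0,1$ or simple $\bm\ve$ can be handled in general. A secondary point is the translation at $i=m$ between the ``$\ve_i\ve_{i+1}(-1)^{\deg P_i}$'' normalisation of Proposition~\ref{prop:rank1} and the $\gamma$-normalisation used here, which comes down to matching two ratios of linear polynomials and is where the non-divisibility hypothesis in part~(1) originates.
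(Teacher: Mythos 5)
Your proposal is correct and follows essentially the same route as the paper's: reduce to the rank-one classifications (Propositions \ref{prop:iff-even} and \ref{prop:rank1}) via the reduction lemmas of \S\ref{sec:reduct-lem}, pin down a single $\gamma$ using simplicity of $\bm\ve$ together with the uniqueness of the pair $(P_i(u),\gamma)$ in Proposition \ref{prop:iff-even}(2), and obtain sufficiency from Theorems \ref{thm:zhang} and \ref{thm:suff}. The only differences are organizational: the paper runs an induction on the rank using Lemmas \ref{lem:embedding-over} and \ref{lem:embedding-under} with the $\ka=2$ case as base and reconciles the two resulting scalars $\gamma_1,\gamma_2$, whereas you extract the rank-one data node by node via Lemma \ref{B2reduction}, and you spell out the construction of $\bm\la(u)$ in the ``if'' direction, which the paper leaves implicit.
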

\begin{proof} 
By Theorem \ref{thm:zhang} and Theorem \ref{thm:suff}, it suffices to prove the ``only if" part. Let $V=V(\bm\mu(u))$ and assume that $V$ is finite-dimensional. We proceed by induction on $n$. For the base case $n=2$, it follows from Propositions \ref{prop:iff-even} and \ref{prop:con-even}. Then we assume that $n\gge 3$.

Recall the notation from \S \ref{sec:reduct-lem} and set $\xi$ to be the highest $\ell_{\s,\bm\ve}$-weight vector. Consider the subspace $\overline V$ defined in \eqref{Vover}, then $\overline V$ is a finite-dimensional $\mathscr B_{\ovs,\ove}$-module by Lemma \ref{lem:embedding-over}. Clearly, $\xi\in\overline V$ and $\xi$ is a highest $\ell_{\ovs,\ove}$-weight vector of the $\ell_{\ovs,\ove}$-weight $\overline{\bm\mu}(u)=(\mu_2(u),\cdots,\mu_\ka(u))$. Thus the cyclic span $\mathscr B_{\ovs,\ove}\xi$ is a finite-dimensional highest $\ell_{\ovs,\ove}$-weight with highest $\ell_{\ovs,\ove}$-weight $\overline{\bm\mu}(u)$. In particular, $V(\overline{\bm\mu}(u))$ is finite-dimensional. By induction hypothesis, we conclude that the conditions from the theorem are satisfied by the components of $\overline{\bm\mu}(u)$ (that is for $1<i\lle\ka$) for some $\gamma_1\in\bC$.

Similarly, consider the subspace $\underline{V}$ defined in \eqref{Vund}, then $\underline V$ is a finite-dimensional $\mathscr B_{\uns,\une}$-module by Lemma \ref{lem:embedding-under}. Clearly, $\xi\in\underline V$ and $\xi$ is a highest $\ell_{\uns,\une}$-weight vector with the $\ell_{\uns,\une}$-weight
\[
\underline{\bm\mu}^\circ(u)=\Big(\mu_1\Big(u+\frac{s_\ka}{2}\Big)+\frac{s_\ka}{2u}\mu_{\ka}\Big(u+\frac{s_\ka}{2}\Big),\cdots,\mu_{\ka-1}\Big(u+\frac{s_\ka}{2}\Big)+\frac{s_\ka}{2u}\mu_{\ka}\Big(u+\frac{s_\ka}{2}\Big)\Big).
\]
Let $\tl{\mu}_i^\circ(u)$ be the series associated to $\bm\mu^\circ(u)$ as defined in \eqref{eq:mu-tilde}. Then it is clear that
\[
\tl{\mu}_i^\circ(u)=\tl{\mu}_i\Big(u+\frac{s_\ka}{2}\Big).
\]
By the same argument as in the previous paragraph, we conclude that the conditions from the theorem are satisfied for $1\lle i<\ka-1$ for some $\gamma_2\in\bC$.

Now it suffices to show that we can choose $\gamma_1=\gamma_2$. Recall from \eqref{eq:good1} that if $\ve_i=\ve_{i+1}$, for $1\lle i<\ka$, then
$$
2\ve_i u-\ve_i\rho_{i+1}+\varpi_{i+1}=2\ve_{i+1} u-\ve_{i+1}\rho_{i+2}+\varpi_{i+2}.
$$
Hence the number $\gamma$ only shows up in \eqref{eq:gamma-thm} when $\ve_i\ne \ve_{i+1}$ and $s_i= s_{i+1}$. By Proposition \ref{prop:iff-even}, the pair $(P_i(u),\gamma)$ satisfying \eqref{eq:gamma-thm} is unique in this case. Since $\bm\ve$ is simple, there is at most one $i$ such that $\ve_i\ne \ve_{i+1}$. Therefore, we can always make sure that $\gamma_1=\gamma_2$, completing the proof of the theorem.
\end{proof}

\begin{cor}
Conjecture \ref{conj:main} holds when $\s$ is the standard parity sequence and $\bm\ve$ is simple.
\end{cor}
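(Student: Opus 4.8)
The plan is to derive the corollary from Theorem~\ref{thm:main-super-class}, Zhang's criterion (Theorem~\ref{thm:zhang}), and the construction underlying Theorem~\ref{thm:suff}; what is wanted is exactly the converse of Theorem~\ref{thm:suff}. So suppose $V(\bm\mu(u))$ is finite-dimensional with $\s$ standard and $\bm\ve$ simple. First we invoke Theorem~\ref{thm:main-super-class} to obtain $\gamma\in\bC$ and monic polynomials $P_i(u)$, $1\lle i<\ka$, satisfying conditions (1)--(2) there. It then suffices to manufacture an $\ell_\s$-weight $\bla(u)=(\la_i(u))_{1\lle i\lle\ka}$ with $L(\bla(u))$ finite-dimensional, together with a constant $\gamma'\in\bC$, so that \eqref{eq:in-proof-01} holds with $\gamma$ replaced by $\gamma'$; comparing the linear factors in \eqref{eq:gamma-thm} and \eqref{eq:in-proof-01} forces the choice $\gamma'=\gamma/2$.

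Next we would construct $\bla(u)$ by prescribing, in the notation of Theorem~\ref{thm:zhang}, monic polynomials $Q_i(u)$ and setting $\la_i(u)/\la_{i+1}(u)=Q_i(u+s_i)/Q_i(u)$ for $i\ne m$ and $\la_m(u)/\la_{m+1}(u)=Q_m(u)/Q_\ka(u)$ (normalising $\la_\ka(u)=1$); as long as $\deg Q_m=\deg Q_\ka$, Theorem~\ref{thm:zhang} then guarantees $L(\bla(u))$ is finite-dimensional. For the non-super nodes $i\ne m$ one has $s_i=s_{i+1}$ and the symmetry $P_i(u)=P_i(-u+\rho_i)$, which forces $\deg P_i$ to be even; writing $P_i(u)=R_i\big((u-\tfrac{1}{2}\rho_i)^2\big)$, factoring $R_i$ over $\bC$ and passing to square roots of its roots produces a monic $Q_i(u)$ with $P_i(u)=(-1)^{\deg Q_i}Q_i(u)Q_i(-u+\rho_i)$, and then $\rho_i=\rho_{i+1}+s_i$ yields $\frac{\la_i(u)\la_{i+1}(-u+\rho_{i+1})}{\la_{i+1}(u)\la_i(-u+\rho_{i+1})}=\frac{P_i(u+s_i)}{P_i(u)}$. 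For the odd node $i=m$ (where $s_m\ne s_{m+1}$) we would take $Q_m(u)=P_m(u)$ and $Q_\ka(u)=(u-\tfrac{1}{2}\rho_{m+1})^{\deg P_m}$; since then $Q_\ka(-u+\rho_{m+1})=(-1)^{\deg P_m}Q_\ka(u)$, this gives $\frac{\la_m(u)\la_{m+1}(-u+\rho_{m+1})}{\la_{m+1}(u)\la_m(-u+\rho_{m+1})}=(-1)^{\deg P_m}\frac{P_m(u)}{P_m(-u+\rho_{m+1})}$.

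Finally we would check \eqref{eq:in-proof-01} node by node. When $\ve_i=\ve_{i+1}$ the two linear factors in \eqref{eq:in-proof-01} cancel by \eqref{eq:good1}, and what survives is precisely condition (1) of Theorem~\ref{thm:main-super-class} if $i\ne m$, and condition (2) if $i=m$ (with $\ve_m\ve_{m+1}=1$). When $\ve_i\ne\ve_{i+1}$ we use \eqref{eq:good2}: if this node is even ($i\ne m$), the ratio of the two linear factors in \eqref{eq:gamma-thm} coincides, by \eqref{eq:good2} and $2\gamma'=\gamma$, with the corresponding ratio in \eqref{eq:in-proof-01}, so condition (1) is again all that is needed; if this node is odd ($i=m$), then Theorem~\ref{thm:main-super-class} imposes no constraint on $\gamma$, so we fix $\gamma=-\varpi_{m+1}$, whereupon \eqref{eq:good2} makes the linear-factor ratio in \eqref{eq:in-proof-01} equal to $-1=\ve_m\ve_{m+1}$ and the $m$-th identity reduces to condition (2). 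As $\bm\ve$ is simple, at most one node genuinely involves $\gamma$, so one global constant suffices and the requirements never clash. The step I expect to be the main obstacle is this odd node $i=m$ together with the bookkeeping of $\gamma$: one must check that the square-root polynomials $Q_i$ exist, that $\deg Q_m=\deg Q_\ka$, and that the interaction of the linear factors with the non-symmetric polynomial $P_m$ reproduces exactly the sign $\ve_m\ve_{m+1}(-1)^{\deg P_m}$ demanded by condition (2). Once these verifications are complete, $\bla(u)$ and $\gamma'=\gamma/2$ witness Conjecture~\ref{conj:main} in the present case, proving the corollary.
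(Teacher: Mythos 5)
Your proof is correct and follows exactly the route the paper intends: the paper's own proof is a one-line reduction to Theorem \ref{thm:zhang} and Theorem \ref{thm:main-super-class} (with a pointer to \eqref{eq:good2}), and your argument simply fills in the details — factoring the symmetric polynomials $P_i(u)=P_i(-u+\rho_i)$ into $Q_i(u)Q_i(-u+\rho_i)$ up to sign, choosing $Q_m=P_m$, $Q_\ka=(u-\tfrac12\rho_{m+1})^{\deg P_m}$, and fixing the single constant via $2\gamma'=\gamma$ (or $2\gamma'=-\varpi_{m+1}$ at an odd sign-change node), all of which checks out. The only cosmetic point is that at an even node with $\ve_i\ne\ve_{i+1}$ the linear factors in \eqref{eq:gamma-thm} and \eqref{eq:in-proof-01} agree verbatim once $2\gamma'=\gamma$, so the appeal to \eqref{eq:good2} there is redundant (it is genuinely needed only at the odd node).
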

\begin{proof}
This follows immediately from Theorem \ref{thm:zhang} and Theorem \ref{thm:main-super-class}, see also equation \eqref{eq:good2}.
\end{proof}

\begin{thm}
Conjecture \ref{conj:main} holds for arbitrary $\bm\ve$ when $\s$ is the standard parity sequence and $n=1$.
\end{thm}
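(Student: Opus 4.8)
The plan is to adapt the argument of Theorem~\ref{thm:main-super-class}. Because $\s$ is the standard parity sequence, Theorem~\ref{thm:zhang} gives an explicit polynomial criterion for a $\YMN$-module $L(\bla(u))$ to be finite-dimensional; feeding this into \eqref{eq:in-proof-01} turns Conjecture~\ref{conj:main} into the assertion that $V(\bm\mu(u))$ is finite-dimensional if and only if there are $\gamma\in\C$ and monic polynomials $P_i(u)$, $1\lle i<\ka$, satisfying a relation of the type \eqref{eq:gamma-thm} for every $i<\ka-1$ (where $s_i=s_{i+1}=1$) together with a relation of type~(2) in Theorem~\ref{thm:main-super-class} for $i=\ka-1$ (where $s_{\ka-1}\ne s_\ka$). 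I would prove this reformulated statement by induction on $\ka=m+1$; the ``if'' direction is Theorem~\ref{thm:suff}, so only necessity is at stake. The base case $\ka=2$ is $\gl_{1|1}$, where every $\bm\ve=(\ve_1,\ve_2)$ is automatically simple, so it is already covered by the corollary to Theorem~\ref{thm:main-super-class} (equivalently by the rank~$1$ results of \S\ref{sec:rank1}).

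For the inductive step with $\ka\gge 3$, let $\xi$ be the highest $\ell_{\s,\bm\ve}$-weight vector of a finite-dimensional $V=V(\bm\mu(u))$ and use the two reductions of \S\ref{sec:reduct-lem}. By Lemma~\ref{lem:embedding-over}, $\overline V$ is a finite-dimensional $\mathscr B_{\ovs,\ove}$-module, where $\ovs=(1,\dots,1,-1)$ is standard for $\gl_{m-1|1}$ and $\ove$ is arbitrary; since $\xi\in\overline V$ generates a highest $\ell_{\ovs,\ove}$-weight module with highest weight $(\mu_2(u),\dots,\mu_\ka(u))$, the induction hypothesis produces monic polynomials and a constant $\gamma_1$ realizing the required conditions for all $2\lle i\lle\ka-1$ (including the odd-node index $i=\ka-1$ and its degree constraint). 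By Lemma~\ref{lem:embedding-under}, $\underline V$ is a finite-dimensional module over $\mathscr B_{\uns,\une}$ with $\uns=(1,\dots,1)$ standard for $\gl_m$ and $n=0$. Here Conjecture~\ref{conj:main} is already known for \emph{arbitrary} $\bm\ve$: it holds for simple $\bm\ve$ by \cite{Molev2002reflection}, and every sequence is a permutation of a simple one, so Theorem~\ref{thm:reflection} extends it. Thus, after the same bookkeeping with the constants $\rho_k$, $\varpi_k$ and the spectral shift by $s_\ka/2$ as in the proof of Theorem~\ref{thm:main-super-class}, we obtain monic polynomials and a constant $\gamma_2$ realizing the required conditions for all $1\lle i\lle\ka-2$.

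It remains to glue these into a single $\gamma$. The two ranges overlap on $2\lle i\lle\ka-2$; only $i=1$ is seen solely by the ``under'' reduction and $i=\ka-1$ solely by the ``over'' one. Two facts make the gluing routine: first, by \eqref{eq:good1} the parameter $\gamma$ cancels from the condition at any index with $\ve_i=\ve_{i+1}$, so $\gamma$ is genuinely active only at descents of $\bm\ve$; second --- and this is where $n=1$ is used decisively --- the unique odd node is $i=\ka-1$, whose condition is of type~(2) and involves no $\gamma$ at all. Hence $\gamma$ is active only at descents of $\bm\ve$ lying in $\{1,\dots,\ka-2\}$. If some such descent lies in the overlap $\{2,\dots,\ka-2\}$, then the uniqueness of the pair $(P_i,\gamma)$ at an active descent (Proposition~\ref{prop:iff-even}(2), applied to the rank~$1$ restriction along that pair via Lemma~\ref{B2reduction}) forces $\gamma_1=\gamma_2$, and this common value works for every $1\lle i<\ka$. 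Otherwise all descents of $\bm\ve$ sit at $i=1$ or $i=\ka-1$; since the latter imposes nothing on $\gamma$, we simply take $\gamma=\gamma_2$, which already realizes the conditions for $1\lle i\lle\ka-2$, while the condition at $i=\ka-1$ holds for free. In all cases we obtain a single $\gamma$ and monic polynomials $P_1(u),\dots,P_{\ka-1}(u)$ of the required shape, from which an $\ell_\s$-weight $\bla(u)$ is assembled via Theorem~\ref{thm:zhang}, verifying Conjecture~\ref{conj:main}. The main obstacle is exactly this reconciliation of $\gamma_1$ and $\gamma_2$ without the simplicity hypothesis on $\bm\ve$; it is overcome because for $n=1$ the single odd node is pinned to the boundary index $\ka-1$ and is therefore $\gamma$-free.
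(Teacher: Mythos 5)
Your proposal is correct and follows essentially the same route as the paper's (very terse) proof: induction with the two reduction lemmas as in Theorem \ref{thm:main-super-class}, Theorem \ref{thm:reflection} together with \cite{Molev2002reflection} to handle the $n=0$ ``under'' reduction for arbitrary $\bm\ve$, and the reconciliation of $\gamma_1,\gamma_2$ using the fact that for $n=1$ the unique odd node $i=\ka-1$ imposes no constraint on $\gamma$ (Proposition \ref{prop:rank1}, i.e.\ the $\gamma$-factor can be absorbed into $P_{\ka-1}(u)$). Your case analysis of where the descents of $\bm\ve$ sit is just a fleshed-out version of the paper's one-line remark, so no substantive difference.
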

\begin{proof}
The proof is similar to that of Theorem \ref{thm:main-super-class} by Theorem \ref{thm:reflection} and induction. Again the point is to argue that we can choose $\gamma_1$ and $\gamma_2$ to be the same as in the proof of Theorem \ref{thm:main-super-class}. However, the condition $n=1$ and Proposition \ref{prop:rank1} show that the choice of $\gamma$ in \eqref{eq:gamma-thm} for $i=\ka-1$ is not important as it can be absorbed into the polynomial $P_{\ka-1}(u)$. 
\end{proof}

\section{Drinfeld functor and dAHA of type BC}\label{sec:schur-weyl}
\subsection{Degenerate affine Hecke algebras} Let $l$ be a positive integer. Following \cite{Chen2014twisted}, we first recall basics about degenerate affine Hecke algebras (dAHA for short) of types A$_l$ and BC$_l$.

Denote by $\fkS_l$ the symmetric group on $l$ elements. Then the wreath product $\mathscr W_l=\Z_2\wr \fkS_l$ is the Weyl group of type BC$_l$. Let $e_1,\cdots,e_l$ be the standard basis of $\bR^l$, then the non-reduced root system of type BC$_l$ consists of the following set of vectors,
\[
\{\pm e_i+e_j, ~ \pm e_i-e_j~|~ 1\lle i\ne j \lle l\} \cup \{\pm e_i,~ \pm 2e_i ~|~ 1\lle i\lle l\}.
\]
For $1\lle i\ne j\lle j$, let $\sigma_{ij}$, $\varsigma_i$ be the reflections about the root vectors $e_i-e_j$ and $e_i$, respectively. Set $\sigma_i=\sigma_{i,i+1}$ for $1\lle i<l$.
\begin{dfn}
For $\vartheta_1\in\bC^\times$ and $l\in\Z_{>0}$, the dAHA $\mathscr{H}_{\vartheta_1}^l$ of type $\mathrm A_l$ is the associative algebra generated by the group algebra $\bC[\fkS_l]$ and $y_1,\cdots,y_l$ with the relations $y_iy_j=y_jy_i$, $1\lle i,j\lle l$, and
\begin{align*}
&\sigma_iy_i-y_{i+1}\sigma_i=\vartheta_1,&1\lle i< l,\\
&\sigma_iy_j=y_j\sigma_i, &j\ne i,i+1.
\end{align*}
\end{dfn}

\begin{dfn}\label{def:dAHA-B}
For $\vartheta_1,\vartheta_2\in\bC^\times$ and $l\in\Z_{>0}$, the dAHA $\mathscr{H}_{\vartheta_1,\vartheta_2}^l$ of type $\mathrm{BC}_l$ is the associative algebra generated by the group algebra $\bC[\scrW_l]$ and $y_1,\cdots,y_l$ with the relations $y_iy_j=y_jy_i$, $1\lle i,j\lle l$, and
\[
\sigma_iy_i-y_{i+1}\sigma_i=\vartheta_1,\qquad \varsigma_ly_i=y_i\varsigma_l,\quad 1\lle i< l,
\]
\[
\varsigma_ly_l+y_l\varsigma_l=\vartheta_2,\qquad \sigma_iy_j=y_j\sigma_i,\quad j\ne i,i+1.
\]
\end{dfn}
The following lemmas are well known, see e.g. \cite[Section 2]{Chen2014twisted}.
\begin{lem}
The subalgebra of $\mathscr{H}_{\vartheta_1,\vartheta_2}^l$ generated by $y_i$, $1\lle i\lle l$, and $\C[\fkS_l]$ is isomorphic to the dAHA $\mathscr{H}_{\vartheta_1}^l$ of type $\mathrm A_l$.\qed
\end{lem}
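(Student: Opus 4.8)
The plan is to produce the obvious candidate homomorphism and then verify it is bijective by comparing PBW bases. Write $A \subseteq \mathscr{H}_{\vartheta_1,\vartheta_2}^l$ for the subalgebra generated by $y_1,\dots,y_l$ together with $\C[\fkS_l]$. First I would check that the generators $y_i$ and $\sigma_i$ of $A$ satisfy the defining relations of $\mathscr{H}_{\vartheta_1}^l$: the $y_i$ commute, and the relations $\sigma_i y_i - y_{i+1}\sigma_i = \vartheta_1$ and $\sigma_i y_j = y_j \sigma_i$ for $j \ne i, i+1$ are literally among the defining relations of $\mathscr{H}_{\vartheta_1,\vartheta_2}^l$ recorded in Definition \ref{def:dAHA-B}. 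By the universal property of $\mathscr{H}_{\vartheta_1}^l$ this yields a surjective algebra homomorphism $\pi: \mathscr{H}_{\vartheta_1}^l \longrightarrow A$ fixing each $y_i$ and each element of $\fkS_l$.

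It remains to show $\pi$ is injective. For this I would invoke the PBW theorem for degenerate affine Hecke algebras: the ordered monomials $y_1^{a_1}\cdots y_l^{a_l}\, w$, with $a_1,\dots,a_l \in \bZ_{\gge 0}$ and $w \in \fkS_l$, form a $\C$-basis of $\mathscr{H}_{\vartheta_1}^l$, and likewise the monomials $y_1^{a_1}\cdots y_l^{a_l}\, \tl w$ with $\tl w \in \scrW_l$ form a $\C$-basis of $\mathscr{H}_{\vartheta_1,\vartheta_2}^l$. Since $\fkS_l$ is a subgroup of $\scrW_l$, the map $\pi$ sends each basis monomial of $\mathscr{H}_{\vartheta_1}^l$ to a distinct basis monomial of $\mathscr{H}_{\vartheta_1,\vartheta_2}^l$; hence $\pi$ carries a basis to a linearly independent set and is therefore injective. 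Consequently $\pi$ is an isomorphism of $\mathscr{H}_{\vartheta_1}^l$ onto $A$.

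The only point requiring care — and the only place where something is actually being used rather than formally manipulated — is the PBW statement for the type-$\mathrm{BC}$ dAHA, namely that $\mathscr{H}_{\vartheta_1,\vartheta_2}^l$ is free as a left module over $\C[y_1,\dots,y_l]$ with basis $\scrW_l$; this is what guarantees that passing to the subalgebra generated by $\C[\fkS_l]$ and the $y_i$ introduces no new relations. This is classical: it follows, for instance, from a filtration argument identifying $\operatorname{gr}\mathscr{H}_{\vartheta_1,\vartheta_2}^l$ with $\C[y_1,\dots,y_l]\rtimes\C[\scrW_l]$, or from the faithful polynomial representation, and in the present context it is precisely the structural fact recalled from \cite{Chen2014twisted}. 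Everything else is a formal consequence, so the lemma follows.
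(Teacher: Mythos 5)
Your proof is correct: the paper states this lemma as well known (citing \cite{Chen2014twisted}) and gives no argument, and your route --- the obvious surjection $\pi:\mathscr{H}_{\vartheta_1}^l\to A$ followed by injectivity via comparison of the PBW bases $y_1^{a_1}\cdots y_l^{a_l}w$ with $w\in\fkS_l\subset\scrW_l$ --- is precisely the standard proof underlying that citation. You also correctly isolate the one nontrivial input, the PBW/freeness statement for $\mathscr{H}_{\vartheta_1,\vartheta_2}^l$ over $\C[y_1,\dots,y_l]$ with basis $\scrW_l$ (Lusztig's graded Hecke algebra result), so there is no gap.
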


One has the following natural embeddings,
\begin{align}
&\imath_1: \mathscr{H}_{\vartheta_1}^{l_1} \hookrightarrow\mathscr{H}_{\vartheta_1,\vartheta_2}^l,\quad y_i\mapsto y_i,\quad \sigma_j\mapsto \sigma_j, \quad &1\lle l_1\lle l,\label{i1embed}
\\
&\imath_2: \mathscr{H}_{\vartheta_1,\vartheta_2}^{l_2} \hookrightarrow\mathscr{H}_{\vartheta_1,\vartheta_2}^l,\quad y_i\mapsto y_{i+l-l_2},\quad \varsigma_i\mapsto \varsigma_{i+l-l_2},\quad \sigma_j\mapsto \sigma_{j+l-l_2}, \quad &1\lle l_2\lle l,\nonumber
\\
& \imath_1\otimes \imath_2: \mathscr{H}_{\vartheta_1}^{l_1}\otimes \mathscr{H}_{\vartheta_1,\vartheta_2}^{l_2}\hookrightarrow \mathscr{H}_{\vartheta_1,\vartheta_2}^l,\quad &l_1+l_2\lle l.\label{eq:embedding-hecke}
\end{align}
Note that due to the relation $[\varsigma_i,y_j]=\vartheta_1\sigma_{ij}(\varsigma_i-\varsigma_j)$ for $i<j$, the last embedding $\imath_1\otimes \imath_2$ does not extend to an embedding $\mathscr{H}_{\vartheta_1,\vartheta_2}^{l_1}\otimes \mathscr{H}_{\vartheta_1,\vartheta_2}^{l_2}\hookrightarrow \mathscr{H}_{\vartheta_1,\vartheta_2}^l$.
\begin{lem} [{\cite[Lemma 3.1]{Etingof2009}}]\label{lem:dAHA-B-other}
The algebra $\mathscr{H}_{\vartheta_1,\vartheta_2}^l$ is isomorphic to the algebra generated by elements $\sfy_i$, $1\lle i\lle l$, and by $\bC[\mathscr W]$ with the relations,
\[
\sigma_i\sfy_i=\sfy_{i+1}\sigma_i,\qquad \sigma_i\sfy_j=\sfy_{j}\sigma_i,\qquad j\ne i,i+1,
\]
\[
\varsigma_l\sfy_l=-\sfy_{l}\varsigma_l,\qquad
\varsigma_l\sfy_i=\sfy_i\varsigma_l,\qquad i\ne l,
\]
\begin{align*}
    [\sfy_i,\sfy_j]= \frac{\vartheta_1 \vartheta_2}{2}\sigma_{ij}(\varsigma_j-\varsigma_i)&\,+\frac{\vartheta_1^2}{4}\sum_{\substack{k=1 \\ k\ne i,j}}^l\Big((\sigma_{jk}\sigma_{ik}-\sigma_{ik}\sigma_{jk})\\
    &\, +\sigma_{ik}\sigma_{jk}(\varsigma_i\varsigma_j-\varsigma_i\varsigma_k+\varsigma_j\varsigma_k)-\sigma_{jk}\sigma_{ik}(\varsigma_i\varsigma_j+\varsigma_i\varsigma_k-\varsigma_j\varsigma_k)\Big).
\end{align*}
Moreover, this presentation is related to the one in Definition \ref{def:dAHA-B} by
\[
\sfy_i=y_i-\frac{\vartheta_2}{2}\varsigma_i+\frac{\vartheta_1}{2}\sum_{k=1}^{i-1}\sigma_{ik}-\frac{\vartheta_1}{2}\sum_{k=i+1}^l\sigma_{ik}-\frac{\vartheta_1}{2}\sum_{\substack{k=1 \\ k\ne i}}^l\sigma_{ik}\varsigma_i\varsigma_k.\qedd
\]
\end{lem}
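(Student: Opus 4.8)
The plan is to produce mutually inverse algebra homomorphisms between the two presentations, the only nontrivial point being the verification of the defining relations. Write $\mathcal A$ for the algebra given by the second presentation. Define $\psi\colon\mathcal A\to\mathscr H_{\vartheta_1,\vartheta_2}^l$ to be the identity on $\bC[\mathscr W]$ and to send $\sfy_i$ to the element displayed in the statement, and define $\phi\colon\mathscr H_{\vartheta_1,\vartheta_2}^l\to\mathcal A$ to be the identity on $\bC[\mathscr W]$ and to send
\[
y_i\mapsto \sfy_i+\frac{\vartheta_2}{2}\varsigma_i-\frac{\vartheta_1}{2}\sum_{k=1}^{i-1}\sigma_{ik}+\frac{\vartheta_1}{2}\sum_{k=i+1}^l\sigma_{ik}+\frac{\vartheta_1}{2}\sum_{\substack{k=1\\ k\ne i}}^l\sigma_{ik}\varsigma_i\varsigma_k,
\]
which is the formal inverse of the substitution defining $\psi$. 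On generators $\phi\circ\psi$ and $\psi\circ\phi$ are visibly the identity, so once both maps are shown to be well defined the lemma follows.

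For well-definedness of $\psi$ one checks that the proposed images $\sfy_i\in\mathscr H_{\vartheta_1,\vartheta_2}^l$ satisfy the relations of $\mathcal A$. The relations $\sigma_i\sfy_i=\sfy_{i+1}\sigma_i$ and $\sigma_i\sfy_j=\sfy_j\sigma_i$ for $j\ne i,i+1$ follow by conjugating the defining sums by $\sigma_i=\sigma_{i,i+1}$ and using the $\mathscr W$-relations together with $\sigma_iy_i-y_{i+1}\sigma_i=\vartheta_1$, the correction terms being arranged precisely so that the residual $\vartheta_1$ cancels against the shift in the index sets of the sums. The relations $\varsigma_l\sfy_i=\sfy_i\varsigma_l$ ($i\ne l$) and $\varsigma_l\sfy_l=-\sfy_l\varsigma_l$ are handled the same way, now invoking $\varsigma_ly_l+y_l\varsigma_l=\vartheta_2$, the identity $\varsigma_l\sigma_{lk}=\sigma_{lk}\varsigma_k$, and the fact that the $\varsigma$'s are commuting involutions; the sign in the last relation comes from pairing $-\tfrac{\vartheta_2}{2}\varsigma_l$ with the anticommutator relation and pairing each $\tfrac{\vartheta_1}{2}\sigma_{lk}$ with the matching $-\tfrac{\vartheta_1}{2}\sigma_{lk}\varsigma_l\varsigma_k$. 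Well-definedness of $\phi$ is the mirror computation, and can in fact be bypassed: the substitution for $\sfy_i$ is unitriangular with respect to the filtration of $\mathscr H_{\vartheta_1,\vartheta_2}^l$ by degree in the $y$'s, since all correction terms lie in $\bC[\mathscr W]$; hence $\psi$ preserves that filtration, induces the identity on the associated graded, and—comparing with the PBW basis $\{w\,y_1^{a_1}\cdots y_l^{a_l}\mid w\in\mathscr W_l\}$ of $\mathscr H_{\vartheta_1,\vartheta_2}^l$ and the analogous spanning set of $\mathcal A$ (available because the commutator relation expresses $[\sfy_i,\sfy_j]$ purely in $\bC[\mathscr W]$)—is bijective, hence an isomorphism whose inverse is $\phi$.

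The main labor is the quadratic commutator relation for $[\sfy_i,\sfy_j]$. Here I would substitute the formulas for $\sfy_i$ and $\sfy_j$, use $[y_i,y_j]=0$, the relation $[\varsigma_i,y_j]=\vartheta_1\sigma_{ij}(\varsigma_i-\varsigma_j)$ for $i<j$ recalled just before the lemma, and the $\sigma$-equivariance of the $y$'s, and then collect terms: every summand is a product of at most two reflections times a product of $\varsigma$'s, so matching against the right-hand side of the stated relation is bookkeeping. The terms carrying $\vartheta_1\vartheta_2$ arise from the cross terms between $-\tfrac{\vartheta_2}{2}\varsigma$ and the $\vartheta_1$-sums, and the $\vartheta_1^2$ terms from the self-interaction of the $\vartheta_1$-sums. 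I expect this computation, rather than any conceptual point, to be the only real obstacle; it can be organized by first settling $l=2$, where the sums over $k\ne i,j$ are empty, and then reducing the general case to it via the $\mathscr W$-subgroup embeddings. This is \cite[Lemma 3.1]{Etingof2009}, and the computation proceeds as there.
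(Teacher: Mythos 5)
The paper does not actually prove this lemma: it is quoted verbatim from \cite{Etingof2009}*{Lemma 3.1} and stated without proof, so there is no internal argument to compare against. Your overall plan --- check that the displayed substitution $\sfy_i=y_i-\tfrac{\vartheta_2}{2}\varsigma_i+\cdots$ satisfies the relations of the second presentation, observe that it is unitriangular with respect to the filtration by degree in the $y$'s so that the induced map is bijective by a PBW comparison --- is the standard route and is the one taken in the cited source, and your verifications of the linear relations (the $\sigma_i$- and $\varsigma_l$-conjugation identities) are correct in outline.

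Two caveats. First, the heart of the lemma is the quadratic relation for $[\sfy_i,\sfy_j]$, and you do not carry it out; you only describe how the terms should organize and then defer to \cite{Etingof2009}. Since the paper itself defers to the same reference this is not fatal, but as a standalone proof it is incomplete. Second, your proposed shortcut of ``first settling $l=2$ and then reducing the general case to it via the $\mathscr W$-subgroup embeddings'' would not work as stated: the embeddings \eqref{i1embed}--\eqref{eq:embedding-hecke} do not intertwine the elements $\sfy_i$ of the smaller and larger algebras, because the correction sums in the definition of $\sfy_i$ run over all $l$ indices (and, as the paper notes, $[\varsigma_i,y_j]=\vartheta_1\sigma_{ij}(\varsigma_i-\varsigma_j)$ obstructs even embedding $\mathscr H^{l_1}_{\vartheta_1,\vartheta_2}\otimes\mathscr H^{l_2}_{\vartheta_1,\vartheta_2}$). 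Moreover the $\vartheta_1^2$-terms in the target identity involve a third index $k\ne i,j$, which is invisible at $l=2$. So the commutator identity has to be checked by the direct general-$l$ expansion (or genuinely quoted from \cite{Etingof2009}); with that understood, the rest of your argument stands.
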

\begin{lem}[{\cite[3.12]{Lusztig1989affine}}]
The center of the dAHA $\mathscr{H}_{\vartheta_1,\vartheta_2}^l$ (resp. $\mathscr{H}_{\vartheta_1}^l$) is generated by the $\fkS_l$-symmetric polynomials in $y_1^2,\cdots,y_l^2$ (resp. $y_1,\cdots,y_l$).\qed
\end{lem}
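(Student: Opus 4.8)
This is a classical fact, and the plan is to prove both inclusions by a graded argument. Write $\mathscr H=\mathscr H_{\vartheta_1,\vartheta_2}^l$, let $A=\bC[y_1,\dots,y_l]$ denote the polynomial subalgebra, and recall the PBW theorem for degenerate affine Hecke algebras of type $\mathrm{BC}$: the multiplication map makes $\mathscr H=\bigoplus_{w\in\scrW_l}A\,w$ a free left $A$-module with basis $\scrW_l$. The group $\scrW_l$ acts on $A$ by signed permutations, $\sigma_i$ interchanging $y_i\leftrightarrow y_{i+1}$ and $\varsigma_l$ sending $y_l\mapsto -y_l$; this action is faithful, and its invariant ring $A^{\scrW_l}$ is precisely the ring of $\fkS_l$-symmetric polynomials in $y_1^2,\dots,y_l^2$.

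First I would verify the easy inclusion $A^{\scrW_l}\subseteq Z(\mathscr H)$. An element of $A^{\scrW_l}$ commutes with $A$, so it is enough to see it commutes with the generators $\sigma_1,\dots,\sigma_{l-1}$ and $\varsigma_l$ of $\bC[\scrW_l]$. The subring of $A$ of polynomials symmetric in $y_i$ and $y_{i+1}$ is generated by $y_i+y_{i+1}$ and $y_iy_{i+1}$, and a one-line computation from $\sigma_iy_i=y_{i+1}\sigma_i+\vartheta_1$, $\sigma_iy_{i+1}=y_i\sigma_i-\vartheta_1$ and $\sigma_iy_j=y_j\sigma_i$ $(j\ne i,i+1)$ shows both generators, hence every such symmetric polynomial, commute with $\sigma_i$; in particular so does every element of $A^{\scrW_l}$. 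Likewise $\varsigma_ly_l=-y_l\varsigma_l+\vartheta_2$ gives $\varsigma_ly_l^2=y_l^2\varsigma_l$, so every polynomial in $y_1,\dots,y_{l-1},y_l^2$, and in particular every element of $A^{\scrW_l}$, commutes with $\varsigma_l$. Thus $A^{\scrW_l}\subseteq Z(\mathscr H)$.

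For the reverse inclusion I would pass to the associated graded algebra for the filtration with $\deg y_i=1$ and $\deg\scrW_l=0$. The defining relations, equivalently the presentation of Lemma \ref{lem:dAHA-B-other} in which $[\sfy_i,\sfy_j]$ has degree $0$, show that $\mathrm{gr}\,\mathscr H\cong A\rtimes\scrW_l$, the smash product for the signed-permutation action. In a smash product $A\rtimes\scrW_l$ with $\scrW_l$ acting faithfully on the domain $A$, a central element $\sum_wa_ww$ must, by commuting with all $q\in A$, satisfy $a_w(q-w\cdot q)=0$, forcing $a_w=0$ for $w\ne 1$; commuting then with $\scrW_l$ forces $a_1\in A^{\scrW_l}$. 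Hence $Z(\mathrm{gr}\,\mathscr H)=A^{\scrW_l}$. Since $\mathrm{gr}\,Z(\mathscr H)\subseteq Z(\mathrm{gr}\,\mathscr H)=A^{\scrW_l}$ while $A^{\scrW_l}=\mathrm{gr}(A^{\scrW_l})\subseteq\mathrm{gr}\,Z(\mathscr H)$ by the previous paragraph, we get $\mathrm{gr}\,Z(\mathscr H)=A^{\scrW_l}=\mathrm{gr}(A^{\scrW_l})$; comparing the finite-dimensional filtered pieces then promotes the inclusion $A^{\scrW_l}\subseteq Z(\mathscr H)$ to an equality. The type $\mathrm A_l$ case is verbatim the same with $\scrW_l$ replaced by $\fkS_l$, whose invariant ring in $A$ is the symmetric polynomials in $y_1,\dots,y_l$.

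The step requiring the most care is the input to the third paragraph: the PBW freeness of $\mathscr H$ over $A$ together with the faithfulness of the $\scrW_l$-action on $A$, which are precisely what make the computation of $Z(\mathrm{gr}\,\mathscr H)$ go through. By contrast, the commutator checks establishing $A^{\scrW_l}\subseteq Z(\mathscr H)$ are routine, and the bootstrap from $\mathrm{gr}\,Z$ back to $Z$ is a standard filtered-algebra argument.
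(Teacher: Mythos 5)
Your proof is correct. Note, however, that the paper does not prove this lemma at all: it is quoted from Lusztig [3.12] with no argument given, so there is nothing in the text to compare against line by line. What you have written is the standard Bernstein--Lusztig argument (essentially the proof in the cited source): the direct commutator check that $\fkS_l$-symmetric polynomials in $y_1^2,\dots,y_l^2$ commute with the generators $\sigma_i$ and $\varsigma_l$, and then the converse via the expansion $z=\sum_{w\in\scrW_l}a_w w$ in the free $A$-basis, where faithfulness of the signed-permutation action and the domain property of $A$ kill the terms with $w\ne 1$ and force $a_1\in A^{\scrW_l}$; your detour through $\mathrm{gr}\,\mathscr H\cong A\rtimes\scrW_l$ is a mild repackaging of that step and works, at the price of invoking the PBW freeness of $\mathscr H$ over $A$ as an external input (itself a Lusztig-type result, so this is acceptable here but worth acknowledging as the one nontrivial ingredient you do not prove). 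One small imprecision: the polynomials in $A$ symmetric under $y_i\leftrightarrow y_{i+1}$ are generated by $y_i+y_{i+1}$, $y_iy_{i+1}$ \emph{together with} the remaining variables $y_j$, $j\ne i,i+1$; since you do use $\sigma_iy_j=y_j\sigma_i$ for those $j$, the argument is unaffected, but the sentence should be stated that way.
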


\subsection{Drinfeld functor for super Yangian}
The symmetric group $\mathfrak S_l$ acts naturally on $V^{\otimes l}$, where the operator $\sigma_{ij}$ for $i<j$ acts as
\beq\label{eq perm}
\mathcal P^{(i,j)}=\sum_{a,b=1}^\ka s_b E_{ab}^{(i)}E_{ba}^{(j)}\in \End(V^{\otimes l}).
\eeq
Here we use the standard notation
\[
E_{ij}^{(k)}=1^{\otimes (k-1)}\otimes E_{ij}\otimes 1^{\otimes (l -k)}\in \End(V^{\otimes l}),\qquad 1\lle k\lle l.
\]
Set
$$
\mc Q^{(k)}=\sum_{i,j=1}^\ka (-1)^{|i||j|+|i|+|j|}E_{ij}^{(k)}\otimes E_{ij}\in \End(V^{\otimes l})\otimes \End(V),\qquad 1\lle k\lle l.
$$

Let $\ve=\pm 1$. Let $M$ be any $\mathscr H_{\vartheta_1}^l$-module. Set
\[
\mc D_\s(M)=M\otimes V^{\otimes l},\quad \mc D_{\s}^{\ve}(M)=\mc D_{\s}(M)/\sum_{i=1}^{l-1}(\mathrm{Im}\, \sigma_i-\ve),
\]
where the symmetric group acts on $\mc D_\s(M)$ by the diagonal action, namely $\sigma_i$ acts on $M\otimes V^{\otimes l}$ as $\sigma_i\otimes \mc P^{(i,i+1)}$ for $1\lle i<l$.

For $\chi,c\in\bC$, define
\[
\mathcal T^{\chi}(u)=\mc T_1^\chi(u)\cdots \mc T^{\chi}_l(u)\in \mathscr H_{\vartheta_1}^l[[u^{-1}]]\otimes \End(V^{\otimes l})\otimes \End(V),
\]
where
\[
\mc T_k^\chi(u)=1+\frac{1}{u-\chi y_k+c}\otimes \mc Q^{(k)},\qquad 1\lle k \lle l.
\]
Then the map $T(u)\mapsto \mc T^{\chi}(u)$ induces an action of $\YMN$ on $\mc D_\s(M)$.

The following statement for the Yangian $\mathrm{Y}(\gl_N)$ case  is well known, see \cite[Proposition 2]{Arakawa1999} and \cite[Theorem 1]{Drinfeld198degenerate}.
\begin{lem}[{\cite[Lemma 4.2]{Lu2021jacobi}}]\label{lem:D-functor-A}
Suppose $\vartheta_1\ne 0$ and $\vartheta_1 \chi=\ve$. Let $M$ be any $\mathscr H_{\vartheta_1}^l$-module. Then the map $T(u)\mapsto \mc T^{\chi}(u)$ induces an action of $\YMN$ on $\mc D_\s^\ve(M)$.
\end{lem}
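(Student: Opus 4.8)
\emph{Strategy.}
The map $T(u)\mapsto\mc T^\chi(u)$ already defines an action of $\YMN$ on $\mc D_\s(M)=M\otimes V^{\otimes l}$ for any $M$, $\chi$, $c$, so the plan is simply to show that, when $\vartheta_1\chi=\ve$, the subspace
\[
\mathcal N:=\sum_{i=1}^{l-1}\mathrm{Im}\big(\sigma_i\otimes\mc P^{(i,i+1)}-\ve\big)\subseteq\mc D_\s(M)
\]
is a $\YMN$-submodule; the action then descends to $\mc D_\s^\ve(M)=\mc D_\s(M)/\mathcal N$ together with the assignment $T(u)\mapsto\mc T^\chi(u)$. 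Since $\YMN$ is generated by the coefficients of the entries of $\mc T^\chi(u)=\mc T_1^\chi(u)\cdots\mc T_l^\chi(u)$, it is enough to prove that $\mc T^\chi(u)$ sends $\mathcal N\otimes V$ into $(\mathcal N\otimes V)[[u^{-1}]]$, $V$ being the auxiliary space.

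\emph{Reduction to one commutator.}
Put $\Delta_i=\sigma_i\otimes\mc P^{(i,i+1)}$, an involution of $\mc D_\s(M)$, so that $\mathrm{Im}(\Delta_i-\ve)$ is its $(-\ve)$-eigenspace. For $k\notin\{i,i+1\}$ the operator $\Delta_i$ commutes with $\mc T_k^\chi(u)$, because $\sigma_i$ commutes with $y_k$ in $\mathscr{H}_{\vartheta_1}^l$ and $\mc P^{(i,i+1)}$ commutes with $\mc Q^{(k)}$ (disjoint tensor legs). Hence
\[
\mc T^\chi(u)\,(\Delta_i-\ve)=\mc T_1^\chi(u)\cdots\mc T_{i-1}^\chi(u)\,\big(\mc T_i^\chi(u)\mc T_{i+1}^\chi(u)(\Delta_i-\ve)\big)\,\mc T_{i+2}^\chi(u)\cdots\mc T_l^\chi(u),
\]
and, since the factors $\mc T_k^\chi(u)$ with $k<i$ preserve the $\Delta_i$-eigenspaces, everything reduces to showing that $\mc T_i^\chi(u)\mc T_{i+1}^\chi(u)(\Delta_i-\ve)$ has image in $\mathrm{Im}(\Delta_i-\ve)[[u^{-1}]]$, equivalently that $[\mc T_i^\chi(u)\mc T_{i+1}^\chi(u),\Delta_i]$ does.

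\emph{The key computation.}
The Hecke relation $\sigma_iy_i-y_{i+1}\sigma_i=\vartheta_1$ together with $\vartheta_1\chi=\ve$ gives, in $\mathscr{H}_{\vartheta_1}^l[[u^{-1}]]$,
\[
\sigma_i\,(u-\chi y_i+c)^{-1}=(u-\chi y_{i+1}+c)^{-1}\sigma_i+\ve\,(u-\chi y_{i+1}+c)^{-1}(u-\chi y_i+c)^{-1}.
\]
Writing $a=(u-\chi y_i+c)^{-1}$, $b=(u-\chi y_{i+1}+c)^{-1}$, so that $\mc T_i^\chi(u)\mc T_{i+1}^\chi(u)=1+a\,\mc Q^{(i)}+b\,\mc Q^{(i+1)}+ab\,\mc Q^{(i)}\mc Q^{(i+1)}$, and using the conjugation identity $\mc P^{(i,i+1)}\mc Q^{(i)}\mc P^{(i,i+1)}=\mc Q^{(i+1)}$ together with the quadratic relations $\mc Q^{(i)}\mc P^{(i,i+1)}=\mc Q^{(i)}\mc Q^{(i+1)}$ and $\mc Q^{(i+1)}\mc P^{(i,i+1)}=\mc Q^{(i+1)}\mc Q^{(i)}$ (immediate from the definitions of $\mc P^{(i,i+1)}$ and $\mc Q^{(k)}$, and encoded in the Yang--Baxter equation \eqref{eq yang-baxter}), a short calculation collapses everything to
\[
[\mc T_i^\chi(u)\mc T_{i+1}^\chi(u),\Delta_i]=ab\,\big(\mc Q^{(i)}-\mc Q^{(i+1)}\big)\,\big(\sigma_i\otimes1+\ve\cdot1\otimes\mc P^{(i,i+1)}\big).
\]
Three elementary remarks then conclude. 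Since $\sigma_i^2=1$ and $\mc P^{(i,i+1)}$ is an involution, $\Delta_i\big(\sigma_i\otimes1+\ve\cdot1\otimes\mc P^{(i,i+1)}\big)=\ve\big(\sigma_i\otimes1+\ve\cdot1\otimes\mc P^{(i,i+1)}\big)$, so the rightmost factor has image in the $\ve$-eigenspace of $\Delta_i$; $\mc Q^{(i)}-\mc Q^{(i+1)}$ anticommutes with $\Delta_i$ (again by $\mc P^{(i,i+1)}\mc Q^{(i)}\mc P^{(i,i+1)}=\mc Q^{(i+1)}$), hence maps the $\ve$-eigenspace into the $(-\ve)$-eigenspace $=\mathrm{Im}(\Delta_i-\ve)$; and $ab$, being symmetric in $y_i,y_{i+1}$, commutes with $\Delta_i$ and preserves its eigenspaces. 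Thus $[\mc T_i^\chi(u)\mc T_{i+1}^\chi(u),\Delta_i]$, and hence $\mc T_i^\chi(u)\mc T_{i+1}^\chi(u)(\Delta_i-\ve)=(\Delta_i-\ve)\mc T_i^\chi(u)\mc T_{i+1}^\chi(u)+[\mc T_i^\chi(u)\mc T_{i+1}^\chi(u),\Delta_i]$, has image in $\mathrm{Im}(\Delta_i-\ve)[[u^{-1}]]$; combined with the reduction above this yields $\mc T^\chi(u)(\mathcal N\otimes V)\subseteq(\mathcal N\otimes V)[[u^{-1}]]$, completing the argument.

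\emph{Main obstacle.}
The one genuinely computational point will be the evaluation of $[\mc T_i^\chi(u)\mc T_{i+1}^\chi(u),\Delta_i]$ to the clean form above: this requires bookkeeping the parity signs hidden in $\mc P^{(i,i+1)}$ and $\mc Q^{(k)}$, and carefully distinguishing operators acting on the $\mathscr{H}_{\vartheta_1}^l$-factor from those acting on the tensor legs. This is the super (and type BC compatible) counterpart of the classical input underlying Drinfeld's functor; cf.\ \cite[Lemma 4.2]{Lu2021jacobi}, \cite[Proposition 2]{Arakawa1999}, \cite[Theorem 1]{Drinfeld198degenerate}. The role of the normalization $\vartheta_1\chi=\ve$ is precisely to force the correction term in the $\sigma_i$-commutation to carry the coefficient $\ve$, thereby producing the factor $\sigma_i\otimes1+\ve\cdot1\otimes\mc P^{(i,i+1)}$ that is annihilated by $\Delta_i-\ve$.
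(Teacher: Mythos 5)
Your proposal is correct, and it supplies a proof that the paper itself does not contain: Lemma \ref{lem:D-functor-A} is quoted from \cite[Lemma 4.2]{Lu2021jacobi} (going back to Drinfeld and Arakawa in the even case), so there is no in-paper argument to compare with line by line. Your route is the expected one, and all the steps check out. Since the action on $\mc D_\s(M)$ is already granted in the paper, everything rests on invariance of $\mathcal N=\sum_i \mathrm{Im}(\Delta_i-\ve)$ with $\Delta_i=\sigma_i\otimes\mc P^{(i,i+1)}$, and your reduction to the single commutator $[\mc T_i^\chi(u)\mc T_{i+1}^\chi(u),\Delta_i]$ is legitimate because $\Delta_i$ commutes with every other factor $\mc T_k^\chi(u)$ and hence those factors preserve its eigenspaces. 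The Hecke identity $\sigma_i(u-\chi y_i+c)^{-1}=(u-\chi y_{i+1}+c)^{-1}\sigma_i+\ve\,(u-\chi y_{i+1}+c)^{-1}(u-\chi y_i+c)^{-1}$ is exactly where $\vartheta_1\chi=\ve$ enters, and $\sigma_i$ indeed commutes with $ab$ (one can also see this directly from $\sigma_i AB=BA\sigma_i$ with $A=u-\chi y_i+c$, $B=u-\chi y_{i+1}+c$). As for the point you flag as the main obstacle: with the paper's conventions $\mc P=\sum_{a,b}s_bE_{ab}\otimes E_{ba}$ and $\mc Q^{(k)}=\sum_{i,j}(-1)^{|i||j|+|i|+|j|}E_{ij}^{(k)}\otimes E_{ij}$, the identities $\mc Q^{(i)}\mc P^{(i,i+1)}=\mc Q^{(i)}\mc Q^{(i+1)}$, $\mc Q^{(i+1)}\mc P^{(i,i+1)}=\mc Q^{(i+1)}\mc Q^{(i)}$ and $\mc P^{(i,i+1)}\mc Q^{(i)}\mc P^{(i,i+1)}=\mc Q^{(i+1)}$ hold exactly, with no extra signs (a short Koszul-sign computation; your appeal to the Yang--Baxter equation is inessential, these are flip/partial-transpose identities rather than YBE), and together with their left-multiplied counterparts they give precisely your collapsed form $[\mc T_i^\chi(u)\mc T_{i+1}^\chi(u),\Delta_i]=ab\,\big(\ve\otimes(\mc Q^{(i)}\mc Q^{(i+1)}-\mc Q^{(i+1)}\mc Q^{(i)})+\sigma_i\otimes(\mc Q^{(i)}-\mc Q^{(i+1)})\big)$, which agrees with the factored expression you wrote. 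The three concluding eigenspace remarks are correct, so the argument is complete and constitutes a valid self-contained proof of the cited lemma.
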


Therefore, one has a functor $\mc D_\s^\ve$ from the category of $\mathscr H_{\vartheta_1}^l$-modules to the category of $\YMN$-modules. We call the functor $\mc D_\s^\ve$ the {\it Drinfeld functor}. For Schur-Weyl type dualities for superalgebras of type A, see \cite{Sergeev1984tensor,Berele1987hook,Moon2003highest,Mitsuhashi2006schur,Flicker2020affine,Lu2021jacobi,Lu2021gelfand,Kwon2022super,Lu2023schur,Guay2024affine,Shen2025quantum} for more details.

\subsection{Drinfeld functor for twisted super Yangian}
We need the following
\begin{lem}\label{lem:embedding}
For any $\gamma\in\C$, the mapping
\beq\label{eq:emd-b-new}
\varphi: B(u)\to T(u) (G^{\bm\ve}+\gamma u^{-1})T^{-1}(-u)
\eeq
defines a superalgebra homomorphism from the twisted super Yangian $\mathscr B_{\bm s,\bm\ve}$ to the super Yangian $\YMN$.
\end{lem}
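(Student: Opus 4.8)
The plan is to mimic the proof of Proposition~\ref{thm:embedding}, which is the special case $\gamma=0$, and the only new ingredient is that the constant matrix $G^{\bm\ve}$ is replaced by $G^{\bm\ve}+\gamma u^{-1}$, which now depends on the spectral parameter. Set $\mathsf G(u)=G^{\bm\ve}+\gamma u^{-1}$ and $S(u)=T(u)\mathsf G(u)T^{-1}(-u)$. First I would verify the reflection equation \eqref{eq:comm-generators b} for $S(u)$. The key observation is that $\mathsf G(u)$ is still a \emph{diagonal} matrix, so the computation in the proof of Proposition~\ref{thm:embedding} goes through verbatim provided that the analogue of the reflection equation \eqref{eq:reflectG} holds for $\mathsf G(u)$ with the appropriate arguments, namely
\[
R(u-v)\,\mathsf G_1(u)\,R(u+v)\,\mathsf G_2(v)=\mathsf G_2(v)\,R(u+v)\,\mathsf G_1(u)\,R(u-v).
\]
This identity for a diagonal $\mathsf G(u)$ is a short direct check: expanding $R(u)=1-\mathcal P/u$ on both sides and using that $\mathsf G_1(u)$ and $\mathsf G_2(v)$ commute (they act on different tensor factors) together with the relations $\mathcal P\,\mathsf G_1(u)=\mathsf G_2(u)\mathcal P$ and $\mathcal P\,\mathsf G_2(v)=\mathsf G_1(v)\mathcal P$, one matches the two sides term by term; the $\gamma$-dependence enters linearly and symmetrically, so no new cancellation is needed beyond what already works for the constant matrix $G^{\bm\ve}$. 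Once this is established, inserting it in place of \eqref{eq:reflectG} in the chain of equalities from Proposition~\ref{thm:embedding} (the steps using \eqref{eq:T'RT}, \eqref{eq:RTT}, \eqref{eq:reflectG}) yields $R(u-v)S_1(u)R(u+v)S_2(v)=S_2(v)R(u+v)S_1(u)R(u-v)$.

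Next I would check the unitary-type condition that is required by the definition of $\mathscr B_{\bm s,\bm\ve}$ --- except that for $\gamma\neq 0$ this is \emph{not} the usual unitary condition \eqref{eq:unitary}. Indeed
\[
S(u)S(-u)=T(u)\mathsf G(u)T^{-1}(-u)T(-u)\mathsf G(-u)T^{-1}(u)=T(u)\,\mathsf G(u)\mathsf G(-u)\,T^{-1}(u),
\]
and $\mathsf G(u)\mathsf G(-u)=(G^{\bm\ve}+\gamma u^{-1})(G^{\bm\ve}-\gamma u^{-1})=1-\gamma^2u^{-2}$, a scalar. Hence $S(u)S(-u)=(1-\gamma^2 u^{-2})\,1$ is a scalar matrix. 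This is exactly why the lemma only claims a homomorphism into $\YMN$ and not into $\mathscr B_{\bm s,\bm\ve}$ directly: the image satisfies the reflection equation but only a rescaled unitarity. To land in $\mathscr B_{\bm s,\bm\ve}$ one would precompose with the automorphism $\mathcal M_{\mathcal h(u)}$ of \eqref{mhu} for a suitable $\mathcal h(u)\in 1+u^{-1}\C[[u^{-1}]]$ with $\mathcal h(u)\mathcal h(-u)=(1-\gamma^2u^{-2})^{-1}$; such $\mathcal h(u)$ exists. Concretely, define $\varphi$ as the composition of the assignment $B(u)\mapsto \mathcal h(u)^{-1}S(u)$ with the embedding of Proposition~\ref{thm:embedding}-type verification; then $\varphi(B(u))\varphi(B(-u))=1$ and the reflection equation is preserved since it is homogeneous of degree two in $B$ and $\mathcal h(u)\mathcal h(-u)$ times the reflection equation for $S$ gives the one for $\mathcal h^{-1}S$ after the obvious bookkeeping. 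This shows $\varphi$ is a well-defined superalgebra homomorphism $\mathscr B_{\bm s,\bm\ve}\to\YMN$. (Alternatively, one may simply read the statement as landing in $\widetilde{\mathscr B}_{\bm s,\bm\ve}$, in which case only the reflection equation needs checking and the scalar $1-\gamma^2u^{-2}$ is absorbed into the central series $f(u)$.)

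The main obstacle, such as it is, is purely bookkeeping: one must make sure the spectral-parameter dependence of $\mathsf G(u)$ is compatible with the two arguments $u$ and $-u$ occurring in $S(u)=T(u)\mathsf G(u)T^{-1}(-u)$ in precisely the way that keeps the reflection-equation manipulation intact --- in particular, in the chain of equalities the matrices $\mathsf G_1$ always carry argument $u$ and $\mathsf G_2$ always carry argument $v$, matching the displayed generalized reflection equation above, so no mismatch arises. There is no injectivity claim in this lemma (unlike Proposition~\ref{thm:embedding}), so the filtration/PBW argument is not needed here; the statement is only that $\varphi$ is a homomorphism, which reduces entirely to the reflection equation and the (rescaled) unitarity just verified. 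I would therefore present the proof as: (i) state and check the generalized reflection equation for the diagonal matrix $\mathsf G(u)=G^{\bm\ve}+\gamma u^{-1}$; (ii) run the computation of Proposition~\ref{thm:embedding} with $\mathsf G$ in place of $G$; (iii) compute $S(u)S(-u)=(1-\gamma^2u^{-2})\cdot 1$ and rescale by $\mathcal M_{\mathcal h(u)}$ to recover exact unitarity, concluding that $\varphi$ is a superalgebra homomorphism.
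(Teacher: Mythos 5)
Your proposal follows essentially the same route as the paper's proof, which is a two-line reduction: one observes that the diagonal, parameter-dependent matrix $G^{\bm\ve}+\gamma u^{-1}$ satisfies the reflection equation (the paper cites the literature for this), and then the chain of equalities from Proposition \ref{thm:embedding} is rerun with $G^{\bm\ve}$ replaced by $G^{\bm\ve}+\gamma u^{-1}$, exactly as you do. Verifying the generalized reflection equation by hand instead of citing it is fine; just note that for $\gamma\ne 0$ there \emph{is} a new cancellation (the terms $\tfrac{\gamma}{uv}(G^{\bm\ve}_1-G^{\bm\ve}_2)$ produced by the $1/(u\pm v)$ parts must match on the two sides), so it is not quite true that nothing beyond the constant case is needed, though the check is indeed short.

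Where you go beyond the paper is the unitary relation, and you are right to flag it: since $\mathscr B_{\bm s,\bm\ve}$ is presented with \eqref{eq:unitary-series}, the images must satisfy $S(u)S(-u)=1$, whereas $S(u)S(-u)=(1-\gamma^2u^{-2})\cdot 1$; the paper's proof is silent on this point, so a scalar renormalization of the K-matrix (or reading the source algebra as $\widetilde{\mathscr B}_{\bm s,\bm\ve}$) is exactly what completes the argument. Two small repairs to your bookkeeping: the map $\mc M_{\mc h(u)}$ of \eqref{mhu} is only an automorphism when $\mc h(u)\mc h(-u)=1$, so it cannot literally be invoked with $\mc h(u)\mc h(-u)=(1-\gamma^2u^{-2})^{-1}$; and your concrete assignment $B(u)\mapsto \mc h(u)^{-1}S(u)$ with that choice of $\mc h$ gives $(1-\gamma^2u^{-2})^{2}$ rather than $1$ --- the correct normalization is $B(u)\mapsto \mc h(u)S(u)$ with $\mc h(u)\mc h(-u)=(1-\gamma^2u^{-2})^{-1}$, for instance $\mc h(u)=(1+\gamma u^{-1})^{-1}$, which preserves the reflection equation and restores unitarity. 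Finally, ``landing in $\widetilde{\mathscr B}_{\bm s,\bm\ve}$'' should read ``defined on $\widetilde{\mathscr B}_{\bm s,\bm\ve}$'': the codomain is $\YMN$ in any case, and the issue is which relations are imposed in the domain.
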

\begin{proof}
Similar to Proposition \ref{thm:embedding}, it suffices to show that the matrix $G_{\ve}+\gamma u^{-1}$ satisfies the reflection equation \eqref{eq:reflectG} which it is known in \cite{Arnaudon2004general,Ragoucy2007analytical,Belliard2009nested}.
\end{proof}

For brevity, we set
$$
G^{\bm\ve,\gamma}(u)=G^{\bm\ve}+\gamma u^{-1},\quad \jmath=\frac{m-n}{2}.
$$
Consider the following elements in $\mathscr H^l_{\vartheta_1,\vartheta_2}[[u^{-1}]]\otimes \End(V^{\otimes l})\otimes \End(V)$,
\[
\mc T^{\chi}_k(u)=1+\frac{1}{u-\jmath-\chi y_k}\otimes \mc Q^{(k)},\quad \mc S^{\chi}_k(u)=1-\frac{1}{u+\jmath-\chi y_k}\otimes \mc Q^{(k)},
\]
for $1\lle k\lle l$. By the identity $\mc Q^{(k)}\cdot \mc Q^{(k)} = 2\jmath \mc Q^{(k)}$, we have
\beq\label{eq:t-times=s}
\mc T^{\chi}_k(u)\mc S^{\chi}_k(u)=1.
\eeq

Set
\beq\label{eq:B-chi}
\mc B^\chi(u)=\mc T_1^\chi(u)\cdots \mc T^{\chi}_l(u)G^{\bm\ve,\gamma}(u)\mc S_l^\chi(-u)\cdots \mc S^{\chi}_1(-u)
\eeq
as an element in $\mathscr H_{\vartheta_1,\vartheta_2}^l[[u^{-1}]]\otimes \End(V^{\otimes l})\otimes \End(V)$. Here $G^{\bm\ve,\gamma}(u)$ stands for $1\otimes 1\otimes G^{\bm\ve,\gamma}(u)$.

Given any $\mathscr H_{\vartheta_1,\vartheta_2}^l$-module $M$, we can regard it as an $\mathscr H_{\vartheta_1}^l$-module via the embedding \eqref{i1embed} and we have the $\YMN$-module $\mc D_\s^{\ve}(M)$ if $\vartheta_1\chi =\ve$, by Lemma \ref{lem:D-functor-A}. Moreover,  the action of $\YMN$ on $\mc D_\s^{\ve}(M)$ is given by
$$
T(u)\mapsto \mc T_1^\chi(u)\cdots \mc T^{\chi}_l(u).
$$
Hence it follows from Lemma \ref{lem:embedding} and \eqref{eq:t-times=s} that $B(u)\mapsto \mc B^\chi(u)$ induces an action of $\BMN$ on $\mc D^{\ve}_\s(M)$.

The $\fkS_l$-action on $V^{\otimes l}$ can be extended to $\scrW_l$ by setting the action of $\varsigma_k$ on $V^{\otimes l}$ by multiplication on the $k$-th factor by the matrix $G^{\bm \ve}$. We also write this operator as $G^{\bm\ve}_k$. For an $\mathscr H_{\vartheta_1,\vartheta_2}^l$-module $M$, the group $\scrW_l$ acts on $M\otimes V^{\otimes l}$ by the diagonal action. We further set
\[
\mc D_{\s,\bm\ve}^{\ve}(M)=\mc D_{\s}^\ve(M)/(\mathrm{Im}\,\varsigma_l-\ve).
\]

We shall need the following lemma. Recall from \eqref{eq:def-tl-b} that $\varpi_1=\sum_{a=1}^\ka s_a\ve_a$ and $2\jmath=\sum_{a=1}^\ka s_a$. Set
\begin{align*}
&\mathcal Q_k^{\mathfrak k}=\sum_{i,j:\ve_i=\ve_j}(-1)^{|i||j|+|i|+|j|}E_{ij}^{(k)}\otimes E_{ij},\\
&\mathcal Q_k^{\mathfrak p}=\mathcal Q^{(k)}-\mathcal Q_k^{\mathfrak k}=\sum_{i,j:\ve_i\ne\ve_j}(-1)^{|i||j|+|i|+|j|}E_{ij}^{(k)}\otimes E_{ij}.
\end{align*}
\begin{lem}\label{lem:cgm-thm4.5}
We have
\begin{align*}
&\mathcal Q_k^{\mathfrak k}\mathcal Q_k^{\mathfrak p}+\mathcal Q_k^{\mathfrak p}\mathcal Q_k^{\mathfrak k}=2\jmath \mathcal Q_k^{\mathfrak p},\\
&G^{\bm\ve}(\mathcal Q_k^{\mathfrak k}\mathcal Q_k^{\mathfrak p}-\mathcal Q_k^{\mathfrak p}\mathcal Q_k^{\mathfrak k})=\varpi_1 \mathcal Q_k^{\mathfrak p}.
\end{align*}
\end{lem}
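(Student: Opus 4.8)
The plan is to reduce both identities to a single elementary computation of the product of two rank-one summands in the superalgebra $\End(V^{\otimes l})\otimes\End(V)$ (with the Koszul sign rule), together with the identity $\mc Q^{(k)}\mc Q^{(k)}=2\jmath\,\mc Q^{(k)}$ recalled above.

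\emph{Step 1 (the sign reduction).} First I would check that for all $i,j,p,q$
\[
\big((-1)^{|i||j|+|i|+|j|}E_{ij}^{(k)}\otimes E_{ij}\big)\big((-1)^{|p||q|+|p|+|q|}E_{pq}^{(k)}\otimes E_{pq}\big)
=\delta_{jp}\,(-1)^{|j|}(-1)^{|i||q|+|i|+|q|}E_{iq}^{(k)}\otimes E_{iq}.
\]
This is a routine reduction modulo $2$: the Koszul sign from the super tensor product contributes $(-1)^{(|i|+|j|)(|j|+|q|)}$, and adding this exponent to the two prefactors and using $|j|^2\equiv|j|$ collapses the total exponent to $|i|+|q|+|i||q|+|j|$; recall $(-1)^{|j|}=s_j$. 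Consequently, for any two index subsets $X,Y\subseteq\{1,\dots,\ka\}^{2}$, writing $\mc Q^{X}=\sum_{(i,j)\in X}(-1)^{|i||j|+|i|+|j|}E_{ij}^{(k)}\otimes E_{ij}$ and similarly for $Y$, one gets
\[
\mc Q^{X}\mc Q^{Y}=\sum_{i,q}\Big(\textstyle\sum_{j:\,(i,j)\in X,\ (j,q)\in Y}s_j\Big)(-1)^{|i||q|+|i|+|q|}\,E_{iq}^{(k)}\otimes E_{iq}.
\]
Taking $X=Y=\{1,\dots,\ka\}^2$ and using $\sum_j s_j=m-n=2\jmath$ recovers $\mc Q^{(k)}\mc Q^{(k)}=2\jmath\,\mc Q^{(k)}$.

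\emph{Step 2 (first identity).} Now I would specialize to $X,Y\in\{\mathfrak k,\mathfrak p\}$ with $\mathfrak k=\{(i,j):\ve_i=\ve_j\}$, $\mathfrak p=\{(i,j):\ve_i\ne\ve_j\}$. The membership constraints force $\ve_i=\ve_q$ when $X=Y$ and $\ve_i\ne\ve_q$ when $X\ne Y$, and the inner sum runs over $\{j:\ve_j=\ve_i\}$ if $X=\mathfrak k$ and over $\{j:\ve_j=-\ve_i\}$ if $X=\mathfrak p$. Hence $\mc Q_k^{\mathfrak k}\mc Q_k^{\mathfrak p}+\mc Q_k^{\mathfrak p}\mc Q_k^{\mathfrak k}$ is supported on $\{(i,q):\ve_i\ne\ve_q\}$ and has there the inner coefficient $\sum_{j:\ve_j=\ve_i}s_j+\sum_{j:\ve_j=-\ve_i}s_j=\sum_j s_j=2\jmath$, which is exactly $2\jmath\,\mc Q_k^{\mathfrak p}$. (Equivalently, subtract $(\mc Q_k^{\mathfrak k})^{2}+(\mc Q_k^{\mathfrak p})^{2}=2\jmath\,\mc Q_k^{\mathfrak k}$ from $\mc Q^{(k)}\mc Q^{(k)}=2\jmath\,\mc Q^{(k)}$.)

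\emph{Step 3 (second identity).} On the same support the inner coefficient of $\mc Q_k^{\mathfrak k}\mc Q_k^{\mathfrak p}-\mc Q_k^{\mathfrak p}\mc Q_k^{\mathfrak k}$ is $\sum_{j:\ve_j=\ve_i}s_j-\sum_{j:\ve_j=-\ve_i}s_j$. The key elementary fact is that this equals $\ve_i\varpi_1$ with $\varpi_1=\sum_a s_a\ve_a$: checking $\ve_i=1$ and $\ve_i=-1$ separately, the expression is $\pm\big(\sum_{j:\ve_j=1}s_j-\sum_{j:\ve_j=-1}s_j\big)=\ve_i\varpi_1$. Thus $\mc Q_k^{\mathfrak k}\mc Q_k^{\mathfrak p}-\mc Q_k^{\mathfrak p}\mc Q_k^{\mathfrak k}=\varpi_1\sum_{i,q:\ve_i\ne\ve_q}\ve_i(-1)^{|i||q|+|i|+|q|}E_{iq}^{(k)}\otimes E_{iq}$, and since $G^{\bm\ve}$ multiplies the $(i,q)$-summand by $\ve_i$ (whether interpreted on the last factor or on the $k$-th factor of $V^{\otimes l}$) and $\ve_i^{2}=1$, applying $G^{\bm\ve}$ leaves precisely $\varpi_1\,\mc Q_k^{\mathfrak p}$. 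The only genuinely delicate point in all of this is the Koszul-sign bookkeeping of Step 1; once the collapsed formula for $\mc Q^{X}\mc Q^{Y}$ is available, Steps 2 and 3 are pure counting, so I would write Step 1 carefully and keep the rest terse.
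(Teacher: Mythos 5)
Your proof is correct and is exactly the direct computation the paper invokes (its proof of this lemma is just "follows from a direct computation"): the Koszul-sign collapse in your Step 1 is right, it is consistent with the stated identity $\mc Q^{(k)}\mc Q^{(k)}=2\jmath\,\mc Q^{(k)}$, and the counting in Steps 2--3, including the observation that $G^{\bm\ve}$ contributes $\ve_i$ whether placed on the last factor or the $k$-th factor, gives both formulas. No gaps.
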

\begin{proof}
The formulas follow from a direct computation.
\end{proof}

The following are the main results of this section.

\begin{prop}\label{prop:Drinfeld-functor-BC}
Let $M$ be any $\mathscr H_{\vartheta_1,\vartheta_2}^l$-module. If $\vartheta_2=\vartheta_1(2\gamma +\varpi_1)$ and $\vartheta_1\chi=\ve$, then the map $B(u)\mapsto \mc B^\chi(u)$ defines a representation of the twisted super Yangian $\BMN$ on the space $\mc D_{\s,\bm \ve}^{\ve}(M)$.
\end{prop}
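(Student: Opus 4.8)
The plan is to show that the operator $\mc B^\chi(u)$ already defines a $\BMN$-action on the ambient space $\mc D_\s^\ve(M)$ (which follows at once from Lemma~\ref{lem:embedding} together with the identity \eqref{eq:t-times=s}), and then to check that the left ideal $\sum_{i=1}^{l-1}(\mathrm{Im}\,\sigma_i-\ve)+(\mathrm{Im}\,\varsigma_l-\ve)$ defining $\mc D_{\s,\bm\ve}^\ve(M)$ is stable under all coefficients of $\mc B^\chi(u)$. By Lemma~\ref{lem:D-functor-A} the sub-ideal generated by the $\mathrm{Im}\,\sigma_i-\ve$ is already $\YMN$-stable, hence $\BMN$-stable since $\BMN\subset\YMN$ via $\varphi$; so the only thing to verify is that $\mc B^\chi(u)$ preserves the quotient by $(\mathrm{Im}\,\varsigma_l-\ve)$, i.e.\ that
\[
\big[\,\mc B^\chi(u),\ \varsigma_l\otimes G^{\bm\ve}_l\,\big]
\ \equiv\ \big(\text{something lying in the ideal}\big)
\]
on $\mc D_\s^\ve(M)$, where $\varsigma_l$ acts diagonally on $M\otimes V^{\otimes l}$ as $\varsigma_l\otimes G^{\bm\ve}_l$. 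Concretely, writing $w=\varsigma_l\otimes G^{\bm\ve}_l$, I must show $\mc B^\chi(u)\,(w-\ve)\equiv (w-\ve)\,\mc B^\chi(u)$ modulo the $\sigma_i$-ideal, so that passing to $\mc D^\ve_{\s,\bm\ve}(M)$ is consistent.

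The key technical step is to analyze how the single factor $\mc T^\chi_l(u)\,G^{\bm\ve,\gamma}(u)\,\mc S^\chi_l(-u)$ interacts with $\varsigma_l$, since $\varsigma_l$ commutes with all $\mc T^\chi_k,\mc S^\chi_k$ for $k<l$ (they act on factors $1,\dots,l-1$ and on the auxiliary space, all commuting with $G^{\bm\ve}_l$ up to the action on $M$ — here one uses the relations $\varsigma_l y_i=y_i\varsigma_l$ for $i<l$ from Definition~\ref{def:dAHA-B}). For the $l$-th factor, the relation $\varsigma_l y_l+y_l\varsigma_l=\vartheta_2$ is what forces the constraint $\vartheta_2=\vartheta_1(2\gamma+\varpi_1)$: intertwining $\varsigma_l$ past $\mc T^\chi_l(u)$ turns $u-\jmath-\chi y_l$ into $u-\jmath-\chi(\vartheta_2/\vartheta_1-y_l)=u-\jmath-\chi\vartheta_2/\vartheta_1+\chi y_l$, and combined with the conjugation $G^{\bm\ve}_l\,\mc Q^{(l)}\,G^{\bm\ve}_l$ which, by the decomposition $\mc Q^{(l)}=\mc Q_l^{\mathfrak k}+\mc Q_l^{\mathfrak p}$, swaps the sign on the $\mathfrak p$-part, one is led to the two identities of Lemma~\ref{lem:cgm-thm4.5}. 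Those identities are precisely what is needed to absorb the numerator shifts $2\jmath$ and $\varpi_1$ against the denominator shift $\chi\vartheta_2/\vartheta_1 = 2\gamma+\varpi_1$ (using $\vartheta_1\chi=\ve=\pm1$), and then the $\gamma u^{-1}$ term in $G^{\bm\ve,\gamma}(u)$ — which commutes with $G^{\bm\ve}_l$ — accounts for the remaining $2\gamma$. So the computation reduces, after repeated application of \eqref{eq:t-times=s} to telescope the $k<l$ factors, to a single rank-one matrix identity in $\End(V^{(l)})\otimes\End(V)$ that follows from Lemma~\ref{lem:cgm-thm4.5} and the relations $\varsigma_l^2=1$, $G^{\bm\ve}_l{}^2=1$.

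The main obstacle I anticipate is the bookkeeping in the middle step: one must carefully move $\varsigma_l$ (equivalently $G^{\bm\ve}_l$ together with the $\varsigma_l\in\scrW_l$) from the far right of $\mc B^\chi(u)=\mc T_1^\chi\cdots\mc T_l^\chi\,G^{\bm\ve,\gamma}\,\mc S_l^\chi\cdots\mc S_1^\chi$ all the way to the left, and each $\mc S_k^\chi(-u)$ and $\mc T_k^\chi(u)$ for $k<l$ commutes with it, while the $l$-th factors and the $G^{\bm\ve,\gamma}(u)$ in the middle require the spectral-parameter-shift argument above. One has to be scrupulous about the sign conventions built into $\mc Q^{(k)}$ (the factor $(-1)^{|i||j|+|i|+|j|}$) and the parity sequence $\s$ entering via $\mc P^{(i,j)}$ and $G^{\bm\ve}$, exactly as flagged in the introduction. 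Once the intertwining relation $\varsigma_l\,\mc B^\chi(u)=\mc B^\chi(u)\,\varsigma_l$ is established on $\mc D_\s^\ve(M)$ — note $\varsigma_l$ here denotes the diagonal operator — the conclusion is immediate: $\mc B^\chi(u)$ descends to the quotient $\mc D^\ve_{\s,\bm\ve}(M)=\mc D_\s^\ve(M)/(\mathrm{Im}\,\varsigma_l-\ve)$, and by Lemma~\ref{lem:embedding} (with this $\gamma$) and \eqref{eq:t-times=s} it satisfies the reflection equation and the unitarity condition, hence defines a $\BMN$-representation there.
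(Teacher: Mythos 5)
Your overall skeleton matches the route the paper intends (it defers to Chen--Guay--Ma's Theorem~4.5 together with Lemma~\ref{lem:cgm-thm4.5}): the action on the ambient space $\mc D_\s^\ve(M)$ is immediate from Lemma~\ref{lem:embedding} and \eqref{eq:t-times=s}, the $\sigma$-part of the defining subspace is $\YMN$-stable by Lemma~\ref{lem:D-functor-A}, and everything reduces to how the diagonal operator $w=\varsigma_l\otimes G^{\bm\ve}_l$ interacts with the $l$-th factors and $G^{\bm\ve,\gamma}(u)$, via $\varsigma_l y_l+y_l\varsigma_l=\vartheta_2$, Lemma~\ref{lem:cgm-thm4.5}, and the constraints on $\vartheta_2,\chi$. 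However, your pivotal intermediate claim is too strong and is false: $\mc B^\chi(u)$ does \emph{not} commute with $w$ on $\mc D_\s^\ve(M)$ (i.e.\ modulo the $\sigma_i$-ideal alone) once $\gamma\ne 0$. Already for $l=1$, $\ka=2$, $\s=(1,1)$, $\bm\ve=(1,-1)$ (so $\varpi_1=0$, $\jmath=1$, and $\mc D_\s^\ve(M)=M\otimes V$ with no $\sigma$-ideal at all), the $(1,2)$ auxiliary entry of the $u^{-2}$-coefficient of $\mc B^\chi(u)$ is $\sfb^{(2)}_{12}=(2\gamma-2\chi y_1)\otimes E_{12}$, and using $\varsigma_1y_1+y_1\varsigma_1=\vartheta_2=2\gamma\vartheta_1$ and $\vartheta_1\chi=\ve$ one gets $[\sfb^{(2)}_{12},\,\varsigma_1\otimes G^{\bm\ve}]=-4\gamma\,(\varsigma_1-\ve)\otimes E_{12}$, which is nonzero whenever $\gamma\ne0$ and $\varsigma_1\ne\ve$ on $M$. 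What saves the construction is that this commutator has image inside $\mathrm{Im}(\varsigma_1\otimes G^{\bm\ve}-\ve)$, since $(\varsigma_1-\ve)m\otimes v_1=-\ve\,(\varsigma_1\otimes G^{\bm\ve}-\ve)(\varsigma_1 m\otimes v_1)$; so the action still descends, but the identity you set out to prove is not the one that holds, and an honest attempt at your computation would leave exactly such uncancelled terms proportional to $\varsigma_l^{\mathrm{diag}}-\ve$.

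Relatedly, the mechanism you describe for the middle factor is not correct: the dAHA relation gives $\varsigma_l y_l\varsigma_l=\vartheta_2\varsigma_l-y_l$, which is not a scalar shift of $y_l$ (the $\vartheta_2/\vartheta_1$ in your formula is spurious), so $\varsigma_l$ cannot be passed through $\mc T^\chi_l(u)$ at the cost of a spectral-parameter shift; the conjugated factor still contains $\varsigma_l$, and this is precisely why strict commutation fails. The correct target is the weaker, one-sided statement: every coefficient of $\mc B^\chi(u)$ maps $\mathrm{Im}(\varsigma_l^{\mathrm{diag}}-\ve)$ into $\mathrm{Im}(\varsigma_l^{\mathrm{diag}}-\ve)+\sum_{i<l}\mathrm{Im}(\sigma_i^{\mathrm{diag}}-\ve)$, equivalently $[\mc B^\chi(u),w]$ vanishes on $\mc D^\ve_{\s,\bm\ve}(M)$ rather than on $\mc D^\ve_\s(M)$; it is in proving this containment that Lemma~\ref{lem:cgm-thm4.5} and the conditions $\vartheta_2=\vartheta_1(2\gamma+\varpi_1)$, $\vartheta_1\chi=\ve$ genuinely enter, exactly as in the Chen--Guay--Ma argument the paper cites. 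With the target corrected, your reduction to the $l$-th factor (all factors with $k<l$ commute with $w$) is the right way to organize the computation; note also that \eqref{eq:t-times=s} does not literally telescope the $k<l$ factors of $\mc B^\chi(u)$, since there $\mc S^\chi_k$ is evaluated at $-u$ while $\mc T^\chi_k$ is at $u$ --- you only need that each such factor commutes with $w$.
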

\begin{proof}
The proof is similar to that of \cite[Theorem 4.5]{Chen2014twisted} by using Lemma \ref{lem:cgm-thm4.5}.
\end{proof}

Therefore, one has a functor $\mc D_{\s,\bm\ve}^\ve$ from the category of $\mathscr H_{\vartheta_1,\vartheta_2}^l$-modules to the category of $\BMN$-modules. Again, we call the functor $\mc D_{\s,\bm\ve}^\ve$ the {\it Drinfeld functor}.

Let $l,l_1,l_2\in\bZ_{\gge0}$ be such that $l=l_1+l_2$. Let $M_1$ be an $\mathscr H_{\vartheta_1}^{l_1}$-module, $M_2$ an $\mathscr H_{\vartheta_1,\vartheta_2}^{l_2}$-module. Set
\[
M_1\odot M_2 =\mathscr H_{\vartheta_1,\vartheta_2}^{l}\otimes_{\mathscr H_{\vartheta_1}^{l_1} \otimes \mathscr H_{\vartheta_1,\vartheta_2}^{l_2} }(M_1\otimes M_2),
\]
see \eqref{eq:embedding-hecke}. Then $M_1\odot M_2$ is an $\mathscr H_{\vartheta_1,\vartheta_2}^{l}$-module and hence $\mc D_{\s,\bm \ve}^\ve(M_1\odot M_2)$ is $\BMN$-module.

Note that $\mc D^\ve_\s(M_1)$ is a $\YMN$-module and $\mc D^\ve_{\s,\bm\ve}(M_2)$ is a $\BMN$-module, thus $\mc D^\ve_\s(M_1)\otimes\mc D^\ve_{\s,\bm\ve}(M_2)$ is a $\BMN$-module induced by the coproduct in Proposition \ref{prop:coproduct}.

\begin{prop}\label{prop:DF-coproduct}
As $\BMN$-modules, we have $\mc D_{\s,\bm \ve}^\ve(M_1\odot M_2)\cong \mc D^\ve_\s(M_1)\otimes\mc D^\ve_{\s,\bm\ve}(M_2)$.
\end{prop}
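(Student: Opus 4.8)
The plan is to exhibit an explicit linear isomorphism between the two underlying vector spaces and check that it intertwines the $\BMN$-actions. First I would unwind the definitions of both sides. On the left, $M_1\odot M_2 = \mathscr H_{\vartheta_1,\vartheta_2}^{l}\otimes_{\mathscr H_{\vartheta_1}^{l_1}\otimes\mathscr H_{\vartheta_1,\vartheta_2}^{l_2}}(M_1\otimes M_2)$, and by the PBW-type basis for $\mathscr H_{\vartheta_1,\vartheta_2}^{l}$ over the parabolic subalgebra $\mathscr H_{\vartheta_1}^{l_1}\otimes\mathscr H_{\vartheta_1,\vartheta_2}^{l_2}$ (minimal length coset representatives), $M_1\odot M_2$ is free as a module over $\bC[\scrW_l/(\fkS_{l_1}\times\scrW_{l_2})]$; then $\mc D_{\s,\bm\ve}^\ve$ applied to it is $(M_1\odot M_2)\otimes V^{\otimes l}$ modulo the images of $\sigma_i-\ve$ ($1\lle i<l$) and $\varsigma_l-\ve$, with $\YMN$ acting through $T(u)\mapsto\mc T_1^\chi(u)\cdots\mc T_l^\chi(u)$ and the reflection $\varsigma_l$ acting through $G^{\bm\ve}_l$. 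On the right, $\mc D^\ve_\s(M_1)\otimes\mc D^\ve_{\s,\bm\ve}(M_2)=\big((M_1\otimes V^{\otimes l_1})/\!\sim\big)\otimes\big((M_2\otimes V^{\otimes l_2})/\!\sim\big)$ with the $\BMN$-module structure coming from the coideal coproduct \eqref{eq:b-copro-in-t}.

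The natural candidate map is the one induced by $h\otimes(m_1\otimes m_2)\otimes w \mapsto (\text{image of }h)\cdot(m_1\otimes w_{[1,l_1]})\otimes(m_2\otimes w_{[l_1+1,l]})$, i.e. the standard ``splitting of the tensor leg'' isomorphism already used in the type A Schur--Weyl story, here realized by the fact that inducing up the Hecke algebra and then quotienting by the symmetrizer on all $l$ strands produces the same space as quotienting each block separately and then letting the full $\scrW_l$ rearrange the tensor factors. The first key step is to check this is a well-defined bijection of vector spaces: well-definedness over the parabolic subalgebra is exactly the statement that on $M_i\otimes V^{\otimes l_i}$ the $\fkS_{l_1}$- (resp.\ $\scrW_{l_2}$-) symmetrizations match the restriction of the $\scrW_l$-symmetrization, which is a routine diagram chase; bijectivity follows by comparing graded dimensions using the PBW/coset decomposition of $\mathscr H_{\vartheta_1,\vartheta_2}^l$ and the fact that $V^{\otimes l}=V^{\otimes l_1}\otimes V^{\otimes l_2}$.

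The second and main step is to verify that this map intertwines the $B(u)$-actions. On the left $B(u)$ acts by $\mc B^\chi(u)=\mc T_1^\chi(u)\cdots\mc T_l^\chi(u)\,G^{\bm\ve,\gamma}(u)\,\mc S_l^\chi(-u)\cdots\mc S_1^\chi(-u)$. The idea is to factor the ordered product $\mc T_1^\chi(u)\cdots\mc T_l^\chi(u)=\big(\mc T_1^\chi(u)\cdots\mc T_{l_1}^\chi(u)\big)\big(\mc T_{l_1+1}^\chi(u)\cdots\mc T_l^\chi(u)\big)$, and likewise for the $\mc S$-product in the opposite order, then insert $1=\mc S_l^\chi(-u)\cdots\mc S_{l_1+1}^\chi(-u)\,\mc T_{l_1+1}^\chi(u)\cdots\mc T_l^\chi(u)$ in the middle (using \eqref{eq:t-times=s}) to split $\mc B^\chi(u)$ as a product $\mc A^\chi(u)\,\mc B_2^\chi(u)$ where $\mc A^\chi(u)=\mc T_1^\chi(u)\cdots\mc T_{l_1}^\chi(u)\cdot\big(\mc S_l^\chi(-u)\cdots\mc S_{l_1+1}^\chi(-u)\mc T_{l_1+1}^\chi(u)\cdots\mc T_l^\chi(u)\cdot G^{\bm\ve,\gamma}(u)\big)\cdots$; more precisely one must massage this, using the RTT and reflection-equation relations together with the commutativity of operators acting on disjoint strands, until it is recognizably the image of the coproduct formula \eqref{eq:b-copro-in-t}, namely $\Delta(B(u)) = T(u)^{[1,l_1]}\cdot(1\otimes B(u)^{[l_1+1,l]})\cdot T^{-1}(-u)^{[1,l_1]}$ evaluated on the Drinfeld-functor images — this is exactly the computation underlying the proof that the type A Drinfeld functor is monoidal, adapted to the coideal setting of \cite{Chen2014twisted}. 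The hard part will be bookkeeping: carefully tracking the parity signs from $\mc Q^{(k)}$ and $G^{\bm\ve}$, checking that the operators on the first $l_1$ strands (which come from $\mc T,\mc S$ and hence from $T(u),T^{-1}(-u)$) genuinely commute past those on the last $l_2$ strands modulo the symmetrizer relations, and confirming that the scalar shifts $\pm\jmath$ in $\mc T_k^\chi,\mc S_k^\chi$ are consistent with the spectral parameter conventions in \eqref{eq:b-copro-in-t}. Once the intertwining of $B(u)$ is established, the isomorphism of $\BMN$-modules follows immediately since $\BMN$ is generated by the coefficients of $B(u)$.
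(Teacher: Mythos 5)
Your route is the same one the paper takes (its proof is just ``parallel to \cite{Chen2014twisted}, Proposition 4.6''): identify the underlying spaces using the decomposition of $\mathscr H^l_{\vartheta_1,\vartheta_2}$ over the parabolic $\mathscr H^{l_1}_{\vartheta_1}\otimes\mathscr H^{l_2}_{\vartheta_1,\vartheta_2}$, and match the $B(u)$-action with the coideal coproduct \eqref{eq:b-copro-in-t} by splitting $\mc B^{\chi}(u)$ at strand $l_1$. Two steps as you wrote them should be repaired, though. First, the intertwining computation needs no insertion of $1$ and no RTT/reflection-equation massaging: directly from \eqref{eq:B-chi},
\[
\mc B^{\chi}(u)=\big(\mc T^{\chi}_1(u)\cdots\mc T^{\chi}_{l_1}(u)\big)\,\big(\mc T^{\chi}_{l_1+1}(u)\cdots\mc T^{\chi}_l(u)\,G^{\bm\ve,\gamma}(u)\,\mc S^{\chi}_l(-u)\cdots\mc S^{\chi}_{l_1+1}(-u)\big)\,\big(\mc S^{\chi}_{l_1}(-u)\cdots\mc S^{\chi}_1(-u)\big),
\]
and by \eqref{eq:t-times=s} the outer factors are exactly the Drinfeld-functor images of $T(u)$ and $T^{-1}(-u)$ acting on $M_1\otimes V^{\otimes l_1}$, while the middle factor is the operator defining the $\BMN$-action on $M_2\otimes V^{\otimes l_2}$; this regrouping is literally the matrix form of $\Delta(b_{ij}(u))=\sum_{a,c}t_{ia}(u)t'_{cj}(-u)\otimes b_{ac}(u)(\pm)$, so the intertwining is immediate once the spaces are identified.

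Second, your bijectivity argument ``by comparing graded dimensions'' is not available: $M_1,M_2$ are not assumed finite-dimensional, and the map you propose (``image of $h$ acting'') is not defined on the right-hand side, which carries no $\mathscr H^l_{\vartheta_1,\vartheta_2}$-action. The standard fix is representation-theoretic rather than numerical: the quotient defining $\mc D^{\ve}_{\s,\bm\ve}$ is precisely the coinvariants of the diagonal $\scrW_l$-action twisted by the character sending $\sigma_i\mapsto\ve$ and $\varsigma_l\mapsto\ve$ (the span of the images of $g-\ve$ over the generators equals the span over all of $\scrW_l$). Since $\mathscr H^l_{\vartheta_1,\vartheta_2}$ is free as a right module over the parabolic with basis the minimal-length representatives of $\scrW_l/(\fkS_{l_1}\times\scrW_{l_2})$, the tensor identity for induced modules gives that these $\scrW_l$-coinvariants of $(M_1\odot M_2)\otimes V^{\otimes l}$ coincide with the $(\fkS_{l_1}\times\scrW_{l_2})$-coinvariants of $(M_1\otimes M_2)\otimes V^{\otimes l}$, and the latter factor as $\mc D^{\ve}_\s(M_1)\otimes\mc D^{\ve}_{\s,\bm\ve}(M_2)$ because $V^{\otimes l}=V^{\otimes l_1}\otimes V^{\otimes l_2}$ and the parabolic acts blockwise. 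With these two repairs your argument is exactly the intended one.
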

\begin{proof}
The proof is parallel to that of \cite[Porposition 4.6]{Chen2014twisted}.
\end{proof}

We say that a $\BMN$-module is \textit{of level $l$} if it decomposes as direct sums of submodules over $\mathfrak k$ of $V^{\otimes l}$ as a $\mathfrak k$-module.

\begin{thm}\label{thm:Drinfeld-B}
Let $\vartheta_1,\vartheta_2,\chi,\gamma$ be as in Proposition \ref{prop:Drinfeld-functor-BC} and $p=\# \{i\,|\, \ve_i=1, 1\lle i\lle \ka\}$. If $\max\{p,m+n-p\}< l$, then the Drinfeld functor $\mc D_{\s,\bm\ve}^\ve$ provides an equivalence between the category of finite-dimensional $\mathscr H_{\vartheta_1,\vartheta_2}^l$-modules and the category of finite-dimensional $\BMN$-modules of level $l$.
\end{thm}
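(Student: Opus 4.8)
The plan is to establish the equivalence by the standard three-step strategy used for Drinfeld functors of affine Hecke type (as in \cite{Arakawa1999,Chen2014twisted}): first exhibit an inverse functor, then check the two natural transformations (unit and counit) are isomorphisms when $\max\{p,m+n-p\}<l$. Concretely, I would define a functor $\mc E$ from $\BMN$-modules of level $l$ back to $\mathscr H_{\vartheta_1,\vartheta_2}^l$-modules by taking, for a $\BMN$-module $N$, the multiplicity space
\[
\mc E(N)=\mathrm{Hom}_{\mathfrak k}\big(V^{\otimes l}_{\langle l\rangle},\,N\big),
\]
where $V^{\otimes l}_{\langle l\rangle}$ denotes the isotypic component of $V^{\otimes l}$ realizing the relevant $\mathfrak k=\gl_{m_1|n_1}\oplus\gl_{m_2|n_2}$-module cut out by the idempotents built from the $\varsigma_l$-eigenspace condition; the action of $y_i$ on $\mc E(N)$ comes from the images of suitable coefficients of $\mc B^\chi(u)$ and $\mc T^\chi_k(u)$, and the action of $\C[\scrW_l]$ comes from the commuting $\scrW_l$-action on $V^{\otimes l}$. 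One then checks directly that $\mc E\circ\mc D_{\s,\bm\ve}^\ve\cong\mathrm{id}$ and $\mc D_{\s,\bm\ve}^\ve\circ\mc E\cong\mathrm{id}$.

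\textbf{Key steps, in order.} First I would record the ``type A'' input: by Lemma~\ref{lem:D-functor-A} and the classical results of Arakawa and Drinfeld \cite{Arakawa1999,Drinfeld198degenerate}, under the condition $\ka=m+n<l$ the Drinfeld functor $\mc D_\s^\ve$ (for super Yangians of type A) is already an equivalence between finite-dimensional $\mathscr H_{\vartheta_1}^l$-modules and finite-dimensional $\YMN$-modules ``generated in tensor degree $l$''; this is available in the super generality we need via the cited Schur--Weyl literature. Second, I would use the decomposition $\mathfrak k\cong\gl_{m_1|n_1}\oplus\gl_{m_2|n_2}$ from \S\ref{sec:properties} together with the combinatorial bounds: the hypothesis $\max\{p,m+n-p\}<l$ is exactly what guarantees that $V^{\otimes l}$, as a $\mathfrak k$-module, \emph{surjects onto} (indeed its relevant isotypic piece is faithful for) the image of $\mathscr H_{\vartheta_1,\vartheta_2}^l$; this is the analogue of the ``$\ka<l$'' condition used in the type-BC non-super case of \cite{Chen2014twisted}. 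Third, I would use Proposition~\ref{prop:DF-coproduct} to reduce the checking of both natural isomorphisms to a parabolic-induction statement: every finite-dimensional irreducible $\mathscr H_{\vartheta_1,\vartheta_2}^l$-module is a subquotient of a module induced from $\mathscr H_{\vartheta_1}^{l_1}\otimes\mathscr H_{\vartheta_1,\vartheta_2}^{l_2}$-modules with $l_2$ small, and correspondingly every finite-dimensional irreducible $\BMN$-module of level $l$ is (by the tensor-product constructions of \S\ref{sec:reps} and the rank-one analysis of \S\ref{sec:rank1}) a subquotient of $\mc D_\s^\ve(M_1)\otimes\mc D_{\s,\bm\ve}^\ve(M_2)$. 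Matching central characters (using that the center of $\mathscr H_{\vartheta_1,\vartheta_2}^l$ is the $\fkS_l$-symmetric functions in $y_1^2,\dots,y_l^2$, and that these map under $\mc D^\ve_{\s,\bm\ve}$ to the coefficients of $f(u)$ from \eqref{eq:bb-central}) then pins down irreducibles on both sides, and exactness of the functors plus a dimension count finishes the equivalence.

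\textbf{The main obstacle} I expect is the faithfulness/fullness of the $\mathfrak k$-action on $V^{\otimes l}$ under the exact numerical hypothesis $\max\{p,m+n-p\}<l$: one must show both that $\mc E$ lands in finite-dimensional $\mathscr H_{\vartheta_1,\vartheta_2}^l$-modules and, more delicately, that the double-centralizer property holds, i.e.\ that the image of $\C[\scrW_l]$ inside $\End_{\mathfrak k}(V^{\otimes l})$ is large enough that no information is lost when passing to multiplicity spaces. In the non-super setting this is handled by classical Schur--Weyl duality for $\fksp$ and $\so$ (as in Olshanski/Molev and \cite{Chen2014twisted}), but in the super case it requires the super Schur--Weyl duality for $\gl_{m_1|n_1}\oplus\gl_{m_2|n_2}$ acting on $V^{\otimes l}$ together with a careful check that the parity signs in \eqref{eq perm} and in $G^{\bm\ve}_k$ are consistent; I would isolate this as a lemma and prove it by combining the type-A super Schur--Weyl statement (cited above) with an induction on $l$ using Proposition~\ref{prop:DF-coproduct} and the rank-one irreducibility results (Propositions~\ref{prop=} and its $\ve_1\ne\ve_2$ counterpart). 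The remaining verifications—that $\mc D_{\s,\bm\ve}^\ve$ is exact, that it sends irreducibles to irreducibles or zero, and that on the relevant subcategories no nonzero module is killed—are then routine adaptations of \cite[\S4]{Chen2014twisted}.
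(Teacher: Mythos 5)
Your overall skeleton (an inverse functor on $\mathfrak k$-multiplicity spaces plus the $\mathfrak k\times\scrW_l$ double centralizer on $V^{\otimes l}$) is the same shape as the paper's argument, which follows Chen--Guay--Ma, but two of your load-bearing steps do not hold up. First, your ``type A input'' asserts that $\mc D_\s^{\ve}$ is already an equivalence onto level-$l$ $\YMN$-modules under the condition $m+n<l$. No such statement is available, and the inequality points the wrong way: the known type-A equivalences (Drinfeld, Arakawa, and their super analogues) require $l$ to be bounded \emph{above} by a row/hook condition, and when $l$ exceeds such a bound the functor $\mc D_\s^{\ve}$ kills nonzero $\mathscr H^{l}_{\vartheta_1}$-modules (already for $m=1$, $n=0$, $l=2$ a one-dimensional module on which $\sigma_1$ acts by $-\ve$ is sent to zero), so nothing can be bootstrapped from it. The paper makes no use of a type-A equivalence at all; Lemma \ref{lem:D-functor-A} enters only to define the $\YMN$-action, and the numerical hypothesis is invoked, together with the quoted $\mathfrak k\times\scrW_l$ decomposition of $V^{\otimes l}$, solely to guarantee that $\mc D^{\ve}_{\s,\bm\ve}(M)\ne 0$ for $M\ne 0$.

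Second, and more seriously, you never supply the mechanism that actually produces the dAHA-action on the multiplicity space, which is the entire technical content of the paper's proof. The paper expands $\mc B^{\chi}(u)$ from \eqref{eq:B-chi} and shows, via Lemma \ref{lem:drinfeld-cal} and the parameter matching $\vartheta_2=\vartheta_1(2\gamma+\varpi_1)$, $\ve=\vartheta_1\chi$, that for $\ve_i\ne\ve_j$ one has $s_i\ve\vartheta_1\sfb^{(2)}_{ij}=-2\ve_i\sum_{k}\sfy_k\otimes E^{(k)}_{ij}$, where the $\sfy_k$ are the modified generators of Lemma \ref{lem:dAHA-B-other}; this identity is what allows one, as in Chen--Guay--Ma, to read off a $\scrW_l$- and $\sfy_k$-action on $\mathrm{Hom}_{\mathfrak k}(V^{\otimes l},N)$ for an abstract level-$l$ module $N$ and to verify the relations of $\mathscr H^{l}_{\vartheta_1,\vartheta_2}$ there. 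Your substitute, taking the $y_i$-action from ``suitable coefficients of $\mc B^{\chi}(u)$ and $\mc T^{\chi}_k(u)$,'' is not available: $\mc T^{\chi}_k(u)$ does not act on an abstract $\BMN$-module of level $l$, and without the $\sfb^{(2)}$-computation there is no candidate operator and no way to check the nontrivial $[\sfy_i,\sfy_j]$ relation. Finally, replacing the mutual-inverse check by parabolic induction (Proposition \ref{prop:DF-coproduct}), central-character matching and a dimension count is not a proof: it is neither shown nor clear that every irreducible $\mathscr H^{l}_{\vartheta_1,\vartheta_2}$-module is a subquotient of a module induced from $\mathscr H^{l_1}_{\vartheta_1}\otimes\mathscr H^{l_2}_{\vartheta_1,\vartheta_2}$ with $l_2$ small, and central characters do not separate irreducibles; likewise the super $\mathfrak k\times\scrW_l$ duality you flag as the main obstacle is simply quoted in the paper (from the super wreath-product Schur--Weyl literature) and cannot be obtained by induction on $l$ from the rank-one irreducibility results.
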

We prove the theorem in Section \ref{app:B}.

\begin{thm}\label{thm:Drinfeld-simple}
Let $\vartheta_1,\vartheta_2,\chi,\gamma$ be as in Proposition \ref{prop:Drinfeld-functor-BC}. Let $M$ be an irreducible $\mathscr H_{\vartheta_1,\vartheta_2}^l$-module. Then $\mc D_{\s,\bm \ve}^\ve(M)$ is either 0 or an irreducible $\BMN$-module.
\end{thm}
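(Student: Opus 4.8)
The statement is exactly the ``super/twisted'' analogue of the classical fact (see \cite[Theorem 4.9]{Chen2014twisted}) that the Drinfeld functor sends a simple module to zero or a simple module. I would follow the classical strategy adapted to the coideal setting, leveraging Proposition \ref{prop:DF-coproduct} and Theorem \ref{thm:Drinfeld-B} as the two main engines.

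\medskip

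\textbf{Step 1: Reduce to the ``large $l$'' regime via the $\odot$-product.} Fix an irreducible $\mathscr H_{\vartheta_1,\vartheta_2}^l$-module $M$; assume $\mc D_{\s,\bm\ve}^\ve(M)\ne 0$, otherwise there is nothing to prove. The obstruction to directly invoking Theorem \ref{thm:Drinfeld-B} is that $l$ may be too small: the equivalence there requires $\max\{p,\ka-p\}<l$. To get around this, I would choose an auxiliary integer $l_1\gg 0$ and a generic irreducible $\mathscr H_{\vartheta_1}^{l_1}$-module $N$ (for instance an induced module from distinct-residue data, so that $\mc D_\s^\ve(N)$ is a nonzero irreducible $\YMN$-module of highest $\ell_\s$-weight type, by the type-A Schur--Weyl story cited after Lemma \ref{lem:D-functor-A}), and then consider $N\odot M$, an $\mathscr H_{\vartheta_1,\vartheta_2}^{l+l_1}$-module. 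By Proposition \ref{prop:DF-coproduct},
\[
\mc D_{\s,\bm\ve}^\ve(N\odot M)\cong \mc D_\s^\ve(N)\otimes \mc D_{\s,\bm\ve}^\ve(M)
\]
as $\BMN$-modules. Choosing $l_1$ so that $l+l_1>\max\{p,\ka-p\}$, the functor $\mc D_{\s,\bm\ve}^\ve$ restricted to level-$(l+l_1)$ modules is now an \emph{equivalence of categories}, hence in particular exact and faithful; so $\mc D_{\s,\bm\ve}^\ve(N\odot M)$ is irreducible iff $N\odot M$ is irreducible as an $\mathscr H_{\vartheta_1,\vartheta_2}^{l+l_1}$-module.

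\medskip

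\textbf{Step 2: Irreducibility of $N\odot M$.} This is the dAHA-side input and the heart of the argument. The module $N\odot M=\mathscr H_{\vartheta_1,\vartheta_2}^{l+l_1}\otimes_{\mathscr H_{\vartheta_1}^{l_1}\otimes\mathscr H_{\vartheta_1,\vartheta_2}^{l_2}}(N\otimes M)$ (here $l_2=l$) is the type-BC analogue of an induced module, and the relevant irreducibility criterion is Zelevinsky/Mackey-type: $N\odot M$ is irreducible provided the ``central character supports'' of $N$ (a multiset of residues in $\bC$, modulo $\vartheta_1\Z$) and of $M$ (a multiset of $\pm$-pairs of residues) are in \emph{generic position} — e.g. disjoint and not related by the reflections of $\scrW$. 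Since $N$ is ours to choose, I would pick $N$ with central character concentrated at residues that avoid all $\scrW$-orbit interactions with the finitely many residues occurring in $M$; genericity of $\vartheta_1\ne 0$ makes this possible. For the precise statement I would cite the Mackey-theory for degenerate affine Hecke algebras of type BC (this is where a reference to e.g. the work on affine Hecke algebras / \cite{Etingof2009} and the standard theory is invoked). Thus $N\odot M$ is irreducible, hence $\mc D_{\s,\bm\ve}^\ve(N\odot M)$ is irreducible.

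\medskip

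\textbf{Step 3: Descend to $M$.} From Step 1 we have $\mc D_\s^\ve(N)\otimes\mc D_{\s,\bm\ve}^\ve(M)$ irreducible, where $\mc D_\s^\ve(N)$ is a nonzero finite-dimensional $\YMN$-module. It remains to deduce that $\mc D_{\s,\bm\ve}^\ve(M)$ is itself irreducible over $\BMN$. For this I would argue contrapositively: if $0\subsetneq V'\subsetneq \mc D_{\s,\bm\ve}^\ve(M)$ were a proper $\BMN$-submodule, then — using that $\BMN$ is a \emph{left coideal} subalgebra of $\YMN$ (Proposition \ref{prop:coproduct}), so $\mc D_\s^\ve(N)\otimes V'$ is again a $\BMN$-submodule — we get $0\subsetneq \mc D_\s^\ve(N)\otimes V'\subsetneq \mc D_\s^\ve(N)\otimes\mc D_{\s,\bm\ve}^\ve(M)$, contradicting irreducibility of the latter. (The strict inclusions on dimension grounds are immediate since $\dim\mc D_\s^\ve(N)\ne 0$.) Hence $\mc D_{\s,\bm\ve}^\ve(M)$ is irreducible, completing the proof.

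\medskip

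\textbf{Main obstacle.} The routine parts (exactness/faithfulness of an equivalence, the coideal bookkeeping in Step 3) are automatic once set up. The genuine work is Step 2: establishing irreducibility of the induced module $N\odot M$ in sufficient generality. One must either invoke an off-the-shelf Mackey/Zelevinsky irreducibility theorem for $\mathscr H_{\vartheta_1,\vartheta_2}^l$ and check its hypotheses against our freely-chosen $N$, or — if no citable statement covers the exact induction $\mathscr H_{\vartheta_1}^{l_1}\otimes\mathscr H_{\vartheta_1,\vartheta_2}^{l_2}\uparrow\mathscr H_{\vartheta_1,\vartheta_2}^{l}$ — prove the needed special case directly by analyzing how $y$-eigenvalues on $N\otimes M$ generate under the $\scrW$-action, using that $N$'s residues are chosen generically. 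I expect the cleanest route is to mimic \cite[\S 4]{Chen2014twisted} verbatim, where this induction-irreducibility step is handled, since nothing in that argument uses positivity of the parity sequence — only the combinatorics of residues, which is insensitive to the signs $s_i$ and $\ve_i$.
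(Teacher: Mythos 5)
Your Step 3 is fine (since $\BMN$ is a left coideal, Proposition \ref{prop:coproduct} does make $\mc D_\s^\ve(N)\otimes V'$ a $\BMN$-submodule of the tensor product), but your overall route is not the paper's and it has genuine gaps. The paper proves Theorem \ref{thm:Drinfeld-simple} directly, following \cite[Theorem 4.7]{Chen2014twisted} with the super modifications of Nazarov and of \cite{Lu2021jacobi}: one works inside $\mc D_{\s,\bm\ve}^\ve(M)$ itself, using the decomposition of $V^{\otimes l}$ over $\mathfrak k\times \scrW_l$ and a commutant/weight-space argument to show that any nonzero $\BMN$-submodule produces an $\mathscr H_{\vartheta_1,\vartheta_2}^l$-submodule of $M$, whence everything by simplicity of $M$; no level condition, no auxiliary induction on the Hecke side, and no appeal to Theorem \ref{thm:Drinfeld-B} is needed.

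The first gap is your Step 2: irreducibility of $N\odot M$ for a generically chosen type-A factor $N$ is a nontrivial Mackey/Zelevinsky-type theorem for degenerate affine Hecke algebras of type BC that you neither prove nor correctly source — \cite{Chen2014twisted} does not contain such an induction-irreducibility statement (their simplicity result is proved directly), and neither does any reference used in this paper. The second, more structural, problem is your black-box use of Theorem \ref{thm:Drinfeld-B}: combined with Proposition \ref{prop:DF-coproduct} exactly as in your Step 1, it would show that $\mc D_{\s,\bm\ve}^\ve(M)\ne 0$ for \emph{every} nonzero $M$ — indeed, if $\mc D_{\s,\bm\ve}^\ve(M)=0$ then $\mc D_{\s,\bm\ve}^\ve(N\odot M)\cong \mc D_\s^\ve(N)\otimes 0=0$ while $N\odot M\ne 0$, contradicting that an equivalence reflects zero objects. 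But the Drinfeld functor genuinely kills modules (for instance the one-dimensional $\mathscr H_{\vartheta_1,\vartheta_2}^{l}$-module on which every $\sigma_i$ acts by $-\ve$ is sent to a super-exterior-power-type space that vanishes once $l$ exceeds the relevant hook bound, and this happens for arbitrarily large $l$), which is exactly why the theorem you are proving allows the value $0$. So the equivalence cannot be invoked naively for the induced modules $N\odot M$: in Schur--Weyl situations such statements are only available on the subcategory of modules not annihilated by the functor (in the non-super case of \cite{Chen2014twisted} the hypothesis runs in the opposite direction, requiring the blocks of $\mathfrak k$ to be large compared with the level, and then enlarging the level by $l_1$ destroys rather than creates the hypothesis). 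Showing that $N\odot M$ lies in the good range and that simplicity transfers is essentially the same difficulty the direct proof resolves, so your reduction does not come for free.
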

The theorem is analogous to \cite[Theorem 11]{Arakawa1999} for Yangian $\mathrm{Y}(\gl_N)$, \cite[Theorem 5.5]{Nazarov1999queer} for super Yangian of type $Q_N$, \cite[Proposition 4.8]{Lu2021jacobi} for super Yangian $\YMN$ and \cite[Theorem 4.7]{Chen2014twisted} for twisted Yangian of type AIII. The proof is similar to that of \cite[Theorem 4.7]{Chen2014twisted} with suitable modifications for super case as presented in the proofs of \cite[Theorem 5.5]{Nazarov1999queer} and \cite[Proposition 4.8]{Lu2021jacobi}. Therefore, we shall not provide the details.

\subsection{Proof of Theorem \ref{thm:Drinfeld-B}}\label{app:B}
In this section, we give a proof of Theorem \ref{thm:Drinfeld-B}. The strategy is essentially the same as in \cite{Chen2014twisted}.

Recall that the action of $\BMN$ on $\mc D_{\s,\bm\ve}^\ve(M)$ is induced by the map $B(u)\mapsto \mc B^\chi(u)$, see \eqref{eq:B-chi}. Expanding $\mc B^\chi(u)$ as a series in $u^{-1}$ with coefficients in $\mathscr H_{\vartheta_1,\vartheta_2}^l\otimes \End(V^{\otimes l})\otimes \End(V)$, we find the first 3 coefficients are given by $G^{\bm \ve}$ (understood as $1\otimes 1\otimes G^{\bm \ve}$),
$$
\gamma+1\otimes \sum_{k=1}^l \big(\mc Q^{(k)}G^{\bm \ve}+G^{\bm \ve}\mc Q^{(k)}\big),
$$
\beq\label{eq:B-matrix-2nd}
\begin{split}
& 2\gamma\Big(1\otimes\sum_{k=1}^l \mc Q^{(k)}\Big)+\sum_{1\lle k<r\lle l}(1\otimes \mc Q^{(k)}\mc Q^{(r)})G^{\bm \ve}+\sum_{1\lle r<k\lle l}G^{\bm \ve}(1\otimes \mc Q^{(k)}\mc Q^{(r)})\\
& +\Big(1\otimes\sum_{k=1}^l \mc Q^{(k)}\Big)G^{\bm \ve}\Big(1\otimes\sum_{r=1}^l \mc Q^{(r)}\Big)+\sum_{k=1}^l\Big( G^{\bm \ve}\big((\jmath-\chi y_k)\otimes  \mc Q^{(k)}\big)+\big((\jmath+\chi y_k)\otimes  \mc Q^{(k)}\big)G^{\bm \ve}\Big).
\end{split}
\eeq
We set
$$
\mc B^{\chi}(u)=\sum_{r \gge 0 }\sum_{i,j=1}^\ka (-1)^{|i||j|+|i|+|j|}\sfb_{ij}^{(r)}u^{-r}\otimes E_{ij},
$$
where $\sfb_{ij}^{(r)}\in \mathscr H_{\vartheta_1,\vartheta_2}^l \otimes \End(V^{\otimes l})$. From above, we conclude that
\[
\sfb_{ij}^{(0)}=\ve_i\delta_{ij},\qquad \sfb_{ij}^{(1)}=\gamma\delta_{ij}+s_i(\ve_i+\ve_j)\sum_{k=1}^l 1\otimes E_{ij}^{(k)}.
\]
Before computing $\sfb_{ij}^{(2)}$, we prepare the following lemma.
\begin{lem}\label{lem:drinfeld-cal}
Suppose $\ve_i\ne \ve_j$. Then as operators on $V^{\otimes l}$, we have
\begin{align}
&s_i\sum_{k=1}^l\Big(\sum_{r=1}^{k-1}\sigma_{rk}-\sum_{r=k+1}^{l}\sigma_{rk}\Big)E_{ij}^{(k)}=-\ve_i\Big(\sum_{1\lle k<r\lle l}\mc Q^{(k)}\mc Q^{(r)}G^{\bm\ve}+\sum_{1\lle r<k\lle l}G^{\bm\ve}\mc Q^{(k)}\mc Q^{(r)}\Big)_{ij},\label{eq:lem-bij2-1}\\
&s_i\sum_{k=1}^l\Big(\sum_{\substack{r=1\\ r\ne k}}\sigma_{kr}\varsigma_r\varsigma_k+\varpi_1\varsigma_k\Big)E_{ij}^{(k)}=\ve_i\Big(\sum_{k,r=1}^l\mc Q^{(k)}G^{\bm\ve}\mc Q^{(r)}\Big)_{ij}.\label{eq:lembij2-2}
\end{align}
Here by $(\cdot)_{ij}$, we mean the $(i,j)$-th entry, namely, for $\mathsf G\in \End(V^{\otimes l})\otimes \End(V)$, $$\mathsf G=\sum_{i,j=1}^\ka (-1)^{|i||j|+|j|}(\mathsf G)_{ij}\otimes E_{ij}.$$
\end{lem}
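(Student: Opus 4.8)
The plan is to prove both identities by a direct computation in the super algebra $\End(V^{\otimes l})\otimes\End(V)$, comparing the $(i,j)$-th entries of the two sides in the sense of the convention $\mathsf G=\sum_{i,j}(-1)^{|i||j|+|j|}(\mathsf G)_{ij}\otimes E_{ij}$. First I would record a few matrix-entry formulas: multiplying out $\mc Q^{(k)}$ and $\mc Q^{(r)}$ for $k\ne r$, contracting the auxiliary $\End(V)$-indices and collecting the Koszul signs of the super tensor product gives
\[
\big(\mc Q^{(k)}\mc Q^{(r)}\big)_{ij}=\sum_{a=1}^\ka(-1)^{|i|+|a|}E_{ia}^{(k)}E_{aj}^{(r)} ,
\]
and, since a copy of $G^{\bm\ve}$ placed to the right, to the left, or between the two factors merely inserts the scalar $\ve_j$, $\ve_i$, or $\ve_a$ respectively, one also obtains
\[
\big(\mc Q^{(k)}\mc Q^{(r)}G^{\bm\ve}\big)_{ij}=\ve_j\sum_a(-1)^{|i|+|a|}E_{ia}^{(k)}E_{aj}^{(r)},\quad \big(G^{\bm\ve}\mc Q^{(k)}\mc Q^{(r)}\big)_{ij}=\ve_i\sum_a(-1)^{|i|+|a|}E_{ia}^{(k)}E_{aj}^{(r)},
\]
\[
\big(\mc Q^{(k)}G^{\bm\ve}\mc Q^{(r)}\big)_{ij}=\sum_a\ve_a(-1)^{|i|+|a|}E_{ia}^{(k)}E_{aj}^{(r)}\qquad(k\ne r).
\]
The other ingredients are the explicit actions $\sigma_{rk}\mapsto\mc P^{(r,k)}$ and $\varsigma_k\mapsto G^{\bm\ve}_k$, together with the contraction rules $\mc P^{(r,k)}E_{ij}^{(k)}=\sum_a s_aE_{ia}^{(r)}E_{aj}^{(k)}$ for $r<k$, $\mc P^{(k,r)}E_{ij}^{(k)}=\sum_a s_i(-1)^{(|a|+|i|)(|i|+|j|)}E_{aj}^{(k)}E_{ia}^{(r)}$ for $k<r$, and $G^{\bm\ve}_kE_{ij}^{(k)}=\ve_iE_{ij}^{(k)}$, $E_{ij}^{(k)}G^{\bm\ve}_k=\ve_jE_{ij}^{(k)}$.

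For \eqref{eq:lem-bij2-1} I would substitute the formulas above into the right-hand side, relabel $k\leftrightarrow r$ in the sum over $r<k$, expand the left-hand side via $\sigma_{rk}\mapsto\mc P^{(r,k)}$, and then compare coefficients of the monomials $E_{ia}^{(k)}E_{aj}^{(r)}$ with $k<r$. The coefficient of $E_{ia}^{(k)}E_{aj}^{(r)}$ turns out to be $s_is_a=(-1)^{|i|+|a|}$ on the left and $-\ve_i\ve_j(-1)^{|i|+|a|}$ on the right, so the two sides agree precisely when $\ve_i\ve_j=-1$, i.e. when $\ve_i\ne\ve_j$; this is the only place the hypothesis is used, and the identity in fact fails for $\ve_i=\ve_j$. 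The remaining monomials $E_{aj}^{(k)}E_{ia}^{(r)}$ are then matched using $E_{ia}^{(r)}E_{aj}^{(k)}=(-1)^{(|i|+|a|)(|a|+|j|)}E_{aj}^{(k)}E_{ia}^{(r)}$ and the parity congruence $(|a|+|i|)(|i|+|j|)\equiv|i|+|a|+(|i|+|a|)(|a|+|j|)\pmod 2$, which holds with no extra condition.

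For \eqref{eq:lembij2-2} I would split $\sum_{k,r=1}^l\mc Q^{(k)}G^{\bm\ve}\mc Q^{(r)}$ into the off-diagonal part ($k\ne r$) and the diagonal part ($k=r$). Grouping the off-diagonal terms by unordered pairs $\{p,q\}$ and using the entry formulas, one checks that the contribution of $\ve_i\big(\mc Q^{(p)}G^{\bm\ve}\mc Q^{(q)}+\mc Q^{(q)}G^{\bm\ve}\mc Q^{(p)}\big)_{ij}$ coincides with $s_i\big(\sigma_{pq}\varsigma_q\varsigma_p+\sigma_{qp}\varsigma_p\varsigma_q\big)E_{ij}^{(k)}$ (the middle $G^{\bm\ve}$ producing the factor $\varsigma_q\varsigma_p=G^{\bm\ve}_qG^{\bm\ve}_p$), using the same parity congruence as before; this accounts for the $\sigma_{kr}\varsigma_r\varsigma_k$-part of the left-hand side. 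For the diagonal terms I would compute $\mc Q^{(k)}G^{\bm\ve}\mc Q^{(k)}$ directly: after contraction the internal summation yields the scalar $\sum_b s_b\ve_b=\varpi_1$, so $(\mc Q^{(k)}G^{\bm\ve}\mc Q^{(k)})_{ij}=\varpi_1 s_iE_{ij}^{(k)}$ and hence $\ve_i\sum_k(\mc Q^{(k)}G^{\bm\ve}\mc Q^{(k)})_{ij}=s_i\varpi_1\sum_k\varsigma_kE_{ij}^{(k)}$, which is exactly the $\varpi_1\varsigma_k$-part (this diagonal step can also be read off from Lemma \ref{lem:cgm-thm4.5} by passing to the $\mathfrak p$-component). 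I expect the main obstacle to be purely combinatorial: keeping track of all the super signs $(-1)^{|a||b|+\cdots}$, both the Koszul signs incurred in products within $\End(V^{\otimes l})$ of operators on distinct tensor legs and those in the super tensor product with the auxiliary $\End(V)$, and applying the entry-extraction convention consistently to both sides. Once this bookkeeping is organized, both identities reduce to the elementary scalar comparisons sketched above.
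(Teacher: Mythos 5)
Your proposal is correct and follows essentially the same route as the paper's proof: both expand $\sigma_{rk}=\mc P^{(r,k)}$, $\varsigma_k=G^{\bm\ve}_k$ and the products $\mc Q^{(k)}\mc Q^{(r)}$, $\mc Q^{(k)}G^{\bm\ve}\mc Q^{(r)}$ explicitly, compare $(i,j)$-entries after relabelling $k\leftrightarrow r$, use $\ve_j=-\ve_i$ for the first identity, and extract $\varpi_1=\sum_a s_a\ve_a$ from the $k=r$ terms for the second. Your intermediate entry formulas and sign bookkeeping check out, so this is just a more explicitly entry-wise write-up of the same direct computation.
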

\begin{proof}
Recall from \eqref{eq perm} that $\sigma_{rk}=\mc P^{(r,k)}=\sum_{a,b=1}^\ka s_bE_{ab}^{(r)}E_{ba}^{(k)}$, therefore the left hand of \eqref{eq:lem-bij2-1} is equal to
\[
\sum_{r<k}\sum_{a=1}^\ka s_is_aE_{ia}^{(r)}E_{aj}^{(k)}-\sum_{k<r}\sum_{a=1}^\ka s_is_aE_{ia}^{(r)}E_{aj}^{(k)}.
\]
A straightforward computation implies
\begin{align*}
\sum_{1\lle k<r\lle l}\mc Q^{(k)}\mc Q^{(r)}G^{\bm\ve}+&\sum_{1\lle r<k\lle l}G^{\bm\ve}\mc Q^{(k)}\mc Q^{(r)}\\ &\,=\sum_{i,j,a=1}^\ka \Big(\sum_{r<k}\ve_i+\sum_{k<r}\ve_j\Big)E_{ia}^{(k)}E_{aj}^{(s)}\otimes E_{ij}(-1)^{|i||j|+|i|+|j|+|a|}.
\end{align*}
After interchanging $k$ and $r$ and using $\ve_j=-\ve_i$, one obtains \eqref{eq:lem-bij2-1}.

Similarly, the left hand side of \eqref{eq:lembij2-2} is equal to
\[
\sum_{\substack{k,r=1\\ r\ne k}}^l \sum_{a=1}^\ka s_is_a\ve_a\ve_iE_{ia}^{(r)}E_{aj}^{(k)}+\varpi_1\sum_{k=1}^l s_i\ve_iE_{ij}^{(k)},
\]
while we also have
\begin{align*}
\sum_{k,r=1}^l\mc Q^{(k)}G^{\bm\ve}\mc Q^{(r)}=\sum_{\substack{k,r=1\\ r\ne k}}\sum_{i,j,a=1}^\ka \ve_aE_{ia}^{(k)}E_{aj}^{(r)}&\otimes E_{ij}(-1)^{|i||j|+|i|+|j|+|a|}\\+&\sum_{k=1}^l \sum_{i,j,a=1}^\ka \ve_a E_{ij}^{(k)}\otimes E_{ij}(-1)^{|i||j|+|i|+|j|+|a|}.
\end{align*}
Now \eqref{eq:lembij2-2} follows from  $\varpi_1=\sum_{a=1}^\ka s_a\ve_a=\sum_{a=1}^\ka (-1)^{|a|}\ve_a$.
\end{proof}
Note that
\beq\label{eq:lembij2-3}
G^{\bm\ve}\mc Q^{(k)}+\mc Q^{(k)}G^{\bm\ve}=\sum_{i,j=1}^\ka(\ve_i+\ve_j)\mc Q^{(k)}.
\eeq
It follows from \eqref{eq:B-matrix-2nd}, \eqref{eq:lembij2-3}, and Lemma \ref{lem:drinfeld-cal} that if $\ve_i\ne \ve_j$, then
\begin{align*}
s_i\sfb_{ij}^{(2)}&= 2\gamma \sum_{k=1}^l E_{ij}^{(k)}-\ve_i\ve\sum_{k=1}^l\Big(\sum_{r=1}^{k-1}\sigma_{rk}-\sum_{r=k+1}^{l}\sigma_{rk}\Big)\otimes E_{ij}^{(k)}\\
&\quad  +\ve_i\ve\sum_{k=1}\Big(\sum_{\substack{r=1\\ r\ne k}}^l\sigma_{kr}\varsigma_r\varsigma_k+\varpi_1\varsigma_k\Big)\otimes  E_{ij}^{(k)}-2\chi\ve_i\sum_{k=1}^l y_k\otimes  E_{ij}^{(k)}\\
&= -2\ve_i\sum_{k=1}^l\Big(\chi y_k+\frac{\ve}{2}\sum_{r=1}^{k-1}\sigma_{rk}-\frac{\ve}{2}\sum_{r=k+1}^{l}\sigma_{rk}-\frac{\ve}{2}\sum_{\substack{r=1\\ r\ne k}}^l\sigma_{kr}\varsigma_r\varsigma_k-\frac{\ve}{2}\big(\varpi_1+2\gamma\big)\varsigma_k\Big)\otimes  E_{ij}^{(k)}.
\end{align*}
Therefore, if we suppose further that $\vartheta_2=\vartheta_1(2\gamma+\varpi_1)$ and $\ve=\vartheta_1\chi$, we have
\begin{align*}
s_i\ve\vartheta_1\sfb_{ij}^{(2)}
=-2\ve_i \sum_{k=1}^l \sfy_k\otimes E_{ij}^{(k)},
\end{align*}
see Lemma \ref{lem:dAHA-B-other} and cf. \cite[Equation (4.1)]{Chen2014twisted}.

The rest of the proof is similar to the one that is outlined in \cite[proof of Theorem 4.3]{Chen2014twisted}. We shall omit the details. The $\widetilde J(E_{ij})$ there should be replaced by $\sfb_{ij}^{(2)}$ as the $J$-presentations of (twisted) super Yangians are not discussed here. The fact that the tensor space $V^{\otimes l}$ decomposes as a direct sum of irreducible modules over $\mathfrak k\times \mathscr W_l$ follows from the proof of \cite[Theorem 5.8]{Shen2025quantum}. A precise decomposition parallel to the one discussed in \cite[Introduction]{Ariki1995schur} (namely one only needs to change $W_{\underline{\la}}$ to the $\mathfrak k$-module associated to the tuple $\underline{\la}$) can be deduced from a standard approach as in \cite[Theorem 3.11]{Cheng2009dualities} or \cite{Ariki1995schur}, see also \cite[Chapter II]{Kerber1971reps}. This decomposition and the condition $\max\{p,\ka-p\}<l$ make sure that $\mc D^{\ve}_{\s,\bm\ve}(M)$ is nonzero if $M$ is nonzero.
\bibliographystyle{amsalpha}
\bibliography{all}

\end{document}